\def\myfontsize{10pt}
\IfFileExists{comrich.conf}{\input{comrich.conf}}{}
\documentclass[\myfontsize]{amsart}
\usepackage{amsmath, amsfonts, amssymb}
\usepackage{stmaryrd}
\usepackage{accents}
\usepackage[all]{xy}
\usepackage{graphicx}
\usepackage{hhline}
\usepackage{standalone}
\usepackage{tikz}
\usetikzlibrary{arrows.meta}
\usepackage{tableau}
\usetikzlibrary{patterns,calc}
\usepackage[colorlinks=true, linkcolor=blue,
  citecolor=blue, urlcolor=blue]{hyperref}
\ifcsname mypreamble\endcsname\mypreamble\fi

\def\newthm#1#2{\newtheorem{#1}[dummy]{#2}%
  \expandafter\def\csname#2\endcsname##1{\hyperref[#1:##1]{#2~\ref*{#1:##1}}}}
\newthm{thm}{Theorem}
\newthm{lemma}{Lemma}
\newthm{prop}{Proposition}
\newthm{cor}{Corollary}
\newthm{conj}{Conjecture}
\newthm{cons}{Consequence}
\newthm{claim}{Claim}
\newthm{fact}{Fact}
\newthm{obs}{Observation}
\newthm{guess}{Guess}
\theoremstyle{definition}
\newthm{defn}{Definition}
\newthm{remark}{Remark}
\newthm{example}{Example}
\newthm{question}{Question}
\newthm{notation}{Notation}
\newthm{problem}{Problem}

\newcommand{\Section}[1]{\hyperref[sec:#1]{Section~\ref*{sec:#1}}}
\newcommand{\Table}[1]{\hyperref[tab:#1]{Table~\ref*{tab:#1}}}
\newcommand{\Figure}[1]{\hyperref[fig:#1]{Figure~\ref*{fig:#1}}}
\newcommand{\eqn}[1]{\hyperref[eqn:#1]{(\ref*{eqn:#1})}}

\DeclareMathOperator{\GL}{GL}
\DeclareMathOperator{\SL}{SL}

\DeclareMathOperator{\Sp}{Sp}
\DeclareMathOperator{\SO}{SO}
\DeclareMathOperator{\Gr}{Gr}
\DeclareMathOperator{\Fl}{Fl}
\DeclareMathOperator{\LG}{LG}

\DeclareMathOperator{\SG}{SG}
\DeclareMathOperator{\SF}{SF}
\DeclareMathOperator{\OG}{OG}
\DeclareMathOperator{\OF}{OF}

\DeclareMathOperator{\Hom}{Hom}

\DeclareMathOperator{\Span}{Span}
\DeclareMathOperator{\Spec}{Spec}

\DeclareMathOperator{\chr}{char}
\DeclareMathOperator{\codim}{codim}

\DeclareMathOperator{\Pic}{Pic}
\newcommand{\Cl}{\operatorname{C\ell}}

\DeclareMathOperator{\lf}{lf}
\DeclareMathOperator{\rt}{rt}
\DeclareMathOperator{\up}{up}
\DeclareMathOperator{\dn}{dn}

\DeclareMathOperator{\ur}{ur}

\newcommand{\ssm}{\smallsetminus}
\newcommand{\bA}{{\mathbb A}}

\newcommand{\bG}{{\mathbb G}}
\newcommand{\bK}{{\mathbb K}}
\newcommand{\bP}{{\mathbb P}}
\newcommand{\C}{{\mathbb C}}
\newcommand{\R}{{\mathbb R}}
\newcommand{\Q}{{\mathbb Q}}
\newcommand{\Z}{{\mathbb Z}}

\newcommand{\cH}{{\mathcal H}}

\def\cL{{\mathcal L}}

\newcommand{\cO}{{\mathcal O}}
\newcommand{\cP}{{\mathcal P}}

\newcommand{\cV}{{\mathcal V}}

\newcommand\oX{\accentset{\circ}{X}}

\newcommand{\bull}{{\scriptscriptstyle \bullet}}
\newcommand{\al}{{\alpha}}
\newcommand{\be}{{\beta}}
\newcommand{\ga}{{\gamma}}
\newcommand{\de}{{\delta}}
\newcommand{\ep}{{\epsilon}}

\newcommand{\ka}{{\kappa}}
\newcommand{\la}{{\lambda}}

\newcommand{\om}{{\omega}}

\newcommand{\dex}{\delta_X}

\newcommand{\wt}{\widetilde}
\newcommand{\wh}{\widehat}

\newcommand{\ov}{\overline}
\newcommand{\pic}[2]{\includegraphics[scale=#1]{Figures/#2}}
\newcommand{\ignore}[1]{}

\newcommand{\noin}{\noindent}

\newcommand{\titem}[1]{\noin{\rm(#1)}}

\begin{document}

\title{Cominuscule Richardson varieties}

\date{October 13, 2026}

\author{Anders~Skovsted~Buch}
\address{Department of Mathematics, Rutgers University, 110
  Frelinghuysen Road, Piscataway, NJ 08854, USA}
\email{asbuch@math.rutgers.edu}

\subjclass[2020]{Primary 14M15; Secondary 14C15, 14C22}

% \keywords{}

\thanks{The author was partially supported by NSF grant DMS-2152316.}

\begin{abstract}
  A Richardson variety in a cominuscule flag variety is defined by a skew shape
  of boxes. We prove that any cominuscule Richardson variety has a unique
  decomposition as a product of indecomposable cominuscule Richardson varieties,
  which are given by the connected components of the skew shape. We also prove
  that the first Chow group and the Picard group of a cominuscule Richardson
  variety are dual to each other; both are free abelian groups of rank equal to
  the number of indecomposable factors. Along the way, we construct an explicit
  desingularization of any cominuscule Richardson variety of classical Lie type
  as a tower of (isotropic) Grassmannian bundles. Our results imply that the
  Hilbert polynomial of the minimal ample line bundle is a well-defined
  geometric invariant of any cominuscule Richardson variety. We give a positive
  formula for this Hilbert polynomial.
\end{abstract}

\maketitle

\setcounter{tocdepth}{1}
\tableofcontents

% !TeX root=comrich.tex

\section{Introduction}

Let $X = G/P_X$ be a flag variety defined by a reductive linear algebraic group
$G$ and a parabolic subgroup $P_X$. Fix a maximal torus $T$ and a Borel subgroup
$B$ such that $T \subset B \subset P_X \subset G$. Each Weyl group element $u
\in W$ defines the Schubert varieties $X_u = \ov{B u.P_X}$ and $X^u = \ov{B^-
u.P_X}$. A non-empty intersection $X_u^v = X_u \cap X^v$ of opposite Schubert
varieties is called a \emph{Richardson variety}. In this paper we study these
Richardson varieties when $X$ is a cominuscule flag variety. We will show that
any cominuscule Richardson variety $X_u^v$ has a unique factorization as a
product of \emph{indecomposable} cominuscule Richardson varieties. In addition,
we prove that the first Chow group $A_1(X_u^v)$ and the Picard group
$\Pic(X_u^v)$ are dual to each other; both are free abelian groups of rank equal
to the number of indecomposable factors of $X_u^v$. The family of cominuscule
flag varieties consists of Grassmannians of type A, Lagrangian Grassmannians,
maximal orthogonal Grassmannians, quadric hypersurfaces, and two exceptional
varieties of Lie type E called the Cayley Plane $E_6/P_6$ and the Freudenthal
variety $E_7/P_7$ (see \Table{labsets}). More generally, any product of these
varieties will be considered a cominuscule flag variety. Our computation of the
Picard groups of cominuscule Richardson varieties of type E uses a new result of
Anderson and Payne \cite{anderson.payne:chow}.

Some properties of Richardson varieties can be proved from the corresponding
properties of Schubert varieties. In particular, classical results show that
Schubert varieties are normal, Cohen-Macaulay, and have rational singularities
\cite{mehta.ramanathan:frobenius, ramanan.ramanathan:projective,
ramanathan:schubert}. These properties are inherited by Richardson varieties,
thanks to a generalization of Kleiman's transversality theorem
\cite{kleiman:transversality, brion:positivity, knutson.lam.ea:projections}.
Both Schubert varieties and Richardson varieties are rational varieties.
Moreover, any Schubert variety $X_u$ is stratified by the $B$-orbits it
contains, each of which is isomorphic to an affine space. Each Chow group
$A_d(X_u)$ is therefore a free abelian group generated by the classes of all
$d$-dimensional Schubert varieties contained in $X_u$. The rationality of
Richardson varieties follows from Deodhar's decomposition \cite{deodhar:some},
which expresses any Richardson variety as a disjoint union of locally closed
cells of the form $\bA^n \times (\bA^1 \ssm \{0\})^m$. However, Deodhar's
decomposition is not a stratification, and the Chow groups of Richardson
varieties are only known in special cases (see, e.g.,\
\cite{gonzales.le.ea:quasisymmetric} for the case of toric Richardson varieties
in Grassmannians).

The Picard group $\Pic(X_u)$ of any Schubert variety was obtained by Mathieu
\cite{mathieu:formules*1}. The usual strategy for computing the Picard group of
a normal variety $Y$ is first to compute the divisor class group $\Cl(Y) =
A_{\dim(Y)-1}(Y)$ and then identify $\Pic(Y)$ as a subgroup of $\Cl(Y)$.
However, aside from toric Richardson varieties and Schubert varieties, the
divisor class group $\Cl(X_u^v)$ appears to be unknown. The fact that $X_u^v$ is
rational implies that $\Cl(X_u^v)$ is finitely generated. The only other result
in this direction that we are aware of is that, when $X = \Gr(m,n)$ is a
Grassmannian of type A, the class group $\Cl(\oX_u^v)$ of any open Richardson
variety $\oX_u^v = Bu.P_X \cap B^-v.P_X$ is zero \cite{levinson.purbhoo:class}.

Given any complete variety $Y$, the natural pairing $\Pic(Y) \otimes A_1(Y) \to
\Z$ given by $L \otimes [C] \mapsto \deg(L|_C)$ defines a group homomorphism
$\Pic(Y) \to A_1(Y)^*$ that we call the \emph{degree map} of $Y$. Here
$A_1(Y)^* = \Hom_\Z(A_1(Y),\Z)$ denotes the dual of the first Chow group. Rather
than relying on the class group, we show that the degree map $\Pic(X_u^v)
\to A_1(X_u^v)^*$ is an isomorphism for any cominuscule Richardson variety
$X_u^v$. We note that Mathieu obtained the Picard group $\Pic(X_u)$ of a
Schubert variety as a subgroup of the class group $\Cl(X_u)$, but he also proved
\cite[Prop.~6]{mathieu:formules*1} that the restriction map $\Pic(X) \to
\Pic(X_u)$ is surjective and has the same kernel as the degree map $\Pic(X)
\to A_1(X_u)^*$; this implies $\Pic(X_u) \cong A_1(X_u)^*$, as the degree
map of a Schubert variety is clearly surjective. While this isomorphism
generalizes to cominuscule Richardson varieties, the restriction map $\Pic(X)
\to \Pic(X_u^v)$ generally fails to be surjective for Richardson varieties.

The flag variety $X = G/P_X$ is called \emph{cominuscule} if $P_X$ is a maximal
parabolic subgroup defined by a cominuscule simple root $\ga$, that is, the
coefficient of $\ga$ in the highest root is 1. We assume this in the following.
The combinatorics of $X$ is then centered around the partially ordered set
$\cP_X = \{ \al \in \Phi \mid \al \geq \ga \}$, where $\Phi$ is the root system,
and we write $\al \geq \be$ if $\al-\be$ is a sum of simple roots (see
\Table{labsets}). In particular, a result of Proctor \cite{proctor:bruhat} shows
that the subset $W^X \subset W$ of minimal length representatives is in
bijective correspondence with the set of lower order ideals in $\cP_X$; this
bijection sends any element $u \in W^X$ to its inversion set $I(u) = \{\al \in
\Phi^+ \mid u.\al \in \Phi^-\}$. The lower order ideals in $\cP_X$ generalize
the (shifted) Young diagrams used for classical Grassmannians. For this reason,
the order ideals will be called \emph{straight shapes} and the roots in $\cP_X$
will be called \emph{boxes}. Given a straight shape $\la \subset \cP_X$, we
let $w_\la \in W^X$ denote the element with $I(w_\la) = \la$, and write $X_\la =
\ov{B w_\la.P_X}$ and $X^\la = \ov{B^- w_\la.P_X}$ for the associated Schubert
varieties. We then have $\dim(X_\la) = \codim(X^\la,X) = |\la|$, and the Bruhat
order on $X$ is given by
\[
  \mu \subset \la \ \Leftrightarrow \
  X_\mu \subset X_\la \ \Leftrightarrow \
  X^\mu \supset X^\la \ \Leftrightarrow \
  X_\la \cap X^\mu \neq \emptyset \,.
\]
Given straight shapes $\mu \subset \la \subset \cP_X$, the associated
\emph{cominuscule Richardson variety} is denoted by $X_\la^\mu = X_\la \cap
X^\mu$. This variety depends only on the skew shape $\theta = \la/\mu := \la
\ssm \mu$ up to translation; we let $X_\theta$ denote the isomorphism class of
$X_\la^\mu$ as an algebraic variety.

An irreducible variety of positive dimension will be called
\emph{indecomposable} if it is not a product of varieties of strictly smaller
dimension. Our main results are the following theorems.

\begin{thm}\label{thm:intro-indecomp}%
  Let $X$ be a cominuscule flag variety and $\theta \subset \cP_X$ a skew shape.
  Then the following are equivalent. {\rm(a)} $\theta$ is connected (as a
  poset). {\rm(b)} $X_\theta$ is indecomposable. {\rm(c)} $A_1(X_\theta) \cong
  \Z$. {\rm(d)} $\Pic(X_\theta) \cong \Z$.
\end{thm}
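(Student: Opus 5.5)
The plan is to prove the implications (a)$\Rightarrow$(c)$\Rightarrow$(d) and (d)$\Rightarrow$(b)$\Rightarrow$(a). The engine is the more general statement announced in the introduction: for any skew shape $\theta$, the degree map $\Pic(X_\theta)\to A_1(X_\theta)^*$ is an isomorphism, and both groups are free of rank equal to the number of connected components of $\theta$.

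First I will show that if $\theta=\theta_1\sqcup\theta_2$ is a disjoint union of incomparable pieces, then $X_\theta\cong X_{\theta_1}\times X_{\theta_2}$. To do this, I will choose $\mu\subset\la$ so that the boxes of $\theta_2$ correspond to roots forming a sub-root system orthogonal to the roots of $\theta_1$. Then $w_\la w_\mu^{-1}$ factors as a product of commuting elements in two commuting Levi subgroups, and the Richardson variety splits as a product. Iterating gives $X_\theta\cong\prod_i X_{\theta_i}$ over the connected components $\theta_i$. This already proves (b)$\Rightarrow$(a) in contrapositive form: if $\theta$ is disconnected, then each factor has dimension $|\theta_i|>0$, so $X_\theta$ is decomposable.

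Next, for connected $\theta$, I will show $A_1(X_\theta)\cong\Z$, which gives (a)$\Rightarrow$(c).
\begin{itemize}
\item Every curve class should be an integer multiple of the class of a $T$-stable line.
\item I expect to get this from a desingularization, namely the tower of Grassmannian bundles mentioned in the abstract. Its $A_1$ is generated by fibre lines, and proper pushforward is surjective on Chow groups.
\item For connectedness of $\theta$: adjacent boxes give $T$-stable lines in $X_\theta$ that are rationally equivalent. I will show this by exhibiting a $T$-stable surface (a quadric or plane) containing both lines.
\item Non-torsion follows by pairing with the restriction of the ample generator $\cO(1)$ of $\Pic(X)$, which has degree $1$ on lines.
\end{itemize}

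Then comes (c)$\Rightarrow$(d). The degree map is injective because $X_\theta$ is rational with rational singularities, so $\Pic$ embeds into $H^2$ and is detected by curves. I will argue this via the resolution $\pi:Z\to X_\theta$: since $\pi_*\cO_Z=\cO_{X_\theta}$, a line bundle $L$ with $\pi^*L$ trivial is itself trivial, and on the tower $Z$ the degree map is injective. Surjectivity holds because $\cO(1)|_{X_\theta}$ has degree $1$ on a line. For the exceptional types, where no classical tower is available, I will invoke Anderson--Payne \cite{anderson.payne:chow} to get the needed control of operational Chow groups.

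Finally, (d)$\Rightarrow$(b). Suppose $X_\theta\cong Y_1\times Y_2$ with both factors of positive dimension. Each $Y_i$ is projective, since it is the image of a projective variety, so it carries an ample line bundle $L_i$. The pullbacks of $L_1$ and $L_2$ are linearly independent in $\Pic$: restricting to slices $Y_1\times\{pt\}$ kills one but not the other. Hence $\operatorname{rank}\Pic\ge2$, contradicting $\Pic\cong\Z$.

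The main obstacle is injectivity of the degree map, i.e.\ $\Pic(X_\theta)\hookrightarrow A_1^*$ with the correct rank, especially outside the classical types. I would first attempt the pushforward/pullback argument through the explicit Grassmannian-bundle tower.
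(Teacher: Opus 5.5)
Your cycle (a)$\Rightarrow$(c)$\Rightarrow$(d)$\Rightarrow$(b)$\Rightarrow$(a) is sound. The steps (c)$\Rightarrow$(d) (normality plus a resolution with injective degree map, from the tower or from Anderson--Payne), (d)$\Rightarrow$(b), and (b)$\Rightarrow$(a) (orthogonality of the label sets $\dex(\theta_1)$, $\dex(\theta_2)$ and the Levi splitting) agree in substance with the paper.

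The gap is your first bullet in (a)$\Rightarrow$(c). The resolution does not show that $A_1(X_\theta)$ is generated by single-box curves. A $T$-stable curve in the tower pushes forward to zero or to a multiple of some $T$-stable curve $C_{\tau,\sigma}\subset X_\theta$, and $\sigma/\tau$ can contain many boxes. Unless you compute the images of the tower's generators explicitly, which you do not, you are left with the same generators that Hirschowitz's theorem gives directly on $X_\theta$. Your neighbor-box argument says nothing about $C_{\tau,\sigma}$ with $|\sigma/\tau|\geq 2$. There is also no such tower for $E_6/P_6$ and $E_7/P_7$, so this step has no support there.

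The missing idea is the triangle reduction. If $\tau\subsetneq\sigma$ are adjacent with $|\sigma/\tau|\geq 2$, there is a shape $\ka$ with $\tau\subsetneq\ka\subsetneq\sigma$ adjacent to both (\Proposition{comin-triangle}). Three pairwise adjacent fixed points span a $T$-stable surface, $\bP^2$ or a quadric cone, which lies in $X_\theta$ by $T$-convexity. Hence $[C_{\tau,\sigma}]$ is a multiple of $[C_{\tau,\ka}]$ or $[C_{\ka,\sigma}]$ in $A_1(X_\theta)$ (\Corollary{triangle}). Induction on $|\sigma/\tau|$ then reduces everything to single-box Richardson curves, whose classes depend only on the box by \Proposition{comin-rich-trans}(c).

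Second, neighbor relations alone do not give $A_1(X_\theta)\cong\Z$ when $\theta$ mixes long and short boxes. In $\LG(n,2n)$ the long (diagonal) boxes are never neighbors. A long box $\al$ next to a short box $\be$ gives a line and a conic with $[X_{\{\be\}}]=2[X_{\{\al\}}]$, not two equivalent lines. So for $\al_i<\be<\al_{i+1}$ you only get $2[X_{\{\al_i\}}]=2[X_{\{\al_{i+1}\}}]$. Pairing with $\cO(1)$ cannot exclude that $[X_{\{\al_i\}}]-[X_{\{\al_{i+1}\}}]$ is a nonzero $2$-torsion class. The paper settles this with the three-box Richardson variety $X_\sigma^\tau\cong Q^3$, where $A_1=\Z$ (see the proof of \Lemma{chow1conn}); you need some such $T$-stable subvariety containing both lines.

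A smaller point: in (c)$\Rightarrow$(d) you do not need surjectivity of the degree map. $\Pic(X_\theta)$ injects into $A_1(X_\theta)^*\cong\Z$ and contains a non-torsion ample class. Your surjectivity argument also breaks down when $\theta$ consists only of short boxes, since $\cO(1)$ then has degree $2$ on every single-box curve; there you would first have to pass to the minimal embedding.
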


\begin{thm}\label{thm:intro-chowpic}%
  Let $X$ be a cominuscule flag variety, let $\theta \subset \cP_X$ be a skew
  shape, and let $\theta_1, \dots, \theta_r$ be the connected components of
  $\theta$.\smallskip

  \noin{\rm(a)} We have $X_\theta \cong X_{\theta_1} \times \dots \times
  X_{\theta_r}$.\smallskip

  \noin{\rm(b)} The map $A_1(X_\theta) \to A_1(X_{\theta_1}) \oplus \dots \oplus
  A_1(X_{\theta_r})$ defined by proper pushforward to the factors is an
  isomorphism of groups.\smallskip

  \noin{\rm(c)} The map $\Pic(X_{\theta_1}) \oplus \dots \oplus
  \Pic(X_{\theta_r}) \to \Pic(X_\theta)$ defined by pullback from the factors is
  an isomorphism of groups.\smallskip

  \noin{\rm(d)} The degree map $\Pic(X_\theta) \to A_1(X_\theta)^*$ is an
  isomorphism of groups.
\end{thm}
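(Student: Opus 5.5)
Since these are introduction-level statements, I will prove Theorem~\ref{thm:intro-chowpic} by reducing everything to the connected case, and then obtain the connected case from a resolution argument.

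\textbf{Part (a).} Write $\theta = \la/\mu$ and let $\theta_1,\dots,\theta_r$ be its connected components. The plan is to find a Levi-type subgroup adapted to the components.
\begin{itemize}
\item Incomparable components of $\theta$ should involve pairwise orthogonal sets of simple roots. Concretely, the simple reflections occurring in reduced words of $w_\la w_\mu^{-1}$ split into mutually commuting groups $J_1,\dots,J_r$, one for each component.
\item Then $X_\la^\mu$ is the translate by $w_\mu$ of a Richardson variety in the flag variety of the Levi $L_J$, where $J = J_1 \sqcup \dots \sqcup J_r$. Equivalently, it lies in the orbit closure $\ov{L_J w_\mu.P_X}$.
\item The group $L_J$ is, up to center, a product $\prod_i L_{J_i}$. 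So $\ov{L_J w_\mu.P_X} \cong \prod_i L_{J_i}/(L_{J_i}\cap w_\mu P_X w_\mu^{-1})$, and each factor is again cominuscule.
\item Intersecting with the opposite Schubert variety factorizes compatibly, and the $i$-th factor is identified with $X_{\theta_i}$ by translation invariance.
\end{itemize}
I would check the orthogonality claim case by case from \Table{labsets} using Proctor's description of $\cP_X$.

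\textbf{Part (c).} With (a) in hand, I use that each $X_{\theta_i}$ is rational, normal and projective with rational singularities. Hence $H^1(X_{\theta_i},\cO)=0$, and $\Pic$ of a product of such varieties splits as the direct sum. This follows from the seesaw principle: the Picard scheme is discrete because $H^1(\cO)=0$ and the factors are rational.

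\textbf{Part (b).} The pushforward map is surjective: embed a curve of $X_{\theta_i}$ as $C\times\{pt\}$, using $T$-fixed points as base points. For injectivity I would use the fact that $A_1$ of the product is generated by $T$-stable curves. Each such curve lies in a single factor direction times fixed points, and curves in the same factor with different base points are rationally equivalent because the other factors are connected by chains of rational curves. Alternatively, (b) follows from (d) together with (c) by duality, once each $A_1(X_{\theta_i})$ is known to be free.

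\textbf{Part (d), the main obstacle.} By (b) and (c), it suffices to treat connected $\theta$ and show that $\Pic(X_\theta)\cong\Z\cong A_1(X_\theta)$ under the degree pairing.
\begin{itemize}
\item The degree map is surjective: the restriction of $\cO_X(1)$ has degree $1$ on a $T$-stable line through two adjacent fixed points of $X_\la^\mu$.
\item For $A_1(X_\theta)\cong\Z$, I would show that all $T$-stable curves are lines in $X$ and are pairwise rationally equivalent inside $X_\theta$ when $\theta$ is connected. Moving a box within a connected shape should connect them.
\item For $\Pic(X_\theta)\cong\Z$ in classical types, I would use the desingularization $\wt X\to X_\theta$ as a tower of Grassmannian bundles. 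Pullback is injective on $\Pic$ since the map is birational with connected fibers onto a normal variety. $\Pic(\wt X)$ is free on the relative hyperplane classes, and a line bundle descends only if it is trivial on the exceptional fibers. Connectedness should force all but one generator to be killed.
\item In type E, I would instead invoke the Anderson--Payne result on operational Chow groups of $T$-varieties: $\Pic$ embeds into $A^1_T$ data on the fixed points and curves, which can be computed combinatorially.
\end{itemize}
This last step is where I expect the real work to lie.
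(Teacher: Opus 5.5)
Your parts (a) and (b) follow the paper's route: orthogonal labels for incomparable components and a Levi product decomposition, and then $A_1$ of the product generated by curves in a single factor direction together with $A_0=\Z$. Your (c) argues directly by the seesaw principle from $H^1(X_{\theta_i},\cO)=0$, whereas the paper deduces (c) from (b) and (d) by duality. That alternative is fine and does not depend on (d). In positive characteristic, take the vanishing from Frobenius splitting of Richardson varieties rather than from ``rational with rational singularities''. The genuine gaps are in (d), in both halves of the connected case.

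First, $A_1(X_\theta)\cong\Z$. Your claim that all $T$-stable curves of $X_\theta$ are lines and pairwise rationally equivalent is false when $\Phi$ is not simply laced. For a short box $\al$ the curve $C_\al$ has $\cO_X(1)$-degree $2$, and in $A_1(X_\theta)$ one has $[X_{\{\al'\}}]=2[X_{\{\al\}}]$ for neighbors with $\al'$ short and $\al$ long. More seriously, ``moving a box'' only relates the single-box Richardson curves $X_\sigma^\tau$ with $|\sigma/\tau|=1$. But $A_1(X_\theta)$ is generated by \emph{all} $T$-stable curves $C_{\tau,\sigma}$ with $\mu\subset\tau\subset\sigma\subset\la$, including those with $|\sigma/\tau|\geq 2$. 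These must be rewritten in terms of single-box curves inside $A_1(X_\theta)$, where their degrees in $X$ say nothing. The missing idea is the triangle argument:
\begin{itemize}
\item Proposition~\ref{prop:comin-triangle} gives a shape $\ka$ strictly between $\tau$ and $\sigma$ that is adjacent to both.
\item The surface $\Omega$ with $\Omega^T=\{w_\tau.P_X,w_\ka.P_X,w_\sigma.P_X\}$ lies in $X_\theta$ by $T$-convexity, which forces the three curve classes to be proportional in $A_1(X_\theta)$ (Corollary~\ref{cor:triangle}).
\item Induction on $|\sigma/\tau|$ then reduces everything to single boxes.
\end{itemize}
Your surjectivity step also needs the minimal embedding. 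If every box of $\theta$ is short (e.g.\ a single short box in $\LG(n,2n)$), then no $T$-stable curve of $X_\theta$ has $\cO_X(1)$-degree $1$.

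Second, $\Pic(X_\theta)\cong\Z$. Your descent sketch (``connectedness should force all but one generator to be killed'') is not an argument. To decide which classes of $\Pic(\wt X)$ come from $X_\theta$, you would have to control the curves contracted by $\pi$, and you have no handle on them. The paper needs no such analysis:
\begin{itemize}
\item By the projection formula, $\deg(\pi^*L|_C)=\deg(L|_{\pi_*C})$, so the degree function of $\pi^*L$ factors through $\pi_*:A_1(\wt X)\to A_1(X_\theta)$.
\item Since $\pi^*$ is injective (you have this) and the degree map $\Pic(\wt X)\to A_1(\wt X)^*$ is injective, the degree map of $X_\theta$ is injective.
\item Combined with $A_1(X_\theta)\cong\Z$ and surjectivity, this gives (d).
\end{itemize}
The injectivity of the degree map of the resolution is the key property you never formulate. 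For the Grassmannian-bundle towers it comes from Proposition~\ref{prop:int-inj}. In type E the paper takes it from Anderson--Payne's theorem on the brick resolution, which plugs into the same square. Your type E appeal to an equivariant localization description of $\Pic$ is not a usable statement as it stands.
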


In particular, both $A_1(X_\theta)$ and $\Pic(X_\theta)$ are free abelian groups
of rank equal to the number of indecomposable factors of $X_\theta$. Part (a) of
\Theorem{intro-chowpic} was proved for cominuscule Grassmannians of classical
Lie types in \cite{buch.ravikumar:pieri}. We obtain it as a special case of a
factorization result for Richardson varieties in arbitrary flag varieties (see
\Proposition{flagvar-decomp}).

In a paper in preparation with Ayah Almousa and Daoji Huang
\cite{almousa.buch.ea:gorenstein}, we have generalized the above results to all
positroid varieties and used this to determine which positroid varieties are
Gorenstein.

Given a cominuscule Richardson variety $X_\theta \subset X$, we define the
\emph{minimal embedding} of $X_\theta$ to be $L.X_\theta$, where $L \subset G$
is the minimal Levi subgroup such that $X_\theta$ is contained in a single
$L$-orbit. We show that this minimal embedding is itself a Richardson variety in
$X$, and also a cominuscule flag variety of $L$; if $X_\theta$ has several
indecomposable factors, then $L.X_\theta$ will be a product of several
indecomposable cominuscule flag varieties.

\begin{thm}\label{thm:intro-minembed}%
  Let $i: X_\theta \subset X$ be a cominuscule Richardson variety such that $X$
  is the minimal embedding of $X_\theta$. Then the pushforward map $i_* :
  A_1(X_\theta) \to A_1(X)$ and the pullback map $i^* : \Pic(X) \to
  \Pic(X_\theta)$ are isomorphisms.
\end{thm}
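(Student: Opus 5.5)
Reading the hypothesis via \Theorem{intro-chowpic}(a), $X = X_1\times\dots\times X_r$ is a product of indecomposable cominuscule flag varieties, with $X_\theta = X_{\theta_1}\times\dots\times X_{\theta_r}$ and each $X_{\theta_j}\subset X_j$ minimally embedded. Both $A_1$ and $\Pic$ of a product of these rational, $B$-stratified varieties split compatibly. Indeed, $A_1$ of a product is the direct sum of the $A_1$ of the factors (\Theorem{intro-chowpic}(b) for $X_\theta$, and the cell decomposition for $X$), and $\Pic$ is the direct sum of the $\Pic$ of the factors (\Theorem{intro-chowpic}(c)). Hence I reduce to the case where $\theta$ is connected and $X$ is an indecomposable cominuscule flag variety. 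In that case \Theorem{intro-indecomp} gives $A_1(X_\theta)\cong\Z\cong\Pic(X_\theta)$, and of course $A_1(X)=\Z[X_{\text{line}}]$ and $\Pic(X)=\Z\,\cO(1)$.

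Since all groups are infinite cyclic, the degree pairing does most of the work. I consider the commutative square formed by the degree maps $\Pic(X)\to A_1(X)^*$ and $\Pic(X_\theta)\to A_1(X_\theta)^*$, together with $i^*$ and the dual of $i_*$. The projection formula $\deg(i^*L|_C)=\deg(L|_{i_*C})$ gives commutativity. The top map is an isomorphism because $\cO(1)$ has degree $1$ on the Schubert line. The bottom map is an isomorphism by \Theorem{intro-chowpic}(d). So $i^*$ is an isomorphism if and only if $(i_*)^*$ is. For a map $\Z\to\Z$, this holds if and only if $i_*$ is an isomorphism. It therefore suffices to show that $i_*:A_1(X_\theta)\to A_1(X)$ is surjective. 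Surjectivity is enough because both groups are $\Z$ and $i_*$ is then nonzero, hence injective.

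Surjectivity means exhibiting a curve $C\subset X_\theta$ with $\deg \cO(1)|_C = 1$, i.e.\ a line of $X$ lying in $X_\theta=X_\la^\mu$. The natural candidates are $T$-stable lines through $T$-fixed points. Take a box $\al$ that is maximal in $\la$ and also lies in $\theta$, so $\la\ssm\{\al\}$ is a straight shape containing $\mu$. Then the Schubert curve $X_{\la}^{\la\ssm\{\al\}}$ is a $T$-stable $\bP^1$ joining $w_\la.P_X$ and $w_{\la\ssm\al}.P_X$, and it is contained in $X_\la\cap X^\mu$. Its class in $A_1(X)$ is the Schubert line class. I would verify this from the Chevalley formula: in a cominuscule $X$, every covering relation in the Bruhat order on $W^X$ corresponds to a curve of degree $1$, since $\langle\om_\ga,\be^\vee\rangle=1$ for all roots $\be\in\cP_X$ (the roots are cominuscule-type, and the relevant coroot coefficient is $1$, with care needed in non-simply-laced types via long/short considerations).

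The main obstacle is this degree-one claim for the $T$-curve $X_\la^{\la\ssm\al}$, especially in types B/C, where one must check that the reflection root has coroot coefficient $1$ at $\ga$. If that fails, I would instead use the Chevalley formula degree $\langle\om_\ga,\be^\vee\rangle$ and choose $\al$ to be a long root. Alternatively, I would note that $\dim X_\la^{\la\ssm\al}=1$ forces it to be the unique Schubert curve $X_{\text{line}}$ translated by $w$, hence of degree one. Notably, minimality of the embedding is used only through the reduction to connected $\theta$ with indecomposable $X$. There the cyclicity of both groups removes any need for a finer analysis.
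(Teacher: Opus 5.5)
Your reduction to connected $\theta$ and your final diagram chase are valid as deductions; the chase is the same one the paper performs in \Theorem{picard}. The problem is the inputs: they are consequences of the statement you are proving, not inputs to it.

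In the paper, $A_1(X_\theta)\cong\Z$ for connected $\theta$ (\Theorem{intro-indecomp}) is obtained in \Theorem{chow1rich} precisely by proving that $i_*$ is an isomorphism (\Lemma{chow1conn}). Likewise $\Pic(X_\theta)\cong\Z$, the splitting \Theorem{intro-chowpic}(c), and the surjectivity half of \Theorem{intro-chowpic}(d) are all read off in \Theorem{picard}. There they come from the square $\Pic(X)\to\Pic(X_\theta)\to A_1(X_\theta)^*$, after $i_*$ is known to be bijective. So the argument is circular, and what it actually proves is only that $X_\theta$ contains a line. Two pieces of content are missing.

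\emph{For $A_1$.} You must show directly that $A_1(X_\la^\mu)$ is cyclic, generated by $[X_{\{\al\}}]$ for a long box $\al\in\theta$.
By \Proposition{chow1gen}, $A_1(X_\la^\mu)$ is generated by the $T$-stable curves $C_{\tau,\sigma}$ with $\mu\subset\tau\subset\sigma\subset\la$.
\Proposition{comin-triangle} and \Corollary{triangle} reduce these, by induction on $|\sigma/\tau|$, to the Richardson curves $[X_{\{\al\}}]$ with $\al\in\theta$.
Connectivity of $\theta$ and \Lemma{neighbor-class} make all these classes multiples of long-box classes. A $Q^3$ Richardson subvariety between consecutive long boxes that are not neighbors then identifies all long-box classes with one another.
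A cyclic group mapping onto $\Z=A_1(X)$ maps isomorphically, so no prior knowledge of $A_1(X_\theta)$ is needed.

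\emph{For $\Pic$.} You need an independent proof that the degree map of $X_\theta$ is injective. This is where a desingularization $\pi$ with injective degree map (\Theorem{comin-desing}) enters, together with the normality of $X_\theta$: it gives $\pi_*\cO_{\wt{X_\theta}}=\cO_{X_\theta}$, so $\pi^*$ is injective on $\Pic$. Surjectivity then comes for free from your square.

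Your surjectivity step for $i_*$ also contains an error. Not every covering in $W^X$ gives a curve of degree one: by \Lemma{comin-degree}, $[X_{\{\al\}}]=2[X_{s_\ga}]$ for a short box, since $(\al^\vee,\om_\ga)=2$.

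A box that is maximal in $\la$ and lies in $\theta$ can be short even when $X$ is the minimal embedding. In $X=\LG(2,4)$ the poset $\cP_X$ is a chain long $<$ short $<$ long. Take $\la$ to be the first two boxes and $\mu=\emptyset$. Then $\dex(\theta)=\Delta$, yet the only maximal box of $\la$ is short, and $X_\la^{\la\ssm\{\al\}}$ is the curve $C_{s_\ga,\ka}$ of class $2[X_{s_\ga}]$ from \Example{lg24-curves}. The same curve refutes your alternative claim that a one-dimensional Richardson variety is a translate of the Schubert line.

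Your fallback of using a long box is the right one, but it needs two things.
(i) You must realize $X_{\{\al\}}$ inside $X_\la^\mu$ when $\al$ is not maximal in $\la$. Take $\la'$ to be the ideal generated by $\mu\cup\{\al\}$ and $\mu'=\la'\ssm\{\al\}$, as in the remark after \Definition{skew-rich}.
(ii) You must know that $\theta$ contains a long box. This is exactly where minimality of the embedding is used, contrary to your closing remark. Since $\dex(\al)=w_{\la(\al)}.\al$ has the same length as $\al$, and $\dex(\theta)=\dex(\cP_X)$ contains a long simple root, $\theta$ has a long box.
Without minimality this fails: for $\theta$ the single short box of $\LG(2,4)$, one gets $X_\theta\cong\bP^1$ and $i_*$ has image $2A_1(X)$.
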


When $\theta \subset \cP_X$ is any connected skew shape, the pushforward
$A_1(X_\theta) \to A_1(X)$ and pullback $\Pic(X) \to \Pic(X_\theta)$ are always
injective. Both maps are isomorphisms if and only if $\theta$ contains a long
box (or the root system $\Phi$ is simply-laced); otherwise the images of the
maps are $2 A_1(X)$ and $2 \Pic(X_\theta)$, respectively.

A result of Hirschowitz \cite{hirschowitz:groupe} implies that the first Chow
group $A_1(X_\theta)$ is generated by the $T$-stable curves contained in
$X_\theta$. To compute the group $A_1(X_\theta)$, we must account for the
relations. We will say that two $T$-fixed points in $X$ are \emph{adjacent} if
they are joined by a $T$-stable curve; in this case the $T$-stable curve is
unique, and the points are comparable in the Bruhat order of $X^T$. Our
computation of $A_1(X_\theta)$ is based on the following result. Part (a) holds
for all flag varieties of type different from $G_2$, whereas part (b) is true
only if $X$ is cominuscule or has type $G_2$ and Picard rank 1.

\begin{prop}\label{prop:intro-triangles}%
  Let $X$ be a cominuscule flag variety.\smallskip

  \noin{\rm(a)} Assume that $x,y,z \in X^T$ are pairwise adjacent $T$-fixed
  points. Then there exists a unique surface $\Omega \subset X$ with fixed locus
  $\Omega^T = \{x,y,z\}$. Moreover, $\Omega$ is a translate of a Schubert
  variety.\smallskip

  \noin{\rm(b)} Let $\tau \subset \sigma \subset \cP_X$ be straight shapes such
  that $w_\tau.P_X$ and $w_\sigma.P_X$ are adjacent $T$-fixed points, and assume
  that $|\sigma/\tau| \geq 2$. Then there exists a straight shape $\ka$ such
  that $\tau \subsetneq \ka \subsetneq \sigma$ and $w_\ka.P_X$ is adjacent to
  both $w_\tau.P_X$ and $w_\sigma.P_X$.
\end{prop}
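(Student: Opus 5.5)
The plan is to reduce both parts to explicit combinatorics of $T$-stable curves, using the standard description of $T$-stable curves in $G/P_X$. Two points $u.P_X$ and $v.P_X$ are adjacent exactly when $v.P_X = s_\al u.P_X \neq u.P_X$ for a positive root $\al$. The curve joining them is then $U_{\pm\al} u.P_X$ closure, which is a translate $u.C_\be$ of a $T$-stable curve through the base point, where $\be = \pm u^{-1}.\al$ can be taken with $\be \in \cP_X$. Since $G$ acts transitively on $X$ and $T$-fixed points form one $W$-orbit, I would first translate by an element of $W$ so that $x = 1.P_X$. Then $y = s_\be.P_X$ and $z = s_{\be'}.P_X$ for roots $\be,\be' \in \cP_X$, because the $T$-stable curves through $1.P_X$ correspond to roots of the unipotent radical of the opposite parabolic, i.e.\ to $-\cP_X$.

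\textbf{Part (a).} Adjacency of $y$ and $z$ means $s_\be s_{\be'} \notin W_{P_X}$ up to a reflection, that is, $s_\be s_{\be'} \equiv s_\de$ modulo $W_{P_X}$ for some root $\de$. In the cominuscule case this forces strong conditions on $\be,\be'$. Each of $\be,\be'$ has $\ga$-coefficient $1$, so the pairing $\langle \be,\be'^\vee\rangle$ is restricted. I would check that pairwise adjacency holds exactly when $\be-\be'$ is a root, equivalently $\langle\be,\be'^\vee\rangle>0$ with $\be\ne\be'$. Given this, let $\Omega$ be the closure of $U_{-\be}U_{-\be'}.P_X$. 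Because $\be-\be'$ is a root of the Levi and $-\be,-\be'$ span a subspace of the abelian tangent space $\mathfrak g_{-\cP_X}$, this closure should equal $w.X_\nu$ for a two-box shape $\nu$ (a chain $\{\ga<\ga'\}$ transported by a Levi Weyl element). Hence it is a translate of a Schubert surface with fixed points exactly $\{x,y,z\}$.

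For uniqueness, any $T$-stable surface $\Omega$ with $\Omega^T=\{x,y,z\}$ has tangent space at $x$ containing the tangent lines of the two curves. Since $\Omega$ is a $T$-stable surface with isolated fixed points, its $T$-weights at $x$ are exactly $-\be,-\be'$, provided I can show $x$ is a smooth point, for instance via Białynicki-Birula with a generic cocharacter attracting to $x$. Then $\Omega$ contains the open $T$-orbit of the corresponding weight plane in the big cell $U^-.P_X \cong \mathfrak g_{-\cP_X}$. This determines $\Omega$ as the closure of $\exp(\mathfrak g_{-\be}\oplus\mathfrak g_{-\be'}).P_X$.

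\textbf{Part (b).} I would work in $W^X$ combinatorics. Adjacency of $w_\tau.P_X$ and $w_\sigma.P_X$ means $w_\sigma = s_\al w_\tau$ modulo $W_{P_X}$, with $\al = w_\tau.\be$ for some $\be\in\cP_X$. By (a) applied after translating by $w_\tau^{-1}$, it suffices to find $\be''\in\cP_X$ with $\be-\be''$ a root such that $\ka$, defined by $w_\ka = s_{w_\tau.\be''}w_\tau$, is a straight shape strictly between $\tau$ and $\sigma$. The skew shape $\sigma/\tau$ is then the "hook" or "rim" associated to the reflection. I would use Proctor's description together with a case analysis over the cominuscule types. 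For type A Grassmannians, $\sigma/\tau$ is a rim hook, and $\ka$ is obtained by removing its top or bottom box. For Lagrangian and orthogonal Grassmannians it is a shifted rim, and for quadrics it is a chain. For $E_6/P_6$ and $E_7/P_7$ I would check by computer or by Proctor's minuscule poset structure. Uniformly, I would choose $\ka = \tau\cup\{\text{a minimal box } b\text{ of }\sigma/\tau\}$. I would then verify that $w_\ka.P_X$ is adjacent to $w_\tau.P_X$ (always true, since it is a cover relation) and to $w_\sigma.P_X$. The latter requires that $\sigma/\ka$ is again a "reflection shape", which I expect to follow from $\langle \be,\al_b^\vee\rangle>0$.

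The main obstacle is this last verification, namely that removing an extremal box keeps adjacency to $\sigma$. In non-simply-laced types, the $\ga$-coefficient argument must handle long versus short roots; this is where I expect to need a type-by-type check.
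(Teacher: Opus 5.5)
\textbf{Part (a).} Your argument rests on the claim that $1.P_X$, $s_\be.P_X$, $s_{\be'}.P_X$ are pairwise adjacent exactly when $\be-\be'$ is a root. This is false for Lagrangian Grassmannians and odd quadrics.

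In $X=\LG(2,4)\cong Q^3$ take $\be=2\ep_2$ and $\be'=2\ep_1$. Then $x=\langle e_1,e_2\rangle$, $y=\langle e_1,e_3\rangle$ and $z=\langle e_2,e_4\rangle$. The reflection $\tau_{1,2}=s_{\ep_1-\ep_2}$ sends $y$ to $z$, so the three points form a triangle, although $\be\perp\be'$ and $\be-\be'\notin\Phi$. The same happens for $\be=\ep_1-\ep_2$, $\be'=\ep_1+\ep_2$ in $Q^{2n-1}$. The correct criterion must add the case of orthogonal long roots with $(\be+\be')/2\in\cP_X$.

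In that case both halves of your argument break:
\begin{itemize}
\item \emph{Existence.} The closure of $U_{-\be}U_{-\be'}.P_X=\{\langle e_1+ae_4,\,e_2+be_3\rangle\}$ is $\bP^1\times\bP^1$. It also contains the fourth fixed point $\langle e_3,e_4\rangle$, so it is not the required surface.
\item \emph{Uniqueness.} The true $\Omega$ is the Schubert divisor $\{V \mid V\cap\langle e_1,e_2\rangle\neq 0\}$, a quadric cone with vertex $x$. So $x$ is a singular point of $\Omega$ with $3$-dimensional tangent space, and your tangent-weight (Bia\l ynicki-Birula) uniqueness argument cannot be run there.
\end{itemize}

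The paper avoids all of this. It conjugates $\Span\{\al_1,\al_2,\al_3\}\cap\Phi$ to a standard subsystem $\Phi_S$ and restricts to the flag subvariety $G_S.x$. A rank-$3$ computer check shows the three reflecting roots are linearly dependent, which reduces to rank $2$. There $X$ is $\bP^2$, $\SG(1,4)$ or $Q^3$, and $\Omega$ is visibly a $2$-dimensional Schubert variety. Uniqueness then comes for free from $T$-convexity of translated Richardson varieties: any other such surface lies in $\Omega$, hence equals it.

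\textbf{Part (b).} Here you have a plan rather than a proof. The step you single out as the main obstacle is the entire content, and both of your concrete ingredients fail.
\begin{itemize}
\item \emph{The reformulation via (a).} Searching for $\be''$ with $\be-\be''\in\Phi$ excludes the only valid choice in $\LG(2,4)$ with $\tau=\emptyset$ and $\sigma$ the two-box shape. There $w_\sigma.P_X=\langle e_2,e_4\rangle=s_{2\ep_1}.P_X$, and the unique intermediate shape $\ka$ has $w_\ka.P_X=s_{2\ep_2}.P_X$. But $2\ep_1-2\ep_2$ is not a root.
\item \emph{The uniform recipe.} Taking $\ka=\tau\cup\{b\}$ with $b$ an arbitrary minimal box of $\sigma/\tau$ already fails in type A. In $\Gr(2,6)$ take $\tau=(2)$ and $\sigma=(4,3)$. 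The rim hook $\sigma/\tau$ has minimal boxes in row $1$, column $3$ and in row $2$, column $1$. Adding the first gives $\ka=(3)$, for which $\sigma/\ka$ is disconnected, hence not a rim hook. So $w_\ka.P_X$ is not adjacent to $w_\sigma.P_X$.
\end{itemize}

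The paper's proof is also a case analysis, but a completed one:
\begin{itemize}
\item It encodes fixed points as subsets $J\subset[1,N]$ and treats quadrics by comparing lengths.
\item For $\LG(n,2n)$, write $K=\tau_{i,k}(I)$ with $i<k\le\ov i$, $k-i\ge 2$, and set $j=i+1$. Then $J$ is $\tau_{j,k}(I)$, $\tau_{i,j}(I)$ or $\tau_{i,\ov i}(I)$, according as $j\in I\cap K$, $\ov j\in I\cap K$ or $j=\ov k$.
\item Type A follows because $\Gr(m,n)$ is a flag subvariety of $\LG(n,2n)$.
\item The maximal orthogonal case follows from the isomorphism of the Weyl groups of types $B_n$ and $C_n$.
\item $E_6/P_6$ and $E_7/P_7$ are checked by computer.
\end{itemize}
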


We also use the fact that any Richardson variety $X_\theta \subset X$ is
\emph{$T$-convex} in $X$ \cite{buch.chaput.ea:equivariant}, that is, if $\Omega
\subset X$ is any $T$-stable closed subvariety such that $\Omega^T \subset
X_\theta$, then $\Omega \subset X_\theta$. Part (a) of the proposition therefore
implies that, if three $T$-stable curves contained in $X_\theta$ form a
triangle, then they define the same class in $A_1(X_\theta)$, up to a constant.
Indeed, the surface $\Omega$ defined by the vertices is contained in $X_\theta$
since $X_\theta$ is $T$-convex, and the three curves are contained in $\Omega$
since $\Omega$ is itself $T$-convex. Using this, part (b) implies that
$A_1(X_\theta)$ is generated by the classes of all Richardson curves in
$X_\theta$. When $\theta$ is connected, it follows from similar arguments that
these Richardson curves generate a subgroup of rank 1 in $A_1(X_\theta)$. Then
the pushforward map $A_1(X_\theta) \to A_1(X)$ is injective, and in fact an
isomorphism when $\theta$ contains a long box. The surjectivity of the
degree map $\Pic(X_\theta) \to A_1(X_\theta)^*$ also follows, using that
the degree map of $X$ is surjective.

To prove that the degree map $\Pic(X_\theta) \to A_1(X_\theta)^*$ is
injective, we use the existence of a desingularization $\pi : \wt{X_\theta} \to
X_\theta$ such that the degree map of $\wt{X_\theta}$ is injective. In
fact, these degree maps fit in a commutative diagram
\[
  \xymatrix{
    \Pic(X_\theta) \ar[r]^{\pi^*} \ar[d] &
    \Pic(\wt{X_\theta}) \ar[d]
    \\
    A_1(X_\theta)^* \ar[r]^{(\pi_*)^*} &
    A_1(\wt{X_\theta})^* \,,
  }
\]
and the normality of $X_\theta$ implies that $\pi^*$ is injective. For
cominuscule Richardson varieties of classical types, we explicitly construct the
desingularization $\wt{X_\theta}$ as a tower of (isotropic) Grassmannian
bundles, in the style of Zelevinsky's desingularization of Schubert varieties in
Grassmannians of type A \cite{zelevinski:small}. This desingularization is of
independent interest, as it is simpler and more explicit than the usual `brick'
resolution studied in \cite{brion:positivity, escobar:brick}. In response to our
questions, David Anderson and Sam Payne have proved that the brick resolution
indeed satisfies the required injectivity property (and much more!). We rely on
this result to complete the computation of the Picard groups of cominuscule
Richardson varieties of type E. We also use this result for Richardson varieties
in the maximal orthogonal Grassmannian $\OG(n,2n)$ when the ground field has
characteristic 2.

One source of motivation for this paper is the problem of classifying
cominuscule Richardson varieties up to isomorphism as algebraic varieties.
Richmond and Slofstra have previously obtained such a classification of the
Schubert varieties in full flag varieties $G/B$
\cite{richmond.slofstra:isomorphism}. The classification of the Schubert
varieties in Grassmannians of type A was obtained in
\cite{tarigradschi.xu:isomorphism} and generalized to all cominuscule Schubert
varieties in \cite{richmond.tarigradschi.ea:isomorphism*1}.

Define a \emph{long/short poset} to be a partially ordered set $\cP$ together
with a partition $\cP = \cP_{\textrm{long}} \coprod \cP_{\textrm{short}}$ into
disjoint subsets of long and short boxes, such that every connected component of
$\cP$ has non-empty intersection with $\cP_{\textrm{long}}$. It was proved in
\cite{richmond.tarigradschi.ea:isomorphism*1} that two cominuscule Schubert
varieties $X_\la$ and $X'_{\la'}$ are isomorphic if and only if the associated
straight shapes $\la\subset \cP_X$ and $\la' \subset \cP_{X'}$ are isomorphic as
long/short posets. Knowledge of the Picard group $\Pic(X_\la)$ was important in
this work; the Schubert classes in the total Chow group $A_*(X_\la)$ can be
identified as the generators of the cone of effective classes, after which the
action of the Chern class $c_1(\cL)$ of the ample generator of $\Pic(X_\la)$
reveals the Bruhat order, and $\la$ is recovered as the subset of
join-irreducible elements in the Bruhat order.

It follows from \Theorem{intro-chowpic} that two cominuscule Richardson
varieties are isomorphic if and only if their indecomposable factors are
isomorphic. This reduces the classification problem to that of classifying the
indecomposable cominuscule Richardson varieties. The following conjecture was
posed in \cite{tarigradschi.xu:isomorphism} for Richardson varieties in
Grassmannians of type A.

\begin{conj}\label{conj:intro-classify}%
  Let $X$ and $X'$ be cominuscule flag varieties and let $\theta \subset \cP_X$
  and $\theta' \subset \cP_{X'}$ be connected skew shapes. Then the Richardson
  varieties $X_\theta$ and $X'_{\theta'}$ are isomorphic as varieties if and
  only if $\theta$ and $\theta'$ are isomorphic or anti-isomorphic as long/short
  posets.
\end{conj}

An \emph{anti-isomorphism} of long/short posets reverses the order but preserves
the lengths of boxes. In a sequel \cite{buch:towards} to this paper we will
prove one direction: if $\theta$ and $\theta'$ are isomorphic or anti-isomorphic
as long/short posets, then $X_\theta$ and $X'_{\theta'}$ are isomorphic as
varieties. We have also verified by computer that \Conjecture{intro-classify} is
true when $\theta$ and $\theta'$ have at most 15 boxes.

The determination of the Picard group of a cominuscule Richardson variety
$X_\theta$ makes it possible to define related geometric invariants that may be
helpful for the classification problem. One example is the Hilbert polynomial
defined by the minimal ample line bundle in $\Pic(X_\theta)$; this line bundle
exists and is unique by \Theorem{intro-chowpic}. In \Section{hilbert} we give a
simple recursive formula for this Hilbert polynomial, which shows that it has a
positive expansion in the binomial basis.

This paper is organized as follows. In \Section{flagvar} we fix our notation for
flag varieties and recall some standard definitions. We also recall the
construction of flag varieties of classical Lie types as varieties of
(isotropic) flags in a fixed vector space. In \Section{decomp} we define flag
subvarieties and minimal embeddings of Richardson varieties in arbitrary flag
varieties, and we prove a decomposition theorem that expresses certain
Richardson varieties as products of smaller Richardson varieties.
\Section{comin} interprets these results for cominuscule flag varieties. We also
prove that the minimal embedding of a cominuscule Richardson variety is a
cominuscule flag variety. \Section{chowpic} determines the first Chow group and
Picard group of a cominuscule Richardson variety. \Section{desing} constructs a
desingularization of any cominuscule Richardson variety of classical Lie type as
a tower of (isotropic) Grassmannian bundles. Finally, \Section{hilbert} computes
the Hilbert polynomial defined by the minimal ample line bundle on any
cominuscule Richardson variety.

We have benefited from helpful comments, references, and discussions with
several people, especially David Anderson, Pierre-Emmanuel Chaput, William
Fulton, William Graham, Sam Payne, Nicolas Perrin, and Weihong Xu.

% !TeX root=comrich.tex

\section{Flag varieties}\label{sec:flagvar}

In this section we fix our notation for flag varieties and recall some results.

\subsection{General notation}

Let $G$ be a connected reductive linear algebraic group over an algebraically
closed field $\bK$, and choose a maximal torus $T$ and a Borel subgroup $B$ such
that $T \subset B \subset G$. The opposite Borel subgroup $B^- \subset G$ is
defined by $B \cap B^- = T$. Let $\Phi$ be the root system of $(G,T)$, with
positive roots $\Phi^+$ and simple roots $\Delta \subset \Phi^+$ given by $B$.
We will not assume that $\Phi$ is irreducible. We identify $\Delta$ with the set
of nodes in the Dynkin diagram of $\Phi$. Let $W = N_G(T)/T$ be the Weyl group
of $G$. The reflection along a root $\al \in \Phi$ is denoted $s_\al$. The
inversion set of $u \in W$ is $I(u) = \{ \al \in \Phi^+ \mid u.\al < 0 \}$. We
have $\ell(u) = \# I(u)$.

A complete homogeneous $G$-variety $X$ will be called a \emph{flag variety} of
$G$ if the morphism $G \to X$, $g \mapsto g.x$ defined by any point $x \in X$ is
separable. Any such flag variety $X$ contains a unique $B$-fixed point. We
denote the parabolic subgroup stabilizing this point by $P_X \subset G$ and the
point itself by $1.P_X$. Each element $g \in G$ defines a point $g.P_X =
g.(1.P_X)$ in $X$. We identify $X$ with the quotient $G/P_X = \{ g.P_X \mid g
\in G \}$. Let $W_X = N_{P_X}(T)/T$ be the Weyl group of $P_X$, and let $W^X
\subset W$ be the subset of minimal representatives of the cosets in $W/W_X$.
Each element $u \in W$ defines a $T$-fixed point $u.P_X \in X^T$ and the
\emph{Schubert varieties} $X_u = \ov{Bu.P_X}$ and $X^u = \ov{B^-u.P_X}$. These
Schubert varieties depend only on the coset $u W_X$ in $W/W_X$, and for $u \in
W^X$ we have $\dim(X_u) = \codim(X^u,X) = \ell(u)$. Let $\leq$ denote the Bruhat
order on $W$. For $u, v \in W^X$ we have
\[
  v \leq u
  \ \ \Leftrightarrow \ \
  X_v \subset X_u
  \ \ \Leftrightarrow \ \
  X^v \supset X^u
  \ \ \Leftrightarrow \ \
  X_u \cap X^v \neq \emptyset \,.
\]
In this case the intersection $X^v_u = X_u \cap X^v$ is called a
\emph{Richardson variety}; this variety is reduced, irreducible, rational, and
$\dim(X_u^v) = \ell(u)-\ell(v)$; see \cite{kleiman:transversality, deodhar:some}
and \cite[\S2]{brion.kumar:frobenius}. In addition, $X_u^v$ is normal and has
rational singularities \cite{mehta.ramanathan:frobenius,
ramanan.ramanathan:projective, ramanathan:schubert, brion:positivity,
knutson.lam.ea:projections}. It was proved in
\cite[Thm.~6.3]{buch.chaput.ea:equivariant} that $X_u^v$ is \emph{$T$-convex} in
$X$, that is, if $Z \subset X$ is any $T$-stable closed subvariety such that the
fixed point set $Z^T$ is contained in $X_u^v$, then $Z \subset X_u^v$. Unless
stated otherwise, a Richardson variety in $X$ will always be defined relative to
the chosen Borel subgroups $B$ and $B^-$.

Each element $u \in W$ has a unique factorization $u = u^X u_X$ for which $u^X
\in W^X$ and $u_X \in W_X$, called the \emph{parabolic factorization} of $u$
with respect to $W_X$. This factorization is \emph{reduced} in the sense that
$\ell(u) = \ell(u^X) + \ell(u_X)$. The parabolic factorization of the longest
element $w_0 \in W$ is denoted $w_0 = w_0^X w_{0,X}$, where $w_0^X$ is the
longest element in $W^X$ and $w_{0,X}$ is the longest element in $W_X$. Notice
that $u \in W^X$ if and only if $u < u s_\be$ holds for all $\be \in \Delta$
with $s_\be \in W_X$.

\subsection{Flag varieties of classical Lie types}\label{sec:classical}

In the rest of this section we fix our notation for flag varieties of the
classical Lie types $A_n$, $B_n$, $C_n$, and $D_n$. This notation will be used
in examples as well as in some type-dependent proofs.

The root system $\Phi$ is identified with a subset of $\R^{n'}$, where $n' =
n+1$ in type $A_n$, and $n'=n$ in the other classical types. The basis of simple
roots will be denoted $\Delta = \{\be_1, \dots, \be_n\}$ with the standard
numbering (see \cite[\S11.4]{humphreys:introduction*1} or \Table{labsets}). Let
$\{\ep_1, \dots, \ep_{n'}\}$ be the coordinate basis of $\R^{n'}$. We set $\be_i
= \ep_i-\ep_{i+1}$ for $1 \leq i \leq n-1$. The last simple root $\be_n$ depends
on the type and is given in the following table together with $\Phi$. The root
systems $B_2$ and $C_2$ are isomorphic, as are $A_3$ and $D_3$.
\smallskip

\newcommand{\myspca}{\vspace{-2.5ex}&&&\\}
\newcommand{\myspcb}{\vspace{-2.5ex}&&&\\}
\newcommand{\myspcc}{\vspace{-2.5ex}\\&&&\\}
\begin{center}
\begin{tabular}{|c|c|l|l|}
\hline\myspca
Type & Rank & Root system $\Phi$ & Last simple root \\
\myspcb\hline\myspca
$A_n$ & $n \geq 1$ & $\{ \ep_i-\ep_j \mid i\neq j \in [1,n+1] \}$ &
$\be_n = \ep_n-\ep_{n+1}$ \\
\myspcb\hline\myspca
$B_n$ & $n \geq 2$ &
$\{ \ep_i-\ep_j,\, \ep_i+\ep_j,\, \ep_i \mid i \neq j \in [1,n] \}$ &
$\be_n=\ep_n$ \\
\myspcb\hline\myspca
$C_n$ & $n \geq 2$ &
$\{ \ep_i-\ep_j,\, \ep_i+\ep_j,\, 2\ep_i \mid i\neq j \in [1,n] \}$ &
$\be_n=2\ep_n$ \\
\myspcb\hline\myspca
$D_n$ & $n \geq 3$ &
$\{ \ep_i-\ep_j,\, \ep_i+\ep_j \mid i\neq j \in [1,n] \}$ &
$\be_n=\ep_{n-1}+\ep_n$ \myspcc\hline
\end{tabular}
\end{center}
\smallskip

The Weyl group $W$ is identified with a subgroup of the permutation group $S_N$,
where $N$ depends on the Lie type and is given by the table below. When $\Phi$
has type $A_n$, we have $W = S_N$. When $\Phi$ has type $B_n$, $C_n$, or $D_n$,
it is convenient to write $\ov{i} = N+1-i$ for $i \in [1,N]$. In type $A_n$ we
set $\ov{i} = i$ for $i \in [1,N]$. Then $W \subset S_N$ is the subgroup of
permutations satisfying the condition in the following table.\smallskip

\renewcommand{\myspca}{\vspace{-2.5ex}&&\\}%
\renewcommand{\myspcb}{\vspace{-2.5ex}&&\\}%
\renewcommand{\myspcc}{\vspace{-2.5ex}\\&&\\}%
\begin{center}
\begin{tabular}{|c|c|l|}
\hline\myspca
Type & $N$ & $W$ is the set of permutations $w \in S_N$ satisfying \\
\myspcb\hline\myspca
$A_n$ & $n+1$ & $w \in S_N$ \\
\myspcb\hline\myspca
$B_n$ & $2n\!+\!1$ & $w(\ov{\,i\,}) = \ov{w(i)}$ for $i \in [1,N]$ \\
\myspcb\hline\myspca
$C_n$ & $2n$ & $w(\ov{\,i\,}) = \ov{w(i)}$ for $i \in [1,N]$ \\
\myspcb\hline\myspca
$D_n$ & $2n$ & $w(\ov{\,i\,}) = \ov{w(i)}$ for $i \in [1,N]$\, and\,
$\# \{ i \in [1,n] : w(i) > n \}$ is even
\myspcc\hline
\end{tabular}
\end{center}
\smallskip

For $i, j \in [1,N]$, we let $\tau_{i,j} \in S_N$ be the permutation defined by
$\tau_{i,j}(i)=j$, $\tau_{i,j}(j)=i$, $\tau_{i,j}(\ov{i})=\ov{j}$,
$\tau_{i,j}(\ov{j})=\ov{i}$, and $\tau_{i,j}(k)=k$ for $k \in
[1,N]\ssm\{i,j,\ov{i},\ov{j}\}$. The reflection $s_\al \in W$ defined by each
root $\al \in \Phi$ is given by the following table. The simple reflections are
denoted $s_i = s_{\be_i}$ for $i \in [1,n]$.\smallskip

\renewcommand{\myspca}{\vspace{-2.5ex}&&&\\}
\renewcommand{\myspcb}{\vspace{-2.5ex}&&&\\}
\renewcommand{\myspcc}{\vspace{-2.5ex}\\&&&\\}
\begin{center}
\begin{tabular}{|l|c|c|c|}
\hline\myspca
Root $\al \in \Phi$ & $\ep_i-\ep_j$ & $\ep_i+\ep_j$ & $\ep_i$, $2\ep_i$ \\
\myspcb\hline\myspca
Reflection $s_\al \in W$ & $\tau_{i,j}$ & $\tau_{i,\ov{j}}$ & $\tau_{i,\ov{i}}$
\myspcc\hline
\end{tabular}
\end{center}
\smallskip

\noin
For $w \in W$ and $1 \leq i \leq n$, we have $w < w s_i$ in the Bruhat order if
and only if $w(i) < w(i+1)$, with one exception: If $\Phi$ has type $D_n$ and
$i=n$, then $w < w s_n$ holds if and only if $w(n-1) < w(n+1)$.

Any element $w \in W$ can be represented in \emph{one-line notation} by the
sequence of numbers $w(1)\, w(2)\, \cdots w(n')$, where `bar' notation is used
for integers larger than $n'$. For example, $w = 4\,\ov{1}\,2\,\ov{5}\,\ov{3}$
means $w(1)=4$, $w(2)=\ov{1}$, $w(3)=2$, $w(4)=\ov{5}$, and $w(5)=\ov{3}$.
Notice that since the number of bars is odd, this element belongs to $W$ only
when $\Phi$ has type $B_5$ or $C_5$. Notice also that $\ov{3}=11+1-3=9$ when
$\Phi$ has type $B_5$, whereas $\ov{3}=10+1-3=8$ when $\Phi$ has type $C_5$.

Flag varieties of the classical Lie types will be constructed as spaces of
isotropic flags in $\bK^N$. The standard construction used here requires that
the field $\bK$ does not have characteristic 2 when $\Phi$ has type $B_n$ or
$D_n$. We first define a bilinear form $\om$ on $\bK^N$ which is zero in type
$A_n$, is non-degenerate and symmetric in types $B_n$ and $D_n$, and is
non-degenerate and anti-symmetric in type $C_n$. Let $\{e_1, \dots, e_N\}$ be
the coordinate basis of $\bK^N$. When $\om$ is not zero, it is determined by
$\om(e_i,e_j) = \de_{i+j,N+1}$ for $1 \leq i \leq j \leq N$. Let $G \subset
\SL(N)$ be the subgroup preserving $\om$, that is
\[
  G = \{ g \in \SL(N) \mid \om(g.x, g.y) = \om(x,y)
  ~\forall\, x,y \in \bK^N \}\,.
\]
This group is $\SL(N)$ in type $A_n$, is the special orthogonal group $\SO(N)$
in types $B_n$ and $D_n$, and is the symplectic group $\Sp(N)$ in type $C_n$.
Let $T \subset G$ be the maximal torus of all diagonal matrices in $G$, let $B
\subset G$ be the Borel subgroup of upper triangular matrices, and let $B^-
\subset G$ be the lower triangular matrices.

A vector subspace $V \subset \bK^N$ is called \emph{isotropic} if the bilinear
form $\om$ vanishes on $V$, that is $\om(V,V)=0$. This condition implies
$\dim(V) \leq n'$. All subspaces $V \subset \bK^N$ are isotropic when $\Phi$ has
type $A_n$. Given a subset $a = \{a_1 < a_2 < \dots < a_r\}$ of $[0,n'] =
\{0,1,\dots,n'\}$, an $a$-dimensional isotropic \emph{flag} $V_\bull \subset
\bK^N$ is an increasing sequence $V_\bull = (V_{a_1} \subset V_{a_2} \subset
\dots \subset V_{a_r})$ of isotropic subspaces of $\bK^N$ with $\dim(V_{a_i}) =
a_i$ for $1 \leq i \leq r$. Note that the steps of a flag are indexed by their
dimensions. Let $F_\om(a;N) = F_\om(a_1,\dots,a_r;N)$ denote (one connected
component of) the variety of all $a$-dimensional isotropic flags in $\bK^N$.
Then $F_\om(a;N)$ is a flag variety of $G$. Given vectors $v_1, \dots, v_\ell
\in \bK^N$, we will use the notation $\langle v_1, \dots, v_\ell\rangle =
\Span_\bK\{v_1, \dots, v_\ell\}$. The $B$-stable point of $X = F_\om(a;N)$ is
then given by
\[
  1.P_X = (\langle e_1,\dots,e_{a_1}\rangle \subset
  \langle e_1,\dots,e_{a_2}\rangle \subset \dots \subset
  \langle e_1,\dots,e_{a_r}\rangle) \,.
\]
More generally, $V_\bull \in F_\om(a;N)$ is a $T$-fixed point if and only if
each subspace $V_{a_i}$ is spanned by a subset of the basis $\{e_1,\dots,e_N\}$.
The variety of all isotropic flags of dimension $a$ is connected unless $\Phi$
has type $D_n$ and $a_r=n$; in this case there are two components, and we let
$F_\om(a;N)$ be the component containing $1.P_X$. In type $D_n$ we also have
$F_\om(a;N) \cong F_\om(a \cup \{n\};N)$ whenever $n-1 \in a$, so we will only
consider dimension sets $a = \{a_1,\dots,a_r\}$ satisfying $n-1 \notin a$ or $n
\in a$. When this convention is followed, the parabolic subgroup $P_X$ defining
$X = F_\om(a;N)$ is determined in all Lie types by $s_i \in W^X$ if and only if
$i \in \{a_1,\dots,a_r\}$.

Let each permutation $w \in S_N$ act on $\bK^N$ by $w(e_i) = e_{w(i)}$ for $i
\in [1,N]$; this identifies $S_N$ with the group of permutation matrices in
$\GL(N)$. The action of $W$ on the fixed point locus $F_\om(a;N)^T$ is given by
$w.V_\bull = (w.V_{a_1} \subset \dots \subset w.V_{a_r})$. The permutation
matrix corresponding to $w \in W$ belongs to $G$ if and only if $\det(w)=1$.
When $\det(w)=-1$, we obtain an element $w' \in G$ by switching the sign of any
entry of the permutation matrix defined by $w$, in which case $w$ is represented
by $w'$ as an element of $N_G(T)/T$.

Let $F_\om(N) = F_\om(1,2,\dots,n;N)$ denote the variety of complete isotropic
flags in $\bK^N$. The space $F_\om(m;N)$, for $m \in [1,n]$, is the Grassmannian
of $m$-dimensional isotropic subspaces $V \subset \bK^N$. The following notation
will be used to refer to flag varieties of all classical types.\smallskip

\renewcommand{\myspca}{\vspace{-2.5ex}&&&&&\\}
\renewcommand{\myspcb}{\vspace{-2.5ex}&&&&&\\}
\renewcommand{\myspcc}{\vspace{-2.5ex}\\&&&&&\\}
\begin{center}
\begin{tabular}{|c|c|c|c|c|c|}
\hline\myspca
Type & $N$ & $\om$ & $F_\om(a;N)$ & $F_\om(N)$ & $F_\om(m;N)$ \\
\myspcb\hline\myspca
$A_n$ & $n+1$ & zero & $\Fl(a;N)$ & $\Fl(N)$ & $\Gr(m,N)$ \\
\myspcb\hline\myspca
$B_n$ & $2n+1$ & orthogonal & $\OF(a;N)$ & $\OF(N)$ & $\OG(m,N)$ \\
\myspcb\hline\myspca
$C_n$ & $2n$ & symplectic & $\SF(a;N)$ & $\SF(N)$ & $\SG(m,N)$ \\
\myspcb\hline\myspca
$D_n$ & $2n$ & orthogonal & $\OF(a;N)$ & $\OF(N)$ & $\OG(m,N)$
\myspcc\hline
\end{tabular}
\end{center}
\smallskip

The symplectic Grassmannian $\SG(n,2n)$ of maximal isotropic subspaces in
$\bK^{2n}$ is called the \emph{Lagrangian Grassmannian} and is also denoted
$\LG(n,2n)$. The \emph{quadric hypersurface} of dimension $d\geq 3$ is denoted
$Q^d = \OG(1,d+2)$. We have $Q^3 \cong \LG(2,4)$ and $Q^4 \cong \Gr(2,4)$. In
addition, the maximal orthogonal Grassmannians $\OG(n,2n)$ and $\OG(n-1,2n-1)$,
of types $D_n$ and $B_{n-1}$, are isomorphic by the map $f : \OG(n,2n) \cong
\OG(n-1,2n-1)$ defined by $f(V) = \iota^{-1}(V)$, where $\iota : \bK^{2n-1} \to
\bK^{2n}$ is the linear map given by $\iota(e_i)=e_i$ for $0 < i < n$,
$\iota(e_i)=e_{i+1}$ for $n<i<2n$, and $\iota(e_n)=\sqrt{1/2}\,(e_n+e_{n+1})$.

% !TeX root=comrich.tex

\section{Product decomposition of Richardson varieties}\label{sec:decomp}

In this section we let $X = G/P_X$ be an arbitrary flag variety. We will show
that certain Richardson varieties in $X$ are products of smaller Richardson
varieties, generalizing the decomposition result in \cite{buch.ravikumar:pieri}
for cominuscule Richardson varieties of classical types.

\subsection{The poset of a flag variety}

The root lattice $\Span_\Z(\Delta)$ has a partial order defined by $\al' \leq
\al$ if and only if $\al-\al'$ is a sum of positive roots. Define the partially
ordered set
\[
  \cP_X \,=\, I(w_0^X) \,=\, \Phi^+ \ssm I(w_{0,X}) \,=\,
  \{ \al \in \Phi^+ \mid w_0^X.\al < 0 \} \,,
\]
where the partial order is inherited from $\Span_\Z(\Delta)$. Equivalently,
$\cP_X$ is the set of roots $\al \in \Phi$ for which $-\al$ is a $T$-weight of
the tangent space $T_X(1.P_X)$. The cardinality of $\cP_X$ is $\#\cP_X =
\dim(X)$, and the minimal elements of $\cP_X$ are the excluded simple roots of
$P_X$,
\[
  \min(\cP_X) \,=\, \cP_X \cap \Delta \,=\,
  \{ \be \in \Delta \mid s_\be \in W^X \} \,.
\]
The corresponding Schubert divisor classes $\{ [X^{s_\be}] \mid \be \in
\min(\cP_X) \}$ form a $\Z$-basis of the Picard group $\Pic(X)$. The identity
$\cP_X = \Phi^+ \ssm I(w_{0,X})$ implies that
\begin{equation}\label{eqn:min2PX}%
  \cP_X \,=\,
  \{\al \in \Phi \mid \exists\, \be \in \min(\cP_X) : \be \leq \al\} \,.
\end{equation}

\begin{remark}
  One can check that $\al_1 \leq \al_2$ is a covering relation in $\cP_X$ if and
  only if $\al_2 - \al_1 \in \Delta$ is a simple root. We will not need this
  fact.
\end{remark}

\subsection{Flag subvarieties}\label{sec:flagsubvar}%

Given a subset $S \subset \Delta$ of the simple roots, let $G_S \subset G$ be
the Levi subgroup containing $T$ with root system $\Phi_S = \Span_\Z(S) \cap
\Phi$, see \cite[(8.4.1)]{springer:linear*1}. Our chosen Borel subgroups of
$G_S$ are $B_S = G_S \cap B$ and $B_S^- = G_S \cap B^-$, and the positive roots
defined by $B_S$ are $\Phi_S^+ = \Phi_S \cap \Phi^+$. The Weyl group of $G_S$ is
the subgroup $W_S \subset W$ generated by the simple reflections $\{s_\be \mid
\be \in S\}$.

\begin{defn}\label{defn:flagsubvar}%
  Let $\ka.P_X \in X$ be a $T$-fixed point and let $S \subset \Delta$. The
  $G_S$-orbit $G_S \ka.P_X \subset X$ will be called a \emph{flag subvariety} of
  $X$.
\end{defn}

This definition of a flag subvariety depends on the group $G$ acting on $X$, as
well as the chosen maximal torus $T$ and Borel subgroup $B$. Our definition
implies a close relationship between Richardson varieties in $X$ and Richardson
varieties in flag subvarieties of $X$, which suits the goals of this paper, see
\Proposition{flagsubrich}.

Any flag subvariety $Y = G_S \ka.P_X \subset X$ is a flag variety of $G_S$ by
\Proposition{flagsubvar} below. The $B_S$-fixed point in $Y$ is denoted $1.P_Y$
and has stabilizer $P_Y \subset G_S$. Let $(W_S)^Y \subset W_S$ be the subset of
minimal representatives modulo $(W_S)_Y = N_{P_Y}(T)/T$. Let $w_0^Y \in (W_S)^Y$
denote the longest element, and let $\cP_Y = I(w_0^Y) \subset \Phi_S^+$ be the
associated partially ordered set. For $\ka \in W^X$, let $\ka_S \in W^X$ denote
the unique minimal element of the set $\{w \ka \mid w \in W_S\}$. Equivalently,
$\ka_S^{-1}$ is the left factor of the parabolic factorization of $\ka^{-1}$
with respect to $W_S$. Part (a) of the following result implies that the
inclusion $Y \to X$ is given by $g.P_Y \mapsto g \ka_S.P_X$ for $g \in G_S$.

\begin{prop}\label{prop:flagsubvar}%
  Let $X = G/P_X$ be a flag variety and let $\ka \in W^X$. Let $Y = G_S \ka.P_X
  \subset X$ be the flag subvariety defined by $S \subset \Delta$ and
  $\ka.P_X$.\smallskip

  \titem{a} $Y$ is a flag variety of $G_S$ with $B_S$-fixed point $1.P_Y =
  \ka_S.P_X$.\smallskip

  \titem{b} $Y = X_{w_0^Y \ka_S}^{\ka_S}$ is a Richardson variety in
  $X$.\smallskip

  \titem{c} The map $w \mapsto w \ka_S$ is an order isomorphism from the Bruhat
  order of $(W_S)^Y$ to the interval $[\ka_S, w_0^Y \ka_S]$ in the Bruhat order
  of $W^X$.\smallskip

  \titem{d} The map $\al \mapsto \ka_S^{-1}.\al$ is an order-preserving
  bijection $\cP_Y \to I(w_0^Y \ka_S) \ssm I(\ka_S)$.
\end{prop}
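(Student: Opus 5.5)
Replacing $\ka$ by $\ka_S$ does not change $Y$, since $\ka = w\ka_S$ with $w \in W_S \subset G_S$ (representatives in $N_{G_S}(T)$). So we may assume $\ka = \ka_S$, i.e.\ $\ka$ is minimal in $W_S\ka$; equivalently $\ka^{-1}$ is minimal in its coset $\ka^{-1}W_S$. This gives $\ka^{-1}.\be > 0$ for all $\be \in S$, hence $\ka^{-1}.\Phi_S^+ \subset \Phi^+$.

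For (a), I would show that $1.P_Y := \ka.P_X$ is $B_S$-fixed. For $\al \in \Phi_S^+$ the root subgroup $U_\al$ satisfies $\ka^{-1}U_\al\ka = U_{\ka^{-1}.\al}$, and we need this to lie in $P_X$. The inclusion holds because $\ka^{-1}.\al > 0$ and $B \subset P_X$. Thus $B_S$ fixes $\ka.P_X$, so the stabilizer $P_Y = G_S \cap \ka P_X \ka^{-1}$ contains $B_S$ and is a parabolic subgroup of $G_S$. Hence $Y = G_S/P_Y$ is complete and homogeneous. Separability of the orbit map $G_S \to Y$ follows from separability of $G\to X$ restricted to the tangent level: the tangent space of $Y$ at $\ka.P_X$ is spanned by the weight spaces of $\mathfrak{g}_S$ not in $\mathrm{Lie}(P_Y)$, and these map isomorphically into $T_X(\ka.P_X)$, since the differential of $G\to X$ is surjective with weight-space kernel. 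I expect this bookkeeping to be the only delicate point in positive characteristic, and it reduces to comparing root spaces.

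For (c), the map $W_S \to W/W_X$, $w \mapsto w\ka W_X$, induces a bijection from $W_S/(W_S)_Y$ onto the $T$-fixed points of $Y$, where $(W_S)_Y = W_S \cap \ka W_X\ka^{-1}$. I would show that for $w \in (W_S)^Y$ the element $w\ka$ lies in $W^X$ and $\ell(w\ka)=\ell(w)+\ell(\ka)$. Length additivity comes from $\ka^{-1}\Phi_S^+ \subset \Phi^+$ together with $I(w\ka) = I(\ka) \sqcup \ka^{-1}.I(w)$, which holds because $w$ only moves roots in $\Phi_S$. Minimality of $w\ka$ in $w\ka W_X$ follows from minimality of $w$ modulo $(W_S)_Y$: an inversion of $w\ka$ lying in $\Phi_X$ would pull back to an inversion of $w$ in $\ka\Phi_X\ka^{-1}\cap\Phi_S$, which is the root system of $(W_S)_Y$.

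The Bruhat order on $Y$ is given by closure of $B_S$-orbits. We have $B_S w\ka.P_X \subset B w\ka.P_X$, and both have dimension $\ell(w)$ relative to $\ell(\ka)$, so the map is order-preserving. To get the interval, I would use that every $v\in W^X$ with $\ka\le v\le w_0^Y\ka$ has the form $w\ka$ with $w\in W_S$. This is the main obstacle, and I would argue it geometrically together with (b). Since $\ka$ is minimal in $W_S\ka$, the orbit $B_S\ka$ corresponds to the point $1.P_Y$, and $G_S\ka.P_X = \ov{B_S w_0^Y\ka.P_X} \subset X_{w_0^Y\ka}$. Also $Y = \ov{B_S^- \ka.P_X} \subset X^{\ka}$, since $B_S^-\subset B^-$. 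Therefore $Y\subset X^{\ka}_{w_0^Y\ka}$. Both sides are irreducible, the Richardson variety being irreducible, and both have dimension $\ell(w_0^Y)$, so (b) follows. Taking $T$-fixed points then gives (c): every $v$ in the interval is a fixed point of $Y$, hence of the form $w\ka$.

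Finally, (d) follows from the inversion set formula $I(w_0^Y\ka) = I(\ka)\sqcup \ka^{-1}.I(w_0^Y)$ established above, with $\cP_Y = I(w_0^Y)$. The map $\al\mapsto\ka^{-1}.\al$ is linear, and it is order-preserving because it sends each simple root $\be\in S$ to the positive root $\ka^{-1}.\be$, hence sends sums of simple roots in $S$ to sums of positive roots.
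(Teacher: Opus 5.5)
Your argument is correct in substance, but it reaches (b) and (c) by a different route than the paper.

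\textbf{How the paper argues.} The paper proves (c) first and purely combinatorially. Because $\ka_S$ is minimal in $W_S\ka_S$, the map $w\mapsto w\ka_S$ is an order isomorphism from $W_S$ onto the whole coset $W_S\ka_S$, and $w\in(W_S)^Y$ if and only if $w\ka_S\in W^X$. It then deduces (b) from three facts: $Y^T\subset X^{\ka_S}_{w_0^Y\ka_S}$, equality of dimensions, and $T$-convexity of Richardson varieties.

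\textbf{How you argue.} You get (b) geometrically from the inclusions $Y=\ov{B_Sw_0^Y\ka.P_X}\subset X_{w_0^Y\ka}$ and $Y=\ov{B_S^-\ka.P_X}\subset X^{\ka}$, together with irreducibility and the dimension count. You then read off the interval statement in (c) from the $T$-fixed points of $Y=X^{\ka}_{w_0^Y\ka}$. This avoids the $T$-convexity theorem entirely and uses only that Richardson varieties are irreducible of the expected dimension. The paper's route is shorter, given the facts it cites.

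You also differ in (a). You check directly with root subgroups that $B_S$ fixes $\ka_S.P_X$, using $\ka_S^{-1}.\Phi_S^+\subset\Phi^+$, and you verify separability on tangent spaces. The paper instead cites that $G_S\cap\ka B\ka^{-1}$ is a Borel subgroup of $G_S$. It locates the $B_S$-fixed point via $\ov{B_S\ka_S.P_X}^T\subset X_{\ka_S}\cap Y^T=\{\ka_S.P_X\}$ and leaves separability implicit.

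\textbf{The missing step.} For (c) you show that $w\mapsto w\ka$ is injective, order-preserving, and onto the interval. An order isomorphism also requires that $w'\ka\le w\ka$ implies $w'\le w$. You never address this, and your remark about dimensions ``relative to $\ell(\ka)$'' does not give it.

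Your own two-sided inclusion argument supplies it. For $w\in(W_S)^Y$, the Schubert variety $Y_w=\ov{B_Sw\ka.P_X}$ is contained in $X_{w\ka}\cap Y\subset X^{\ka}_{w\ka}$. Both are irreducible of dimension $\ell(w)$, so $Y_w=X^{\ka}_{w\ka}$. Now suppose $w'\ka\le w\ka$. By length additivity we also have $\ka\le w'\ka$, so the point $w'\ka.P_X=w'.P_Y$ lies in $Y_w$. Hence $w'\le w$ in $(W_S)^Y$.

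Alternatively, use the subword property. Write $w'\ka=ab$ with $a\le w$ in $W_S$ and $b\le\ka$. Then $b\in W_S\ka$, so minimality of $\ka$ in $W_S\ka$ forces $b=\ka$, and therefore $a=w'$.
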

\begin{proof}
  Since $\ka.P_X$ is stabilized by the Borel subgroup $G_S \cap \ka B \ka^{-1}$
  of $G_S$ \cite[(6.4.7)]{springer:linear*1}, $Y$ is a flag variety of $G_S$.
  The $T$-fixed points are $Y^T = \{ w \ka.P_X \mid w \in W_S \}$. Since
  $\ov{B_S \ka_S.P_X}^T \subset X_{\ka_S} \cap Y^T = \{ \ka_S.P_X \}$, it
  follows that $\ka_S.P_X$ is a $B_S$-fixed point of $Y$, which proves part (a).
  The definition of $\ka_S$ implies that the product $w \ka_S$ is reduced for
  each $w \in W_S$, so the map $w \mapsto w \ka_S$ is an order isomorphism $W_S
  \to W_S \ka_S$. The element $w \in W_S$ belongs to $(W_S)^Y$ if and only if
  $w$ is the minimal representative of the point $w.P_Y = w \ka_S.P_X$. Using
  that $\ka_S \in W^X$, we deduce that $w \in (W_S)^Y$ if and only if $w \ka_S
  \in W^X$. This proves part (c), which in turn implies $Y^T \subset X_{w_0^Y
  \ka_S}^{\ka_S}$ and $\dim(Y) = \dim(X_{w_0^Y \ka_S}^{\ka_S})$. Part (b)
  follows from this, using that Richardson varieties are $T$-convex
  \cite{buch.chaput.ea:equivariant}. Finally, part (d) holds because
  $\ka_S^{-1}.\al > 0$ for all $\al \in \Phi_S^+$ by the definition of $\ka_S$.
\end{proof}

\begin{remark}
  The set $-\cP_Y = \{-\al \mid \al \in \cP_Y\}$ consists of all $T$-weights of
  the tangent space $T_Y(1.P_Y) \subset T_X(\ka_S.P_X)$, while
  $-\ka_S^{-1}.\cP_Y$ is the set of $T$-weights of the translate
  $\ka_S^{-1}.T_Y(1.P_Y) \subset T_X(1.P_X)$. This explains the geometric
  relevance of \Proposition{flagsubvar}(d).
\end{remark}

We will show in \Proposition{comin} that when $X$ is a cominuscule flag variety,
all flag subvarieties of $X$ are also cominuscule. The following example shows
that the general situation is more complicated.

\begin{example}\label{example:fsvSG26}%
  Let $X = \SG(2,6)$ be the symplectic Grassmannian of 2-dimensional isotropic
  subspaces of $\bK^6$, see \Section{classical}. Let $S = \{\be_1, \be_2\}
  \subset \Delta$ be the two short simple roots, so that $G_S$ has type $A_2$,
  and set $\ka = \ka_S = s_3 s_2$ and $Y = G_S \ka.P_X$. Then $\ka_S.P_X = s_3
  s_2.\langle e_1, e_2 \rangle = \langle e_1, e_4 \rangle$, and the map $Y \to
  \Fl(3)$ defined by $V \mapsto (V \cap \langle e_1,e_2,e_3\rangle, V^\perp \cap
  \langle e_1,e_2,e_3\rangle)$ is an isomorphism. The list of flag subvarieties
  of $X$ includes $\bP^1$, $\bP^1\times\bP^1$, $\bP^2$, $\bP^3$, $\Fl(3)$,
  $\LG(2,4)$, $\SG(2,6)$, and the single point $\Spec(\bK)$. Some of these
  varieties have several inclusions as flag subvarieties of $X$.
\end{example}

When $X$ is a cominuscule flag variety, it follows from
\cite[Lemma~4.2(a)]{buch.chaput.ea:positivity} that the map of
\Proposition{flagsubvar}(d) is an order isomorphism, in the sense that the
inverse map $\ka_S : I(w_0^Y \ka_S) \ssm I(\ka_S) \to \cP_Y$ is also
order-preserving. The following example shows that this is not the case for
arbitrary flag varieties.

\begin{example}
  Let $X = \Fl(1,2;4)$ be the variety of two-step flags in $\bK^4$ with
  dimension $(1,2)$, and set $S = \{\be_1, \be_3\}$, $\ka = \ka_S = s_2 s_1$,
  and $Y = G_S \ka.P_X$. We obtain $\ka_S.P_X = (\langle e_3 \rangle, \langle
  e_1, e_3 \rangle)$, $Y = \bP^1 \times \bP^1$, $w_0^Y = s_1 s_3$, $\cP_Y = S$,
  $\ka_S^{-1}.\be_1 = \be_2$, and $\ka_S^{-1}.\be_3 = \be_1+\be_2+\be_3$. The
  two roots in $\ka_S^{-1}.\cP_Y = I(w_0^Y \ka_S) \ssm I(\ka_S)$ are comparable,
  while the roots in $\cP_Y$ are not.
\end{example}

\subsection{Minimal embeddings}

We next show that each Richardson variety $X_u^v$ is contained in a unique
minimal flag subvariety of $X$. The Schubert varieties $Y_{u'} = \ov{B_S
u'.P_Y}$ and $Y^{v'} = \ov{B_S^- v'.P_Y}$ in the flag subvariety $Y = G_S
\ka.P_X \subset X$, defined for $v', u' \in W_S$, may not be Schubert varieties
in $X$, but any Richardson variety $Y_{u'}^{v'} = Y_{u'} \cap Y^{v'}$ is also a
Richardson variety in $X$ by the following result.

\begin{prop}\label{prop:flagsubrich}%
  Let $X = G/P_X$ be a flag variety, let $u, v, \ka \in W^X$ satisfy $v \leq u$,
  and let $Y = G_S \ka.P_X$ be the flag subvariety of $X$ defined by $\ka.P_X$
  and $S \subset \Delta$.\smallskip

  \titem{a} The intersection $X_u^v \cap Y$ is either empty or a Richardson
  variety in $Y$.\smallskip

  \titem{b} We have $X_u^v \subset Y$ if and only if $u \ka^{-1} \in W_S$
  and $v \ka^{-1} \in W_S$.\smallskip

  \titem{c} Given $v' \leq u'$ in $(W_S)^Y$, we have $Y_{u'}^{v'} = X_{u'
  \ka_S}^{v' \ka_S}$.
\end{prop}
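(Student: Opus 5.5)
The plan is to prove part (c) first and then deduce (b) and (a) from it, using \Proposition{flagsubvar} throughout. For (c), let $v' \leq u'$ in $(W_S)^Y$. By \Proposition{flagsubvar}(a), $Y_{u'} = \ov{B_S u'\ka_S.P_X}$, and by \Proposition{flagsubvar}(c), $u'\ka_S \in W^X$. Since $B_S \subset B$ and $B_S^- \subset B^-$, we get $Y_{u'} \subset X_{u'\ka_S}$ and $Y^{v'} \subset X^{v'\ka_S}$. Hence $Y_{u'}^{v'} \subset X_{u'\ka_S}^{v'\ka_S}$, and the right side is non-empty because $v'\ka_S \leq u'\ka_S$ by \Proposition{flagsubvar}(c). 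Both sides are irreducible Richardson varieties, in $Y$ and in $X$ respectively. The products $u'\ka_S$ and $v'\ka_S$ are reduced, so their dimensions are $\ell(u')-\ell(v')$ and $\ell(u'\ka_S)-\ell(v'\ka_S) = \ell(u')-\ell(v')$. Equal dimension together with the inclusion and reducedness of Richardson varieties then gives equality.

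For (b), first suppose $X_u^v \subset Y$. Then $u.P_X$ and $v.P_X$ lie in $Y^T = W_S\ka.P_X$. Write $\ka = y\ka_S$ with $y \in W_S$. By \Proposition{flagsubvar}(c), every point of $Y^T$ is $w'\ka_S.P_X$ for a unique $w' \in (W_S)^Y$, and $w'\ka_S \in W^X$. Since $u$ is the minimal representative of its coset, $u = w'\ka_S$, so $u\ka^{-1} = w'y^{-1} \in W_S$; the same argument applies to $v$. Conversely, suppose $u\ka^{-1}, v\ka^{-1} \in W_S$. Then $u, v \in W_S\ka_S \cap W^X$, so by \Proposition{flagsubvar}(c) we have $u = u'\ka_S$ and $v = v'\ka_S$ with $v' \leq u'$ in $(W_S)^Y$. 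Part (c) now gives $X_u^v = Y_{u'}^{v'} \subset Y$.

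For (a), I would describe $X_u \cap Y$ and $X^v \cap Y$ separately. The set $X_u \cap Y$ is closed and $B_S$-stable in $Y$, so it is a union of $B_S$-orbits $B_S w'\ka_S.P_X$ with $w' \in (W_S)^Y$, one for each $w'$ with $w'\ka_S \leq u$. The key combinatorial input, which I expect to be the main obstacle, is that the set $\{w' \in (W_S)^Y : w'\ka_S \leq u\}$ is either empty or has a unique maximum $u'$. This is a Deodhar-type statement that the Bruhat order restricted to a left $W_S$-coset below $u$ has a maximum. I would try to prove it by induction on $\ell(u)$ using the lifting property: if $s \in S$ with $su < u$, compare $w'\ka_S \leq u$ with $w'\ka_S \leq su$ or $sw'\ka_S \leq su$. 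Alternatively, it can be obtained from the Demazure product. Granting this, $X_u \cap Y = Y_{u'}$, and dually $X^v \cap Y = Y^{v'}$ for the unique minimum $v'$ of $\{w' : w'\ka_S \geq v\}$. Therefore $X_u^v \cap Y = Y_{u'} \cap Y^{v'}$, which is empty unless $v' \leq u'$, in which case it is the Richardson variety $Y_{u'}^{v'}$.

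The description above is set-theoretic. To handle the scheme structure, I would note that part (c) identifies $Y_{u'}^{v'}$ with the reduced variety $X_{u'\ka_S}^{v'\ka_S}$, so the intersection is a Richardson variety as a reduced variety.
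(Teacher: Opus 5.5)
Your proofs of (b) and (c) are correct. For (c) you take a different route from the paper:
\begin{itemize}
\item The paper notes that $Y_{u'}^{v'}$ and $X_{u'\ka_S}^{v'\ka_S}$ have the same $T$-fixed points by \Proposition{flagsubvar}(c), and concludes by $T$-convexity of Richardson varieties \cite{buch.chaput.ea:equivariant}.
\item You get the inclusion $Y_{u'}^{v'}\subset X_{u'\ka_S}^{v'\ka_S}$ directly from $B_S\subset B$ and $B_S^-\subset B^-$. You then finish with a dimension count, using that $u'\ka_S$ and $v'\ka_S$ are reduced products.
\end{itemize}
Your argument is more elementary, since it needs only irreducibility and the dimension formula for Richardson varieties. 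The paper's is a one-liner once $T$-convexity is available. For (b), the paper reads both directions off the identification of $W_S\ka_S\cap W^X$ with the interval $[\ka_S,w_0^Y\ka_S]$. You use fixed points in one direction and your part (c) in the other, which is equivalent.

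The gap is in (a), at the scheme structure. Everything you prove there is set-theoretic, and your final paragraph does not repair it. Part (c) is an equality between two reduced varieties, so it only identifies the reduction of $X_u^v\cap Y$ with $Y_{u'}^{v'}$. It says nothing about whether the scheme-theoretic intersection $X_u\cap X^v\cap Y$ has nilpotents; calling it a Richardson variety ``as a reduced variety'' just replaces the intersection by its reduction.

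The missing ingredient is a reducedness statement, which the paper obtains from compatible Frobenius splitting \cite[\S2]{brion.kumar:frobenius}. By \Proposition{flagsubvar}(b), $Y = X^{\ka_S}_{w_0^Y\ka_S}$ is itself a Richardson variety in $X$. Schubert and Richardson varieties in $X$ are compatibly split, so $X^v\cap Y$ and $X_u\cap Y$ are reduced. Only with this does $B_S^-$-stability (resp.\ $B_S$-stability), together with the extremal-element lemma, give $X^v\cap Y=Y^{v'}$ and $X_u\cap Y=Y_{u'}$ as schemes. Hence $X_u^v\cap Y=Y_{u'}\cap Y^{v'}$ holds scheme-theoretically, as the paper proves. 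If you only want the set-theoretic statement, your argument is complete modulo that lemma.

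For the extremal-element lemma itself, the paper simply cites \cite[Prop.~2.1]{knutson.lam.ea:projections}, in the form of a unique minimum of $\{w\in W_S\mid w\ka_S\geq v\}$. Your maximum version follows from it via the order-reversing map $x\mapsto xw_0$. Your suggested induction over $s\in S$ with $su<u$ would not close on its own. Take $X=\Fl(3)$, $S=\{\be_1\}$, $\ka=1$ and $u=s_2s_1$. Then $u$ has no left descent in $S$, yet $\{w'\in(W_S)^Y\mid w'\ka_S\leq u\}=\{1,s_1\}$ is nontrivial. So either cite the known result or set up the induction differently.
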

\begin{proof}
  Assume $X^v \cap Y \neq \emptyset$. Using that all Richardson varieties in $X$
  are compatibly Frobenius split \cite[\S2]{brion.kumar:frobenius}, it follows
  from \Proposition{flagsubvar}(b) that $X^v \cap Y$ is reduced. Since $X^v \cap
  Y$ is also $B_S^-$-stable, it is a union of $B_S^-$-stable Schubert varieties
  in $Y$. The $T$-fixed points are $(X^v \cap Y)^T = \{ w \ka_S.P_X \mid w \in
  W_S \text{ and } w \ka_S \geq v \}$. Since the set $\{ w \in W_S \mid w \ka_S
  \geq v \}$ has a unique minimal element by
  \cite[Prop.~2.1]{knutson.lam.ea:projections}, it follows that $X^v \cap Y$ is
  a single $B_S^-$-stable Schubert variety in $Y$. Similarly, $Y \cap X_u$ is
  either empty or a $B_S$-stable Schubert variety. Part (a) follows from this.
  Part (b) follows from \Proposition{flagsubvar}(c) because $u \ka^{-1} \in W_S$
  is equivalent to $\ka_S \leq u \leq w_0^Y \ka_S$. \Proposition{flagsubvar}(c)
  also implies that $Y_{u'}^{v'}$ and $X_{u' \ka_S}^{v' \ka_S}$ have the same
  $T$-fixed points, which proves part (c), using that $X_{u' \ka_S}^{v' \ka_S}$
  is $T$-convex in $X$ \cite{buch.chaput.ea:equivariant}.
\end{proof}

\begin{cor}\label{cor:flagsubint}%
  Let $X = G/P_X$ be a flag variety, and let $Y = G_S \ka.P_X$ be a flag
  subvariety of $X$.\smallskip

  \titem{a} Any flag subvariety of $Y$ is also a flag subvariety of
  $X$.\smallskip

  \titem{b} Any flag subvariety of $X$ contained in $Y$ is also a flag
  subvariety of $Y$.\smallskip

  \titem{c} Any non-empty intersection of flag subvarieties of $X$ is a flag
  subvariety.\smallskip

  \titem{d} Given $S_1, S_2 \subset \Delta$ and $\ka \in W$, we have
  $G_{S_1}\ka.P_X \cap G_{S_2}\ka.P_X = G_{S_1\cap S_2}\ka.P_X$.
\end{cor}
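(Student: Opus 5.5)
The plan is to reduce everything to \Proposition{flagsubvar} and \Proposition{flagsubrich}, using that flag subvarieties are Richardson varieties. The key tool is \Proposition{flagsubrich}(b): a Richardson variety $X_u^v$ of $X$ lies in $Y = G_S\ka.P_X$ exactly when $u\ka^{-1}, v\ka^{-1} \in W_S$. Combined with \Proposition{flagsubvar}(b), which says $Y = X_{w_0^Y\ka_S}^{\ka_S}$, this turns containment between flag subvarieties into a statement about Weyl group cosets.

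\emph{Part (d).} One inclusion is clear: $G_{S_1\cap S_2} \subset G_{S_1}\cap G_{S_2}$ gives $G_{S_1\cap S_2}\ka.P_X \subset G_{S_1}\ka.P_X \cap G_{S_2}\ka.P_X$. For the reverse inclusion, write $Y_i = G_{S_i}\ka.P_X$ and $Z = G_{S_1\cap S_2}\ka.P_X$; each of these is a Richardson variety of $X$ by \Proposition{flagsubvar}(b).
\begin{itemize}
\item The intersection $Y_1\cap Y_2$ is $T$-stable and closed. Its $T$-fixed points are the points $w\ka.P_X$ with $w\ka W_X \subset W_{S_1}\ka W_X \cap W_{S_2}\ka W_X$.
\item The main obstacle is to show that such a coset already meets $W_{S_1\cap S_2}\ka$, so that these fixed points lie in $Z$.
\item My first attempt is to apply \Proposition{flagsubrich}(a) to $Z' = X_{u}^{v} \cap Y_2$ with $X_u^v = Y_1$. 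This shows $Y_1\cap Y_2$ is a Richardson variety $Y_2{}_{u'}^{v'}$ of $Y_2$, hence by \Proposition{flagsubrich}(c) a Richardson variety $X_{u'\ka'}^{v'\ka'}$ of $X$ with $u'\ka'\ka^{-1}, v'\ka'\ka^{-1}\in W_{S_2}$.
\item Since it lies in $Y_1$, \Proposition{flagsubrich}(b) gives that its endpoints $u'',v''$ satisfy $u''\ka^{-1}, v''\ka^{-1}\in W_{S_1}\cap W_{S_2} = W_{S_1\cap S_2}$. Here I use the standard fact that parabolic subgroups intersect as $W_{S_1}\cap W_{S_2}=W_{S_1\cap S_2}$.
\item \Proposition{flagsubrich}(b) applied once more shows $Y_1\cap Y_2 \subset Z$.
\end{itemize}
The one subtlety is that \Proposition{flagsubrich}(b) is phrased with a fixed $\ka$. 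I will have to check that the representative $\ka$ can be replaced by $\ka_S$ or by any element of the same coset $W_S\ka$, which holds because $W_S\ka = W_S\ka_S$.

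\emph{Part (c).} Let $Y_1 = G_{S_1}\ka_1.P_X$ and $Y_2 = G_{S_2}\ka_2.P_X$ with $Y_1\cap Y_2 \neq \emptyset$. The intersection is a non-empty closed $T$-stable subvariety, so it has a $T$-fixed point $\ka.P_X$ by Borel's fixed point theorem. Then $Y_i = G_{S_i}\ka.P_X$, since $\ka.P_X \in Y_i$ and $Y_i$ is a single $G_{S_i}$-orbit. Part (d) now gives $Y_1\cap Y_2 = G_{S_1\cap S_2}\ka.P_X$, and the general case follows by induction on the number of factors.

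\emph{Part (a).} A flag subvariety of $Y$ has the form $(G_S)_{S'}\, w\ka_S.P_Y$ with $S'\subset S$ and $w\in W_S$. The Levi subgroup of $G_S$ for $S'$ is $G_{S'}$, and by \Proposition{flagsubvar}(a) the point $w.P_Y$ is $w\ka_S.P_X$. So this subvariety equals $G_{S'}(w\ka_S).P_X$, which is a flag subvariety of $X$.

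\emph{Part (b).} Let $Z = G_{S'}\ka'.P_X \subset Y$. By part (d), $Z = Z\cap Y$ equals $G_{S'}\ka'.P_X \cap G_S\ka'.P_X = G_{S'\cap S}\ka'.P_X$, where I use $Y = G_S\ka'.P_X$ because $\ka'.P_X\in Y$. Writing $\ka' = w\ka_S$ with $w\in W_S$, this is $(G_S)_{S'\cap S}\, w.P_Y$, which is a flag subvariety of $Y$.
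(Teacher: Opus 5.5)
Your proposal is correct. For (a)--(c) it matches the paper, which calls (a) clear and derives (b) and (c) from (d). Your use of Borel's fixed point theorem to find a common $T$-fixed point is exactly the detail the paper leaves implicit.

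For (d) your route differs in the key step. The paper also uses that both sides are Richardson varieties. It then reduces to the double coset identity $W_{S_1}\kappa W_X\cap W_{S_2}\kappa W_X = W_{S_1\cap S_2}\kappa W_X$, and proves this by citing a Bruhat-order theorem of Bj\"orner--Brenti, using that $W_X\kappa^{-1}$ is a Bruhat interval. You instead feed the endpoints $u'',v''\in W^X$ of the Richardson variety $Y_1\cap Y_2$ into the membership criterion of Proposition~\ref{prop:flagsubrich}(b), for $S_1$, $S_2$, and then $S_1\cap S_2$. The only Coxeter-theoretic input you then need is $W_{S_1}\cap W_{S_2}=W_{S_1\cap S_2}$.

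This works because Proposition~\ref{prop:flagsubvar}(c) already places the minimal representative of every $T$-fixed point of $G_S\kappa.P_X$ in the single right coset $W_S\kappa$ (for $\kappa\in W^X$). That fact is really what makes the double coset identity hold. So your argument stays inside the paper's own machinery and makes the external citation unnecessary once Proposition~\ref{prop:flagsubvar}(c) is available. The paper's version, by contrast, isolates the Weyl group identity as a standalone fact.

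Two small repairs are needed:
\begin{itemize}
\item Proposition~\ref{prop:flagsubrich}(b) assumes $\kappa\in W^X$, and its criterion fails otherwise (e.g.\ for $u=v=1$ and $\kappa\in W_X\smallsetminus W_S$). In (d), where $\kappa\in W$, first replace $\kappa$ by $\kappa^X$. This is harmless, since all three varieties depend only on $\kappa.P_X$, and it is a different issue from the $W_S\kappa=W_S\kappa_S$ point you flag.
\item In (a), the relevant fixed point of $Y$ is $w.P_Y=w\kappa_S.P_X$, not $w\kappa_S.P_Y$. In (b), write $\kappa'.P_X=w\kappa_S.P_X$ rather than $\kappa'=w\kappa_S$.
\end{itemize}
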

\begin{proof}
  Part (a) is clear from the definitions, and parts (b) and (c) follow from part
  (d). Let $S_1$, $S_2$, and $\ka$ be as in (d). Since both $G_{S_1}\ka.P_X \cap
  G_{S_2}\ka.P_X$ and $G_{S_1\cap S_2}\ka.P_X$ are Richardson varieties in $X$
  by \Proposition{flagsubvar}(b) and \Proposition{flagsubrich}(a,c), it is
  enough to show that these varieties have the same $T$-fixed points.
  Equivalently, we must show that $W_{S_1} \ka W_X \cap W_{S_2} \ka W_X =
  W_{S_1\cap S_2} \ka W_X$ holds as subsets of $W$. This is equivalent to $W_X
  \ka^{-1} W_{S_1} \cap W_X \ka^{-1} W_{S_2} = W_X \ka^{-1} W_{S_1 \cap S_2}$.
  Since $W_X \ka^{-1}$ is an interval in the Bruhat order of $W$, this follows
  from \cite[Thm.~2.6.1]{bjorner.brenti:combinatorics}.
\end{proof}

The \emph{support} of $w \in W$ is the subset $S \subset \Delta$ of simple roots
$\be$ for which $s_\be$ occurs in any reduced expression of $w$.

\begin{defn}\label{defn:minembed}%
  Let $X_u^v \subset X$ be the Richardson variety defined by $v \leq u \in W^X$.
  The \emph{minimal embedding} of $X_u^v$ is the flag subvariety
  $Y = G_S u.P_X$, where $S \subset \Delta$ is the support of $u v^{-1}$.
\end{defn}

It follows from \Proposition{flagsubrich}(b) that the minimal embedding of
$X_u^v$ is the unique minimal flag subvariety of $X$ that contains $X_u^v$. The
minimal embedding depends on the chosen Borel subgroup $B \subset G$, as shown
by the following example.

\begin{example}
  Let $X = \Fl(4)$ be the variety of complete flags in $\bK^4$. The minimal
  embedding of the Richardson variety $X_u^v$ defined by $u = s_1 s_2 s_3$ and
  $v = s_1 s_3$ is $G_{\{\be_1,\be_2\}} u.P_X \cong \Fl(3)$. The translate
  $s_2.X_u^v = X_{s_2 u}^{s_2 v} = G_{\{\be_1\}} s_2 u.P_X \cong \bP^1$ is its
  own minimal embedding.
\end{example}

\subsection{Product decompositions}

It was shown in \cite{buch.ravikumar:pieri} that certain Richardson varieties in
classical cominuscule flag varieties are products of smaller Richardson
varieties. We proceed to generalize this result. Two subsets $S_1, S_2 \subset
\Phi$ will be called \emph{orthogonal} if every root in $S_1$ is orthogonal to
every root in $S_2$. If $S_1, S_2 \subset \Delta$ consist of simple roots, then
this holds if and only if $S_1 \cap S_2 = \emptyset$ and no edge in the Dynkin
diagram of $\Phi$ connects a root in $S_1$ to a root in $S_2$.

\begin{prop}\label{prop:flagvar-decomp}%
  Let $X$ be a flag variety of $G$, and let $v \leq \ka \leq u \in W$. Assume
  that $\Delta = S_1 \coprod S_2$ is a disjoint union of orthogonal subsets. Set
  $Y_i = G_{S_i} \ka.P_X$, $P_{\ov Y_i} = G_{S_{3-i}}P_X$, and $\ov Y_i =
  G/P_{\ov Y_i}$, for $i \in \{1,2\}$.\smallskip

  \titem{a} The map $g.P_X \mapsto g.P_{\ov Y_i}$, for $g \in G_{S_i}$, is
  an isomorphism $Y_i \cong \ov Y_i$.\smallskip

  \titem{b} The map $g.P_X \mapsto (g.P_{\ov Y_1}, g.P_{\ov Y_2})$, for $g \in
  G$, is an isomorphism $X \cong \ov Y_1 \times \ov Y_2$.\smallskip

  \titem{c} The map of part {\rm(a)} restricts to an isomorphism $Y_i \cap X_u^v
  \cong (\ov Y_i)_u^v$, and the map of part {\rm(b)} restricts to an isomorphism
  $X_u^v \cong (\ov Y_1)_u^v \times (\ov Y_2)_u^v$.\smallskip

  \titem{d} The map $w \mapsto (w^{\ov Y_1}, w^{\ov Y_2})$ is an order
  isomorphism $W^X \cong W^{\ov Y_1} \times W^{\ov Y_2}$ of Bruhat orders, with
  inverse $(w_1, w_2) \mapsto w_1 w_2$, and $(W_{S_i})^{Y_i} = W^{\ov Y_i} =
  W_{S_i} \cap W^X$.\smallskip

  \titem{e} We have $\cP_{Y_i} = \cP_{\ov Y_i} = \Phi_{S_i} \cap \cP_X$ and
  $\cP_X = \cP_{Y_1} \coprod \cP_{Y_2}$.
\end{prop}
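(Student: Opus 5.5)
The plan is to use the group splitting. Since $\Delta = S_1 \coprod S_2$ with $S_1,S_2$ orthogonal, we have $W = W_{S_1} \times W_{S_2}$ as a direct product of commuting subgroups. The root system splits as $\Phi = \Phi_{S_1} \coprod \Phi_{S_2}$. Moreover $G_{S_1}$ and $G_{S_2}$ commute, generate $G$, and intersect in a subgroup of $T$. Every parabolic subgroup containing $T$ is determined by its roots. So $P_X$ corresponds to $\Phi_{S_1}\cap \Phi_{P_X}$ together with $\Phi_{S_2}\cap\Phi_{P_X}$, and $P_{\ov Y_i} = G_{S_{3-i}}P_X$ is the parabolic whose roots are $\Phi_{P_X}\cup\Phi_{S_{3-i}}$. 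Hence $P_{\ov Y_1}\cap P_{\ov Y_2} = P_X$, which is checked on roots and on $T$.

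\textbf{Part (b).} The map $X \to \ov Y_1\times \ov Y_2$ is $G$-equivariant, and it is injective on points because the stabilizer of the image of $1.P_X$ is $P_{\ov Y_1}\cap P_{\ov Y_2}=P_X$. It is surjective because $G_{S_1}$ acts trivially on $\ov Y_2$ and $G_{S_2}$ acts trivially on $\ov Y_1$, so $(g_1.P_{\ov Y_1}, g_2.P_{\ov Y_2})$ is the image of $g_1g_2.P_X$. To see that this bijective morphism of homogeneous spaces is an isomorphism, I would compare tangent spaces. The $T$-weights of $T_X(1.P_X)$ are $-\cP_X$, and this set splits as the union of the weights of the two factors. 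So the differential is an isomorphism at $1.P_X$, hence everywhere by equivariance. Separability is guaranteed by the definition of flag variety, and Zariski's main theorem then finishes the argument.

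\textbf{Part (a).} The same reasoning applies: $G_{S_i}$ acts transitively on $\ov Y_i$, and restricting the isomorphism of (b) to $Y_i = G_{S_i}\ka.P_X$ gives $Y_i \cong \ov Y_i\times\{\ka.P_{\ov Y_{3-i}}\}$. Note that the map in (a) sends $g.P_X \mapsto g.P_{\ov Y_i}$ for $g\in G_{S_i}$ applied to points of $Y_i$; I interpret it as $g\ka.P_X\mapsto g\ka.P_{\ov Y_i}$, which is well defined via (b).

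\textbf{Part (d).} This is Weyl-group combinatorics. Write $w=w_1w_2$ with $w_i\in W_{S_i}$. Then $W_X = (W_X\cap W_{S_1})\times(W_X\cap W_{S_2})$, since $W_X$ is generated by simple reflections. Minimal coset representatives therefore factor componentwise, and $w^{\ov Y_1}=w_1^{\min}$ because $W_{\ov Y_1} = W_{S_2}W_X$. The Bruhat order on a direct product of Coxeter groups is the product order, and lengths add.

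\textbf{Part (e).} We have $\cP_X = I(w_0^X)$ and $w_0^X = w_0^{\ov Y_1}w_0^{\ov Y_2}$, with inversions splitting by component. Also $\cP_{Y_i}=\cP_{\ov Y_i}$, since these are roots of $G_{S_i}$, and by (a) they are the same flag variety with matching Borel subgroups.

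\textbf{Part (c).} This is where I expect the main work, because the Richardson variety must be pushed through the isomorphism with the right indices. Under (b), $B = B_{S_1}B_{S_2}$, so
\[
  Bu.P_X \mapsto B_{S_1}u_1.P_{\ov Y_1}\times B_{S_2}u_2.P_{\ov Y_2},
\]
and this equals $Bu.P_{\ov Y_1}\times Bu.P_{\ov Y_2}$ because $u_2$ acts trivially on $\ov Y_1$. Taking closures gives $X_u\cong(\ov Y_1)_u\times(\ov Y_2)_u$, and similarly for $X^v$. Intersecting gives the product decomposition of $X_u^v$.

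For $Y_i\cap X_u^v$, identify $Y_i$ with $\ov Y_i\times\{\ka.P_{\ov Y_{3-i}}\}$. The intersection is then $(\ov Y_i)_u^v$, provided $\ka.P_{\ov Y_{3-i}}\in(\ov Y_{3-i})_u^v$. This membership follows from $v\le\ka\le u$ together with the product Bruhat order from (d), which gives $v_{3-i}\le\ka_{3-i}\le u_{3-i}$ componentwise modulo $W_X$.
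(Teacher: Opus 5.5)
Your strategy is sound, but one claim you use in several places is false. $G_{S_1}$ and $G_{S_2}$ do not commute, and $G_{S_1}$ does not act trivially on $\ov Y_2$. Both groups contain $T$, which does not commute with the root subgroups of the other factor, and $T$ has only finitely many fixed points on $\ov Y_2$. For example, take $G=\SL_2\times\SL_2$ and $P_X=B$: then $G_{S_1}=\SL_2\times T'$, with $T'$ the diagonal torus of the second factor, and it moves points of $\ov Y_2\cong\bP^1$. So your surjectivity argument in (b) fails as written. For $g_i\in G_{S_i}$ the image of $g_1g_2.P_X$ is $(g_1.P_{\ov Y_1},\, g_1g_2.P_{\ov Y_2})$, and the second entry is in general not $g_2.P_{\ov Y_2}$. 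The same false claim is your implicit justification in (a), that $G_{S_i}$ fixes $\ka.P_{\ov Y_{3-i}}$. It is also your justification in (c), that the diagonal $B$-orbit of $(u.P_{\ov Y_1},u.P_{\ov Y_2})$ is a product.

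The repair is easy. Let $H_i\subset G_{S_i}$ be the subgroup generated by the root subgroups $U_\al$, $\al\in\Phi_{S_i}$.
\begin{itemize}
\item By the commutator formula, $H_1$ and $H_2$ commute, each is normal in $G$, and $G=H_1H_2T$.
\item Since $H_i\subset P_{\ov Y_{3-i}}$, it acts trivially on $\ov Y_{3-i}$, so $G_{S_i}=H_iT$ fixes every $T$-fixed point of $\ov Y_{3-i}$.
\item Choosing $g_i\in H_i$ now gives surjectivity in (b). Alternatively, use completeness of $X$ and $\dim X=\dim\ov Y_1+\dim\ov Y_2$, which your weight count already gives.
\item Your derivation of (a) then goes through.
\item For (c), let $U_{S_i}$ be generated by the $U_\al$ with $\al\in\Phi^+_{S_i}$, so $B=TU_{S_1}U_{S_2}$. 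Since $(u.P_{\ov Y_1},u.P_{\ov Y_2})$ is $T$-fixed, its diagonal orbit is $U_{S_1}u.P_{\ov Y_1}\times U_{S_2}u.P_{\ov Y_2}=Bu.P_{\ov Y_1}\times Bu.P_{\ov Y_2}$. The same works for $B^-$.
\end{itemize}
Also, in (b), equal weight sets do not by themselves make the differential an isomorphism. You should note that the differential is the natural map $\mathfrak g/\mathfrak p_X\to\mathfrak g/\mathfrak p_{\ov Y_1}\oplus\mathfrak g/\mathfrak p_{\ov Y_2}$. Its kernel $(\mathfrak p_{\ov Y_1}\cap\mathfrak p_{\ov Y_2})/\mathfrak p_X$ is zero by the root-space decomposition.

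With these corrections your proof is complete, and it proceeds in a different order from the paper's. The paper starts from the Weyl group.
\begin{itemize}
\item It locates the $B_{S_i}$-fixed point of $Y_i$ at $\ka_{3-i}.P_X$ and checks which $s_\be$, $\be\in S_i$, stabilize it; this gives (d) and then (e).
\item It gets (a) from $G_{S_i}P_{\ov Y_i}=G$ and $G_{S_i}\cap P_{\ov Y_i}=P_{Y_i}$.
\item It gets (b) from $P_{\ov Y_1}\cap P_{\ov Y_2}=P_X$ plus a dimension count.
\item It gets (c) purely from dimensions: the equivariant maps send $X_u^v$ and $Y_i\cap X_u^v$ into the target Richardson varieties, which are irreducible of the same dimension by (d).
\end{itemize}
Your route puts geometry first: it deduces (a) from (b) and computes the images of $B$- and $B^-$-orbits directly in (c). So it does not need the irreducibility of $Y_i\cap X_u^v$ from \Proposition{flagsubrich}(a). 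Your tangent-space argument also makes explicit why the bijective morphism in (b) is an isomorphism in positive characteristic, a point the paper's proof leaves implicit.
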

\begin{proof}
  The assumptions imply that $\Phi = \Phi_{S_1} \coprod \Phi_{S_2}$, where
  $\Phi_{S_1}$ and $\Phi_{S_2}$ are orthogonal subsets of $\Phi$. It follows
  that the set of products $P_{\ov Y_i} = G_{S_{3-i}} P_X$ is a parabolic
  subgroup of $G$, see e.g.\ \cite[(8.2.3)]{springer:linear*1}. It also follows
  that the product map $W_{S_1} \times W_{S_2} \to W$ is an isomorphism of
  groups, which restricts to an isomorphism $(W_{S_1} \cap W_X) \times (W_{S_2}
  \cap W_X) \cong W_X$. Write $\ka = \ka_1 \ka_2$ with $\ka_i \in W_{S_i}$. Then
  the $B_{S_i}$-stable point of $Y_i$ is $1.P_{Y_i} = \ka_{3-i}.P_X$ by
  \Proposition{flagsubvar}(a). For $\be \in S_i$ we have $s_\be \ka_{3-i}.P_X =
  \ka_{3-i} s_\be.P_X$, hence $s_\be.P_{Y_i} = 1.P_{Y_i}$ holds if and only if
  $s_\be.P_X = 1.P_X$. We obtain $(W_{S_i})_{Y_i} = W_{S_i} \cap W_X = W_{S_i}
  \cap W_{\ov Y_i}$. Part (d) follows from this. In particular, we obtain $w_0^X
  = w_0^{\ov Y_1} w_0^{\ov Y_2} = w_0^{Y_1} w_0^{Y_2}$, which proves part (e).
  Part (a) follows because $G_{S_i} P_{\ov Y_i} = G$ and $G_{S_i} \cap P_{\ov
  Y_i} = G_{S_i} \cap P_X = P_{Y_i}$, see \cite[(8.4.3)]{springer:linear*1}.
  Similarly, the identity $P_{\ov Y_1} \cap P_{\ov Y_2} = P_X$ shows that the
  map of part (b) is injective, and surjectivity holds because $\dim(X) =
  \dim(\ov Y_1 \times \ov Y_2)$ by part (e). Finally, part (d) implies that
  $\dim(Y_i \cap X_u^v) = \dim((\ov Y_i)_u^v)$ and $\dim(X_u^v) = \dim ((\ov
  Y_1)_u^v \times (\ov Y_2)_u^v)$, which proves part (c).
\end{proof}

\begin{cor}\label{cor:rich-factor}%
  Let $X = G/P_X$ be a flag variety, let $v \leq \ka \leq u$ in $W^X$, and let
  $S_1, S_2 \subset \Delta$ be orthogonal subsets such that $u v^{-1} \in W_{S_1
  \cup S_2}$. Set $Y_i = G_{S_i}\ka.P_X$ for $i \in \{1,2\}$. Then $X_u^v \cong
  (Y_1 \cap X_u^v) \times (Y_2 \cap X_u^v)$ as $T$-varieties.
\end{cor}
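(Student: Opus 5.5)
Since $X_u^v$ is contained in its minimal embedding, I will first reduce to the case where the simple roots involved are all of $\Delta$, and then apply \Proposition{flagvar-decomp}.

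Set $S = S_1 \cup S_2$ and $Y = G_S \ka.P_X$. Because $v \leq \ka \leq u$, we have $u v^{-1} \in W_S$, and $u\ka^{-1}$, $v\ka^{-1}$ should also lie in $W_S$. I would check this by noting that $u \geq \ka \geq v$ with $u \in W_S v$. The Bruhat interval $[v,u]$ in $W^X$ consists of elements of $W_S v W_X$, so $\ka \in W_S v W_X$. Then the $T$-fixed points $u.P_X$, $\ka.P_X$, $v.P_X$ all lie in $Y$. If instead one argues with $T$-convexity, the point $\ka.P_X \in X_u^v$ gives $\ka \in W_S v W_X$ directly: the minimal embedding $G_S v.P_X \supset X_u^v$ by \Proposition{flagsubrich}(b), and hence $G_S \ka.P_X = G_S v.P_X$. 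By \Proposition{flagsubrich}(b) we get $X_u^v \subset Y$, and by \Proposition{flagsubrich}(a) $X_u^v = X_u^v \cap Y$ is a Richardson variety $Y_{u'}^{v'}$ in $Y$.

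Next I view $Y$ as a flag variety of the Levi group $G_S$, using \Proposition{flagsubvar}(a). Its set of simple roots $S = S_1 \coprod S_2$ is a disjoint union of orthogonal subsets. Write $Y_{u'}^{v'}$ with $u' , v' \in (W_S)^Y$ so that $u = u'\ka_S$ and $v = v'\ka_S$, using \Proposition{flagsubrich}(c); the element $\ka$ corresponds to $\ka' = \ka\ka_S^{-1}$, up to $W_X$, with $v' \leq \ka' \leq u'$. Apply \Proposition{flagvar-decomp} to $G_S$ acting on $Y$ with the decomposition $S = S_1 \coprod S_2$ and the element $\ka'$. Its flag subvarieties $(G_S)_{S_i}\ka'.P_Y = G_{S_i}\ka.P_X = Y_i$ agree with those in the statement. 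Part (c) of that proposition then gives an isomorphism $Y_{u'}^{v'} \cong (\ov Y_1)_{u'}^{v'} \times (\ov Y_2)_{u'}^{v'}$ together with isomorphisms $Y_i \cap Y_{u'}^{v'} \cong (\ov Y_i)_{u'}^{v'}$. Combining these yields $X_u^v \cong (Y_1\cap X_u^v)\times(Y_2\cap X_u^v)$.

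Finally I check $T$-equivariance. The maps in \Proposition{flagvar-decomp}(a),(b) are $g.P \mapsto g.P'$, so they are $G_S$-equivariant and in particular $T$-equivariant. The composite $X_u^v \to Y_1\cap X_u^v$ is therefore the $T$-equivariant map $g\ka_S.P_X \mapsto g'\ka_S.P_X$ obtained through $\ov Y_1 \cong Y_1$.

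The main obstacle is the bookkeeping in the identification of $\ka$ inside $Y$. One must confirm that $\ka$ (not only $\ka_S$) satisfies the hypothesis $v'\le\ka'\le u'$ up to $W_{Y}$, and that $Y_i$ defined via $\ka$ equals the flag subvariety used in \Proposition{flagvar-decomp}. For this I would use the order isomorphism of \Proposition{flagsubvar}(c) and the fact that \Proposition{flagvar-decomp} only depends on the point $\ka.P_X$.
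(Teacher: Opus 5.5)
Your proposal is correct and follows the paper's proof. You first show $X_u^v \subset Y = G_S\ka.P_X$ with $S = S_1 \cup S_2$ via \Proposition{flagsubrich}, and then apply \Proposition{flagvar-decomp}(c) to $G_S$ acting on $Y$, using $S = S_1 \coprod S_2$. The only difference is how you obtain $G_S\ka.P_X = G_S v.P_X$. Your $T$-fixed point argument ($\ka.P_X \in X_u^v \subset G_S v.P_X$) is valid and replaces the paper's observation $v_S \leq \ka_S \leq u_S = v_S$. Your first, interval-based justification, however, was only asserted, and it would itself need this order-preservation of $w \mapsto w_S$.
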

\begin{proof}
  Set $S = S_1 \cup S_2$. Since $v_S \leq \ka_S \leq u_S = v_S$, it follows from
  \Proposition{flagsubrich} that $X_u^v$ is a Richardson variety in $Y = G_S
  \ka.P_X$. The result therefore follows from \Proposition{flagvar-decomp}(c).
\end{proof}

\subsection{Indecomposable varieties}

Cominuscule Richardson varieties that satisfy the following definition play an
important role in this paper.

\begin{defn}
  An irreducible algebraic variety of positive dimension will be called
  \emph{indecomposable} if it is not isomorphic to a product of varieties of
  strictly smaller dimension.
\end{defn}

\begin{lemma}\label{lemma:indecomp}%
  Let $Z$ be an irreducible projective variety. If $\Pic(Z)=\Z$ or $A_1(Z)=\Z$,
  then $Z$ is indecomposable.
\end{lemma}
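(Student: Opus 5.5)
We argue by contraposition: assume $Z \cong Z_1 \times Z_2$ with $0 < \dim Z_i < \dim Z$, and show that both $\Pic(Z)$ and $A_1(Z)$ then have rank at least $2$, so neither is isomorphic to $\Z$. Since $Z$ is irreducible and projective, each $Z_i$ is irreducible and projective of positive dimension: it is the image of $Z$ under a projection, hence closed and irreducible, and $Z_i$ is projective because it embeds into $Z$ as a fiber $Z_i \times \{z\}$.

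For the Picard group, choose very ample line bundles $L_i$ on $Z_i$ and let $p_i : Z \to Z_i$ be the projections. Since $\dim Z_i > 0$, there is an irreducible curve $C_i \subset Z_i$, for instance a component of a general linear section. Fix points $z_i \in Z_i$ and consider the curves $C_1 \times \{z_2\}$ and $\{z_1\} \times C_2$ in $Z$. The pairing of $p_1^*L_1$ and $p_2^*L_2$ against these curves, via $\deg(L|_C)$, gives the matrix
\[
  \begin{pmatrix} \deg(L_1|_{C_1}) & 0 \\ 0 & \deg(L_2|_{C_2}) \end{pmatrix} ,
\]
whose diagonal entries are positive by ampleness. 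The off-diagonal entries vanish because the pullback of a line bundle to a curve mapping to a point is trivial.

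This nondegenerate $2\times 2$ matrix shows that $p_1^*L_1$ and $p_2^*L_2$ are $\Z$-linearly independent in $\Pic(Z)$. It also shows that the classes $[C_1\times\{z_2\}]$ and $[\{z_1\}\times C_2]$ are linearly independent in $A_1(Z)$, since the degree pairing $\Pic(Z)\otimes A_1(Z)\to\Z$ is well defined on rational equivalence classes (projection formula for proper pushforward to a point). Consequently $\Pic(Z)$ and $A_1(Z)$ each contain a free abelian subgroup of rank $2$. A subgroup of $\Z$ has rank at most $1$, so neither group is isomorphic to $\Z$, which contradicts the hypothesis.

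The only point requiring care is that the degree pairing is well defined on $A_1(Z)$ for a possibly singular projective $Z$, so that linear independence detected by degrees descends to Chow classes. This follows from Fulton's intersection theory: $\deg(c_1(L)\cap[C])$ respects rational equivalence. Everything else is elementary.
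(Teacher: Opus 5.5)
Your proof is correct and is essentially the paper's argument. The paper embeds each $Z_i \subset \bP^N$ and compares with $\bP^N \times \bP^N$: the pullbacks of the two generators of $\Pic(\bP^N \times \bP^N)$ are exactly your $p_1^*L_1$ and $p_2^*L_2$, and the pushforwards of $[C_1 \times z_2]$ and $[z_1 \times C_2]$ are independent in $A_1(\bP^N \times \bP^N)$, whereas your diagonal degree matrix makes the independence check on both sides explicit.
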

\begin{proof}
  Assume that $Z = Z_1 \times Z_2$, where $Z_1$ and $Z_2$ have positive
  dimension. Choose a point $p_i \in Z_i$, a curve $C_i \subset Z_i$, and an
  embedding $\phi_i : Z_i \subset \bP^N$, for $i \in \{1,2\}$. Then pushforward
  along $\phi_1 \times \phi_2 : Z \to \bP^N \times \bP^N$ maps $[C_1 \times
  p_2]$ and $[p_1 \times C_2]$ to linearly independent classes in $A_1(\bP^N
  \times \bP^N)$, and pullback along $\phi_1 \times \phi_2$ maps the generators
  of $\Pic(\bP^N \times \bP^N)$ to linearly independent classes in $\Pic(Z)$.
\end{proof}

\begin{defn}\label{defn:poset-connected}%
  Let $\cP$ be a partially ordered set. Two elements $\al_1, \al_2 \in \cP$ are
  \emph{comparable} if $\al_1 \leq \al_2$ or $\al_2 \leq \al_1$; otherwise
  $\al_1$ and $\al_2$ are \emph{incomparable}. Two subsets $A_1, A_2 \subset
  \cP$ are \emph{incomparable} if no element from $A_1$ is comparable to an
  element from $A_2$. A \emph{connected component} of $\cP$ is a maximal
  non-empty subset that is not a union of two non-empty incomparable subsets.
  The partially ordered set $\cP$ is \emph{connected} if it is a connected
  component of itself.
\end{defn}

Notice that \Definition{poset-connected} implies that the empty set is not
connected and has no connected components.

\begin{lemma}\label{lemma:PXdecomp}%
  Let $X = G/P_X$, and assume that $\cP_X = \cP_1 \coprod \cP_2$ is a union of
  two incomparable subsets. Let $S_i \subset \Delta$ be the subset of all simple
  roots that are connected to $\min(\cP_i)$ by a path in the Dynkin diagram of
  $\Phi$, for $i \in \{1,2\}$. Then $S_1 \cap S_2 = \emptyset$.
\end{lemma}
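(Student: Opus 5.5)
We argue by contradiction. Suppose some simple root $\de$ lies in $S_1 \cap S_2$. Then there are simple roots $\be_1 \in \min(\cP_1)$ and $\be_2 \in \min(\cP_2)$ joined by a path in the Dynkin diagram. Choose such $\be_1, \be_2$ and a path
\[
  \be_1 = \de_0, \de_1, \dots, \de_k = \be_2
\]
of minimal length. Note that $\be_1 \neq \be_2$, since $\cP_1 \cap \cP_2 = \emptyset$. Also $\de_0 \in \min(\cP_X)$.

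Next we use the standard fact that a sum of simple roots along a connected path in the Dynkin diagram is a root. Consider the chain
\[
  \al_j = \de_0 + \de_1 + \dots + \de_j, \qquad j = 0, \dots, k.
\]
Each $\al_j$ is a positive root, because the $\de_i$ form a path (a chain of the Dynkin diagram, with each $\de_{j}$ not orthogonal to $\al_{j-1}$). Adding $\de_j$ to $\al_{j-1}$ gives a root: indeed $\langle \al_{j-1}, \de_j^\vee \rangle = \langle \de_{j-1}, \de_j^\vee \rangle < 0$, since the path has no repeated vertices and, the diagram being a forest, $\de_j$ is adjacent only to $\de_{j-1}$ among earlier vertices. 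Since $\al_j \geq \de_0 = \be_1 \in \min(\cP_X)$, equation \eqn{min2PX} gives $\al_j \in \cP_X$ for all $j$.

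Now $\al_0 = \be_1 \in \cP_1$. Moreover $\al_k \geq \be_2$, so $\al_k$ is comparable to $\be_2 \in \cP_2$. Since $\cP_1$ and $\cP_2$ are incomparable, $\al_k \in \cP_2$; otherwise $\al_k \in \cP_1$ would be comparable with an element of $\cP_2$. But $\al_k \geq \al_0 = \be_1 \in \cP_1$, so $\al_k$ is also comparable with an element of $\cP_1$. This is a contradiction, since $\al_k$ must lie in $\cP_1$ or $\cP_2$. Hence no such path exists, and $S_1 \cap S_2 = \emptyset$.

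The only real obstacle is checking that the path sums $\al_j$ are roots. For this we rely on the Dynkin diagram being a tree, so a shortest path is simple, together with the inequality $\langle \al_{j-1}, \de_j^\vee\rangle < 0$, which forces $\al_{j-1} + \de_j \in \Phi$. Minimality of the path is in fact unnecessary beyond ensuring that it is simple.
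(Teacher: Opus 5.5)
Your proof is correct and follows the same strategy as the paper: exhibit a root that lies above a minimal element of $\cP_1$ and a minimal element of $\cP_2$. Such a root belongs to $\cP_X$ by \eqn{min2PX}, which contradicts incomparability. The only difference is the choice of that root. The paper takes the highest root of the connected component $\Delta'$ of the Dynkin diagram contained in $S_1 \cap S_2$; it dominates every simple root of $\Delta'$, so this avoids your check that path sums are roots.
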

\begin{proof}
  If $S_1 \cap S_2 \neq \emptyset$, then $S_1 \cap S_2$ contains a connected
  component $\Delta' \subset \Delta$ of the Dynkin diagram, hence both $\cP_1$
  and $\cP_2$ have non-empty intersection with $\Delta'$. This contradicts the
  assumption that $\cP_1$ and $\cP_2$ are incomparable, as \eqn{min2PX} implies
  that the highest root of $\Span_\Z(\Delta') \cap \Phi$ belongs to either
  $\cP_1$ or $\cP_2$.
\end{proof}

\begin{cor}\label{cor:indecomp-flag}%
  Let $X = G/P_X$ be a flag variety.\smallskip

  \titem{a} If $\# \min(\cP_X) = 1$, then $X$ is indecomposable.\smallskip

  \titem{b} If $X$ is indecomposable, then $\cP_X$ is connected.
\end{cor}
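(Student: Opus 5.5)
Part (a) follows directly from \Lemma{indecomp}. If $\#\min(\cP_X) = 1$, then the Schubert divisor classes $\{[X^{s_\be}] \mid \be \in \min(\cP_X)\}$ form a $\Z$-basis of $\Pic(X)$ consisting of a single element, so $\Pic(X) \cong \Z$. Since $X$ is an irreducible projective variety, \Lemma{indecomp} shows that $X$ is indecomposable. The positivity of $\dim(X)$ is automatic here, because $\min(\cP_X) \neq \emptyset$ forces $\cP_X \neq \emptyset$, and $\dim(X) = \#\cP_X$.

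For part (b) I will argue by contraposition: assuming $\cP_X$ is not connected, I will exhibit $X$ as a product of two flag varieties of positive dimension. There are two cases.

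\emph{Case $\cP_X = \emptyset$.} Then $\dim(X) = 0$, so $X$ is not indecomposable by definition, since indecomposable varieties have positive dimension.

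\emph{Case $\cP_X \neq \emptyset$ and disconnected.} Then $\cP_X = \cP_1 \coprod \cP_2$ with $\cP_1, \cP_2$ non-empty and incomparable. Let $S_1, S_2$ be as in \Lemma{PXdecomp}, so that $S_1 \cap S_2 = \emptyset$. Each $S_i$ is a union of connected components of the Dynkin diagram. Set $S_1' = S_1$ and $S_2' = \Delta \ssm S_1$. These are disjoint orthogonal subsets with $\Delta = S_1' \coprod S_2'$, and $S_2 \subset S_2'$. \Proposition{flagvar-decomp}(b) then gives $X \cong \ov Y_1 \times \ov Y_2$, and part (e) of that proposition gives $\dim(\ov Y_i) = \#(\Phi_{S_i'} \cap \cP_X)$.

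It remains to check that both factors have positive dimension. Every root lies in the span of a single connected component of the Dynkin diagram. By \eqn{min2PX}, each $\al \in \cP_X$ lies above some element of $\min(\cP_X)$ in the same component. Hence $\cP_1 \subset \Phi_{S_1}$, since a root above $\min(\cP_1)$ lies in $\Phi_{S_1}$, and similarly for $\cP_2$. The one point to verify is that $\min(\cP_i) = \min(\cP_X) \cap \cP_i$; this holds because $\cP_1$ and $\cP_2$ are incomparable. It follows that $\cP_1 \subset \Phi_{S_1'} \cap \cP_X$ and $\cP_2 \subset \Phi_{S_2'} \cap \cP_X$. Both sets are non-empty, so both factors have positive dimension, and $X$ is therefore decomposable.
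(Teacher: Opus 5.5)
Your proof is correct and follows the paper's own argument. Part (a) uses $\Pic(X)\cong\Z$ together with \Lemma{indecomp}, and part (b) combines \Lemma{PXdecomp} with \Proposition{flagvar-decomp}(b),(e) to split $X$ into two factors of positive dimension. You also supply details the paper leaves implicit: the case $\cP_X=\emptyset$, and the check that $\min(\cP_i)=\min(\cP_X)\cap\cP_i\subset\Delta$.
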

\begin{proof}
  This follows from \Lemma{indecomp}, \Lemma{PXdecomp}, and
  \Proposition{flagvar-decomp}.
\end{proof}

\begin{conj}
  The flag variety $X = G/P_X$ is indecomposable if and only if $\cP_X$ is
  connected.
\end{conj}

\begin{remark}
  Let $X = G/P_X$ be a flag variety, and let $X_u^v$ be the Richardson variety
  defined by $v \leq u \in W^X$. If $X_u^v$ is indecomposable, then the support
  of $u v^{-1}$ is a connected subset of the Dynkin diagram of $\Phi$ by
  \Corollary{rich-factor}. The converse holds when $X$ is cominuscule, but is
  not true in general. For example, for $X = \Fl(4)$ we have $X_{4132}^{1423} =
  \{ V_\bull \in X \mid V_2 = \langle e_1, e_4 \rangle \} \cong \bP^1 \times
  \bP^1$.
\end{remark}

% !TeX root=comrich.tex

\section{Cominuscule Richardson varieties}\label{sec:comin}

In this section we interpret the results of \Section{decomp} for cominuscule
flag varieties. We prove that flag subvarieties of cominuscule flag varieties
are again cominuscule, and show that any cominuscule Richardson variety is a
product of indecomposable cominuscule Richardson varieties.

\subsection{Cominuscule flag varieties}\label{sec:comin-flagvar}%

Given a simple root $\be \in \Delta$, we let $\om_\be^\vee$ denote the
corresponding fundamental coweight. We have $\al = \sum_{\be \in \Delta}
(\al,\om_\be^\vee) \be$ for $\al \in \Phi$.

\begin{defn}\label{defn:comin}%
  The flag variety $X = G/P_X$ is called \emph{cominuscule} if, for each root
  $\al \in \Phi$, we have
  \[
    \sum_{\ga \in \min(\cP_X)} (\al, \om_\ga^\vee) \ \in \ \{\pm 1, 0\} \,.
  \]
\end{defn}

An indecomposable cominuscule flag variety is the same as what is called a
cominuscule flag variety in many other sources; see
\Corollary{indecomp-comin-classify} below. A representative collection of
indecomposable cominuscule flag varieties is shown in \Table{labsets}. Notice
that \Definition{comin} depends on the group $G$ acting on $X$. For example, the
maximal orthogonal Grassmannian $\OG(m,2m) \cong \OG(m-1,2m-1)$ is cominuscule
for the action of $\SO(2m)$, but not for the action of $\SO(2m-1)$.

% !TeX root=comrich.tex

\def\vmm#1{\vspace{#1mm}}
\begin{table}[ht]
\caption{Partially ordered sets $\cP_X$ of indecomposable cominuscule flag varieties. In each case the partial order is given by $\al' \leq \al$ if and only if $\al'$ is weakly northwest of $\al$. The numbers in the boxes indicate the labeling $\delta_X : \cP_X \to \Delta$ by simple roots.}
\label{tab:labsets}%
\begin{tabular}{|c|c|}
\hline%%%%%%%%%%%%%%%%%%%%%%%%%%%%%%%%%%%%%%%%%%%%%%%%%%%%%%%%%%%%%%%%%%%%%
&\vmm{-2}\\
Grassmannian $\Gr(3,7)$ of type A & Max.\ orthog.\ Grassmannian $\OG(6,12)$
\\
&\vmm{-3}\\
\pic{1}{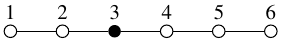} &\\
&\\
&\vmm{-3}\\
$\tableau{12}{
3 & 4 & 5 & 6 \\
2 & 3 & 4 & 5 \\
1 & 2 & 3 & 4
}$
&\vmm{-27}\\
&\pic{1}{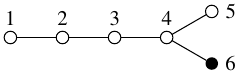}
\\ &
$\tableau{12}{
6 & 4 & 3 & 2 & 1 \\
  & 5 & 4 & 3 & 2 \\
  &   & 6 & 4 & 3 \\
  &   &   & 5 & 4 \\
  &   &   &   & 6
}$
\\ &
\vmm{-2}\\
\hline%%%%%%%%%%%%%%%%%%%%%%%%%%%%%%%%%%%%%%%%%%%%%%%%%%%%%%%%%%%%%%%%%%%%%
&\vmm{-2}\\
Lagrangian Grassmannian $\LG(6,12)$ & Cayley Plane $E_6/P_6$
\\
&\vmm{-3}\\
\pic{1}{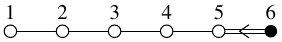} & \\
&\vmm{-2}\\
$\tableau{12}{
6 & 5 & 4 & 3 & 2 & 1 \\
  & 6 & 5 & 4 & 3 & 2 \\
  &   & 6 & 5 & 4 & 3 \\
  &   &   & 6 & 5 & 4 \\
  &   &   &   & 6 & 5 \\
  &   &   &   &   & 6
}$
&\vmm{-36}\\
& \pic{1}{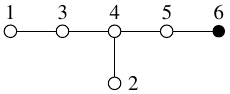} \\
&\vmm{-2}\\
&
$\tableau{12}{
6 & 5 & 4 & 2 \\
  &   & 3 & 4 & 5 & 6 \\
  &   & 1 & 3 & 4 & 5 \\
  &   &   &   & 2 & 4 & 3 & 1
}$
\\
& \vmm{-2}\\
\hline%%%%%%%%%%%%%%%%%%%%%%%%%%%%%%%%%%%%%%%%%%%%%%%%%%%%%%%%%%%%%%%%%%%%%
& \vmm{-2}\\
Even quadric $Q^{10} \subset \bP^{11}$ & Freudenthal variety $E_7/P_7$
\\
&\vmm{-3}\\
\pic{1}{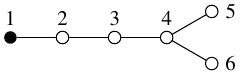} &
\\
&\vmm{-3}\\
$\tableau{12}{
1 & 2 & 3 & 4 & 5 \\
  &   &   & 6 & 4 & 3 & 2 & 1
}$
& \\
&\\
\hhline{-~}%%%%%%%%%%%%%%%%%%%%%%%%%%%%%
&\vmm{-2}\\
Odd quadric $Q^{11} \subset \bP^{12}$ &
\\
&\vmm{-3}\\
\pic{1}{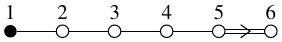} & \\
&\vmm{-1}\\
$\tableau{12}{
1 & 2 & 3 & 4 & 5 & 6 & 5 & 4 & 3 & 2 & 1
}$
& \vmm{-52}\\
& \pic{1}{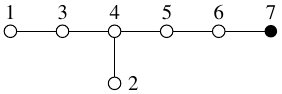} \\
&\vmm{-3}\\
&
$\tableau{12}{
7 & 6 & 5 & 4 & 3 & 1 \\
  &   &   & 2 & 4 & 3 \\
  &   &   &   & 5 & 4 & 2 \\
  &   &   &   & 6 & 5 & 4 & 3 & 1 \\
  &   &   &   & 7 & 6 & 5 & 4 & 3 \\
  &   &   &   &   &   &   & 2 & 4 \\
  &   &   &   &   &   &   &   & 5 \\
  &   &   &   &   &   &   &   & 6 \\
  &   &   &   &   &   &   &   & 7
}$
\vmm{-2}\\
& \\
\hline%%%%%%%%%%%%%%%%%%%%%%%%%%%%%%%%%%%%%%%%%%%%%%%%%%%%%%%%%%%%%%%%%%%%%
\end{tabular}
\end{table}

\begin{lemma}\label{lemma:indecomp-comin}%
  Let $X = G/P_X$ be a cominuscule flag variety. The following are equivalent:\,
  {\rm(a)} $X$ is indecomposable.\, {\rm(b)} $\cP_X$ is connected.\, {\rm(c)}
  $\# \min(\cP_X) = 1$.
\end{lemma}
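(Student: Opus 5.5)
We will prove the cycle of implications (c)$\Rightarrow$(a)$\Rightarrow$(b)$\Rightarrow$(c). The first two implications are already in hand. \Corollary{indecomp-flag}(a) says that $\#\min(\cP_X)=1$ implies that $X$ is indecomposable, and \Corollary{indecomp-flag}(b) says that an indecomposable $X$ has connected $\cP_X$. So only (b)$\Rightarrow$(c) remains, and this is where the cominuscule hypothesis of \Definition{comin} will be used.

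For (b)$\Rightarrow$(c), we argue by contraposition. Suppose $\min(\cP_X)$ contains two distinct simple roots $\ga_1 \neq \ga_2$. For $\ga \in \min(\cP_X)$, define
\[
  \cP_\ga = \{\al \in \cP_X \mid (\al,\om_\ga^\vee) \geq 1\} \,.
\]
By \eqn{min2PX}, every $\al \in \cP_X$ satisfies $\al \geq \be$ for some $\be \in \min(\cP_X)$. Hence $(\al,\om_\be^\vee) \geq 1$, so $\al$ lies in at least one set $\cP_\ga$. The cominuscule condition says that $\sum_{\ga \in \min(\cP_X)} (\al,\om_\ga^\vee) \in \{0,\pm1\}$. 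All these coefficients are non-negative for $\al \in \Phi^+$, and at least one of them is positive. Therefore exactly one coefficient equals $1$ and the others vanish, so $\cP_X = \coprod_{\ga} \cP_\ga$ is a disjoint union.

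It remains to check that the sets $\cP_\ga$ are pairwise incomparable. Suppose $\al \leq \al'$ with $\al \in \cP_{\ga_1}$ and $\al' \in \cP_{\ga_2}$. Since $\al'-\al$ is a non-negative combination of simple roots, we get $(\al',\om_{\ga_1}^\vee) \geq (\al,\om_{\ga_1}^\vee) = 1$. Then $\al' \in \cP_{\ga_1} \cap \cP_{\ga_2}$, contradicting disjointness. So $\cP_X$ is the union of the two non-empty incomparable subsets $\cP_{\ga_1}$ and $\bigcup_{\ga \neq \ga_1} \cP_\ga$. Both are non-empty because $\ga \in \cP_\ga$. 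Thus $\cP_X$ is disconnected, which proves (b)$\Rightarrow$(c).

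Each step is a routine check. The only point needing care is the observation that every element of $\cP_X$ has a positive coefficient on some excluded simple root, which is exactly what \eqn{min2PX} supplies.
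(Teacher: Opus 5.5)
Your proof is correct, but for (b)$\Rightarrow$(c) it takes a different route from the paper. The paper first notes that a connected $\cP_X$ forces $\min(\cP_X)$ into a single connected component $\Delta'$ of the Dynkin diagram, which implicitly uses that roots supported on different components are incomparable. It then applies \Definition{comin} only to the highest root of $\Span_\Z(\Delta')\cap\Phi$. Since that root has a positive coefficient on every simple root of $\Delta'$, the sum in \Definition{comin} is at least $\#\min(\cP_X)$, so $\#\min(\cP_X)\le 1$.

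You instead apply the cominuscule condition to every root of $\cP_X$. Together with \eqn{min2PX}, this gives an explicit partition $\cP_X=\coprod_\ga \cP_\ga$, and the blocks are incomparable because coefficients are monotone in the root order. Your argument avoids both the reduction to a Dynkin component and the full-support property of the highest root. It also proves slightly more: every $\al\in\cP_\ga$ lies above some element of $\min(\cP_X)$, which must be $\ga$ by disjointness, so each $\cP_\ga$ has the unique minimum $\ga$. Hence the sets $\cP_\ga$ are exactly the connected components of $\cP_X$, and there are $\#\min(\cP_X)$ of them. The paper's argument is shorter, given the standard highest-root fact.

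One small omission: the contrapositive of (b)$\Rightarrow$(c) also has to cover $\min(\cP_X)=\emptyset$. In that case $\cP_X=\emptyset$ by \eqn{min2PX}, and the empty set is not connected under \Definition{poset-connected}. This is immediate, but you should state it.
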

\begin{proof}
  In view of \Corollary{indecomp-flag}, it suffices to show that (b) implies
  (c). If $\cP_X$ is connected, then $\min(\cP_X)$ is contained in a single
  connected component $\Delta' \subset \Delta$ of the Dynkin diagram. By letting
  $\al$ be the highest root of $\Span_\Z(\Delta') \cap \Phi$ in
  \Definition{comin}, we obtain $\# \min(\cP_X) = 1$, as required.
\end{proof}

A simple root $\ga \in \Delta$ is called \emph{cominuscule} if $(\al,
\om_\ga^\vee) \in \{\pm 1, 0\}$ for all $\al \in \Phi$. A non-trivial linear
algebraic group will be called \emph{simple} if it is semi-simple and its root
system is irreducible.

\begin{cor}\label{cor:indecomp-comin-classify}%
  The flag variety $X = G/P_X$ is an indecomposable cominuscule flag variety if
  and only if $P_X$ is a maximal parabolic subgroup of $G$ defined by a
  cominuscule simple root. In this case there exists a simple subgroup $G'
  \subset G$ such that $G'$ acts transitively on $X$, and $X$ is cominuscule for
  this action.
\end{cor}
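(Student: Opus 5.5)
We prove both directions and then construct $G'$.

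\emph{Indecomposable cominuscule implies maximal parabolic with a cominuscule root.} Assume $X$ is indecomposable and cominuscule. By \Lemma{indecomp-comin} we have $\min(\cP_X) = \{\ga\}$ for a single simple root $\ga$. Since $\min(\cP_X)$ is the set of simple roots $\be$ with $s_\be \in W^X$, the Weyl group $W_X$ is generated by $\{s_\be \mid \be \in \Delta \ssm \{\ga\}\}$. Hence $P_X$ is the maximal parabolic subgroup corresponding to $\ga$. With $\min(\cP_X) = \{\ga\}$, \Definition{comin} says exactly that $(\al,\om_\ga^\vee) \in \{\pm1,0\}$ for all $\al \in \Phi$, so $\ga$ is cominuscule.

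\emph{Maximal parabolic with a cominuscule root implies indecomposable cominuscule.} Conversely, suppose $P_X$ is the maximal parabolic subgroup defined by a cominuscule simple root $\ga$. Then $\min(\cP_X) = \{\ga\}$. The sum in \Definition{comin} reduces to $(\al,\om_\ga^\vee)$, which lies in $\{\pm1,0\}$, so $X$ is cominuscule. Indecomposability then follows from \Corollary{indecomp-flag}(a), or equivalently from \Lemma{indecomp-comin}.

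\emph{The simple subgroup $G'$.} Let $\Delta' \subset \Delta$ be the connected component of the Dynkin diagram containing $\ga$, and take $G'$ to be the derived subgroup of the Levi subgroup $G_{\Delta'}$. This group is semisimple with irreducible root system $\Phi_{\Delta'}$, so it is simple.

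To see that $G'$ acts transitively, apply \Proposition{flagvar-decomp} with $S_1 = \Delta'$ and $S_2 = \Delta \ssm \Delta'$. These subsets are orthogonal because $\Delta'$ is a component of the Dynkin diagram. Since $S_2 \cap \min(\cP_X) = \emptyset$, we have $G_{S_2} \subset P_X$, so $P_{\ov Y_1} = G_{S_2}P_X = P_X$. Part (a) of that proposition then gives $Y_1 = G_{\Delta'}.P_X \cong \ov Y_1 = X$, and so $G_{\Delta'}$ acts transitively on $X$. The central torus of $G_{\Delta'}$ lies in $T \subset P_X$ and acts trivially on the orbit of $1.P_X$ modulo stabilizers. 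Since $G_{\Delta'}$ is the product of $G'$ and this central torus, $G'$ is also transitive.

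It remains to check that $X$ is a flag variety of $G'$ and is cominuscule for this action. The stabilizer of $1.P_X$ in $G'$ is $G' \cap P_X$, the maximal parabolic subgroup of $G'$ attached to $\ga$; here we use the Borel subgroup $B \cap G'$ and the maximal torus $(T \cap G')^\circ$. The root system of $G'$ is $\Phi_{\Delta'} \subset \Phi$, so $\ga$ remains a cominuscule simple root. The forward direction then shows that $X$ is cominuscule for $G'$.

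\emph{Main obstacle.} The step requiring the most care is separability of the orbit map $G' \to X$, which the paper's definition of a flag variety demands. We argue as follows. The map $G_{\Delta'} \to X$ is separable, because it is the restriction of the separable map $G \to X$ to a subgroup whose differential surjects onto $T_X(1.P_X)$. This surjectivity holds since the tangent weights $-\cP_X$ are roots of $\Phi_{\Delta'}$; indeed, by \eqn{min2PX} every root $\al \ge \ga$ lies in $\Span_\Z(\Delta')$. The root subgroups for these negative roots lie in $G'$, so the differential of $G' \to X$ is already surjective at the identity. Hence $G' \to X$ is separable.
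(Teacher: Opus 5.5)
Your proof is correct and follows the paper's argument. The first claim comes from \Lemma{indecomp-comin}, where you spell out that $\min(\cP_X)=\{\ga\}$ means $P_X$ is the maximal parabolic of a cominuscule $\ga$; for the second claim, the paper likewise applies \Proposition{flagvar-decomp} with $S_1$ the Dynkin component of $\ga$ and takes $G'$ to be the commutator subgroup of $G_{S_1}$. Your extra details (absorbing the central torus into $P_X$, and checking separability of $G'\to X$ via the root subgroups for $-\al$, $\al\in\cP_X$) fill in points the paper leaves implicit. The only slip is that cominuscularity for $G'$ follows from your second (``converse'') direction, not the forward one.
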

\begin{proof}
  The first claim follows from \Lemma{indecomp-comin}. For the second claim, let
  $S_1 \subset \Delta$ be the connected component in the Dynkin diagram of the
  unique cominuscule simple root in $\min(\cP_X)$, and set $S_2 = \Delta \ssm
  S_1$. Then $X$ is a flag variety of $G_{S_1}$ by \Proposition{flagvar-decomp}.
  Now let $G' = (G_{S_1},G_{S_1})$ be the commutator subgroup.
\end{proof}

\begin{cor}\label{cor:comin-factor}%
  A flag variety is cominuscule if and only if it is a product of indecomposable
  cominuscule flag varieties.
\end{cor}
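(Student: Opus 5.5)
The plan is to prove both directions from \Proposition{flagvar-decomp} and \Lemma{PXdecomp}. The ``if'' direction is short. Suppose $X \cong \ov Y_1 \times \dots \times \ov Y_r$ comes from a decomposition $\Delta = S_1 \coprod \dots \coprod S_r$ into pairwise orthogonal subsets, with each $\ov Y_i$ an indecomposable cominuscule flag variety. Part (e) of \Proposition{flagvar-decomp} gives $\min(\cP_X) = \coprod_i \min(\cP_{\ov Y_i})$, and each $\min(\cP_{\ov Y_i})$ lies in $S_i$. Take any root $\al \in \Phi$. It lies in a single $\Phi_{S_i}$, so only $\ga \in S_i$ contribute to the sum in \Definition{comin}. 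By \Lemma{indecomp-comin}, $\min(\cP_{\ov Y_i})$ consists of a single root, so the sum is a single coefficient $(\al,\om_\ga^\vee)$. That coefficient lies in $\{\pm1,0\}$ because $\ov Y_i$ is cominuscule. One point needs care: a product of abstract indecomposable cominuscule varieties $G_i/P_i$ has to be realized as a flag variety of $G = \prod G_i$. In that case $\Delta = \coprod \Delta_i$ is automatically orthogonal, so the computation above applies verbatim.

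The ``only if'' direction is by induction on $\dim X$, or on $\#\min(\cP_X)$. Let $X$ be cominuscule. If $\cP_X$ is connected, then $X$ is indecomposable by \Lemma{indecomp-comin} and we are done. If $\cP_X$ is empty, $X$ is a point, which is the empty product. Otherwise, write $\cP_X = \cP_1 \coprod \cP_2$ as a union of non-empty incomparable subsets. Let $S_1$ be the set of simple roots connected to $\min(\cP_1)$ in the Dynkin diagram, and set $S_2 = \Delta \ssm S_1$. Then $S_1$ is a union of Dynkin components, so $S_1$ and $S_2$ are orthogonal, and $S_2$ contains $\min(\cP_2)$ by \Lemma{PXdecomp}. \Proposition{flagvar-decomp}(b) gives $X \cong \ov Y_1 \times \ov Y_2$ with $\cP_{\ov Y_i} = \Phi_{S_i} \cap \cP_X$. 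Both factors have smaller dimension because both $\cP_{\ov Y_i}$ are non-empty.

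The remaining step is to check that each $\ov Y_i$ is cominuscule for the $G$-action. Since $P_{\ov Y_i} = G_{S_{3-i}} P_X$, we get $\min(\cP_{\ov Y_i}) = \min(\cP_X) \cap S_i$. For $\al \in \Phi$, the sum over this smaller set either equals the full sum in \Definition{comin} (when $\al \in \Phi_{S_i}$) or is zero (when $\al \in \Phi_{S_{3-i}}$), so it lies in $\{\pm1,0\}$. Induction then finishes the proof.

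The main obstacle is bookkeeping rather than substance. We have to make sure ``cominuscule'' is being tested for the correct group at each step: the factors are flag varieties of $G$ with $P_{\ov Y_i} \supset G_{S_{3-i}}$, and by \Corollary{indecomp-comin-classify} they can be viewed as cominuscule for a simple subgroup. We also have to identify $\min(\cP_{\ov Y_i})$ correctly, via $\cP_{\ov Y_i} = \Phi_{S_i} \cap \cP_X$ together with \eqn{min2PX}.
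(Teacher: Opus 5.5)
Your proposal is correct and follows the same route as the paper. The paper proves the decomposition direction from \Proposition{flagvar-decomp} and \Lemma{indecomp-comin}, and the converse by regarding the product as a cominuscule flag variety of $G_1\times\cdots\times G_m$; you additionally spell out what the paper leaves implicit, namely the splitting via \Lemma{PXdecomp}, the induction, the check that each factor $\ov Y_i$ is still cominuscule for $G$, and the convention that a point is the empty product.
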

\begin{proof}
  It follows from \Proposition{flagvar-decomp} and \Lemma{indecomp-comin} that
  any cominuscule flag variety is a product of indecomposable cominuscule
  factors. On the other hand, if $X = X_1 \times \dots \times X_m$ is any
  product of cominuscule flag varieties $X_i = G_i/P_{X_i}$, then $X$ is a
  cominuscule flag variety of $G_1 \times \dots \times G_m$.
\end{proof}

\subsection{Shapes of boxes}

Let $X$ be a cominuscule flag variety and let $\cP_X \subset \Phi^+$ be the
associated partially ordered set. A lower order ideal $\la \subset \cP_X$ will
be called a \emph{straight shape} in $\cP_X$. Straight shapes generalize the
Young diagrams used for Grassmannians of type A; for this reason, the roots in
$\cP_X$ will be called \emph{boxes}. The boxes in $\cP_X$ will be designated as
\emph{long} or \emph{short} according to their lengths as roots in $\Phi$. By
convention, all roots in simply-laced components of $\Phi$ are considered long.
When $X$ is indecomposable, all boxes of $\cP_X$ are long unless $X$ is a
Lagrangian Grassmannian or an odd quadric. A box in $\cP_{\LG(n,2n)}$ is short
if and only if it is not on the southwest diagonal, while the only short box in
$\cP_{Q^{2n+1}}$ is the middle box (see \Table{labsets}). Given straight shapes
$\mu \subset \la$ in $\cP_X$, we let $\la/\mu$ denote the set of boxes in $\la$
that are not in $\mu$. A subset $\theta \subset \cP_X$ of the form $\theta =
\la/\mu$ is called a \emph{skew shape}. Equivalently, $\theta$ is a convex
subset of $\cP_X$, that is, $\al \in \theta$ holds whenever $\al' < \al < \al''$
and $\al', \al'' \in \theta$.

Parts (a) and (b) of the following result were proved by Proctor
\cite{proctor:bruhat}, and part (c) was proved by Stembridge
\cite{stembridge:fully}. Notice that the general statement follows easily from
the indecomposable case via \Proposition{flagvar-decomp}.

\begin{thm}\label{thm:comin-poset}%
  Let $X$ be a cominuscule flag variety.\smallskip

  \titem{a} The Bruhat order on $W^X$ is a distributive lattice that agrees
  with the weak left Bruhat order.\smallskip

  \titem{b} The map $u \mapsto I(u)$ is a bijection from $W^X$ to the set
  of straight shapes in $\cP_X$. We have $v \leq u$ in $W^X$ if and only if
  $I(v) \subset I(u)$.
  \smallskip

  \titem{c} All elements of $W^X$ are fully commutative.
\end{thm}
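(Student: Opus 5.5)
The plan is first to reduce to the indecomposable case and then use the root-theoretic characterization of cominuscule parabolics. Write $X$ as a product of indecomposable cominuscule factors, as in \Corollary{comin-factor}. By \Proposition{flagvar-decomp}(d),(e), $W^X$ is the product of the $W^{\ov Y_i}$ as Bruhat orders, $\cP_X$ is the disjoint union of the $\cP_{\ov Y_i}$, and inversion sets split accordingly. Straight shapes in a disjoint union of incomparable posets are exactly tuples of straight shapes, and products of distributive lattices are distributive. Full commutativity of $w = w_1 w_2$ with $w_i \in W_{S_i}$ and $S_1, S_2$ orthogonal reduces to that of the $w_i$. So we may assume $P_X$ is maximal, defined by a cominuscule simple root $\ga$, with $\Phi$ irreducible. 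Then $\cP_X = \{\al \in \Phi^+ \mid (\al,\om_\ga^\vee) = 1\}$, by \eqn{min2PX} and \Definition{comin}.

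For (b), the basic fact is that for $u \in W^X$ we have $I(u) \subset \cP_X$: if $\al \in \Phi^+$ has $(\al,\om_\ga^\vee)=0$, then $\al \in \Phi_{X}$ and $u.\al>0$ by minimality. Next I would show that $I(u)$ is a lower order ideal. Take $\be$ simple with $\al - \be \in \cP_X$ and $\al \in I(u)$. The key point is that $\al - \be$ and $\be$ cannot both be non-inversions, because $u.\al = u.(\al-\be) + u.\be$ is a sum, and inversion sets are biclosed. If $\be \notin \cP_X$, then $u.\be>0$, so $\al-\be \in I(u)$.

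For bijectivity I would induct on $|\la|$. Given a straight shape $\la$, pick a maximal box $\al \in \la$. Write $\la' = \la \ssm\{\al\}$, and let $u'$ correspond to $\la'$ by induction. I want $s_{u'.\al}$ with $u'.\al$ a negative-of-simple, that is, $u' . \al = \be'$ should be simple after the appropriate sign. Here I would use the cominuscule condition heavily. For $\al \in \cP_X$, the pairing $\langle \al',\al^\vee\rangle$ for other boxes lies in $\{-1,0,1\}$ (up to the long/short factor), which forces left multiplication by a simple reflection to add exactly one root to the inversion set. This gives $I(s u') = I(u') \cup \{u'^{-1}.\be\}$, and that added root must be $\al$.

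Injectivity follows since an element is determined by its inversion set. The statement that Bruhat order equals inclusion follows from the standard fact that the left weak order is inclusion of inversion sets, combined with the claim that every Bruhat cover $v \lessdot u$ in $W^X$ is a left weak cover; this gives (a) as well. For that claim I would use that $u = s_\de v$ with $\ell(u) = \ell(v)+1$. Cominuscularity gives that all reflections relating elements of $W^X$ change the inversion set by a convex chain. I expect this step, showing that Bruhat covers in $W^X$ are left weak covers, to be the main obstacle. The plan is to use the $\Z$-grading by $\om_\ga^\vee$, under which $\mathfrak{u}_X^-$ is abelian, and to argue with $T$-stable curves in $X$ being lines. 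Once Bruhat order equals inclusion of ideals, distributivity is immediate, since ideals are closed under union and intersection.

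For (c), a braid relation $sts\cdots$ of length at least $3$ in a reduced word of $u$ would produce, via the recursive construction above, three boxes $\al_1<\al_2<\al_3$ in $I(u)$ lying in a rank-2 subsystem spanned by two simple roots. That would give a root with $\om_\ga^\vee$-coefficient at least $2$, contradicting cominuscularity.
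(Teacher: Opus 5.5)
The paper does not prove this theorem itself. It reduces to the indecomposable case via \Proposition{flagvar-decomp}, as you do, and then cites Proctor \cite{proctor:bruhat} for (a), (b) and Stembridge \cite{stembridge:fully} for (c). Your self-contained root-theoretic route is viable, and the routine parts are fine: $I(u)\subset\cP_X$, the ideal property, injectivity, and distributivity once Bruhat order is identified with inclusion. Part (c) is also fine, provided ``braid relation'' means a full factor $sts\cdots$ of length $m(s,t)\geq 3$; then $b^{-1}.\be_s$, $b^{-1}.\be_t$, $b^{-1}.(\be_s+\be_t)$ are inversions, where $b$ is the suffix. A shorter alternating factor is harmless, e.g.\ $w_0^X=s_2s_1s_2$ for $\LG(2,4)$.

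However, the two steps that carry the actual content are not established. The first is the surjectivity step. The fact you invoke, that left multiplication by a simple reflection adds exactly one root to $I(u')$, holds in every Weyl group and does not show that this root can be $\al$. The whole point is that $\de=u'.\al$ is a simple root, and your pairing remark does not give this (pairings between boxes are in fact never negative). What is needed is the following argument. If $\de$ is not simple, write $\de=\be+\eta$ with $\be\in\Delta$ and $\eta\in\Phi^+$, so that $\al=u'^{-1}.\be+u'^{-1}.\eta$. The $\om_\ga^\vee$-coefficients of the two summands lie in $\{-1,0,1\}$ and sum to $1$, so one summand lies in $\Phi_X$. It lies in $\Phi_X^+$, since $u'\in W^X$ sends $\Phi_X^-$ to negative roots while $u'$ maps this summand to $\be$ or $\eta$. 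Hence the other summand is a box strictly below $\al$, so it lies in $\la'=I(u')$. Then $u'$ maps it to a negative root, contradicting $\be,\eta>0$.

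The more serious gap is the step you flag yourself: that Bruhat order on $W^X$ is inclusion of inversion sets. The ``convex chain'' assertion is unsupported. The geometric plan also rests on a false premise: $T$-stable curves in $\LG(n,2n)$ and odd quadrics need not be lines, even for Bruhat covers. In \Example{lg24-curves} the cover $s_\ga\lessdot\ka$ gives a conic; cf.\ \Lemma{comin-degree}.

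The missing idea is again the $\om_\ga^\vee$-grading. For $\al,\eta\in\cP_X$, the root $s_\eta.\al=\al-\langle\al,\eta^\vee\rangle\eta$ has $\om_\ga^\vee$-coefficient $1-\langle\al,\eta^\vee\rangle\in\{-1,0,1\}$, so $\langle\al,\eta^\vee\rangle\geq 0$. Now let $v\lessdot u=v s_\eta$ in $W^X$ with $\eta\in\Phi^+$.
\begin{itemize}
\item $\eta\in\cP_X$, since $s_\eta\notin W_X$.
\item $\de=v.\eta>0$, since $\ell(u)>\ell(v)$.
\item For every $\al\in I(v)$, the root $u.\al=v.\al-\langle\al,\eta^\vee\rangle\de$ has only non-positive coefficients, hence is negative.
\end{itemize}
Thus $I(v)\subset I(u)$, and the length count makes every Bruhat cover in $W^X$ a left weak cover. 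Combined with Deodhar's chain property for $W^X$, this yields (a) and the order statement in (b). With these two arguments supplied, your proof becomes a complete alternative to the cited results, using only the $\om_\ga^\vee$-grading.
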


Given a straight shape $\la \subset \cP_X$, we let $w_\la \in W^X$ denote the
unique element with inversion set $I(w_\la) = \la$. The Schubert and Richardson
varieties in $X$ will frequently be indexed by straight shapes using the
notation $X_\la = X_{w_\la}$, $X^\mu = X^{w_\mu}$, and $X_\la^\mu = X_\la \cap
X^\mu$, where $\mu \subset \la \subset \cP_X$ are straight shapes. Notice that
$\dim(X_\la^\mu) = |\la/\mu|$ is the number of boxes in the skew shape
$\la/\mu$.

An ordering $\mu = \{\al_1, \al_2, \dots, \al_{|\mu|}\}$ of the boxes of the
straight shape $\mu$ is called \emph{increasing} if $\al_i < \al_j$ implies $i <
j$. In this case we have \cite[Thm.~2.4]{buch.samuel:k-theory}
\begin{equation}\label{eqn:comin-rootprod}%
  w_\mu = s_{\al_1} s_{\al_2} \cdots s_{\al_{|\mu|}} \,.
\end{equation}
For any root $\al \in\cP_X$, consider the shape $\la(\al) = \{ \al' \in \cP_X
\mid \al' < \al \}$, and set $\delta_X(\al) = w_{\la(\al)}.\al$. Since
$s_{\delta_X(\al)} = w_{\la(\al)} s_\al w_{\la(\al)}^{-1} =
w_{\la(\al)\cup\{\al\}} w_{\la(\al)}^{-1}$ has length 1 by
\Theorem{comin-poset}, it follows that $\delta_X : \cP_X \to \Delta$ is a
labeling of the boxes in $\cP_X$ by simple roots, such that
\begin{equation}\label{eqn:comin-redexp}%
  w_\mu = s_{\delta_X(\al_{|\mu|})} \cdots s_{\delta_X(\al_2)} s_{\delta_X(\al_1)}
\end{equation}
is a reduced expression of $w_\mu$; see \cite[Cor.~2.10]{buch.samuel:k-theory}.
Examples of this labeling are provided in \Table{labsets}. More generally, if
$\theta \subset \cP_X$ is any skew shape, and $\theta = \{\al'_1, \al'_2, \dots,
\al'_{|\theta|}\}$ is an increasing ordering, we define
\[
  w_\theta = s_{\delta_X(\al'_{|\theta|})} \cdots s_{\delta_X(\al'_2)} s_{\delta_X(\al'_1)} \,.
\]
This is a reduced expression of $w_\theta$, and we have $w_\theta = w_\la
w_\mu^{-1}$ whenever $\theta = \la/\mu$ is written as a difference between
straight shapes $\mu \subset \la \subset \cP_X$.

\begin{lemma}\label{lemma:comparable-boxes}%
  Let $\al_1, \al_2 \in \cP_X$ be boxes and let $\theta \subset \cP_X$ be a skew
  shape.\smallskip

  \titem{a} If $\al_2$ covers $\al_1$, then $\delta_X(\al_1)$ and
  $\delta_X(\al_2)$ are distinct and not orthogonal.\smallskip

  \titem{b} If $\al_1$ and $\al_2$ are incomparable, then $\delta_X(\al_1)$ and
  $\delta_X(\al_2)$ are orthogonal.\smallskip

  \titem{c}\, $\theta$ is connected if and only if $\dex(\theta)$ is a connected
  subset of the Dynkin diagram.
\end{lemma}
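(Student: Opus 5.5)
We derive all three parts from the reduced expressions \eqn{comin-redexp}, applied to suitably chosen straight shapes.

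For part (b), let $\al_1, \al_2$ be incomparable and let $\nu$ be the set of boxes strictly below $\al_1$ or $\al_2$. Incomparability gives $\al_1, \al_2 \notin \nu$, so $\nu$, $\nu \cup \{\al_1\}$, $\nu \cup \{\al_2\}$ and $\la = \nu \cup \{\al_1,\al_2\}$ are all straight shapes. Both orderings of the two new boxes extend an increasing ordering of $\nu$ to one of $\la$. Hence \eqn{comin-redexp} yields two reduced expressions
\[
  w_\la = s_{\dex(\al_2)} s_{\dex(\al_1)} w_\nu = s_{\dex(\al_1)} s_{\dex(\al_2)} w_\nu \,.
\]
Reducedness forces $\dex(\al_1) \neq \dex(\al_2)$, since otherwise $s_{\dex(\al_1)}^2 = 1$. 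Cancelling $w_\nu$ shows that the two distinct simple reflections commute, which happens exactly when the simple roots are orthogonal.

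For part (a), let $\al_2$ cover $\al_1$ and put $\mu = \la(\al_2) = \{\al' \mid \al' < \al_2\}$. If $\al_1 < \be$ with $\be \in \mu$, then $\al_1 < \be < \al_2$, contradicting the covering. So $\al_1$ is maximal in $\mu$, and $\nu = \mu \ssm \{\al_1\}$ is a straight shape. Then $\la = \nu \cup \{\al_1, \al_2\}$ is a straight shape with reduced expression $w_\la = s_{\dex(\al_2)} s_{\dex(\al_1)} w_\nu$, and as before $\dex(\al_1) \neq \dex(\al_2)$. Suppose the two labels were orthogonal. Then $w_\la = s_{\dex(\al_1)} s_{\dex(\al_2)} w_\nu$ is also reduced, so $u = s_{\dex(\al_2)} w_\nu$ is a right suffix of a reduced word of $w_\la$. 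Since $w_\la \in W^X$, this suffix also lies in $W^X$.

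The inversion set of $u$ is $I(u) = \nu \cup \{ w_\nu^{-1}.\dex(\al_2) \}$. Since $u \leq w_\la$ in the left weak order, $I(u) \subset I(w_\la) = \la$, so the extra root is $\al_1$ or $\al_2$. It cannot be $\al_2$, because by \Theorem{comin-poset}(b) $I(u)$ is a straight shape and $\al_1 < \al_2$ with $\al_1 \notin \nu$. So $w_\nu^{-1}.\dex(\al_2) = \al_1$. On the other hand, $w_{\nu \cup \{\al_1\}} = s_{\dex(\al_1)} w_\nu$ by \eqn{comin-redexp}, so $\al_1$ is the inversion it adds and $\al_1 = w_\nu^{-1}.\dex(\al_1)$. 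This gives $\dex(\al_1) = \dex(\al_2)$, contradicting distinctness. I expect this step, identifying the extra inversion with $\al_1$, to be the only delicate point of the proof.

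For part (c), suppose first that $\theta$ is connected. Two boxes $\al < \al'$ of $\theta$ are joined by a chain of covering relations in $\cP_X$, and by convexity of $\theta$ every box of this chain lies in $\theta$. Hence the Hasse diagram of $\theta$ is connected. By (a), boxes related by a covering have labels that are equal or adjacent in the Dynkin diagram, so $\dex(\theta)$ is connected.

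Conversely, suppose $\theta = \theta_1 \coprod \theta_2$ with $\theta_1, \theta_2$ non-empty and incomparable. By (b), every label in $\dex(\theta_1)$ is orthogonal to every label in $\dex(\theta_2)$. In particular the two label sets are disjoint and no Dynkin edge joins them, so $\dex(\theta)$ is disconnected.
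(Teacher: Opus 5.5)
Your proof is correct and takes essentially the paper's route. The paper only says the lemma follows from \Theorem{comin-poset} and the reduced expressions \eqn{comin-redexp}, and your argument spells this out: swapping incomparable boxes in an increasing ordering for (b), the inversion-set argument for (a) (which uses only part (b) of \Theorem{comin-poset}, not full commutativity), and deducing (c) from (a) and (b).
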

\begin{proof}
  This follows from \Theorem{comin-poset} and \eqn{comin-redexp}.
\end{proof}

Given a straight shape $\la \subset \cP_X$ and $u \in W$, we set $u \bullet \la
= I((u w_\la)^X)$, that is, $u \bullet \la \subset \cP_X$ is the unique straight
shape such that $u.(w_\la.P_X) = w_{u \bullet \la}.P_X$. The action of $W$ on
$X^T$ is expressed in terms of shapes as follows.

\begin{lemma}\label{lemma:act-fixed}%
  Let $\la \subset \cP_X$ be a straight shape. For $\be \in \Delta$ we have
  \[
    s_\be \bullet \la = \begin{cases}
      \la \cup \{\al\} & \text{if $\cP_X \ssm \la$ contains a minimal box $\al$ satisfying $\delta_X(\al) = \be$;} \\
      \la \ssm \{\al\} & \text{if $\la$ contains a maximal box $\al$ satisfying $\delta_X(\al) = \be$;} \\
      \la & \text{otherwise.}
    \end{cases}
  \]
\end{lemma}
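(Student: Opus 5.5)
We compute $s_\be w_\la$ and its minimal representative modulo $W_X$, using \Theorem{comin-poset} together with the reduced expression \eqn{comin-redexp}. Since $W^X$ is a distributive lattice whose Bruhat order agrees with the weak left order, for $\be \in \Delta$ exactly one of three things happens. Either $s_\be w_\la \in W^X$ with $\ell(s_\be w_\la) = \ell(w_\la)+1$, or $s_\be w_\la \in W^X$ with $\ell(s_\be w_\la) = \ell(w_\la)-1$, or $s_\be w_\la \notin W^X$. In the last case we will show that $s_\be w_\la = w_\la s_{\be'}$ for some $s_{\be'} \in W_X$, so that $s_\be \bullet \la = \la$. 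The task is to identify which case occurs in terms of boxes.

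First consider the case $w_\la^{-1}.\be < 0$, so that $\ell(s_\be w_\la) < \ell(w_\la)$. Then $s_\be w_\la \in W^X$, since left multiplication by $s_\be$ can only destroy minimality when the length goes up. Hence $s_\be w_\la = w_{\mu}$ with $\mu \subset \la$ and $|\mu| = |\la|-1$, so $\mu = \la \ssm \{\al\}$ for some maximal box $\al$ of $\la$. Choose an increasing ordering of $\la$ ending with $\al$. Then \eqn{comin-redexp} gives $w_\la = s_{\dex(\al)} w_\mu$, and therefore $s_\be = w_\la w_\mu^{-1} = s_{\dex(\al)}$, that is, $\dex(\al)=\be$. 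Conversely, if $\la$ has a maximal box $\al$ with $\dex(\al)=\be$, the same reduced expression shows $s_\be w_\la = w_{\la\ssm\{\al\}}$. This also shows the first two cases of the formula cannot both apply: a maximal box of $\la$ labelled $\be$ forces $\ell(s_\be w_\la)<\ell(w_\la)$. The symmetric argument handles the case $\ell(s_\be w_\la) > \ell(w_\la)$ with $s_\be w_\la \in W^X$: there $s_\be w_\la = w_{\la \cup \{\al\}}$ for a box $\al$ minimal in $\cP_X \ssm \la$, and the reduced expression forces $\dex(\al)=\be$. Conversely, if such a minimal box labelled $\be$ exists, then $w_{\la\cup\{\al\}} = s_\be w_\la$.

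The remaining case, where $\ell(s_\be w_\la) > \ell(w_\la)$ but $s_\be w_\la \notin W^X$, is the main point. Here we will invoke Deodhar's lemma: if $w \in W^X$ and $s_\be w \notin W^X$, then $s_\be w = w s_{\be'}$ with $s_{\be'} \in W_X$. It follows that $s_\be.(w_\la.P_X) = w_\la.P_X$, and so $s_\be \bullet \la = \la$. The three cases in the formula are mutually exclusive by the preceding paragraph, which says that the first two cases occur exactly under the stated box conditions. Hence in every other situation we land in this third case, and this finishes the proof.

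The step I expect to need the most care is the converse direction in the first two cases. Specifically, it must be checked that when $\al$ is minimal in $\cP_X\ssm\la$, the set $\la\cup\{\al\}$ is a straight shape admitting an increasing ordering ending in $\al$. This follows because a lower order ideal remains one after adding a minimal element of its complement.
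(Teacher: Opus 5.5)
Your proof is correct and takes essentially the same approach as the paper. Both arguments compare $w_\la$ with $s_\be w_\la$ via length and Bruhat order, then use the reduced expression \eqn{comin-redexp} to show that the added or removed box carries the label $\be$. The differences are minor: the paper uses the involution $s_\be \bullet (s_\be \bullet \la) = \la$ to reduce to the case $\la \subset s_\be \bullet \la$ and leaves the stationary case implicit, whereas you split into three cases explicitly and settle the stationary one with Deodhar's lemma.
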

\begin{proof}
  We may assume $\la \subset s_\be \bullet \la$. If $\la \neq s_\be \bullet
  \la$, then $s_\be \bullet \la = \la \cup \{\al\}$ for some box $\al \in
  \cP_X$, and \eqn{comin-redexp} implies that $\delta_X(\al) = \be$.
\end{proof}

Identify $\Z^2$ with a grid of boxes in the plane. The point $(r,c) \in \Z^2$
represents a box in row $r$ and column $c$, where the row number increases from
north to south, and the column number increases from east to west. The
\emph{northwest-to-southeast order} on $\Z^2$ is defined by $(r',c') \leq (r,c)$
if and only if $r' \leq r$ and $c' \leq c$. This is a distributive lattice with
meet $(r_1,c_1) \wedge (r_2,c_2) = (\min(r_1,r_2), \min(c_1,c_2))$ and join
$(r_1,c_1) \vee (r_2,c_2) = (\max(r_1,r_2), \max(c_1,c_2))$. Given simple roots
$\be_1, \be_2 \in \Delta$ in the same connected component of the Dynkin diagram
of $\Phi$, we let $[\be_1, \be_2] \subset \Delta$ denote the subset of simple
roots occurring on the shortest path from $\be_1$ to $\be_2$ in the Dynkin
diagram. The following result implies that $\cP_X$ is a distributive lattice
when $X$ is indecomposable.

\begin{lemma}\label{lemma:PX-lattice}%
  Let $X$ be an indecomposable cominuscule flag variety.\smallskip

  \titem{a}
  The poset $\cP_X$ is isomorphic to a subset of $\Z^2$ that is closed under the
  join and meet operations. Moreover, every covering $\al_1 \lessdot \al_2$ in
  $\cP_X$ is also a covering in $\Z^2$.\smallskip

  \titem{b}
  Let $\al_1, \al_2 \in \cP_X$ be incomparable boxes. For all $\al_1 \wedge
  \al_2 \leq \al \leq \al_1 \vee \al_2$ we have $\delta_X(\al) \in
  [\delta_X(\al_1), \delta_X(\al_2)] \subset \Delta$.
\end{lemma}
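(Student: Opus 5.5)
The plan is to argue case by case through the indecomposable cominuscule flag varieties listed in \Table{labsets}. By \Corollary{indecomp-comin-classify} we may assume $\Phi$ is irreducible and $X = G/P_X$ with $P_X$ maximal, defined by a cominuscule simple root. The types are: Grassmannians $\Gr(m,N)$, Lagrangian Grassmannians $\LG(n,2n)$, maximal orthogonal Grassmannians $\OG(n,2n)$, the quadrics $Q^{2n-2}$ and $Q^{2n-1}$, $E_6/P_6$, and $E_7/P_7$.

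For part (a), I will write down an explicit embedding of $\cP_X$ into $\Z^2$ in each case.
\begin{itemize}
\item For $\Gr(m,N)$, the roots $\ep_i-\ep_j$ with $i\le m<j$ go to $(j-m,\,m+1-i)$. This gives a rectangle, which is closed under $\wedge$ and $\vee$.
\item For $\LG(n,2n)$ and $\OG(n,2n)$, the roots $\ep_i+\ep_j$ (and $2\ep_i$ in type C) go to a staircase region $\{(r,c): r\le c\}$ or $r<c$ after relabeling. Such a region is again closed under componentwise min and max.
\item For the quadrics, $\cP_X$ is a chain, or a chain with one "diamond" in the even case. Both are visibly sublattices of $\Z^2$ once drawn as in \Table{labsets}.
\item For $E_6$ and $E_7$, I will use the displayed diagrams. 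Since these posets are finite, checking closure under $\wedge$ and $\vee$ is a finite verification.
\end{itemize}
In every case I will check that $\al\le\al'$ in $\cP_X$ exactly when the images are northwest-to-southeast comparable. That is, $\al'-\al$ must be a nonnegative sum of simple roots precisely when the row and column both increase. The covering statement then follows from the remark that coverings correspond to adding a simple root, which moves exactly one step in a row or a column.

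For part (b), my approach is to reduce to the interval $[\al_1\wedge\al_2,\al_1\vee\al_2]$. By part (a), this interval is a product of two chains, that is, a rectangle in $\Z^2$. By \Lemma{comparable-boxes}(a), consecutive boxes in a chain of coverings have adjacent labels, meaning distinct and non-orthogonal, so they are joined by an edge of the Dynkin diagram. Hence the labels along the top edge of the rectangle, from $\al_1\wedge\al_2$ to $\al_1$, trace a walk in the Dynkin diagram, and the same holds along each edge.

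The key structural input is that the rectangle spanned by two incomparable boxes has the following property: the labels on a row path from $\al_1$ toward $\al_1\vee\al_2$ form a walk, and every label in the interior lies on the geodesic between the two corner labels $\delta_X(\al_1)$ and $\delta_X(\al_2)$. The Dynkin diagram is a tree, so shortest paths are unique. I will prove this by using that $\delta_X$ is constant along southeast diagonals in the type A, C, and D pictures. In the quadric and exceptional cases I will instead verify it directly from the labeled diagrams.

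A cleaner uniform route would be to show two things:
\begin{enumerate}
\item The boxes $\al_1\wedge\al_2$ and $\al_1\vee\al_2$ have labels on $[\delta_X(\al_1),\delta_X(\al_2)]$, since the paths of coverings from $\al_1$ to $\al_2$ through them realize walks.
\item No label can leave the geodesic and return, because any backtrack would create two comparable boxes with equal or orthogonal labels in a manner forbidden by full commutativity (\Theorem{comin-poset}(c)).
\end{enumerate}
I expect this uniform argument to be the main obstacle. If it fails, I will fall back on the explicit type-by-type verification using \Table{labsets}, which is finite for the exceptional types and follows from the diagonal-constancy of labels for the classical ones.
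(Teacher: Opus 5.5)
Your strategy is the paper's own: it proves the lemma by inspection of the labeled posets in \Table{labsets}, case by case. Your part (a) is fine. The reduction of (b) to the interval $[\al_1\wedge\al_2,\al_1\vee\al_2]$ is also fine, and this interval is indeed a full rectangle by (a). To see this, write $\al_1=(r_1,c_1)$ and $\al_2=(r_2,c_2)$ with $r_1<r_2$ and $c_1>c_2$. Maximal chains from the meet to $\al_1$ and to $\al_2$ consist of unit steps, so the row $r_1$ and column $c_2$ edges of the rectangle lie in $\cP_X$. Each interior point $(r,c)$ is then the join of the edge boxes $(r_1,c)$ and $(r,c_2)$.

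However, one ingredient of your classical-type check is false. $\delta_X$ is \emph{not} constant along the southeast diagonals of $\cP_{\OG(n,2n)}$. The main-diagonal boxes $\ep_i+\ep_{i+1}$ alternate between the labels $\be_n$ and $\be_{n-1}$; in the $\OG(6,12)$ diagram they read $6,5,6,5,6$. Only the off-diagonal boxes have constant labels, namely $\be_{n-1-(c-r)}$.

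So your types A and C argument does not transfer verbatim to type D. That argument uses that the rectangle meets exactly the diagonals $c-r\in[c_2-r_2,\,c_1-r_1]$, and that the diagonal labels run monotonically along a path of the Dynkin diagram; you should state the monotonicity explicitly, since constancy alone is not enough.

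The repair in type D is short. Every box $(r,c)$ of the rectangle satisfies $r\le r_2\le c_2\le c$, so the only box that can lie on the main diagonal is $\al_2$ itself. Every other box has label $\be_{n-1-(c-r)}$ with $1\le c-r\le c_1-r_1$. These labels lie on the path from $\delta_X(\al_1)$ through $\be_{n-2}$ to $\delta_X(\al_2)$.

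Also, your ``uniform route'' cannot work as written. Step (1) concludes that the labels of the meet and join lie on the geodesic because they lie on a walk from $\delta_X(\al_1)$ to $\delta_X(\al_2)$. But a walk in a tree can leave the geodesic and return. Step (2) appeals to full commutativity, yet comparable boxes with equal labels are common, for example along diagonals. So the explicit type-by-type verification is genuinely needed, not just a fallback.
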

\begin{proof}
  This follows by inspection of \Table{labsets}.
\end{proof}

\subsection{Cominuscule flag subvarieties}

We next show that any flag subvariety of the cominuscule flag variety $X =
G/P_X$ is also cominuscule. Given a straight shape $\la \subset \cP_X$ and a
subset $S \subset \Delta$, let $\la_S \subset \la$ be the minimal straight shape
for which $\delta_X(\la/\la_S) \subset S$, and let $\la^S \supset \la$ be the
maximal straight shape for which $\delta_X(\la^S/\la) \subset S$. With the
notation of \Section{flagsubvar} we have $w_{\la_S} = (w_\la)_S$ and $w_{\la^S}
= w_0^Y (w_\la)_S$, with $Y = G_S w_\la.P_X$; see \Proposition{flagsubvar}(c).

\begin{prop}\label{prop:comin}%
  Let $X = G/P_X$ be a cominuscule flag variety and let $Y = G_S w_\la.P_X
  \subset X$ be the flag subvariety defined by $S \subset \Delta$ and $\la
  \subset \cP_X$.\smallskip

  \titem{a} $Y = X_{\la^S}^{\la_S}$ is a cominuscule flag variety of $G_S$
  with $B_S$-stable point $w_{\la_S}.P_X$.\smallskip

  \titem{b} The map $\al \mapsto w_{\la_S}^{-1}.\al$ is an order isomorphism
  $\cP_Y \cong \la^S/\la_S$.\smallskip

  \titem{c} The labeling $\delta_Y : \cP_Y \to S$ is given by $\delta_Y(\al)
  = \delta_X(w_{\la_S}^{-1}.\al)$.\smallskip

  \titem{d} Given straight shapes $\tau \subset \sigma \subset \cP_Y$, we
  have $Y_\sigma^\tau = X_{\ov{\sigma}}^{\ov{\tau}}$, where $\ov{\tau} = \la_S
  \cup w_{\la_S}^{-1}.\tau$ and $\ov{\sigma} = \la_S \cup w_{\la_S}^{-1}.\sigma$
  are straight shapes in $\cP_X$.
\end{prop}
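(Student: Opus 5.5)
Most of the statement is a translation of \Proposition{flagsubvar} and \Proposition{flagsubrich} into the language of shapes. The one genuinely new claim is that $Y$ is cominuscule. Throughout, set $\ka = w_\la$, so that $\ka_S = w_{\la_S}$ and $w_0^Y \ka_S = w_{\la^S}$, as recorded just before the statement.

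\textbf{Part (a).} By \Proposition{flagsubvar}(a,b), $Y$ is a flag variety of $G_S$ with $B_S$-fixed point $\ka_S.P_X = w_{\la_S}.P_X$, and $Y = X_{w_0^Y\ka_S}^{\ka_S} = X_{\la^S}^{\la_S}$.

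To see that $Y$ is cominuscule, I use \Definition{comin}. Write $\rho = \sum_{\ga\in\min(\cP_X)} \om_\ga^\vee$, so that $(\al,\rho)\in\{0,\pm1\}$ for all $\al\in\Phi$. Put $\rho' = \ka_S.\rho$; since $W$ permutes $\Phi$, we also have $(\al,\rho')\in\{0,\pm1\}$ for every root $\al$. For $\al\in\Phi_S^+$ we have $\ka_S^{-1}.\al>0$, so
\[
  (\al,\rho') = (\ka_S^{-1}.\al,\rho) \in \{0,1\}.
\]
Moreover this value is $1$ exactly when $\ka_S^{-1}.\al\in\cP_X$. Indeed, $\cP_X$ is the set of positive roots with $(\,\cdot\,,\rho)=1$ by \eqn{min2PX} and the cominuscule condition.

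Next I identify $\cP_Y$. By \Proposition{flagsubvar}(d), $\ka_S^{-1}.\cP_Y = I(w_0^Y\ka_S)\ssm I(\ka_S) = \la^S/\la_S \subset \cP_X$. Conversely, a root $\al\in\Phi_S^+$ with $\ka_S^{-1}.\al\in\cP_X$ satisfies $\al\notin\Phi_{S'}$, where $S'\subset S$ is the set of simple roots with $s_\be\in (W_S)_Y$. This is because $(\,\cdot\,,\rho')$ vanishes on $(W_S)_Y$-roots: they fix the point $\ka_S.P_X$, so their $\ka_S^{-1}$-translates lie in $\Phi_X$, where $\rho$ vanishes. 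Hence $\al\in\cP_Y$. So
\[
  \cP_Y = \{\al\in\Phi_S^+ \mid (\al,\rho')=1\}.
\]

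From this I want to conclude that $\sum_{\ga\in\min(\cP_Y)}\om_\ga^{\vee}$ pairs with every root of $\Phi_S$ in $\{0,\pm1\}$. The plan is to show that $\rho'$ restricted to $\Span(S)$ equals this sum. On each simple root $\be\in S$, $(\be,\rho')$ is $1$ if $\be\in\cP_Y$ (that is, $\be\in\min(\cP_Y)$) and $0$ otherwise. So the two coweights agree on $\Span(S)$, and the bound on $(\al,\rho')$ transfers.

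This step is the main obstacle: I must check that $\rho'$ is nonnegative on all of $S$ and that it really gives the fundamental coweight sum. Both follow from the displayed identities above.

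\textbf{Part (b).} The bijection $\al\mapsto\ka_S^{-1}.\al$ is order-preserving by \Proposition{flagsubvar}(d). Its inverse is order-preserving by \cite[Lemma~4.2(a)]{buch.chaput.ea:positivity}, as noted before the proposition. Alternatively, if $\al_1,\al_2\in\cP_Y$ have translates with $\ka_S^{-1}\al_2-\ka_S^{-1}\al_1$ a sum of positive roots, one can use the distributive lattice structure from \Lemma{PX-lattice} componentwise.

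\textbf{Part (c).} Choose an increasing ordering of $\la^S/\la_S$ and extend it by an increasing ordering of $\la_S$ placed in front. By \eqn{comin-redexp}, the labels of the boxes of $\la^S/\la_S$ give a reduced word $w_{\la^S}w_{\la_S}^{-1} = w_0^Y$ in $W_S$. Transport the ordering to $\cP_Y$ via (b); it remains increasing. Applying the definition $\delta_Y(\al) = w_{\la(\al)}.\al$ inside $Y$, with $w_{\la(\al)}$ computed in $W_S$, and comparing with $\delta_X$ of the corresponding box shows the two labels agree. Concretely, $w_{\la(\al)}\ka_S = w_{\la_S\cup\ka_S^{-1}.\la(\al)}$ by \Proposition{flagsubvar}(c) and (b), so
\[
  \delta_Y(\al) = w_{\la(\al)}\ka_S.(\ka_S^{-1}.\al) = \delta_X(\ka_S^{-1}.\al).
\]

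\textbf{Part (d).} By \Proposition{flagsubrich}(c), $Y_\sigma^\tau = X_{w_\sigma\ka_S}^{w_\tau\ka_S}$. By (b) together with \Proposition{flagsubvar}(c), the element $w_\sigma\ka_S\in W^X$ has inversion set $I(\ka_S)\cup\ka_S^{-1}.I(w_\sigma) = \ov\sigma$, since the product is reduced. The same argument gives the inversion set $\ov\tau$ for $w_\tau\ka_S$, which proves (d).
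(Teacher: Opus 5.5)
Your proposal is correct. Parts (b), (c), and (d) follow essentially the paper's argument: the citation of \cite[Lemma~4.2(a)]{buch.chaput.ea:positivity} for (b), and the identity $w_\sigma\ka_S=w_{\ov\sigma}$ for (c) and (d). For the cominuscule claim in (a), however, you take a genuinely different route.

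The paper first reduces to connected $\Delta$ and connected $S$. It then uses the lattice property of \Lemma{PX-lattice}, itself obtained by inspecting \Table{labsets}, to show that $\cP_X\ssm\la_S$ has a unique minimal box $\al$ with $\dex(\al)\in S$, so that $P_Y$ is maximal. Finally it checks type by type that $\ga=\dex(\al)$ is cominuscule in $\Span_\Z(S)\cap\Phi$: via the southwest diagonal for $\LG$ and $\OG$, the lowest index for quadrics, and a computer check for $E_6/P_6$ and $E_7/P_7$.

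You instead observe that the conjugated coweight $\rho'=\ka_S.\rho$ has the following properties:
\begin{itemize}
\item it is $\{0,\pm1\}$-valued on $\Phi$;
\item it is nonnegative on $S$, because $\ka_S^{-1}.\Phi_S^+\subset\Phi^+$ and $\rho$ is dominant;
\item it vanishes on $S'$.
\end{itemize}
It therefore agrees on $\Span(S)$ with $\sum_{\ga\in\min(\cP_Y)}\om_\ga^\vee$. The vanishing on $S'$ can also be checked without the fact about reflections in $W_X$. For $\be\in S'$ we have $s_\be\ka_S=\ka_S s_{\be'}$ with $s_{\be'}\in W_X$ simple, since $\ka_S\in W^X$ and $\ell(s_\be\ka_S)=\ell(\ka_S)+1$; hence $\ka_S^{-1}.\be=\be'\in\Delta\ssm\min(\cP_X)$.

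Your argument is uniform in type, verifies \Definition{comin} directly, and needs neither the reduction to the indecomposable case nor any computer check. It is the root-system form of the fact that the grading of $\mathfrak{g}$ defining $P_X$, conjugated by $\ka_S$ and restricted to the Levi subalgebra of $G_S$, defines $P_Y$. In exchange, the paper's route explicitly identifies the defining simple root of each indecomposable factor of $Y$ as the label of a minimal box of $\cP_X\ssm\la_S$.

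One step in (c) should be made explicit. The final equality $w_{\la(\al)}\ka_S.(\ka_S^{-1}.\al)=\dex(\ka_S^{-1}.\al)$ is not just the definition of $\dex$. The reason is that $\nu=\la_S\cup\ka_S^{-1}.\la(\al)$ is in general larger than $\{\be'\in\cP_X\mid\be'<\ka_S^{-1}.\al\}$.

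What you need is that $w_\nu.\be=\dex(\be)$ for every straight shape $\nu$ and every minimal box $\be$ of $\cP_X\ssm\nu$. The paper takes this from \cite[Lemma~4.2(b)]{buch.chaput.ea:positivity}. It also follows by comparing $w_{\nu\cup\{\be\}}=w_\nu s_\be$ from \eqn{comin-rootprod} with $w_{\nu\cup\{\be\}}=s_{\dex(\be)}w_\nu$ from \eqn{comin-redexp}, using $\be\notin I(w_\nu)$ to fix the sign. That $\ka_S^{-1}.\al$ is a minimal box of $\cP_X\ssm\nu$ follows, as in your part (d), from $I(w_{\la(\al)\cup\{\al\}}\ka_S)=\nu\cup\{\ka_S^{-1}.\al\}$.

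Your suggested alternative for (b) via \Lemma{PX-lattice} is not an argument as written, but the citation already suffices.
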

\begin{proof}
  It follows from \Proposition{flagsubvar} that $Y = X_{\la^S}^{\la_S}$, and
  $w_{\la_S}.P_X$ is the $B_S$-stable point in $Y$. To see that $Y$ is
  cominuscule, we may assume by \Proposition{flagvar-decomp} and
  \Corollary{comin-factor} that the Dynkin diagram $\Delta$ is connected, and
  that $S$ is a connected subset of $\Delta$. We may also assume that $Y$ is not
  a point, so that $\la_S \neq \la^S$. By \Corollary{indecomp-comin-classify} we
  must show there exists $\ga \in S$ such that $\ga$ is a cominuscule simple
  root of $\Phi' = \Span_\Z(S) \cap \Phi$ and $s_\be \bullet \la_S = \la_S$ for
  $\be \in S \ssm \{\ga\}$. By construction of $\la_S$, no maximal box $\al'$ of
  $\la_S$ satisfies $\delta_X(\al') \in S$. Assume that two minimal boxes $\al_1
  \neq \al_2$ of $\cP_X \ssm \la_S$ satisfy $\delta_X(\al_1), \delta_X(\al_2)
  \in S$. Then $\al_1 \wedge \al_2 \in \la_S$. Let $\al' \in \la_S$ be maximal
  with $\al' \geq \al_1 \wedge \al_2$. Since $\al_1 \not\leq \al'$ and $\al_2
  \not\leq \al'$, it follows from \Lemma{PX-lattice}(a) that $\al' \leq \al_1
  \vee \al_2$, after which \Lemma{PX-lattice}(b) implies $\delta_X(\al') \in S$,
  a contradiction. We deduce that $\cP_X \ssm \la_S$ contains a unique minimal
  box $\al$ with $\delta_X(\al) \in S$. Set $\ga = \delta_X(\al)$. By
  \Lemma{act-fixed} we have $s_\be \bullet \la_S = \la_S$ for $\be \in S \ssm
  \{\ga\}$; we must show that $\ga$ is a cominuscule simple root of $\Phi'$. If
  $\Phi'$ has type A, then any simple root $\ga$ is cominuscule, so assume that
  $\Phi'$ is not of type A. Then $S$ contains either two simple roots of
  distinct lengths, or a simple root connected to three other simple roots in
  the Dynkin diagram of $\Phi'$. If $X$ is a Lagrangian Grassmannian or a
  maximal orthogonal Grassmannian, it follows that $\delta_X(\al') \in S$ for
  all boxes $\al'$ on the southwest diagonal of $\cP_X$. Since some box $\al'$
  on the southwest diagonal is either maximal in $\la_S$ or minimal in $\cP_X
  \ssm \la_S$, we deduce that $\al = \al'$ and $\ga$ is cominuscule. If $X$ is a
  quadric, then $\ga$ is the simple root in $S$ of lowest index using the
  numbering of \Table{labsets}, hence $\ga$ is cominuscule in $\Phi'$. Finally,
  it has been checked using a computer that $\ga$ is cominuscule in the finitely
  many cases for $E_6/P_6$ and $E_7/P_7$. This proves part (a). Part (b) is a
  special case of \cite[Lemma~4.2(a)]{buch.chaput.ea:positivity}; see also
  \Proposition{flagsubvar}(d). Given any straight shape $\sigma \subset \cP_Y$,
  it follows from part (b) that $\ov{\sigma} = \la_S \cup w_{\la_S}^{-1}.\sigma$
  is a straight shape in $\cP_X$, and \eqn{comin-rootprod} implies that
  $w_{\ov{\sigma}} = w_\sigma w_{\la_S}$. Part (d) therefore follows from
  \Proposition{flagsubrich}(c). Let $\al \in \cP_Y$ and set $\sigma = \{ \al'
  \in \cP_Y \mid \al' < \al \}$, so that $\delta_Y(\al) = w_\sigma.\al$. Since
  $w_{\la_S}^{-1}.\al$ is a minimal box of $\cP_X \ssm \ov{\sigma}$, it follows
  from \cite[Lemma~4.2(b)]{buch.chaput.ea:positivity} that
  $\delta_X(w_{\la_S}^{-1}.\al) = w_{\ov{\sigma}}.(w_{\la_S}^{-1}.\al) =
  \delta_Y(\al)$, which proves part (c).
\end{proof}

\subsection{Richardson varieties defined by skew shapes}

The next result shows that any skew shape $\theta \subset \cP_X$ defines a
Richardson variety $X_\theta$ in $X$ up to translation.

\begin{prop}\label{prop:comin-rich-trans}%
  Let $X = G/P_X$ be a cominuscule flag variety, and let $X_\la^\mu$ and
  $X_{\la'}^{\mu'}$ be Richardson varieties in $X$ such that $\la/\mu =
  \la'/\mu' \subset \cP_X$. Let $S \subset \Delta$ be the subset of simple roots
  that are orthogonal to $\delta_X(\la/\mu)$.\smallskip

  \titem{a} For $w \in W_S$ we have $w\bullet\mu \subset w\bullet\la$,
  $(w\bullet\la)/(w\bullet\mu) = \la/\mu$, and $X_{w\bullet\la}^{w\bullet\mu} =
  w.X_\la^\mu$.\smallskip

  \titem{b} There exists $w \in W_S$ such that $w.X_\la^\mu =
  X_{\la'}^{\mu'}$.\smallskip

  \titem{c} Assume that $X_\la^\mu$ and $X_{\la'}^{\mu'}$ are both contained in
  a larger Richardson variety $X_a^b$. Then $[X_\la^\mu] = [X_{\la'}^{\mu'}]$
  holds in the Chow group $A_{|\la/\mu|}(X_a^b)$.
\end{prop}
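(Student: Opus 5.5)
For part (a), I would argue one simple reflection at a time and then induct on the length of $w$, since $W_S$ is generated by $\{s_\be \mid \be \in S\}$. Fix $\be \in S$. Every box of $\theta = \la/\mu$ has label orthogonal to $\be$, so no box of $\theta$ has label $\be$.

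By \Lemma{act-fixed}, $s_\be \bullet \mu$ is obtained from $\mu$ by adding a minimal box of $\cP_X \ssm \mu$ with label $\be$, or removing a maximal box of $\mu$ with label $\be$, or doing nothing. The same description holds for $s_\be\bullet\la$. The main point is that $\la$ and $\mu$ undergo the same change.

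Suppose $\al$ is a minimal box of $\cP_X \ssm \mu$ with $\dex(\al)=\be$. Then $\al \notin \theta$, so $\al \notin \la$. Every box below $\al$ lies in $\mu \subset \la$, so $\al$ is also minimal in $\cP_X\ssm\la$. It remains to exclude the possibility that $\la$ has a maximal box with label $\be$ while $\mu$ does not.

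For this, suppose $\al'$ is a maximal box of $\la$ with label $\be$. Then $\al' \notin \theta$, so $\al' \in \mu$, and it is maximal in $\mu$. Now suppose $\cP_X\ssm\la$ had a minimal box $\al$ with label $\be$ while $\mu$ did not gain $\al$. By the previous argument $\al$ would be minimal in $\cP_X \ssm \mu$ unless some box of $\theta$ lies below it. Any box of $\theta$ covered by $\al$ would have a label that is non-orthogonal to $\be$ by \Lemma{comparable-boxes}(a), which is impossible. Hence $\al$ is minimal over $\mu$ as well.

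The symmetric argument handles removals. Therefore $s_\be\bullet\la / s_\be\bullet\mu = \la/\mu$. Since $s_\be.X_\la^\mu = X_{s_\be\la}\cap X^{s_\be\mu}$ is the Richardson variety whose $T$-fixed points are the $s_\be$-translates, I would conclude $s_\be.X_\la^\mu = X_{s_\be\bullet\la}^{s_\be\bullet\mu}$ by comparing fixed points and using $T$-convexity. I expect this last identification to be the main obstacle, because $s_\be B s_\be \ne B$ in general. The plan is to note that $s_\be.X_\la^\mu$ is $T$-stable, irreducible, and of the right dimension, and that its fixed points are $\{s_\be\bullet\nu : \mu\subset\nu\subset\la\}$, which by the above equals the interval $[s_\be\bullet\mu, s_\be\bullet\la]$. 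One inclusion then follows from $T$-convexity, and equality follows from dimension and irreducibility. The fixed-point bijection holds because $\nu\mapsto s_\be\bullet\nu$ shifts all intermediate shapes uniformly: they contain $\mu$ and are contained in $\la$.

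For (b), I would use that $\la$ and $\la'$ agree on $\theta$, and that $\mu$, $\mu'$ are straight shapes whose complements' minimal boxes meet $\theta$ appropriately. I would induct on $|\mu \triangle \mu'|$. If $\mu\ne\mu'$, pick a box $\al$ that is maximal in $\mu\ssm\mu'$, or otherwise handle $\mu'\ssm\mu$ symmetrically. Show that $\al$ is a maximal box of $\mu$, and that no box of $\theta$ covers it, since $\theta\cup\mu'$ is a lower ideal not containing $\al$. Its label $\be$ is then orthogonal to all labels of $\theta$: any $\theta$-box comparable to $\al$ lies above it, and using \Lemma{comparable-boxes}(a,b) together with the lattice structure from \Lemma{PX-lattice}, I would check that no $\theta$-box carries the label $\be$ or an adjacent label. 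Applying $s_\be$ then removes $\al$ and reduces $|\mu\triangle\mu'|$. This combinatorial check is the delicate step, and I would fall back on the lattice lemma here.

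For (c), I would take $w$ from (b) and write it as a product of reflections $s_\be$. At each step $X_\nu^\rho$ and $s_\be.X_\nu^\rho$ both lie in the $G_{\{\be\}}$-orbit family $\{g.X^\rho_\nu : g \in G_{\{\be\}}\}$, and this family is rationally connected through the $\bP^1$ obtained from $U_{\pm\be}$. As long as the intermediate translates stay inside $X_a^b$, the classes agree in $A_*(X_a^b)$ by rational equivalence. I would arrange this by choosing at each step the reflection so that both endpoints lie in $X_a^b$, and then using $T$-convexity and the $U_\be$-stability of the relevant Schubert conditions.
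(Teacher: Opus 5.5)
Your plan follows the paper's route. Part (a) goes one simple reflection at a time via \Lemma{act-fixed} and \Lemma{comparable-boxes}(a), with $T$-convexity identifying the translate. Part (b) uses single-box moves, and part (c) uses $G_{\{\be\}}$-translation. Part (a) is sound. Your opening formula $s_\be.X_\la^\mu = X_{s_\be\la}\cap X^{s_\be\mu}$ is not right as a factorization, since $s_\be.X_\la \neq X_{s_\be\bullet\la}$ in general. The fixed-point, $T$-convexity and dimension argument that follows does prove the equality you need.

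The first gap is in (b). You leave the orthogonality of $\be=\dex(\al)$ to $\dex(\theta)$ open, and the check you propose via \Lemma{PX-lattice} cannot succeed as framed. A box lying above $\al$ without covering it can carry the label $\be$ itself (in type A labels are constant along diagonals), so such $\theta$-boxes cannot be excluded by inspecting labels. The point is that there are none. You already observed that $\la'=\mu'\cup\theta$ is a lower ideal not containing $\al$. That shows no box of $\theta$ lies above $\al$ at all, not merely that none covers it. Since $\al\in\mu$ and $\mu$ is a lower ideal disjoint from $\theta$, no box of $\theta$ lies below $\al$ either. So $\al$ is incomparable to $\theta$, and \Lemma{comparable-boxes}(b) gives $\be\in S$ at once; the same works for $\al$ minimal in $\mu'\ssm\mu$. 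This is exactly the paper's argument after it reduces to $\la'=\la\cup\{\al\}$ and $\mu'=\mu\cup\{\al\}$.

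The second gap is in (c). Having both endpoints $X_\nu^\rho$ and $s_\be.X_\nu^\rho$ in $X_a^b$ does not put the intermediate translates $g.X_\nu^\rho$, $g\in G_{\{\be\}}$, inside $X_a^b$. Appealing to ``$U_\be$-stability of the relevant Schubert conditions'' does not name a stable ambient variety, and $X_a$, $X^b$ themselves need not be $G_{\{\be\}}$-stable.

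The missing step is the following. Take a move between $(\la,\mu)$ and $(\la\cup\{\al\},\mu\cup\{\al\})$ with $\dex(\al)=\be$. The box $\al$ is maximal in $\la\cup\{\al\}$ and minimal in $\cP_X\ssm\mu$. By \Lemma{act-fixed}, $s_\be$ removes $\al$ from the first shape and adds it to the second. Hence $X_{\la\cup\{\al\}}$ and $X^\mu$ are both $G_{\{\be\}}$-stable, and so is $X_{\la\cup\{\al\}}^\mu$. This variety contains both endpoints. It lies in $X_a^b$ because $b\subset\mu$ and $\la\cup\{\al\}\subset a$, both read off from the endpoints. The rational equivalence $[X_\la^\mu]=[s_\be.X_\la^\mu]$ therefore holds in $A_*(X_{\la\cup\{\al\}}^\mu)$ and pushes forward to $X_a^b$.

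You should also state why your chain from (b) stays between $b$ and $a$. Removal steps keep $\mu_i\supset\mu\cap\mu'\supset b$, and addition steps keep $\la_i\subset\la'\subset a$.
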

\begin{proof}
  We first prove part (a) when $w = s_\be$ is a simple generator of $W_S$, given
  by $\be \in S$, such that $s_\be\bullet\mu \neq \mu$. By \Lemma{act-fixed} we
  have either $s_\be\bullet\mu = \mu\cup\{\al\}$ or $s_\be\bullet\mu =
  \mu\ssm\{\al\}$, with $\delta_X(\al) = \be$, and the definition of $S$ gives
  $\al \notin \la/\mu$. If $s_\be\bullet\mu = \mu\cup\{\al\}$, then $\al$ is a
  minimal box of both $\cP_X\ssm\mu$ and $\cP_X\ssm\la$, hence $s_\be\bullet\la
  = \la\cup\{\al\}$. Otherwise we have $s_\be\bullet\mu = \mu\ssm\{\al\}$, where
  $\al$ is a maximal box of $\mu$. If $\al<\al'$ is a covering in $\cP_X$, then
  \Lemma{comparable-boxes}(a) and the definition of $S$ show that $\al' \notin
  \la/\mu$, hence $\al$ is also a maximal box of $\la$, and we have
  $s_\be\bullet\la = \la\ssm\{\al\}$. In either case we obtain $s_\be\bullet\mu
  \subset s_\be\bullet\la$ and $(s_\be\bullet\la)/(s_\be\bullet\mu) = \la/\mu$.
  We also have $s_\be.X_\la^\mu = X_{s_\be\bullet\la}^{s_\be\bullet\mu}$, as
  both varieties are $T$-convex in $X$ \cite{buch.chaput.ea:equivariant} and
  contain the same $T$-fixed points by \Lemma{act-fixed}. A symmetric argument
  proves part (a) when $w = s_\be$ and $s_\be\bullet\la \neq \la$, and the
  general case of (a) follows from these cases. For parts (b) and (c), we may
  assume that $\la' = \la\cup\{\al\}$ and $\mu' = \mu\cup\{\al\}$ for some box
  $\al \in \cP_X\ssm\la$. Set $\be = \delta_X(\al)$. Since $\al$ is incomparable
  to $\la/\mu$, we obtain $\be \in S$ by \Lemma{comparable-boxes}(b), so part
  (a) shows that $s_\be.X_\la^\mu = X_{\la'}^{\mu'}$. This proves part (b). The
  group $G_{\{\be\}}$ stabilizes $X_{\la'}^\mu$, as it stabilizes both
  $X_{\la'}$ and $X^\mu$. We deduce that $[X_\la^\mu] = [X_{\la'}^{\mu'}]$ holds
  in $A_*(X_{\la'}^{\mu})$. Part (c) follows from this since $X_{\la'}^\mu
  \subset X_a^b$.
\end{proof}

\begin{defn}\label{defn:skew-rich}%
  Given any skew shape $\theta \subset \cP_X$, we let $X_\theta$ denote the
  isomorphism class of the Richardson variety $X_\la^\mu \subset X$, for any
  choice of straight shapes $\mu \subset \la$ in $\cP_X$ such that $\la/\mu =
  \theta$. Moreover, if $\phi \subset \theta$ is any smaller skew shape, then we
  set $[X_\phi] = [X_{\la'}^{\mu'}] \in A_{|\phi|}(X_\theta)$, for any choice of
  straight shapes $\mu', \la' \subset \cP_X$ such that $\mu \subset \mu' \subset
  \la' \subset \la$ and $\phi = \la'/\mu'$.
\end{defn}

It follows from \Proposition{comin-rich-trans} that the variety $X_\theta$ and
class $[X_\phi] \in A_*(X_\theta)$ are well-defined. One way to choose the
shapes $\mu', \la' \subset \cP_X$ in \Definition{skew-rich} is to let $\la'$ be
the order ideal generated by $\mu \cup \phi$ and set $\mu' = \la' \ssm \phi$.

\begin{remark}
  More generally, if $\phi \subset \theta \subset \cP_X$ are skew shapes, then
  there exists a well-defined group homomorphism $A(X_\phi) \to A(X_\theta)$
  that agrees with the map $A(X_{\la'}^{\mu'}) \to A(X_\la^\mu)$ whenever $\mu
  \subset \mu' \subset \la' \subset \la$ are straight shapes such that $\phi =
  \la'/\mu'$ and $\theta = \la/\mu$. We will not need this fact.
\end{remark}

\begin{remark}\label{remark:comin-rich-int}%
  Let $X_{\la'}^{\mu'}$ and $X_{\la''}^{\mu''}$ be Richardson varieties in $X$,
  and set $\la = \la' \cap \la''$ and $\mu = \mu' \cup \mu''$. Using that
  $X_{\la'} \cap X_{\la''} = X_\la$ and $X^{\mu'} \cap X^{\mu''} = X^\mu$, we
  obtain
  \[
    X_{\la'}^{\mu'} \cap X_{\la''}^{\mu''} = X_\la \cap X^\mu = \begin{cases}
      X_\la^\mu & \text{if $\mu \subset \la$;} \\
      \emptyset & \text{if $\mu \not\subset \la$.}
    \end{cases}
  \]
  When this intersection is not empty, the isomorphism class of $X_\la^\mu$ is
  given by the skew shape $\la/\mu = (\la'/\mu') \cap (\la''/\mu'')$.
\end{remark}

\begin{lemma}\label{lemma:skew-embed}%
  Let $\theta \subset \cP_X$ be a skew shape and set $S = \delta_X(\theta)$.
  There exists a unique maximal skew shape $\wh\theta \subset \cP_X$
  containing $\theta$ such that $\delta_X(\wh\theta) = S$. Moreover, if
  $\mu \subset \la$ are straight shapes in $\cP_X$ with $\theta = \la/\mu$,
  then $\wh\theta = \la^S/\la_S$.
\end{lemma}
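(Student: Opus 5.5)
Fix straight shapes $\mu \subset \la$ with $\theta = \la/\mu$. The plan is to show that $\la^S/\la_S$ contains $\theta$, has labels in $S$, and contains every skew shape $\phi \supset \theta$ with $\delta_X(\phi) = S$. Uniqueness and maximality then follow at once, and the description is independent of the choice of $(\la,\mu)$.

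\emph{First step.} By definition $\la_S \subset \la \subset \la^S$, and both $\delta_X(\la/\la_S)$ and $\delta_X(\la^S/\la)$ lie in $S$. So $\la^S/\la_S$ is a skew shape with labels in $S$. It also contains $\theta$: since $\delta_X(\la/\mu) = S$, minimality of $\la_S$ should give $\la_S \subset \mu$. This is seen by noting that $\la_S$ is the smallest straight shape reachable from $\la$ by removing maximal boxes with labels in $S$ (equivalently $w_{\la_S} = (w_\la)_S$, via \Lemma{act-fixed}), and $\mu$ is reachable in this way. Hence $\delta_X(\la^S/\la_S) = S$.

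\emph{Second step (main point).} Let $\phi = \la'/\mu'$ be any skew shape with $\theta \subset \phi$ and $\delta_X(\phi) \subset S$. I will show $\mu' \supset \la_S$ and $\la' \subset \la^S$. Geometrically, $X_{\la'}^{\mu'}$ contains $X_\la^\mu$ after translating, and its minimal embedding lies in $G_S w_{\la'}.P_X$. I prefer a combinatorial argument, using that $\la^S$ and $\la_S$ depend only on the coset $W_S w_\la$. By \Proposition{comin}(a), $\la_S$ and $\la^S$ are the minimum and maximum of the $W_S$-orbit of $\la$ under $\bullet$ (\Proposition{flagsubvar}(c)).

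Now $\la' \cup \la$ is obtained from $\la$ by adding boxes of $\phi$ in increasing order. Each such box has label in $S$, so by \Lemma{act-fixed} every intermediate shape lies in $W_S \bullet \la$. Similarly, $\mu' \cap \la$ (note $\mu' \subset \mu$ may fail, so I work with $\la\cap\la'$ and $\mu' \cup \mu$ as in \Remark{comin-rich-int}) is obtained from $\la$ by removing boxes of $\phi$, all labeled in $S$. Hence $\la' \cup \la$ and $\mu' \cap \la$ lie in $W_S\bullet\la$. Therefore $\la' \subset \la^S$ and $\mu' \supset \mu'\cap\la \supset \la_S$, so $\phi \subset \la^S/\la_S$.

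\emph{Expected obstacle.} The delicate point is the path argument: it needs every box of $(\la'\cup\la)/\la$ and $\la/(\mu'\cap\la)$ to lie in $\phi$. Convexity of $\phi$, with $\mu' \subset \mu$-type containments inherited from $\theta\subset\phi$, should give this. If the direct comparison is messy, the fallback is \Proposition{flagsubrich}(b) and \Corollary{flagsubint}(d): $X_{\la'\cup\la}^{\mu'\cap\la}$ and $X_\la^\mu$ are contained in the single flag subvariety $G_S w_\la.P_X = X_{\la^S}^{\la_S}$ (\Proposition{comin}(a)), and the containment is then read off on $T$-fixed points.
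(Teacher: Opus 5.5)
\textbf{The second step relies on a false claim.} Your first step is correct. The problem is that you assert $\la'\subset\la^S$ and $\la_S\subset\mu'$ for \emph{any} representation $\phi=\la'/\mu'$, on the grounds that the boxes of $(\la'\cup\la)\ssm\la$ and of $\la\ssm\mu'$ lie in $\phi$. That is not true. Two representations of $\phi$ can differ by boxes incomparable to every box of $\phi$, and nothing ties $\la',\mu'$ to $\la,\mu$.

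Here is a counterexample. Let $X=\Gr(2,4)$ with $\cP_X=\{a,b,c,d\}$, where $a<b<d$, $a<c<d$, $b$ and $c$ are incomparable, and the labels are $\be_2,\be_3,\be_1,\be_2$. Take $\theta=\phi=\{b\}$, so $S=\{\be_3\}$, with $\la=\{a,b\}$, $\mu=\{a\}$, $\la'=\{a,b,c\}$, $\mu'=\{a,c\}$.
\begin{itemize}
\item Here $\la^S=\la$, so $\la'\not\subset\la^S$.
\item Indeed $(\la'\cup\la)\ssm\la=\{c\}$ has label $\be_1\notin S$, so $\la'\cup\la\notin W_S\bullet\la$.
\item Exchanging the two representations gives $\la_S=\{a,c\}\not\subset\mu'=\{a\}$.
\end{itemize}
So the containments you hope to inherit from $\theta\subset\phi$ do not exist, and convexity of $\phi$ cannot supply them. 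Your fallback fails for the same reason: here $X_{\la'\cup\la}^{\mu'\cap\la}\cong\bP^1\times\bP^1$ is not contained in $G_S w_\la.P_X\cong\bP^1$.

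\textbf{What is missing.} You need to normalize the representation of $\phi$ relative to $\la/\mu$, and this is where the hypothesis $\delta_X(\phi)=S$ has to be used. The paper does this by translation. Write $\phi=\sigma'/\tau'$ and choose straight shapes $\tau'\subset\mu'\subset\la'\subset\sigma'$ with $\theta=\la'/\mu'$ (for example, $\la'$ the order ideal generated by $\tau'\cup\theta$ and $\mu'=\la'\ssm\theta$). Let $S'$ be the set of simple roots orthogonal to $S$. By \Proposition{comin-rich-trans} there is $w\in W_{S'}$ with $w\bullet\la'=\la$ and $w\bullet\mu'=\mu$. Part (a) of that proposition, applied to skew shapes with labels in $S$, shows that $\sigma=w\bullet\sigma'$ and $\tau=w\bullet\tau'$ satisfy $\sigma/\tau=\phi$ and $\tau\subset\mu\subset\la\subset\sigma$.

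Your path argument then works verbatim: $\delta_X(\la/\tau)$ and $\delta_X(\sigma/\la)$ lie in $S$, so $\la_S\subset\tau$ and $\sigma\subset\la^S$. In effect, the translation shows that $\la\cup\phi$ and $\la\ssm\phi$ are straight shapes, which is the representation your argument needs. This fact genuinely depends on the labels. In the example above, $\phi=\{b,d\}$ is a skew shape containing $\theta=\{b\}$, but $\{a,b\}\cup\phi$ is not an order ideal. You mentioned ``after translating'' in passing but set it aside; it is the essential step.
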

\begin{proof}
  Let $\mu \subset \la$ be straight shapes in $\cP_X$ such that $\theta =
  \la/\mu$, and let $\xi \subset \cP_X$ be a skew shape such that $\theta
  \subset \xi$ and $\delta_X(\xi) = S$. We must show that $\xi \subset
  \la^S/\la_S$. Choose straight shapes $\tau' \subset \mu' \subset \la' \subset
  \sigma'$ such that $\xi = \sigma'/\tau'$ and $\theta = \la'/\mu'$. Let $S'
  \subset \Delta$ be the simple roots orthogonal to $S$. By
  \Proposition{comin-rich-trans} we can choose $w \in W_{S'}$ such that $w
  \bullet \mu' = \mu$ and $w \bullet \la' = \la$. Set $\tau = w \bullet \tau'$
  and $\sigma = w \bullet \sigma'$. We obtain $\tau \subset \mu \subset \la
  \subset \sigma$ and $\xi = \sigma/\tau$. In particular, we have
  $\delta_X(\la/\tau) \subset S$ and $\delta_X(\sigma/\la) \subset S$. This
  implies $\la_S \subset \tau$ and $\sigma \subset \la^S$, and therefore $\xi
  \subset \la^S/\la_S$.
\end{proof}

The skew shape $\wh\theta$ of \Lemma{skew-embed} will be called the
\emph{minimal embedding} of $\theta$, and $X_{\wh\theta}$ will be called the
\emph{minimal embedding} of $X_\theta$. Indeed, $X_{\wh\theta}$ is isomorphic to
the minimal embedding of $X_\la^\mu$ whenever $\la/\mu = \theta$. Notice that
$X_{\wh\theta}$ is a cominuscule flag variety by \Proposition{comin}.

\begin{cor}\label{cor:comin-rich-factor}%
  Let $\theta \subset \cP_X$ be a skew shape. Assume that $\theta = \theta_1
  \cup \theta_2$, where $\theta_1$ and $\theta_2$ are incomparable subsets of
  $\cP_X$. Then $X_\theta \cong X_{\theta_1} \times X_{\theta_2}$.
\end{cor}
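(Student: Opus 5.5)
Choose straight shapes $\mu \subset \la \subset \cP_X$ with $\theta = \la/\mu$, so that $X_\theta \cong X_\la^\mu$. Set $u = w_\la$ and $v = w_\mu$. The plan is to apply \Corollary{rich-factor} with $S_1 = \delta_X(\theta_1)$ and $S_2 = \delta_X(\theta_2)$. This requires four things: $S_1$ and $S_2$ are orthogonal subsets of $\Delta$; $u v^{-1} \in W_{S_1 \cup S_2}$; there is an intermediate element $\ka$ with $v \leq \ka \leq u$; and the factors $Y_i \cap X_u^v$ are Richardson varieties of shape $\theta_i$.

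\emph{Orthogonality.} For $\al_1 \in \theta_1$ and $\al_2 \in \theta_2$, the boxes are incomparable, so $\delta_X(\al_1)$ and $\delta_X(\al_2)$ are orthogonal by \Lemma{comparable-boxes}(b). Orthogonal simple roots are in particular distinct, since $(\be,\be) \neq 0$. Hence $S_1 \cap S_2 = \emptyset$ and no Dynkin edge joins $S_1$ to $S_2$.

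\emph{Support and choice of $\ka$.} Since $u v^{-1} = w_\theta$ has a reduced expression in the letters $\delta_X(\al)$ for $\al \in \theta$, we get $u v^{-1} \in W_{S_1 \cup S_2}$. Let $\ka = w_{\mu \cup \theta_1}$. We must check that $\mu \cup \theta_1$ is a straight shape. Suppose $\al' < \al$ with $\al \in \theta_1$ and $\al' \notin \mu$. Then $\al' \in \la$, so $\al' \in \theta$. By convexity of $\theta$, any $\al'' \in \theta_2$ comparable to $\al'$ would lie between elements of $\theta$ and be comparable to an element of $\theta_1$, which is impossible. So $\al' \in \theta_1$, and we have $\mu \subset \mu\cup\theta_1 \subset \la$, i.e.\ $v \leq \ka \leq u$. \Corollary{rich-factor} then gives
\[
  X_u^v \cong (Y_1 \cap X_u^v) \times (Y_2 \cap X_u^v),
\]
where $Y_i = G_{S_i}\ka.P_X$.

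\emph{Identifying the factors (main obstacle).} By \Proposition{flagsubrich}(a) each $Y_i \cap X_u^v$ is a Richardson variety; we need its shape to be $\theta_i$. I would do this by computing $T$-fixed points and invoking $T$-convexity.

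For $Y_1$, the fixed points are $w\ka.P_X$ with $w \in W_{S_1}$. Acting by $s_\be$ with $\be \in S_1$ only adds or removes boxes labeled in $S_1$, by \Lemma{act-fixed}. So the fixed points of $Y_1 \cap X_\la^\mu$ are the straight shapes $\nu$ with $\mu \subset \nu \subset \la$ that differ from $\mu\cup\theta_1$ only in boxes of $\theta$ with labels in $S_1$. Boxes of $\theta_2$ have labels in $S_2$, which is disjoint from $S_1$, so these boxes are never changed. Hence $\nu = \mu \cup \phi$ with $\phi \subset \theta_1$ an order ideal of $\theta_1$.

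These are exactly the fixed points of $X_{\mu\cup\theta_1}^{\mu}$, whose shape is $\theta_1$. Both $Y_1 \cap X_\la^\mu$ and $X_{\mu\cup\theta_1}^{\mu}$ are Richardson varieties, hence $T$-convex, so equality of their fixed-point sets gives equality of the varieties: $Y_1 \cap X_\la^\mu = X_{\mu\cup\theta_1}^\mu$. Conversely, every such $\nu$ is reachable from $\mu\cup\theta_1$ by removing maximal boxes of $\theta_1$ one at a time, so it does lie in $Y_1$.

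Symmetrically, $Y_2 \cap X_\la^\mu = X_\la^{\mu\cup\theta_1}$, which has shape $\la/(\mu\cup\theta_1) = \theta_2$. Combining, $X_\theta \cong X_{\theta_1} \times X_{\theta_2}$.
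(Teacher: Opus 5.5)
Your proof is correct. Its skeleton is the same as the paper's: apply \Corollary{rich-factor} with $S_i = \dex(\theta_i)$, with orthogonality coming from \Lemma{comparable-boxes}(b), and with $w_\la w_\mu^{-1} = w_\theta \in W_{S_1\cup S_2}$.

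The two routes differ in the choice of base point and in how the factors are identified.
\begin{itemize}
\item The paper takes $\ka = w_\la$, so $v \leq \ka \leq u$ is automatic. It writes $Y_i = X_{\la^{S_i}}^{\la_{S_i}}$ by \Proposition{comin}(a). It then uses \Remark{comin-rich-int} to see that $Y_i \cap X_\la^\mu$ is the Richardson variety of shape $\theta \cap (\la^{S_i}/\la_{S_i})$. Finally it applies \Lemma{skew-embed} to $\theta_i = \la/(\la\ssm\theta_i)$ to get $\la^{S_i}/\la_{S_i} = \wh\theta_i$. This needs the analogue of your check, namely that $\la\ssm\theta_i$ is a straight shape. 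The shape $\wh\theta_i$ contains $\theta_i$ and misses $\theta_{3-i}$ because its labels lie in $S_i$.
\item You place the base point at the corner $\ka = w_{\mu\cup\theta_1}$. The factors then come out as the explicit Richardson varieties $X^\mu_{\mu\cup\theta_1}$ and $X_\la^{\mu\cup\theta_1}$. You verify this directly from \Lemma{act-fixed} and $T$-convexity.
\end{itemize}
Your argument is more elementary and self-contained. It bypasses \Proposition{comin}, \Remark{comin-rich-int}, and the translation argument behind \Lemma{skew-embed}. It also exhibits the product concretely as $X_\la^\mu \cong X^\mu_{\mu\cup\theta_1}\times X_\la^{\mu\cup\theta_1}$. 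The paper's version is shorter given the minimal-embedding machinery already in place, and it reduces the identification to the combinatorial statement $\theta\cap\wh\theta_i = \theta_i$.

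Two small points of presentation:
\begin{itemize}
\item In your straight-shape check, convexity and the auxiliary box $\al''$ play no role. Once $\al'\in\theta$ and $\al'<\al\in\theta_1$, incomparability of $\theta_1$ and $\theta_2$ gives $\al'\notin\theta_2$ immediately.
\item To invoke $T$-convexity in $X$, you want $Y_1\cap X_\la^\mu$ to be a Richardson variety in $X$, not merely in $Y_1$. This follows from \Proposition{flagsubrich}(a) together with \Proposition{flagsubrich}(c).
\end{itemize}
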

\begin{proof}
  Choose straight shapes $\mu \subset \la \subset \cP_X$ such that $\theta =
  \la/\mu$, and set $S_i = \delta_X(\theta_i)$ and $Y_i = G_{S_i} w_\la.P_X$ for
  $i \in \{1,2\}$. Since $w_\la w_\mu^{-1} = w_{\la/\mu} \in W_{S_1 \cup S_2}$,
  and $S_1$ and $S_2$ are orthogonal subsets of $\Delta$ by
  \Lemma{comparable-boxes}(b), it follows from \Corollary{rich-factor} that
  $X_\theta \cong (Y_1 \cap X_\la^\mu) \times (Y_2 \cap X_\la^\mu)$.
  \Proposition{comin}(a) gives $Y_i = X_{\la^{S_i}}^{\la_{S_i}}$, so it suffices
  to show that $\theta_i = \theta \cap (\la^{S_i}/\la_{S_i})$ by
  \Remark{comin-rich-int}. Using that $\theta_1$ and $\theta_2$ are
  incomparable, it follows that $\la \ssm \theta_i$ is a straight shape in
  $\cP_X$, so we obtain $\wh\theta_i = \la^{S_i}/\la_{S_i}$ by
  \Lemma{skew-embed}. The result follows from this since $\wh\theta_i \cap
  \theta_{3-i} = \emptyset$.
\end{proof}

\Corollary{comin-rich-factor} will be generalized in \Theorem{chow1rich}, where
we show that any cominuscule Richardson variety has a unique factorization as a
product of indecomposable cominuscule Richardson varieties.

\begin{remark}\label{remark:min-embed}%
  Let $X = G/P_X$ be a cominuscule flag variety and let $\theta \subset \cP_X$
  be a skew shape. Then \Lemma{skew-embed} shows that $X$ is the minimal
  embedding of $X_\theta$ if and only if $\dex(\theta) = \dex(\cP_X)$. When
  studying the Richardson variety $X_\theta$, it is frequently convenient to
  replace $X$ with the minimal embedding, so that the identity $\dex(\theta) =
  \dex(\cP_X)$ can be assumed. By \Corollary{indecomp-comin-classify} and
  \Corollary{comin-factor} we may additionally replace $G$ with a smaller group,
  so that $\dex(\theta) = \Delta$. In this case it follows from
  \Lemma{comparable-boxes}(c) that $\theta$, $\cP_X$, and the Dynkin diagram of
  $\Phi$ have the same number of connected components.
\end{remark}

% !TeX root=comrich.tex

\section{The first Chow group and Picard group}\label{sec:chowpic}

In this section we compute the first Chow group $A_1(X_\theta)$ and the Picard
group $\Pic(X_\theta)$ of any cominuscule Richardson variety $X_\theta$.

\subsection{$T$-stable curves in flag varieties}

We start by recalling some facts about $T$-stable curves in an arbitrary flag
variety $X = G/P_X$. The proofs given in \cite[\S3]{fulton.woodward:quantum} are
valid for flag varieties defined over any algebraically closed field. A
$T$-stable curve in $X$ is assumed to be irreducible. Given a root $\al \in
\cP_X$, there exists a unique $T$-stable curve $C_\al \subset X$ that contains
the points $1.P_X$ and $s_\al.P_X$. Any $T$-stable curve in $X$ has the form
$w.C_\al$ for some $\al \in \cP_X$ and $w \in W$.

Any $W_X$-invariant character $\la : T \to \bG_m$ extends to a character of
$P_X$. Let $\bK_\la$ be the 1-dimensional representation of $P_X$ defined by
$p.z = \la(p)z$, and let $L_\la = G \times^{P_X} \bK_{-\la}$ be the associated
line bundle on $X$. The total space of this bundle consists of all pairs $[g,z]$
with $g \in G$ and $z \in \bK$, subject to the relation $[g p,\la(p)z] = [g,z]$
for all $p \in P_X$. The degree of $L_\la$ restricted to $C_\al$ is given by
\begin{equation}\label{eqn:curve-degree}%
  \int_{C_\al} c_1(L_\la) \,=\, (\al^\vee, \la) \,,
\end{equation}
where $\al^\vee$ is the coroot of $\al$.

We will say that two distinct $T$-fixed points $x, y \in X^T$ are
\emph{adjacent} if both points are contained in some $T$-stable curve in $X$.
Equivalently, we have $x \neq y = s_\al.x$ for some positive root $\al \in
\Phi^+$. This root $\al \in \Phi^+$ is unique by
\cite[Lemma~4.1(i)]{fulton.woodward:quantum}. For convenience we will also say
that two elements $u, v \in W^X$ are adjacent if the corresponding $T$-fixed
points $x=u.P_X$ and $y=v.P_X$ are adjacent. In this case we denote the unique
$T$-stable curve through these points by either $C_{x,y}$ or $C_{u,v}$. Recall
from \Definition{skew-rich} that, if $X$ is a cominuscule flag variety, then any
skew shape $\phi \subset \cP_X$ defines a Chow class $[X_\phi] \in
A_{|\phi|}(X)$.

\begin{lemma}\label{lemma:comin-degree}%
  Let $X$ be an indecomposable cominuscule flag variety, and let $\ga \in
  \Delta$ be the cominuscule simple root defining $X$. For $\al \in \cP_X$ we
  have in $A_1(X)$ that
  \[
    [X_{\{\al\}}] \,=\, [C_\al] \,=\, \begin{cases}
      [X_{s_\ga}] & \text{if $\al$ is long,} \\
      2[X_{s_\ga}] & \text{if $\al$ is short.}
    \end{cases}
  \]
\end{lemma}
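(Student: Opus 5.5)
Since $X$ is indecomposable, $\Pic(X) \cong \Z$ and $A_1(X) \cong \Z$, so a class in $A_1(X)$ is determined by its degree against the generator of $\Pic(X)$. The plan is to compute every class in question by such a degree, using \eqn{curve-degree}. The generator of $\Pic(X)$ is $L_{\om_\ga}$, where $\om_\ga$ is the fundamental weight of $\ga$; it is $W_X$-invariant because $P_X$ is the maximal parabolic defined by $\ga$. The Schubert curve $X_{s_\ga}$ equals $C_\ga$, since it is the $T$-stable curve through $1.P_X$ and $s_\ga.P_X$. By \eqn{curve-degree} its degree is $(\ga^\vee,\om_\ga) = 1$. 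Hence $[X_{s_\ga}]$ generates $A_1(X)$, and for every $\al \in \cP_X$ we get $[C_\al] = (\al^\vee,\om_\ga)\,[X_{s_\ga}]$.

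Next I compute $(\al^\vee,\om_\ga)$, which is the coefficient of $\ga^\vee$ in the coroot $\al^\vee = 2\al/(\al,\al)$. Write $\al = \sum_\be c_\be \be$. Then $\al^\vee = \sum_\be c_\be \frac{(\be,\be)}{(\al,\al)}\be^\vee$, so the $\ga^\vee$-coefficient is $c_\ga (\ga,\ga)/(\al,\al)$. For $\al \in \cP_X$ the cominuscule condition gives $c_\ga = 1$. The cominuscule root $\ga$ is long: this holds in all the cases of \Table{labsets}, for example in type $C_n$ we have $\ga = \be_n = 2\ep_n$, and in type $B_n$ we have $\ga = \be_1$. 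Therefore the coefficient is $1$ if $\al$ is long and $2$ if $\al$ is short, the ratio of squared lengths in the relevant types being $2$. This gives the right-hand side for $[C_\al]$.

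It remains to show that $[X_{\{\al\}}] = [C_\al]$. By \Definition{skew-rich}, $[X_{\{\al\}}]$ is the class of the curve $X_{\la'}^{\mu'}$, where $\la' = \la(\al)\cup\{\al\}$ and $\mu' = \la(\al)$. Set $w = w_{\la(\al)}$. This curve is $T$-stable and joins $w.P_X$ and $w s_\al.P_X = s_{\de_X(\al)} w.P_X$, so it equals $w.C_\al$. Translation by $w \in G$ preserves rational equivalence classes on $X$, because $G$ is connected and rational; equivalently, the degree of the $G$-equivariant bundle $L_{\om_\ga}$ is unchanged under translation. Hence $[X_{\{\al\}}] = [w.C_\al] = [C_\al]$. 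Alternatively, one can compute directly: the curve is $C_{\de_X(\al)}$ translated, with degree $(\al^\vee,\om_\ga)$.

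The only step needing real care is that $\ga$ is long, so that the short/long dichotomy yields exactly the factor $2$. I would settle this by checking the two non-simply-laced families, $\LG(n,2n)$ and the odd quadric, against the data in \Table{labsets}.
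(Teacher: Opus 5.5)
Your proof is correct and follows the paper's argument. You identify $X_{\la(\al)\cup\{\al\}}^{\la(\al)}$ with the translate $w_{\la(\al)}.C_\al$, then compute $\int_{C_\al} c_1(L_{\om_\ga}) = (\al^\vee,\om_\ga) = (\ga,\ga)/(\al,\al)$, using that $\ga$ has coefficient $1$ in $\al$, that $\ga$ is long, and that $A_1(X)\cong\Z$ is generated by $[X_{s_\ga}] = [C_\ga]$.

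Your parenthetical ``alternative'' is inaccurate, though the main argument never uses it. In general $\delta_X(\al) \notin \cP_X$, so $C_{\delta_X(\al)}$ is undefined; the curve is the $G_{\{\delta_X(\al)\}}$-orbit of $w_{\la(\al)}.P_X$.
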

\begin{proof}
  It follows from equation \eqn{comin-rootprod} that the class $[X_{\{\al\}}]
  \in A_1(X)$ is represented by the curve $X_{\la(\al)\cup\{\al\}}^{\la(\al)} =
  w_{\la(\al)}.C_\al$, so it is enough to prove the second equality.
  Using equation \eqn{curve-degree} and the assumption that $\ga$ is
  cominuscule, we obtain
  \[
    \int_{C_\al} c_1(L_{\om_\ga}) \,=\,
    (\al^\vee,\om_\ga) \,=\,
    \frac{(\ga,\ga)}{(\al,\al)} (\al,\om_\ga^\vee) \,=\,
    \frac{(\ga,\ga)}{(\al,\al)} \,.
  \]
  The result follows from this since the root system $\Phi$ does not have type
  $G_2$, $\ga$ is long, and $A_1(X) \cong \Z$ is generated by the Schubert curve
  class $[C_\ga] = [X_{s_\ga}]$ by \Lemma{indecomp-comin}.
\end{proof}

Given a cominuscule flag variety $X$, we will say that two boxes $\al', \al \in
\cP_X$ are \emph{neighbors} if one of the boxes covers the other.

\begin{lemma}\label{lemma:neighbor-class}%
  Let $X$ be a cominuscule flag variety and let $\mu \subset \la \subset \cP_X$
  be straight shapes. Let $\al', \al \in \la/\mu$ be neighbor boxes such that
  $\al$ is at least as long as $\al'$. Then we have in $A_1(X_\la^\mu)$ that
  \begin{equation}\label{eqn:neighbor-class}%
    [X_{\{\al'\}}] = \begin{cases}
      [X_{\{\al\}}] & \text{if $\al'$ and $\al$ have the same length,} \\
      2[X_{\{\al\}}] & \text{if $\al'$ is shorter than $\al$.}
    \end{cases}
  \end{equation}
\end{lemma}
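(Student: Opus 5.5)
Reduce to a two-box Richardson variety and compare the two curve classes inside it. Since $\al'$ and $\al$ are neighbors, one covers the other; say the smaller is $\al_1$ and the larger is $\al_2$. Then $\phi = \{\al_1,\al_2\}$ is a skew shape contained in $\la/\mu$: it is convex because $\al_2$ covers $\al_1$. As in \Definition{skew-rich}, let $\la'$ be the order ideal generated by $\mu \cup \phi$ and set $\mu' = \la' \ssm \phi$. Then $\mu \subset \mu' \subset \la' \subset \la$, so $X_{\la'}^{\mu'} \subset X_\la^\mu$ is a surface.

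The surface $Z = X_{\la'}^{\mu'}$ contains the Richardson curves $X_{\mu'\cup\{\al_1\}}^{\mu'}$ and $X_{\la'}^{\mu'\cup\{\al_1\}}$, whose classes are $[X_{\{\al_1\}}]$ and $[X_{\{\al_2\}}]$ by \Definition{skew-rich}. Pushing forward along $Z \to X_\la^\mu$, it suffices to prove \eqn{neighbor-class} in $A_1(Z)$.

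Next I would identify $Z$ explicitly. By \Lemma{comparable-boxes}(a), the labels $\be_1 = \dex(\al_1)$ and $\be_2 = \dex(\al_2)$ are distinct adjacent nodes of the Dynkin diagram. Let $Y = G_S w_{\la'}.P_X$ with $S = \{\be_1,\be_2\}$. By \Proposition{comin}, $Y$ is an indecomposable cominuscule flag variety for a rank-two group of type $A_2$ or $B_2 = C_2$, so $Y$ is $\bP^2$ or the quadric $Q^3 \cong \LG(2,4)$. Since $\dim(Z) = 2$, \Lemma{skew-embed} shows that $\phi$ is a maximal skew shape with labels in $S$, hence $\wh\phi = \phi$ and $Z$ equals its minimal embedding. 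Thus $Z = Y$ with $\#\cP_Y = 2$, which forces $Y \cong \bP^2$. In type $B_2/C_2$ the poset $\cP_Y$ has $3$ boxes for $Q^3$ or $2$ boxes for... (I will double-check this case split: the two-box cominuscule variety in type $C_2$ is $\bP^3$ restricted? No: $\SG(1,4) = \bP^3$ has 3 boxes, and $\LG(2,4)$ has 3 boxes.) So each rank-two case needs care. If the posets force more than two boxes, the maximality claim shows this cannot happen, and then $Y \cong \bP^2$ with both boxes long.

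That conclusion seems too strong, because $\LG$ has long and short neighbors. So instead of relying on this identification, I will argue directly by degrees; this is the safer route. By \Proposition{comin}(b,c), $Z$ is a cominuscule flag variety $Y$ of $G_S$, and $\cP_Y \cong \phi$ has two boxes. Moreover $Y$ is indecomposable, because $\phi$ is connected (\Lemma{indecomp-comin}). Hence $A_1(Y) \cong \Z$. The two boxes of $\cP_Y$ correspond, via $w_{\la_S}^{-1}$, to $\al_1$ and $\al_2$. This is a Weyl group action, so it preserves root lengths.

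Now apply \Lemma{comin-degree} to $Y$. Its Richardson curves $Y_{\{\al\}}$ have class $[C]$ or $2[C]$ according as the box is long or short in $\Phi_S$, where $C$ is the Schubert curve. By \Proposition{comin}(d), these curves are exactly the curves $X_{\{\al_i\}}$ above.

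The relation \eqn{neighbor-class} then follows. If the boxes have equal length, both classes are $[C]$ or both are $2[C]$. If $\al'$ is shorter, then $[X_{\{\al'\}}] = 2[C] = 2[X_{\{\al\}}]$, because the longer root $\al$ must be long in $\Phi_S$: $\Phi_S$ has only two root lengths, and the cominuscule root is long.

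The main obstacle is verifying that lengths and the long/short dichotomy match between $\Phi$ and $\Phi_S$, including the simply-laced convention. The case where both boxes are short, as in $\LG$, also needs checking; there the equality holds trivially.
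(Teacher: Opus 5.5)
Your reduction to the surface $Z = X_{\la'}^{\mu'}$ is right, and so is applying \Lemma{comin-degree} on $Y = G_S w_{\la'}.P_X$. The step joining them is wrong, and the missing argument is exactly what the paper supplies. In your ``safer route'' you assert that $Z$ is the cominuscule flag variety $Y$ and that $\cP_Y \cong \phi$ has two boxes. That is the same claim you had just rejected. \Proposition{comin}(b) identifies $\cP_Y$ with $(\la')^S/(\la')_S$, which by \Lemma{skew-embed} is the minimal embedding $\wh\phi$, and $\wh\phi$ can strictly contain $\phi$.

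This happens precisely in the case that carries the factor $2$. If $\al'$ is shorter than $\al$, then $S$ has type $B_2$ and $Y \cong Q^3$. Its poset $\cP_Y$ is a chain of three boxes (long, short, long), and $Z$ is the Schubert divisor of $Q^3$ (a quadric cone), not $Y$. When $S$ has type $A_2$ your argument is fine: $\#\cP_Y = 2$ forces $\wh\phi = \phi$ and $Z = Y \cong \bP^2$.

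So in the $B_2$ case \Lemma{comin-degree} gives the relation only in $A_1(Y)$. You cannot push it forward from $Y$, because in general $Y \not\subset X_\la^\mu$. For example, if $\la/\mu$ consists of a diagonal box of $\cP_{\LG(n,2n)}$ and one of its neighbors, then $X_\la^\mu = Z$ is the quadric cone.

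The missing ingredient is that the pushforward $A_1(Z) \to A_1(Y)$ is injective. The paper obtains this by noting that $Z$ is a Schubert variety in $Y$ (\Proposition{flagsubrich}) and applying Mathieu's result $A_1(Z) \cong A_1(Y)$. Equivalently, the cell decomposition of the Schubert variety $Z$ shows that $A_1(Z) \cong \Z$ is generated by its unique Schubert curve, which maps to the generator of $A_1(Y)$. With this step the relation holds in $A_1(Z)$, and your final pushforward to $A_1(X_\la^\mu)$ completes the proof.
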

\begin{proof}
  Choose straight shapes $\tau, \sigma \subset \cP_X$ with $\sigma/\tau =
  \{\al', \al\}$ and $\mu \subset \tau \subset \sigma \subset \la$, and set $S =
  \dex(\sigma/\tau)$ and $Y = G_S w_\sigma.P_X$. Since $G_S$ has type $A_2$ or
  $B_2$, and $Y$ is a cominuscule flag variety of $G_S$ by \Proposition{comin},
  we must have $Y = \bP^2$ or $Y = Q^3$. Equation \eqn{neighbor-class} therefore
  holds in $A_1(Y)$ by \Lemma{comin-degree}. Since $X_\sigma^\tau$ is a Schubert
  variety in $Y$ by \Proposition{flagsubrich}, we obtain $A_1(X_\sigma^\tau)
  \cong A_1(Y)$ by \cite{mathieu:formules*1}, which shows that
  \eqn{neighbor-class} holds in $A_1(X_\sigma^\tau)$. Finally, since
  $X_\sigma^\tau \subset X_\la^\mu$, we deduce that \eqn{neighbor-class} holds
  in $A_1(X_\la^\mu)$.
\end{proof}

\begin{example}\label{example:lg24-curves}%
  Consider the quadric threefold $X = \LG(2,4) \cong Q^3$ of type $C_2$, and let
  $\ga \in \Delta$ be the cominuscule simple root defining $X$. Since $\cP_X$
  contains two long roots and one short root, it follows that each $T$-fixed
  point in $X$ is contained in three $T$-stable curves, two of which have class
  $[X_{s_\ga}]$ and generate $A_1(X)$, while the third curve has class
  $2[X_{s_\ga}]$. Since $X$ contains four $T$-fixed points, it follows that all
  distinct $T$-fixed points in $X$ are adjacent. We have $W^X = \{1, s_\ga, \ka,
  w_0^X\}$, where $\ell(\ka)=2$. Using that $C_{s_\ga,\ka}$ is the Richardson
  variety defined by the short root in $\cP_X$, we deduce from
  \Lemma{comin-degree} that the $T$-stable curves in $X$ have the degrees in the
  following picture, where single lines indicate the class $[X_{s_\ga}]$ and
  double lines indicate twice this class.
  \begin{center}
  \begin{tikzpicture}[x=30pt,y=30pt]
    \node (A) at (0,0) {$1.P_X$};
    \node (B) at (2,0) {$s_\ga.P_X$};
    \node (C) at (0,2) {$\ka.P_X$};
    \node (D) at (2,2) {$w_0^X.P_X$};
    \path (A) edge (B);
    \path (A) edge (C);
    \path (A) edge[double distance=1.5pt] (D);
    \path (B) edge[double distance=1.5pt] (C);
    \path (B) edge (D);
    \path (C) edge (D);
  \end{tikzpicture}
  \end{center}
\end{example}

\subsection{Triangles of $T$-stable curves in Richardson varieties}

We next prove some facts about triples of adjacent $T$-fixed points in arbitrary
flag varieties of Lie type different from $G_2$. Recall that a subgroup of a
Weyl group is \emph{parabolic} if it is generated by simple reflections.

\begin{lemma}\label{lemma:adjacent_rank2}%
  Let $\Phi$ be a root system of rank 3 that does not contain a subsystem of
  type $G_2$. Let $W$ be the Weyl group of $\Phi$, let $P \subset W$ be a
  parabolic subgroup, and let $\al_1, \al_2, \al_3 \in \Phi^+$ be distinct
  positive roots such that $s_{\al_1} \notin P$, $s_{\al_3} \notin P$, and
  $s_{\al_1} s_{\al_2} s_{\al_3} \in P$. Then the roots $\al_1, \al_2, \al_3$
  are linearly dependent.
\end{lemma}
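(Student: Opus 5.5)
The plan is to argue by contradiction through fixed vectors. Suppose $\al_1, \al_2, \al_3$ are linearly independent. Let $V$ be the real span of $\Phi$. Since $\Phi$ has rank 3, $\dim V = 3$, so the three roots form a basis of $V$.

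The first step is a fixed-point computation (Carter's lemma, proved directly). Set $w = s_{\al_1} s_{\al_2} s_{\al_3}$ and let $v \in V$ satisfy $w.v = v$. Expanding the reflections one at a time gives
\[
  s_{\al_3}.v = v - c_3\al_3, \quad
  s_{\al_2}s_{\al_3}.v = v - c_3\al_3 - c_2\al_2, \quad
  w.v = v - c_3\al_3 - c_2\al_2 - c_1\al_1,
\]
where the coefficients are
\[
  c_3 = (v,\al_3^\vee), \qquad
  c_2 = (s_{\al_3}.v, \al_2^\vee), \qquad
  c_1 = (s_{\al_2}s_{\al_3}.v, \al_1^\vee).
\]
Since $w.v = v$, linear independence forces $c_1 = c_2 = c_3 = 0$. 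Then $c_3 = 0$ gives $s_{\al_3}.v = v$, so $c_2 = (v,\al_2^\vee) = 0$, and similarly $c_1 = (v,\al_1^\vee) = 0$. Hence $v$ is orthogonal to a basis of $V$, so $v = 0$. In other words, $w$ has no nonzero fixed vector in $V$.

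The second step uses the parabolic structure. Write $P = W_I$ for a subset $I$ of the simple roots. If $I$ is all of $\Delta$, then $P = W$, which contradicts $s_{\al_1} \notin P$. So there is a simple root $\be \notin I$. Every simple reflection $s_{\be'}$ with $\be' \in I$ fixes the fundamental coweight $\om_\be^\vee$, because $(\be', \om_\be^\vee) = 0$. Hence every element of $W_I$ fixes $\om_\be^\vee$, and we may regard $\om_\be^\vee$ as a nonzero vector of $V$ via the inner product. In particular $w \in P$ has a nonzero fixed vector in $V$, contradicting the first step. Therefore the roots must be linearly dependent.

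I expect no serious obstacle. The one point to check carefully is that $G$ is only reductive, so the ambient character space may be larger than $V$. This is why all computations are done inside $V$, where $W$ acts and the coweight $\om_\be^\vee$ makes sense as a nonzero vector. This argument appears to use neither $s_{\al_3} \notin P$ nor the exclusion of $G_2$. Those hypotheses are presumably needed where the lemma is applied, not for linear dependence itself.
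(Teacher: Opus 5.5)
Your proof is correct, and it takes a genuinely different route from the paper. The paper establishes this lemma only by computer verification over the rank-3 root systems $A_3$, $B_3$, $A_2\times A_1$, $B_2\times A_1$, and $A_1\times A_1\times A_1$.

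You argue conceptually, in two steps:
\begin{itemize}
\item \emph{Carter's lemma.} A product of reflections in a basis of $V$ has no nonzero fixed vector. Your telescoping expansion $w.v = v - c_3\al_3 - c_2\al_2 - c_1\al_1$ is correct. So is the back-substitution showing that $v$ is orthogonal to all three roots.
\item \emph{Fixed vector of a parabolic subgroup.} Every element of a proper parabolic subgroup $W_I$ fixes the nonzero vector $\om_\be^\vee$ for any $\be \in \Delta \ssm I$. The hypothesis $s_{\al_1}\notin P$ guarantees that $P$ is proper.
\end{itemize}

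This approach buys several things over the case-by-case check:
\begin{itemize}
\item It is uniform and needs no enumeration.
\item It proves more: in any rank $n$, a product of $n$ reflections in linearly independent roots lies in no proper parabolic subgroup, nor in any conjugate of one.
\item It uses only one of the two hypotheses $s_{\al_1}\notin P$, $s_{\al_3}\notin P$.
\item It needs neither distinctness of the roots nor the exclusion of $G_2$, so $G_2\times A_1$ is covered as well.
\end{itemize}

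As you suspected, the $G_2$ hypothesis does matter in the paper, but elsewhere. It is used in the rank-2 classification step of the proof of \Proposition{triangle}, not in this lemma. Your caveat about $G$ being only reductive is unnecessary, since the lemma is stated purely in terms of $\Phi$ and its Weyl group acting on the span $V$ of $\Phi$. It does no harm, though.
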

\begin{proof}
  This has been checked using a computer for the root systems $A_3$, $B_3$,
  $A_2\times A_1$, $B_2\times A_1$, and $A_1\times A_1\times A_1$.
\end{proof}

\begin{prop}\label{prop:triangle}%
  Let $X = G/P_X$ be a flag variety, and assume that the root system of $G$ does
  not contain a subsystem of type $G_2$. Let $x, y, z \in X^T$ be pairwise
  adjacent $T$-fixed points. There exists a unique irreducible closed $T$-stable
  subvariety $\Omega \subset X$ such that $\Omega^T = \{x,y,z\}$. Furthermore,
  there exists $w \in W$ such that $w.\Omega$ is a Richardson variety in $X$
  whose minimal embedding is $\bP^2$, $\SG(1,4)$, or $Q^3$. In particular,
  $\Omega$ is isomorphic to either $\bP^2$ or the Schubert divisor in $Q^3$.
\end{prop}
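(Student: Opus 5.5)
Translate by $W$ so that $x = 1.P_X$. Then $y = s_{\al}.P_X$ and $z = s_{\be}.P_X$ for positive roots $\al,\be$ with $s_\al, s_\be \notin W_X$. The third adjacency gives $z = s_{\ga} y$ for a positive root $\ga$, so $s_\be^{-1} s_\ga s_\al \in W_X$. Rewriting, $s_{\al_1}s_{\al_2}s_{\al_3} \in W_X$ with $\al_1=\be$, $\al_2=\ga$, $\al_3=\al$, where $s_\be,s_\al\notin W_X$. The roots $\al,\be,\ga$ are distinct because the three points are distinct and the root of an adjacency is unique.

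The first key step is to show $\al,\be,\ga$ span a rank-2 subsystem. Suppose they are linearly independent. Let $\Phi'$ be the rank-3 subsystem $\Phi\cap\Span(\al,\be,\ga)$. After conjugating by an element of $W$, choose the positive system so that $\Phi'$ is generated by simple roots of a standard parabolic. Then apply \Lemma{adjacent_rank2} to $W_{\Phi'}$ with parabolic subgroup $P = W_{\Phi'}\cap (\text{stabilizer of } x)$. This gives a contradiction, since the stabilizer intersected with a reflection subgroup is again parabolic in it after suitable choice. Hence the roots lie in a rank-2 subsystem $\Phi''$, which is of type $A_2$, $B_2$ or $A_1\times A_1$. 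Type $G_2$ is excluded by hypothesis, and $A_1\times A_1$ is impossible because it has only two positive roots.

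Next, conjugate by $w\in W$ so that $\Phi''$ becomes a standard subsystem $\Phi_S$ with $S\subset\Delta$ of size 2. Replace $w$ within its coset so that $w.x$ is the minimal point $(w.x)$ with respect to $W_S$. Then $\{w.x,w.y,w.z\}$ lies in the flag subvariety $Y = G_S w.x$. By \Proposition{flagsubvar}, $Y$ is a Richardson variety in $X$ and a flag variety of $G_S$ of rank 2.

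Now enumerate the flag varieties of $A_2$ and $B_2$ that contain three pairwise adjacent $T$-fixed points, and the $T$-stable irreducible subvarieties with exactly those fixed points:
\begin{itemize}
\item $\bP^2$: the three fixed points span all of $\bP^2$.
\item $\Fl(3)$: the triangles are exactly the $\bP^1$-fibers... on inspection, the triangle lies in a sub-$\bP^2$ fixed by a single-parabolic condition, which reduces to a smaller flag subvariety.
\item $\bP^3=\SG(1,4)$ and $Q^3$: the triangle spans a $\bP^2$ or a Schubert divisor (the cone over a conic), as in \Example{lg24-curves}.
\item The full flag variety of $B_2$: this reduces as in the $\Fl(3)$ case.
\end{itemize}
In each case, use \Corollary{flagsubint} and Bruhat considerations to pass to the smallest flag subvariety containing the three points. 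That subvariety is $\bP^2$, $\SG(1,4)$ or $Q^3$, and the triangle is a translate of a Richardson variety there: the whole $\bP^2$, or a Schubert surface in $\bP^3$ or $Q^3$. This gives existence of $\Omega$ together with its identification.

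For uniqueness, let $\Omega'$ be any irreducible closed $T$-stable subvariety with $\Omega'^T=\{x,y,z\}$. Since $w.\Omega$ is a Richardson variety and hence $T$-convex, $w.\Omega'\subset w.\Omega$. A $T$-stable closed subvariety with three fixed points must have dimension at least 2, because a curve through $x$ has only two fixed points. Since $\Omega$ is irreducible of dimension 2, $\Omega'=\Omega$.

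The main obstacle is the reduction of the rank-3 case to \Lemma{adjacent_rank2}. The difficulty is that the stabilizer restricted to the reflection subgroup generated by three roots must be seen to be parabolic with respect to a suitable base. My plan is to take $P$ to be the stabilizer in $W_{\Phi'}$ of a generic point of the appropriate face, which is a parabolic subgroup of $W_{\Phi'}$ for some base.
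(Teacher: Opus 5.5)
Your outline follows the paper's proof. You normalize $x=1.P_X$, use \Lemma{adjacent_rank2} to force the three roots into a rank-2 span, conjugate that span to a standard $\Phi_S$, pass to the flag subvariety $G_S.x$, identify it, and get uniqueness from $T$-convexity. Your dimension argument for uniqueness is fine.

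\textbf{The gap: the full flag varieties in your rank-2 case analysis.} Your claims about $\Fl(3)$ and the full flag variety of type $B_2$ are wrong.
\begin{itemize}
\item A $\bP^1$-fiber has only two $T$-fixed points, so it cannot carry a triangle.
\item The proposed reduction ``to a smaller flag subvariety'' cannot work. When $|S|=2$, every flag subvariety $G_{S'}\ka.P_X$ with $S'\subsetneq S$ is a point or a $\bP^1$. So if $G_S/B_S$ contained a triangle, the smallest flag subvariety containing it would be $G_S/B_S$ itself, which is not $\bP^2$, $\SG(1,4)$, or $Q^3$.
\end{itemize}
What you need instead is that these cases never occur. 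Translate by $W_S$ so that $x$ is the $B_S$-fixed point of $Y=G_S.x$. The relation you derived then puts a product of three reflections in the stabilizer $(W_S)_Y$. If $P_Y=B_S$, this stabilizer is trivial. But a product of three reflections has determinant $-1$, so it cannot be the identity. Equivalently, adjacent points of $G_S/B_S$ have lengths of opposite parity, so the adjacency graph is bipartite. This is exactly the paper's observation that $P_X\neq B$.

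With that in hand, exclude rank $1$ and $A_1\times A_1$: neither has three distinct positive roots. Only $\bP^2$, $\SG(1,4)\cong\bP^3$ and $Q^3$ remain. In each, $W_S$ acts transitively on the three or four fixed points, so any triple is a translate of the fixed locus of the $2$-dimensional Schubert variety.

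\textbf{The rank-3 step.} The step you flagged as the main obstacle is not a real problem, and your plan for it works. The stabilizer of $x=v.P_X$ is the stabilizer of $v.\varpi$, where $\varpi$ is a dominant weight with stabilizer $W_X$. By Steinberg's theorem, its intersection with $W_{\Phi'}$ is the standard parabolic subgroup for any base of $\Phi'$ whose closed chamber contains the projection of $v.\varpi$ to $\Span_\R(\Phi')$.

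The paper sidesteps this by conjugating first. Once $\Phi'=\Phi_S$, it replaces $X$ by $G_S.x$ and translates by $W_S$ so that $x$ becomes the $B_S$-fixed point (\Proposition{flagsubvar}(a)). The relevant group is then simply the standard parabolic $(W_S)_{G_S.x}$, and \Lemma{adjacent_rank2} applies verbatim.
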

\begin{proof}
  It suffices to show that $\Omega$ exists with the stated properties, as
  uniqueness then follows because $\Omega$ is $T$-convex. Choose $\al_1, \al_2,
  \al_3 \in \Phi^+$ such that $s_{\al_1}.x = y$, $s_{\al_2}.y = z$, and
  $s_{\al_3}.z = x$, and set $\Phi' = \Span_\R\{\al_1,\al_2,\al_3\} \cap \Phi$.
  Since any basis of simple roots in $\Phi'$ can be extended to a basis of
  $\Phi$, there exists $w \in W$ such that $w.\Phi' = \Phi_S$ for some subset $S
  \subset \Delta$. By replacing $x$, $y$, $z$ with $w.x$, $w.y$, $w.z$, we may
  assume $\Phi' = \Phi_S$. By replacing $X$ with the flag subvariety $G_S.x$, we
  may further assume that $\Phi$ has rank at most 3. By translating $x,y,z$ by a
  second element from $W$, we may assume $x = 1.P_X$. Then \Lemma{adjacent_rank2}
  implies that $\Phi$ has rank 2. Since our assumptions imply that $P_X \neq B$
  and $\Phi$ does not have type $A_1\times A_1$ or $G_2$, we deduce that $X$ is
  one of the spaces $\bP^2$, $\SG(1,4) \cong \bP^3$, or $\LG(2,4) \cong Q^3$.
  After a final translation of $x,y,z$ by an element from $W$, we obtain
  $\{x,y,z\} = \Omega^T$, where $\Omega \subset X$ is a 2-dimensional Schubert
  variety in $X$. This completes the proof.
\end{proof}

\begin{cor}\label{cor:triangle}%
  Let $X = G/P_X$ be a flag variety, and assume that the root system of $G$ does
  not contain a subsystem of type $G_2$. Let $X_u^v \subset X$ be a Richardson
  variety, and let $x, y, z \in (X_u^v)^T$ be pairwise adjacent $T$-fixed
  points. Then the curve classes $[C_{x,y}]$, $[C_{y,z}]$, $[C_{z,x}]$ generate
  a free Abelian subgroup of rank 1 in $A_1(X_u^v)$, and at least two of the
  three curve classes individually generate this subgroup.
\end{cor}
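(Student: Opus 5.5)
By \Proposition{triangle} there is a unique irreducible closed $T$-stable surface $\Omega \subset X$ with $\Omega^T = \{x,y,z\}$. Since $x,y,z \in X_u^v$ and $X_u^v$ is $T$-convex in $X$ \cite{buch.chaput.ea:equivariant}, we get $\Omega \subset X_u^v$. Similarly, the curves $C_{x,y}$, $C_{y,z}$, $C_{z,x}$ have their $T$-fixed points in $\Omega$. The proposal is to show $\Omega$ is $T$-convex: by \Proposition{triangle} it is a $W$-translate of a Richardson variety, and translates of $T$-convex varieties are $T$-convex. Hence all three curves lie in $\Omega$.

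The key point is that the three curve classes are pushed forward along $\Omega \subset X_u^v$, so it suffices to understand $A_1(\Omega)$. Again by \Proposition{triangle}, after translating by some $w \in W$ (an automorphism of $X$ preserving $T$-stable curves), $\Omega$ becomes a Richardson variety whose minimal embedding $Y$ is $\bP^2$, $\SG(1,4)\cong\bP^3$, or $Q^3$. Since it is two-dimensional, $\Omega$ is a Schubert surface in $Y$: either $\bP^2$ itself, a plane in $\bP^3$, or the Schubert divisor in $Q^3$. In each case $\Omega$ is a Schubert variety of $Y$. Therefore $A_1(\Omega)$ is free of rank one, generated by the Schubert curve class, using the affine paving by $B$-orbits (or \cite{mathieu:formules*1}).

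Next I will identify the three classes in $A_1(\Omega)\cong\Z$.
\begin{itemize}
\item If $\Omega\cong\bP^2$, then every $T$-stable curve is a line, so all three classes equal the generator.
\item If $\Omega$ is the Schubert divisor in $Q^3=\LG(2,4)$, then $\Omega^T$ consists of three of the four fixed points in \Example{lg24-curves}, with two single edges and one double edge. For instance $\{1,s_\ga,\ka\}$ gives the edges $1$--$s_\ga$ (single), $1$--$\ka$ (single), and $s_\ga$--$\ka$ (double). Here $A_1(\Omega)\to A_1(Q^3)$ is an isomorphism, because both are generated by the Schubert line. So the classes are $g,g,2g$ for a generator $g$.
\end{itemize}

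Finally, push forward to $A_1(X_u^v)$. The images of the three classes lie in the cyclic subgroup generated by the image of $g$, and at least two of them equal that image. To see the subgroup is free of rank one, it remains to show the image of $g$ is non-torsion. For this, compose with the pushforward to $A_1(X)$ and take the degree of an ample line bundle: the degree of $g$ is positive. The main obstacle is verifying $T$-convexity of $\Omega$ and identifying the curve degrees inside $\Omega$ in the $Q^3$ case. Both reduce to \Proposition{triangle} and \Example{lg24-curves}.
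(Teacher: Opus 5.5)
Your proposal is correct and follows essentially the same route as the paper: locate the $T$-convex surface $\Omega$ from \Proposition{triangle} inside $X_u^v$ and read off the three classes in $A_1(\Omega)\cong\Z$. They are all equal when $\Omega \cong \bP^2$, and have the pattern $g,g,2g$ for the Schubert divisor in $Q^3$ via \Example{lg24-curves}; then push forward. Your ample-degree argument showing that the image of $g$ in $A_1(X_u^v)$ is non-torsion makes explicit the injectivity of $A_1(\Omega)\to A_1(X_u^v)$, which the paper uses implicitly when it calls $A_1(\Omega)$ a subgroup.
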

\begin{proof}
  Let $\Omega \subset X$ be as in \Proposition{triangle}. We have $A_1(\Omega) =
  \Z$. Since $\Omega^T \subset X_u^v$ and $X_u^v$ is $T$-convex
  \cite{buch.chaput.ea:equivariant}, we have $\Omega \subset X_u^v$. Since
  $\Omega$ is itself $T$-convex, the three curves $C_{x,y}$, $C_{y,z}$,
  $C_{z,x}$ are contained in $\Omega$. If $\Omega \cong \bP^2$, then $[C_{x,y}]
  = [C_{y,z}] = [C_{z,x}]$ holds in $A_1(\Omega)$; notice that $\SG(1,4)$ is
  minuscule, so all $T$-stable curves in $\SG(1,4)$ have the same degree. If
  $\Omega$ is isomorphic to the Schubert divisor in $Q^3$, then
  \Example{lg24-curves} shows that $[C_{x,y}] = 2[C_{y,z}] = 2[C_{z,x}]$ holds,
  up to permuting the three points. The result follows from this since
  $A_1(\Omega)$ is a subgroup of $A_1(X_u^v)$.
\end{proof}

\Corollary{triangle} is particularly useful for cominuscule Richardson
varieties, as these contain many triangles of $T$-stable curves by the
following result.

\begin{prop}\label{prop:comin-triangle}%
  Let $X$ be a cominuscule flag variety, and let $v, w \in W^X$ be adjacent
  elements such that $\ell(w) - \ell(v) \geq 2$. Then there exists $u \in W^X$
  such that $v < u < w$ and $u$ is adjacent to both $v$ and $w$.
\end{prop}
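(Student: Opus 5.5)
We first reduce to the case where the situation is as small as possible, and then finish by inspecting a short list of cominuscule flag varieties. Write $w = s_\al v$ with $\al \in \Phi^+$; this uses that $v.P_X$ and $w.P_X$ are adjacent. Let $\tau = I(v) \subset \sigma = I(w)$ be the corresponding straight shapes, so $|\sigma/\tau| = \ell(w)-\ell(v) \geq 2$.

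Consider the Richardson variety $X_w^v$ and its minimal embedding $Y = G_S w.P_X$, where $S$ is the support of $w v^{-1} = s_\al$. By \Proposition{comin} and \Proposition{flagsubrich}, $Y$ is a cominuscule flag variety of $G_S$, and the interval $[v,w]$ in $W^X$ is identified with an interval in $(W_S)^Y$ via \Proposition{flagsubvar}(c). Adjacency is preserved under this identification, because $T$-stable curves in $Y$ are $T$-stable curves in $X$ and the reflection $s_\al$ lies in $W_S$. Since $S$ is the support of the single reflection $s_\al$, it is a connected subset of the Dynkin diagram. So by \Corollary{indecomp-comin-classify} we may assume that $\Phi$ is irreducible and $X$ is indecomposable. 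Replacing $(v,w)$ by the pair given by \Proposition{flagsubvar}(c), we may moreover assume that $X$ is the minimal embedding of $X_w^v$, i.e.\ that $\al$ has full support, $\dex(\sigma/\tau) = \Delta$, and $w = s_\al v$.

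Next we locate the intermediate element. The key step is to produce positive roots $\be_1, \be_2$ with $s_\al = s_{\be_1} s_{\be_2} s_{\be_1}$, i.e.\ $\al = s_{\be_1}.\be_2$, such that $u = s_{\be_1} v$ satisfies $v < u < w$ in $W^X$ and $w = s_{\be'} u$ for some root $\be'$. The natural choice is $\be_1 = \be$, a simple root with $(\al,\be^\vee) > 0$. Such a $\be$ exists because $\al$ is not simple: if $\al$ were simple, then $\ell(w)-\ell(v) = 1$ by \Lemma{act-fixed}. Then $s_\be.\al = \al - (\al,\be^\vee)\be$ is a positive root $\al'$, and
\[ s_\al = s_\be s_{\al'} s_\be. \]
The candidates for $u$ are $s_\be v$ and $s_{\al'} v$ (equivalently $s_\be w$). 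For each candidate, adjacency to $v$ and to $w$ is automatic provided it defines a $T$-fixed point distinct from both: for $u = s_\be v$ we have $v = s_\be u$ and $w = s_\be s_{\al'} s_\be\, s_\be u = (s_\be s_{\al'} s_\be s_\be) u$, which we rewrite as a single reflection, $w = s_{s_\be.\ldots}\,u$, by conjugation. What must be checked is that $u$ lies strictly between $v$ and $w$ in the Bruhat order, i.e.\ that $\tau \subsetneq I(u^X) \subsetneq \sigma$. Here we use \Lemma{act-fixed} together with the minimal-embedding assumption $\dex(\sigma/\tau) = \Delta$. The box labelled $\be$ which is minimal in $\sigma/\tau$, or maximal in it, lets one of $s_\be v$ or $s_\be w$ move exactly one box inside $\sigma/\tau$. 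Since the Bruhat order is the weak order by \Theorem{comin-poset}(a), that element lies in the interval.

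The main obstacle is ensuring that the resulting element is a genuinely different point of $X$, and that the comparison with the other endpoint is strict. For non-simply-laced $\Phi$ the string $(\al,\be^\vee)$ can equal $2$, and in that case $s_\be$ might fix the coset. If this uniform argument becomes messy, the fallback is the finite classification. In the minimal embedding, $X$ is one of $\Gr(m,n)$, $\LG(n,2n)$, $\OG(n,2n)$, $Q^d$, $E_6/P_6$, $E_7/P_7$, and the pair $(v,w)$ is determined by $\tau \subset \sigma$ with $\sigma = s_\al \bullet \tau$. For the classical types we describe adjacency explicitly in terms of $T$-fixed subspaces: two fixed points are adjacent when they differ by exchanging one basis vector, or a pair $e_i, e_{\ov j}$ in the isotropic cases. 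We then exhibit $u$ by performing the exchange in two steps through an intermediate basis vector, giving an isotropic subspace whose shape lies between $\tau$ and $\sigma$. The two exceptional types would be verified by computer, as is done elsewhere in the paper.
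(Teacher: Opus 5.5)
\textbf{There is a genuine gap: the existence of $u$ is never actually established.}

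\emph{The uniform argument.} The step ``adjacency is automatic'' is false. We have $w.P_X=s_\al v.P_X=s_\be s_{\al'}\,u.P_X$, and $s_\be s_{\al'}$ is a product of two reflections. It therefore has determinant $+1$ and is never a reflection, so no conjugation rewrites it as one. Whether $u.P_X$ and $w.P_X$ are adjacent depends on the stabilizer $W_X$ of the point, and your argument never uses it. The same reasoning would apply verbatim to $\Fl(3)$ with $v=1$, $w=w_0=s_{\ep_1-\ep_3}$ and $\be=\be_1$. There neither $s_1$ nor $s_1w_0$ is adjacent to both endpoints; the paper notes that this property fails for general flag varieties. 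You also do not justify that some box labelled $\be$ is minimal or maximal in $\sigma/\tau$, which you need for strict betweenness.

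A smaller slip: $w=s_\al v$ holds only for the $T$-fixed points, not in $W$. In $\Gr(2,4)$ with $v=1$ and $\al=\ep_1-\ep_4$ one has $w=(s_\al)^X\neq s_\al$. Your conclusion that $S$ is connected survives, because $S$ equals the support of $\al$, but $wv^{-1}$ is not a reflection in general.

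\emph{The fallback.} This is the paper's route: reduce to indecomposable $X$, check $E_6/P_6$ and $E_7/P_7$ by computer, and work with $T$-fixed subspaces in the classical types. Your extra reduction to the minimal embedding is valid by $T$-convexity but unnecessary. The classical case is the whole proof, however, and the recipe ``perform the exchange in two steps through an intermediate basis vector'' fails exactly where the paper has to work.

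For the even quadric $Q^{2n-2}=\OG(1,2n)$, type D has no reflection $\tau_{i,\ov{i}}$, so $\{i\}$ and $\{\ov{i}\}$ are not adjacent; your description of adjacency is off here. In $Q^4$ the points $\{1\}$ and $\{5\}$ are adjacent with length difference $3$, but the intermediate $\{2\}$ is not adjacent to $\{5\}=\{\ov{2}\}$. The paper instead shows that $u\neq u'$ are adjacent iff $\ell(u)+\ell(u')\neq 2n-2$, and takes $\ell(u)=n-1$ when $\ell(v)<n-1<\ell(w)$.

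In $\LG(2,4)$ the points $\{1,2\}$ and $\{2,4\}$ differ by the single exchange $e_1\to e_4$ and have length difference $2$. Yet routing $e_1$ through $e_2$ or $e_3$ gives no valid point, since $\langle e_2,e_3\rangle$ is not isotropic. The needed $u=\{1,3\}$ moves $e_2$ first. For $\LG(n,2n)$ the paper writes $I=(I\cap K)\cup\{i,\ov{k}\}$ and $K=(I\cap K)\cup\{k,\ov{i}\}$, sets $j=i+1$, and distinguishes whether $j$ or $\ov{j}$ lies in $I\cap K$ or $j=\ov{k}$.

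The remaining classical types are then deduced from this case:
\begin{itemize}
\item Type A follows because $\Gr(m,n)$ is a flag subvariety of $\LG(n,2n)$.
\item Maximal orthogonal Grassmannians, where adjacency is by pair exchanges only, follow via $\OG(n+1,2n+2)\cong\OG(n,2n+1)$ and the Coxeter isomorphism $W(B_n)\cong W(C_n)$, which matches the relevant parabolic subgroups.
\end{itemize}
Without an argument of this kind, the existence of $u$ is not established.
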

\begin{proof}
  We may assume that $X$ is indecomposable. The claim has been verified using a
  computer when $X$ is one of the exceptional spaces $E_6/P_6$ and $E_7/P_7$, so
  we may assume that $X = F_\om(m;N)$ is a Grassmannian of classical Lie type.
  In particular, the Weyl group $W$ is identified with a subgroup of the
  symmetric group $S_N$, see \Section{classical}. Let $\wt{W^X}$ be the set of
  all $m$-element subsets $J \subset [1,N]$ for which $i \in J$ implies $\ov{i}
  = N+1-i \notin J$, and endow this set with the partial order defined by
  \[
    \{i_1 < \dots < i_m\} \leq \{j_1 < \dots < j_m\}
    \ \ \Longleftrightarrow \ \
    i_p \leq j_p \text{ for all $p \in [1,m]$.}
  \]
  Then the map $\iota : W \to \wt{W^X}$ defined by $\iota(u) =
  \{u(1),\dots,u(m)\}$ is order-preserving, satisfies $\iota(u' u) =
  u'.\iota(u)$ for $u',u \in W$, and $\iota(u)=\iota(u')$ holds if and only if
  $u.P_X = u'.P_X$. We will say that $I, J \in \wt{W^X}$ are adjacent if
  $s_\al.I = J$ holds for some root $\al \in \Phi$. Then $u.P_X$ and $u'.P_X$
  are adjacent $T$-fixed points in $X$ if and only if $\iota(u)$ and $\iota(u')$
  are adjacent in $\wt{W^X}$.

  Assume that $X = Q^{N-2} = \OG(1,N)$ is a quadric hypersurface. Then we have
  $\wt{W^X} = \{ \{j\} \mid 1 \leq j \leq N \text{ and } 2j \neq N+1 \}$. Since
  $\tau_{i,k}(\{i\}) = \{k\}$, it follows that $\{i\}$ and $\{k\}$ are adjacent
  if and only if $\tau_{i,k} \in W$. If $N$ is odd, then all distinct points in
  $X^T$ are adjacent and the result is clear, so assume that $N=2n$ is even.
  Then $\{i\}$ and $\{k\}$ are adjacent if and only if $i\neq k$ and $i+k \neq
  2n+1$. Equivalently, $u, u' \in W^X$ are adjacent if and only if $u \neq u'$
  and $\ell(u)+\ell(u') \neq 2n-2$. Notice also that $u'<u$ holds in $W^X$ if
  and only if $\ell(u')<\ell(u)$. If $\ell(v) < n-1 < \ell(w)$, then any element
  $u \in W^X$ with $\ell(u)=n-1$ will satisfy the proposition, and otherwise we
  can choose any $u \in W^X$ such that $\ell(v) < \ell(u) < \ell(w)$.

  Assume that $X = \LG(n,2n)$ is a Lagrangian Grassmannian. Then $\wt{W^X}$ is
  the set of all $n$-element subsets $J \subset [1,2n]$ such that $j \in J$ or
  $\ov{j} \in J$ for all $j \in [1,n]$. Set $I = \iota(v)$ and $K = \iota(w)$.
  It suffices to show that there exists $J \in \wt{W^X}$ such that $I < J < K$
  and $J$ is adjacent to both $I$ and $K$. Indeed, if $J = s_\al.I$, then $u =
  (s_\al v)^X \in W^X$ satisfies the proposition. Choose $i,k \in [1,2n]$ such
  that $i < k \leq \ov{i}$ and $\tau_{i,k}(I) = K$. We have $I = (I\cap
  K)\cup\{i,\ov{k}\}$ and $K = (I\cap K)\cup\{k,\ov{i}\}$ since $I<K$, and $k-i
  \geq 2$ holds because $K$ does not cover $I$. Set $j = i+1$ and $A = (I\cap
  K)\ssm\{j,\ov{j}\}$. If $I\cap K\cap\{j,\ov{j}\} = \emptyset$, then we must
  have $j \in \{i,\ov{k},k,\ov{i}\}$, but only $j=\ov{k}$ is possible. We can
  therefore use one of the following three cases to define the required element
  $J \in \wt{W^X}$:\medskip

  \renewcommand{\myspca}{\vspace{-2.5ex}&&&\\}
  \renewcommand{\myspcb}{\vspace{-2.5ex}&&&\\}
  \renewcommand{\myspcc}{\vspace{-2.5ex}\\&&&\\}
  \begin{tabular}{|c|c|c|c|}
  \hline\myspca
  Case & $I$ & $J$ & $K$ \\
  \myspcb\hline\myspca
  $j \in I\cap K$ &
  $A\cup\{i,j,\ov{k}\}$ &
  $\tau_{j,k}(I) = A\cup\{i,k,\ov{j}\} = \tau_{i,j}(K)$ &
  $A\cup\{j,k,\ov{i}\}$ \\
  \myspcb\hline\myspca
  $\ov{j} \in I\cap K$ &
  $A\cup\{i,\ov{k},\ov{j}\}$ &
  $\tau_{i,j}(I) = A\cup\{j,\ov{k},\ov{i}\} = \tau_{j,k}(K)$ &
  $A\cup\{k,\ov{j},\ov{i}\}$ \\
  \myspcb\hline\myspca
  $j=\ov{k}$ &
  $A\cup\{i,\ov{k}\}$ &
  $\tau_{i,\ov{i}}(I) = A\cup\{\ov{k},\ov{i}\} = \tau_{\ov{k},k}(K)$ &
  $A\cup\{k,\ov{i}\}$
  \myspcc\hline
  \end{tabular}
  \medskip

  If $X = \Gr(m,n)$ is a Grassmannian of type $A_{n-1}$, then $X$ is a flag
  subvariety of $\LG(n,2n)$, so the result for $X$ follows from the case just
  established. Finally, assume that $X$ is a maximal orthogonal Grassmannian.
  Then $X$ is isomorphic to the space $\OG(n,2n+1)$ of type $B_n$. Since the
  Weyl groups of types $B_n$ and $C_n$ are isomorphic as Coxeter groups, and the
  isomorphism identifies the parabolic subgroups corresponding to $\OG(n,2n+1)$
  and $\LG(n,2n)$, the proposition for $\OG(n,2n+1)$ also follows from the case
  of $\LG(n,2n)$. This completes the proof.
\end{proof}

\begin{example}
  Let $Y = G_{\{\be_1,\be_2\}} s_3 s_2.P_X$ be the Richardson variety in $X =
  \SG(2,6)$ from \Example{fsvSG26}. We have $Y \cong \Fl(3)$, so $Y$ contains no
  triangles of $T$-stable curves, and \Corollary{triangle} provides no
  information about $A_1(Y)$.
\end{example}

\subsection{The first Chow group of a product}

We next compute the first Chow group of a product of varieties under suitable
conditions. Let $\Gamma$ be a connected solvable linear algebraic group, and let
$X$ be a projective $\Gamma$-variety. A result of Hirschowitz
\cite{hirschowitz:groupe} states that each equivariant Chow group
$A^\Gamma_d(X)$, defined as the group of $\Gamma$-stable cycles modulo rational
equivalence given by semi-invariant rational functions, is isomorphic to the
usual Chow group $A_d(X)$. This result was generalized to algebraic schemes in
\cite{fulton.macpherson.ea:intersection}. We need the following consequence.

\begin{prop}\label{prop:chow1gen}%
  Let $\Gamma$ be a connected solvable linear algebraic group, and let $X$ be a
  projective $\Gamma$-variety. Then $A_d(X)$ is generated by $\Gamma$-stable
  cycles for all $d$.
\end{prop}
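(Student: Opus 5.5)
We will deduce \Proposition{chow1gen} from Hirschowitz's theorem \cite{hirschowitz:groupe}, as generalized in \cite{fulton.macpherson.ea:intersection}. That result says the natural map $A^\Gamma_d(X) \to A_d(X)$ is an isomorphism. Here $A^\Gamma_d(X)$ is the group generated by $\Gamma$-stable $d$-dimensional subvarieties of $X$, modulo divisors of semi-invariant rational functions on $\Gamma$-stable $(d+1)$-dimensional subvarieties, and the map sends the class of a $\Gamma$-stable cycle to its ordinary class. Surjectivity of this map is exactly the claim: every class in $A_d(X)$ is then represented by a $\mathbb{Z}$-combination of $\Gamma$-stable subvarieties. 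So the plan is to check that the hypotheses of that theorem hold in our setting and read off surjectivity.

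The hypotheses are that $\Gamma$ is connected and solvable and that $X$ is a $\Gamma$-scheme of finite type over the algebraically closed field $\bK$. Projectivity of $X$ is more than enough. We only need surjectivity, so it is worth recalling the mechanism in case the cited statement is phrased in a restricted setting, for instance over $\C$ only.

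For that mechanism we will use the Borel fixed point argument. Let $Z \subset X$ be a $d$-dimensional subvariety. Its orbit $\Gamma.[Z]$ in the Chow variety, or in the Hilbert scheme of the projective variety $X$, has a closure that is projective and $\Gamma$-stable. By Borel's fixed point theorem this closure contains a $\Gamma$-fixed point, which corresponds to a $\Gamma$-stable cycle $Z_0$.

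We then show $[Z] = [Z_0]$ in $A_d(X)$ by dévissage along a composition series of $\Gamma$ with factors $\bG_a$ or $\bG_m$. For a one-dimensional connected group $H \cong \bA^1$ or $\bA^1 \ssm \{0\}$ acting on $X$, take the closure of $\{(h, h.Z)\}$ in $\bP^1 \times X$. Its fibers over the points of $\bP^1$ are rationally equivalent. The fiber over a limit point is $H$-stable, and it equals $Z$ translated by the group element corresponding to that point. Iterating through the subnormal series moves $Z$ to a $\Gamma$-stable cycle, with each step preserving the rational equivalence class. This is the standard argument, essentially Hirschowitz's own.

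The main obstacle is making the induction work at each step. A cycle that is stable under a normal subgroup $\Gamma_1$ must remain $\Gamma_1$-stable after taking the limit under the next one-dimensional quotient. This holds because $\Gamma_1$ is normal, so $\Gamma$ permutes $\Gamma_1$-stable cycles, and the limit is taken inside the $\Gamma_1$-fixed locus of the Chow variety. That locus is closed. Since the paper permits citing \cite{hirschowitz:groupe, fulton.macpherson.ea:intersection}, we will present the proof as a direct appeal to the surjectivity of $A^\Gamma_d(X) \to A_d(X)$.
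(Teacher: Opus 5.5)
Your proposal is correct and matches the paper, which likewise obtains the statement as an immediate consequence of Hirschowitz's theorem (extended to schemes by Fulton--MacPherson--Sottile--Sturmfels): the natural map $A^\Gamma_d(X) \to A_d(X)$ is an isomorphism, and its surjectivity is exactly the claim. Your Borel-fixed-point and degeneration sketch is not needed, since you rest the argument on the cited theorem. As written, the sketch also misstates one point: the limit fiber is $H$-stable, and it is the non-limit fibers that are translates of $Z$.
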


Given projective varieties $X$ and $Y$, the pushforward along the projections
from $X \times Y$ defines a natural group homomorphism
\begin{equation}\label{eqn:chow1prod}%
  \rho : A_1(X \times Y) \to A_1(X) \oplus A_1(Y) \,.
\end{equation}

\begin{lemma}\label{lemma:chow1prod}%
  Let $X$ and $Y$ be projective varieties satisfying $A_0(X) \cong \Z \cong
  A_0(Y)$. Assume that a connected solvable linear algebraic group acts
  algebraically on $X$ with finitely many fixed points. Then the map
  \eqn{chow1prod} is an isomorphism.
\end{lemma}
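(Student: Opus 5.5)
Surjectivity of $\rho$ comes from sections. Fix points $p \in X$ and $q \in Y$. The maps $i_q : X \to X \times Y$, $x \mapsto (x,q)$, and $j_p : Y \to X \times Y$, $y \mapsto (p,y)$, induce a homomorphism $\sigma = i_{q*} + j_{p*} : A_1(X) \oplus A_1(Y) \to A_1(X \times Y)$. Compose $\sigma$ with $\rho$. Pushing $X \times \{q\}$ forward to $X$ is the identity. Pushing a curve in $X \times \{q\}$ forward to $Y$ gives zero, since its image is a point. So $\rho \circ \sigma = \mathrm{id}$, and $\rho$ is surjective. It remains to show that $\sigma$ is surjective, i.e.\ that $A_1(X \times Y)$ is generated by classes of curves of the form $C \times \{q\}$ and $\{p\} \times D$. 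Then $\rho$ is an isomorphism with inverse $\sigma$.

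First I will use the hypothesis $A_0(X) \cong \Z \cong A_0(Y)$. Since $A_0(Y) \cong \Z$, any two points of $Y$ are rationally equivalent. Rational equivalence is preserved by the flat pullback along $X \to \mathrm{pt}$ followed by products; more precisely, exterior product $A_k(X) \otimes A_l(Y) \to A_{k+l}(X \times Y)$ is well defined. Hence $[C \times \{q'\}] = [C \times \{q\}]$ for all $q' \in Y$, and likewise $[\{p'\} \times D] = [\{p\} \times D]$ for all $p' \in X$. So any curve of the form $C \times \{\text{pt}\}$ or $\{\text{pt}\} \times D$ lies in the image of $\sigma$.

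The main step is to show that $A_1(X \times Y)$ is generated by curves of these two kinds. Let $\Gamma$ be the connected solvable group acting on $X$, and let it act on $X \times Y$ through the first factor. By \Proposition{chow1gen}, applied to the projective variety $X \times Y$ (or, more carefully, to the $\Gamma$-stable cycles via Hirschowitz's theorem as generalized in \cite{fulton.macpherson.ea:intersection}), $A_1(X \times Y)$ is generated by classes of $\Gamma$-stable irreducible curves $Z \subset X \times Y$. Let $Z$ be such a curve.

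\textbf{Case 1: $\mathrm{pr}_1(Z)$ is a point $p$.} Then $Z = \{p\} \times D$, which is of the required form.

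\textbf{Case 2: $\mathrm{pr}_1(Z)$ is a curve $C$.} Then $C$ is $\Gamma$-stable. Because $\Gamma$ is connected and solvable and $X$ is projective, Borel's fixed point theorem gives $C^\Gamma \neq \emptyset$; since $X$ has finitely many fixed points, $C$ is not fixed pointwise. So $C$ contains a dense $\Gamma$-orbit $U$, and $U$ is a single orbit. Now $Z$ is $\Gamma$-stable and $\Gamma$ acts trivially on $Y$. Take $z = (x,y) \in Z$ with $x \in U$. Then $\Gamma.z = \Gamma.x \times \{y\} = U \times \{y\}$, which is a dense subset of $C \times \{y\}$. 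This orbit lies in the irreducible curve $Z$, so $Z = C \times \{y\}$.

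In both cases $[Z]$ lies in the image of $\sigma$, which completes the argument. The hardest point is justifying that $\Gamma$-stable cycles generate $A_1$ when $\Gamma$ acts trivially on the factor $Y$. This is exactly \Proposition{chow1gen}, which needs only projectivity of $X \times Y$ and no assumption on the fixed points. The finiteness of $X^\Gamma$ (here "fixed points" means $\Gamma$-fixed, i.e.\ $X^\Gamma$ finite) is used only to exclude Case 2 curves that are pointwise fixed. Borel's theorem is what guarantees that the orbit argument applies to an actual curve.
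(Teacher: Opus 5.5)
Your proposal is correct and follows the paper's proof essentially step for step. The section $\sigma$ (the paper's $\phi(a\oplus b)=a\times q+p\times b$) is a right inverse of $\rho$, and you get surjectivity of $\sigma$ from \Proposition{chow1gen} with $\Gamma$ acting on the first factor, splitting into the cases where a $\Gamma$-stable curve projects to a point of $X$ or to a curve containing a non-fixed point. The only slip is cosmetic: Borel's fixed point theorem plays no role, since finiteness of $X^\Gamma$ alone gives a non-fixed point of $C$, and its orbit forces $Z=C\times\{y\}$.
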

\begin{proof}
  Let $p \in A_0(X)$ and $q \in A_0(Y)$ denote the point classes of $X$ and $Y$,
  and let $\phi : A_1(X) \oplus A_1(Y) \to A_1(X \times Y)$ be the group
  homomorphism defined by $\phi(\sigma \oplus \tau) = \sigma \times q + p \times
  \tau$, see \cite[\S 1.10]{fulton:intersection}. Since $\rho \circ \phi$ is the
  identity, it suffices to show that $\phi$ is surjective. Let $\Gamma$ be the
  solvable group acting on $X$, and extend the action to $X \times Y$ by
  $g.(x,y) = (g.x, y)$. Let $C \subset X \times Y$ be a 1-dimensional closed
  irreducible subvariety that is stable under this action. By
  \Proposition{chow1gen} it is enough to show that the class $[C] \in
  A_1(X\times Y)$ belongs to the image of $\phi$. Let $C' = \pi_X(C) \subset X$
  be the image of $C$ under the projection $\pi_X : X\times Y \to X$. Then $C'$
  is a $\Gamma$-stable irreducible closed subvariety of $X$. If $\dim(C')=1$,
  then choose $x_0 \in C'$ such that $x_0$ is not a $\Gamma$-fixed point in $X$,
  and choose $y_0 \in Y$ such that $(x_0,y_0) \in C$. Using that $C$ contains
  the orbit $\Gamma.(x_0,y_0)$, we obtain $C = C' \times \{y_0\}$ and $[C] =
  \phi([C'] \oplus 0)$. Otherwise $C' = \{x_0\}$ is a single point in $X$. In
  this case we obtain $C = \{x_0\} \times C''$, where $C'' = \pi_Y(C)$ is the
  projection of $C$ to $Y$, and therefore $[C] = \phi(0 \oplus [C''])$, as
  required.
\end{proof}

We will say that a projective variety $X$ is \emph{strongly indecomposable} if
$A_0(X) \cong \Z$, $A_1(X) \cong \Z$, and some connected solvable linear
algebraic group acts on $X$ with finitely many fixed points. This implies that
$X$ is indecomposable by \Lemma{indecomp}.

\begin{cor}\label{cor:unique-factor}%
  Let $X_1, \dots, X_r$ and $Y_1, \dots, Y_s$ be strongly indecomposable
  projective varieties, and assume that $\phi : X_1 \times \cdots \times X_r \to
  Y_1 \times \dots \times Y_s$ is an isomorphism of algebraic varieties. Then
  $r=s$, and $\phi$ is given by $\phi(x_1,\dots,x_r) = (\phi_1(x_{\sigma(1)}),
  \dots, \phi_r(x_{\sigma(r)}))$ for some permutation $\sigma \in S_r$ and
  isomorphisms $\phi_j : X_{\sigma(j)} \to Y_j$ for $1 \leq j \leq r$.
\end{cor}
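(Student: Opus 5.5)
We will use \Lemma{chow1prod} to identify $A_1$ of each product with the direct sum of the $A_1$'s of its factors, and then show that $\phi$ respects the splitting.

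First, the bookkeeping. Each $X_i$ and each $Y_j$ has $A_0 \cong \Z$ and is acted on by a connected solvable group with finitely many fixed points. A finite product of such varieties inherits both properties: use the product group, and apply \Lemma{chow1prod} inductively for $A_0$ (or use the Künneth-type map on point classes). Applying \Lemma{chow1prod} repeatedly gives
\[
A_1(X_1 \times \cdots \times X_r) \cong \bigoplus_i A_1(X_i) \cong \Z^r ,
\]
and similarly $A_1(Y_1 \times \cdots \times Y_s) \cong \Z^s$. Since $\phi_*$ is an isomorphism, $r = s$.

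Next, fix a point $x = (x_1,\dots,x_r)$ and consider the composite
\[
\psi_{ij} = \pi_j \circ \phi \circ \iota_{i,x} : X_i \to Y_j ,
\]
where $\iota_{i,x}$ inserts $X_i$ with the other coordinates fixed at $x$. The key step is to show that for each $i$ there is exactly one $j$ for which $\psi_{ij}$ is non-constant.

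The main obstacle is how to detect constancy using only $A_1$. A curve $C \subset X_i$ whose image $\psi_{ij}(C)$ is a curve pushes forward to a nonzero class, because $Y_j$ is projective and a curve class in a projective variety is nonzero, as one sees by pairing with an ample bundle. Equivalently, the pushforward $A_1(X_i) \cong \Z \to A_1(Y_j) \cong \Z$ is nonzero exactly when $\psi_{ij}$ is non-constant, after choosing $C$ suitably.

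Work with the matrix $M = (m_{ij})$ of $\phi_*$ in the bases given by generators of the $A_1(X_i)$ and $A_1(Y_j)$, so that $m_{ij}$ is the degree of $\psi_{ij}$ on curves. Positivity enters here. Pairing with ample line bundles on the $Y_j$ shows that the entries have a fixed sign, which we normalize to be non-negative, since generators can be taken effective: they are the classes of curves of positive degree. A nonnegative integer matrix that is invertible over $\Z$ and whose inverse is also nonnegative (apply the same argument to $\phi^{-1}$) must be a permutation matrix. This yields $\sigma$ with $\psi_{ij}$ constant unless $i = \sigma(j)$.

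With $\sigma$ in hand, the projection $\pi_j \circ \phi$ is constant along every factor $X_i$ with $i \neq \sigma(j)$, for every base point. To conclude that $\pi_j \circ \phi$ factors through $X_{\sigma(j)}$, use rigidity: $X$ is irreducible, and a morphism whose restrictions to all fibres of the projection $X \to \prod_{i \neq \sigma(j)} X_i$ along one slot are constant descends. Concretely, the fibres of
\[
\pi_j \circ \phi : X \to Y_j
\]
contain the fibres of the projection onto $X_{\sigma(j)}$, and since that projection has a section, $\pi_j \circ \phi$ factors through it. Call the resulting map $\phi_j : X_{\sigma(j)} \to Y_j$.

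Then $\phi = (\phi_1, \dots, \phi_r) \circ \sigma$. Since $\phi$ is an isomorphism, each $\phi_j$ is bijective on points, and it has an inverse obtained by running the same construction on $\phi^{-1}$. Hence each $\phi_j$ is an isomorphism.
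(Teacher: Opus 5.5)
Your proposal is correct and follows essentially the same route as the paper. The paper uses \Lemma{chow1prod} to get $A_1 \cong \Z^r$ and $\Z^s$, observes that $\phi_*$ and $(\phi^{-1})_*$ are non-negative integer matrices so $\phi_*$ is a permutation matrix, and concludes that each $\pi_j\circ\phi$ depends only on $X_{\sigma(j)}$. You fill in details the paper leaves implicit, and they are sound: $A_0$ of a product is $\Z$ by exterior products of point classes (not by \Lemma{chow1prod}, which concerns $A_1$); a zero matrix entry forces the slice maps to be constant; and the $\phi_j$ descend and are invertible.
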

\begin{proof}
  Identify $A_1(X_i) = \Z$ and $A_1(Y_j) = \Z$ such that curve classes in $X_i$
  and $Y_j$ correspond to positive integers, and set $X = X_1 \times \dots
  \times X_r$ and $Y = Y_1 \times \dots \times Y_s$. By \Lemma{chow1prod} we
  have $A_1(X) = \Z^r$ and $A_1(Y) = \Z^s$. Since $\phi_* : A_1(X) \to A_1(Y)$
  is an isomorphism, we obtain $r=s$. In addition, both $\phi_*$ and its inverse
  $(\phi^{-1})_*$ are given by matrices of non-negative integers. This implies
  that the matrix of $\phi_*$ is a permutation matrix, hence each composition
  $\pi_j \phi : X \to Y_j$ of $\phi$ with the projection to $Y_j$ depends only
  on $X_{\sigma(j)}$ for some permutation $\sigma \in S_r$. This shows that
  $\phi$ has the claimed form.
\end{proof}

Let $X_u^v$ be a Richardson variety in the flag variety $X = G/P_X$. Since the
maximal torus $T$ acts on $X_u^v$ with finitely many fixed points, the following
observation shows that \Lemma{chow1prod} applies to products of Richardson
varieties.

\begin{lemma}\label{lemma:chowzero}%
  For any Richardson variety $X_u^v$ we have $A_0(X_u^v) = \Z$.
\end{lemma}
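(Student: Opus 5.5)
The plan is to use \Proposition{chow1gen} with the torus $T$: the group $A_0(X_u^v)$ is generated by classes of $T$-stable zero-cycles, namely the $T$-fixed points of $X_u^v$. It therefore suffices to do two things. First, show that all $T$-fixed points of $X_u^v$ define the same class in $A_0(X_u^v)$. Second, show that this common class has infinite order, i.e.\ that the degree map $A_0(X_u^v) \to \Z$ is an isomorphism.

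For the first step, take a fixed point $x = w.P_X$ with $v \leq w \leq u$. I would connect $x$ to the point $u.P_X$ by a chain of $T$-stable curves inside $X_u^v$. Since the Bruhat interval $[v,u]$ in $W^X$ is graded, if $w < u$ then there is a covering $w \lessdot w'$ with $w' \leq u$. Write $w'.P_X = s_\al w.P_X$ for a positive root $\al$; covers in the Bruhat order of $W^X$ are given by reflections. Then the $T$-stable curve $C_{w,w'}$ has its two fixed points $w.P_X$ and $w'.P_X$ in $X_u^v$. Because $X_u^v$ is $T$-convex \cite{buch.chaput.ea:equivariant}, the curve lies in $X_u^v$. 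A $T$-stable curve is isomorphic to $\bP^1$, so its two fixed points are rationally equivalent on it, and hence also in $X_u^v$. Induction on $\ell(u)-\ell(w)$ then shows that every fixed point has the class $[u.P_X]$.

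For the second step, the degree map $A_0(X_u^v)\to\Z$ is well defined because $X_u^v$ is complete. It sends $[u.P_X]$ to $1$, so $A_0(X_u^v) \cong \Z$.

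The main point needing care is the claim that $T$-stable zero-cycles are supported on fixed points, which holds because a $T$-stable closed point is $T$-fixed. A second point is the existence of the chain of covers inside $[v,u]$ together with the reflection description of covers in $W^X$. These are standard Bruhat order facts, so I do not expect a serious obstacle.
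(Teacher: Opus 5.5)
Your proposal is correct and follows essentially the same route as the paper. Both arguments use \Proposition{chow1gen} to reduce to showing that all $T$-fixed points are rationally equivalent, and then link them along a saturated Bruhat chain in $[v,u]$. The only differences are cosmetic: the paper chains down to $v$ and identifies each linking curve directly as the one-dimensional Richardson variety $X_{v_i}^{v_{i-1}} \cong \bP^1 \subset X_u^v$, so it does not need $T$-convexity, and your explicit degree-map argument for infinite order is a step the paper leaves implicit.
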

\begin{proof}
  By \Proposition{chow1gen} we must show that all $T$-fixed points in $X_u^v$
  are rationally equivalent. Any such point has the form $w.P_X$, where $w \in
  W^X$ and $v \leq w \leq u$. There exists a chain in the Bruhat order on $W^X$,
  $v = v_0 < v_1 < \dots < v_r = w$, such that $\ell(v_i) = \ell(v)+i$ for $1
  \leq i \leq r$. Using that each Richardson variety $X_{v_i}^{v_{i-1}}$ is
  isomorphic to $\bP^1$ and contained in $X_u^v$, we obtain $[v_{i-1}.P_X] =
  [v_i.P_X] \in A_0(X_u^v)$ for all $i \in [1,r]$. This implies $[w.P_X] =
  [v.P_X]$ in $A_0(X_u^v)$, as required.
\end{proof}

\subsection{The first Chow group of a cominuscule Richardson variety}

We can now compute the first Chow group of any cominuscule Richardson variety.
Given a cominuscule flag variety $X$ and two straight shapes $\tau \subset
\sigma \subset \cP_X$, we will say that $\sigma$ and $\tau$ are \emph{adjacent}
if the corresponding points $w_\tau.P_X, w_\sigma.P_X \in X^T$ are adjacent.
Equivalently, we have $\tau \neq \sigma = s_\al \bullet \tau$ for some $\al \in
\Phi^+$. In this case we let $C_{\tau,\sigma}$ denote the unique $T$-stable
curve through $w_\tau.P_X$ and $w_\sigma.P_X$. Since any Richardson variety
$X_\la^\mu$ is $T$-convex, we have $C_{\tau,\sigma} \subset X_\la^\mu$ if and
only if $\mu \subset \tau \subset \sigma \subset \la$.

\begin{lemma}\label{lemma:chow1conn}%
  Let $X$ be a cominuscule flag variety, and let $\mu \subset \la \subset \cP_X$
  be straight shapes. Assume that $\theta = \la/\mu$ is connected and
  $\dex(\theta) = \dex(\cP_X)$. Then pushforward along the inclusion $X_\la^\mu
  \subset X$ is an isomorphism of groups $A_1(X_\la^\mu) \cong A_1(X)$.
\end{lemma}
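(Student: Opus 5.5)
Since $\theta$ is connected and $\dex(\theta)=\dex(\cP_X)$, \Remark{min-embed} and \Lemma{comparable-boxes}(c) show that $\cP_X$ is connected. After replacing $G$ by a smaller group we may assume $X$ is indecomposable. Then $A_1(X)\cong\Z$ is generated by $[X_{s_\ga}]=[C_\ga]$, where $\ga$ is the cominuscule simple root, by \Lemma{indecomp-comin}. The plan has three steps. First, $A_1(X_\la^\mu)$ is generated by the classes $[X_{\{\al\}}]$ for $\al\in\theta$. Second, these classes span a cyclic group that maps isomorphically to its image under $i_*$. Third, that image is all of $A_1(X)$.

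For the first step, \Proposition{chow1gen} with $\Gamma=T$ shows that $A_1(X_\la^\mu)$ is generated by $T$-stable curves. By $T$-convexity these are the curves $C_{\tau,\sigma}$ with $\mu\subset\tau\subset\sigma\subset\la$ and $\tau,\sigma$ adjacent. I argue by induction on $|\sigma/\tau|$ that each $[C_{\tau,\sigma}]$ is an integer combination of the classes $[X_{\{\al\}}]$.
\begin{itemize}
\item If $|\sigma/\tau|=1$, then $C_{\tau,\sigma}=X_\sigma^\tau$, whose class is $[X_{\{\al\}}]$ by \Definition{skew-rich}.
\item Otherwise, \Proposition{comin-triangle} gives a shape $\ka$ with $\tau\subsetneq\ka\subsetneq\sigma$ adjacent to both $\tau$ and $\sigma$. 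The three $T$-fixed points lie in $X_\la^\mu$.
\item \Corollary{triangle} then says the three curve classes span a rank-one group, and at least two of them generate it. If $[C_{\tau,\sigma}]$ is one of the generators, it equals $\pm$ one of $[C_{\tau,\ka}]$ or $[C_{\ka,\sigma}]$. If not, it is a multiple of either of them. In both cases induction applies.
\end{itemize}

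For the second step, I use \Lemma{neighbor-class}. Neighboring boxes in $\theta$ have classes in $A_1(X_\la^\mu)$ that agree, or differ by a factor of $2$ with the short class being twice the long one. Since $\theta$ is connected, walking along covering relations shows that every $[X_{\{\al\}}]$ is a multiple of a single class $c$. Take $c=[X_{\{\al_0\}}]$ for a long box $\al_0\in\theta$ if one exists. If $\theta$ has only short boxes, all the classes coincide. Hence $A_1(X_\la^\mu)=\Z c$ is cyclic. By \Lemma{comin-degree}, $i_*c$ is $[X_{s_\ga}]$ or $2[X_{s_\ga}]$, which is nonzero in $A_1(X)\cong\Z$. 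So $i_*$ is injective, and $A_1(X_\la^\mu)\cong\Z$.

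For surjectivity I need a long box in $\theta$, so that $i_*c=[X_{s_\ga}]$ by \Lemma{comin-degree}. This is the delicate point. The introduction warns that without a long box the image is only $2A_1(X)$, so the hypothesis $\dex(\theta)=\dex(\cP_X)$ must be what forces a long box. The boxes carry simple-root labels, and the long simple roots in $\dex(\cP_X)$ must appear as labels of boxes of $\theta$. For $\LG(n,2n)$ the label $n$, the long root, occurs only on the southwest diagonal, where the boxes are long. For $Q^{2n+1}$, every label occurs on a long box, since the only short box is the middle one. The harder claim is that a box labeled by a long simple root is itself long. I would check this from \Table{labsets}: in $\LG(n,2n)$ exactly the diagonal boxes carry label $n$, and in the odd quadric the middle box carries the short label $n$ while every other label sits on a long box. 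The remaining types are simply laced, so all their boxes are long. Granting this, $\theta$ contains a box labeled by a long simple root, that box is long, and $i_*$ is onto.
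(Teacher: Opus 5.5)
Your first step (reducing to single-box classes with \Proposition{comin-triangle} and \Corollary{triangle}) is the same as the paper's. Your argument that $\theta$ contains a long box is also fine, and more explicit than the paper's. The ``harder claim'' is in fact immediate: $\dex(\al)=w_{\la(\al)}.\al$ is a Weyl group translate of $\al$, so it has the same length. (Also, the short middle label of $Q^{2n+1}$ is $\be_{n+1}$, not $\be_n$.)

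The genuine gap is in your second step. \Lemma{neighbor-class} only says that the class of a short box is twice the class of a long neighbor. It never expresses a long class in terms of a short one. Suppose a path of coverings from your long box $\al_0$ to another long box $\al_1$ passes through a short box $\be$. Then you get $[X_{\{\be\}}]=2[X_{\{\al_0\}}]$ and $[X_{\{\be\}}]=2[X_{\{\al_1\}}]$, hence only $2\bigl([X_{\{\al_0\}}]-[X_{\{\al_1\}}]\bigr)=0$. This does not make $[X_{\{\al_1\}}]$ a multiple of $c$. The difference is a possible $2$-torsion element of $A_1(X_\la^\mu)$, and since both classes push forward to $[X_{s_\ga}]$, it would lie in $\ker(i_*)$. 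So cyclicity, and with it injectivity, is exactly what remains unproved. This is not a corner case. In $\LG(n,2n)$ the long boxes are the diagonal boxes, no two of which are neighbors, so the problem arises whenever $\theta$ contains two diagonal boxes. In $Q^{2n+1}$ it arises when $\theta$ contains long boxes on both sides of the middle box.

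The missing ingredient is a direct proof that two long boxes $\al_i<\be<\al_{i+1}$ separated by one short box have equal classes. The paper first notes that the long boxes of $\cP_X$ are totally ordered, so it suffices to compare consecutive long boxes of $\theta$. If two consecutive long boxes are not neighbors, then $\dex(\al_i)=\dex(\al_{i+1})$ and there is a unique short box $\be$ between them. Choose $\mu\subset\tau\subset\sigma\subset\la$ with $\sigma/\tau=\{\al_i,\be,\al_{i+1}\}$ and set $S=\dex(\sigma/\tau)$. Then \Proposition{comin} gives $X_\sigma^\tau=G_S w_\sigma.P_X\cong Q^3$. Here $A_1(Q^3)\cong\Z$, and both $[X_{\{\al_i\}}]$ and $[X_{\{\al_{i+1}\}}]$ equal the generator by \Lemma{comin-degree}. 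Pushing forward along $X_\sigma^\tau\subset X_\la^\mu$ gives the equality in $A_1(X_\la^\mu)$.

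Alternatively, apply \Corollary{triangle} to the triangles $\tau,\tau\cup\{\al_i\},\sigma$ and $\tau\cup\{\al_i\},\tau\cup\{\al_i,\be\},\sigma$ inside this $Q^3$. Combined with the degrees in \Example{lg24-curves}, this yields the same equality.

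With this added, all long-box classes coincide and every short-box class is twice that common class. Then $A_1(X_\la^\mu)$ is cyclic, and the rest of your argument goes through.
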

\begin{proof}
  It follows from \Lemma{comparable-boxes}(c) that $\cP_X$ is connected, so we
  have $A_1(X) \cong \Z$ by \Lemma{indecomp-comin}. In view of
  \Lemma{comin-degree}, it suffices to show that $A_1(X_\la^\mu) =
  \Z\,[X_{\{\al\}}]$ holds for any long box $\al \in \theta$.
  \Proposition{chow1gen} implies that $A_1(X_\la^\mu)$ is generated by the
  classes $[C_{\sigma,\tau}]$ for which $\sigma$ and $\tau$ are adjacent and
  $\mu \subset \tau \subset \sigma \subset \la$. If $|\sigma/\tau| \geq 2$, then
  \Proposition{comin-triangle} shows that there exists a straight shape $\ka$
  such that $\tau \subset \ka \subset \sigma$ and $\ka$ is adjacent to both
  $\tau$ and $\sigma$. \Corollary{triangle} then implies that
  $[C_{\tau,\sigma}]$ is a multiple of either $[C_{\tau,\ka}]$ or
  $[C_{\ka,\sigma}]$. It follows by induction on $|\sigma/\tau|$ that
  $A_1(X_\la^\mu)$ is generated by the classes $[C_{\tau,\sigma}]$ for which
  $\sigma/\tau=\{\al\}$ contains a single box. In this case $C_{\tau,\sigma} =
  X_\sigma^\tau$ is a 1-dimensional Richardson variety contained in $X_\la^\mu$,
  and we have $[C_{\tau,\sigma}] = [X_{\{\al\}}]$. If all boxes of $\theta$ are
  long, then the claim follows from \Lemma{neighbor-class} and the assumption
  that $\theta$ is connected, so assume that $\theta$ contains both long and
  short boxes. If $\al' \in \theta$ is any short box, then since $\theta$ is
  connected, there exists a path from $\al'$ to some long box $\al \in \theta$,
  in which case \Lemma{neighbor-class} implies that $[X_{\{\al'\}}] =
  2[X_{\{\al\}}]$. It follows that $A_1(X_\la^\mu)$ is generated by the classes
  $[X_{\{\al\}}]$ defined by all long boxes $\al \in \theta$. It remains to
  prove that these classes are all the same. Since $X$ is either an odd quadric
  or a Lagrangian Grassmannian, an inspection of \Table{labsets} shows that the
  long boxes in $\cP_X$ are linearly ordered. Let the long boxes in $\theta$ be
  $\{\al_1 < \al_2 < \dots < \al_\ell\}$. If $\al_i < \al_{i+1}$ is a covering
  in $\cP_X$, then \Lemma{neighbor-class} shows $[X_{\{\al_i\}}] =
  [X_{\{\al_{i+1}\}}] \in A_1(X_\la^\mu)$. Otherwise \Table{labsets} shows that
  $\dex(\al_i)=\dex(\al_{i+1})$, and we have $\al_i < \be < \al_{i+1}$ for a
  unique short box $\be \in \cP_X$, such that $\{\al_i,\be,\al_{i+1}\}$ is a
  skew shape in $\cP_X$. Choose straight shapes $\tau,\sigma \subset \cP_X$ such
  that $\mu \subset \tau \subset \sigma \subset \la$ and $\sigma/\tau =
  \{\al_i,\be,\al_{i+1}\}$, and set $S = \dex(\sigma/\tau)$ and $Y = G_S
  w_\sigma.P_X$. We deduce from \Proposition{comin} that $Y = X_\sigma^\tau
  \cong Q^3$, hence $[X_{\{\al_i\}}] = [X_{\{\al_{i+1}\}}]$ holds in
  $A_1(X_\sigma^\tau)$ by \Lemma{comin-degree}. Since $X_\sigma^\tau \subset
  X_\la^\mu$, we obtain $[X_{\{\al_i\}}] = [X_{\{\al_{i+1}\}}]$ in
  $A_1(X_\la^\mu)$, which completes the proof.
\end{proof}

\begin{thm}\label{thm:chow1rich}%
  Let $X$ be a cominuscule flag variety, and let $\theta \subset \cP_X$ be a
  skew shape. Let $\theta_1, \dots, \theta_r$ be the connected components of
  $\theta$.\smallskip

  \titem{a} The Richardson variety $X_\theta$ is indecomposable if and only if
  $\theta$ is connected.\smallskip

  \titem{b} We have $X_\theta \cong X_{\theta_1} \times \dots \times
  X_{\theta_r}$ as algebraic varieties; the associated projections $X_\theta \to
  X_{\theta_i}$ are uniquely determined by the isomorphism class of $X_\theta$
  as a variety, up to reordering and automorphisms of the targets.\smallskip

  \titem{c} The pushforwards along the projections define an isomorphism of
  first Chow groups $A_1(X_\theta) \cong A_1(X_{\theta_1}) \oplus \dots \oplus
  A_1(X_{\theta_r})$.\smallskip

  \titem{d} The first Chow group $A_1(X_\theta)$ is a free Abelian group of rank
  $r$.\smallskip

  \titem{e} Assume that $X$ is the minimal embedding of $X_\theta$. Then proper
  pushforward along the inclusion $i : X_\theta \to X$ is an isomorphism of
  groups $i_* : A_1(X_\theta) \xrightarrow{\cong\,} A_1(X)$.
\end{thm}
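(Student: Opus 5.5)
The plan is to first establish the product decomposition in (b), then compute $A_1$ of the connected pieces, and finally assemble everything with \Lemma{chow1prod} and \Corollary{unique-factor}.

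\textbf{Step 1: the factorization in (b).} Apply \Corollary{comin-rich-factor} repeatedly, splitting off one connected component at a time. The components $\theta_1,\dots,\theta_r$ are pairwise incomparable. After the first split, each remaining union $\theta_2\cup\dots\cup\theta_r$ is again a skew shape, since it is convex in $\cP_X$. Induction on $r$ then gives $X_\theta \cong X_{\theta_1}\times\dots\times X_{\theta_r}$.

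\textbf{Step 2: the connected case of (e), and strong indecomposability.} Assume first that $\theta$ is connected. Replace $X$ by the minimal embedding $X_{\wh\theta}$, which is a cominuscule flag variety by \Proposition{comin}, and use \Remark{min-embed} to arrange $\dex(\theta)=\dex(\cP_X)$. Then \Lemma{chow1conn} gives $A_1(X_\theta)\cong A_1(X)\cong\Z$. Together with \Lemma{chowzero} and the $T$-action with finitely many fixed points, this shows each $X_{\theta_i}$ is strongly indecomposable, hence indecomposable by \Lemma{indecomp}.

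\textbf{Step 3: parts (c), (d), and the uniqueness in (b).} \Lemma{chow1prod} applies to products of Richardson varieties. It has to be iterated: the product $X_{\theta_2}\times\dots\times X_{\theta_r}$ still satisfies $A_0\cong\Z$, since $A_0$ of a product of such varieties is generated by products of fixed points. This iteration gives (c), and (d) follows from Step 2. The uniqueness statement in (b) follows from \Corollary{unique-factor} applied to the strongly indecomposable factors $X_{\theta_i}$.

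\textbf{Step 4: part (a).} If $\theta$ is connected, Step 2 shows $X_\theta$ is indecomposable. If $\theta$ is disconnected, then (b) writes $X_\theta$ as a product of factors of strictly smaller positive dimension, so it is decomposable.

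\textbf{Step 5: the general case of (e).} Here $X$ is the minimal embedding, so $\dex(\theta)=\dex(\cP_X)$. By \Remark{min-embed} we may shrink $G$ so that $\dex(\theta)=\Delta$. Then the connected components of $\theta$, of $\cP_X$, and of the Dynkin diagram correspond. \Proposition{flagvar-decomp} then identifies $X\cong\prod_i Y_i$ with $Y_i$ indecomposable, and identifies the factor embeddings $X_{\theta_i}\subset Y_i$, each of which is again a minimal embedding.

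Apply the connected case to each factor, and \Lemma{chow1prod} to both $X_\theta$ and $X$. Since $i_*$ is compatible with the projections, it is the direct sum of the isomorphisms $A_1(X_{\theta_i})\to A_1(Y_i)$.

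The main obstacle is the bookkeeping in Step 5: checking that the product decomposition of $X$ restricts to the decomposition of $X_\theta$, with matching minimal embeddings on each factor. This should follow directly from \Proposition{flagvar-decomp}(c,e).
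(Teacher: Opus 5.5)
Your proposal is correct and follows essentially the same route as the paper: you factor via \Corollary{comin-rich-factor}, get (c) from \Lemma{chow1prod} and \Lemma{chowzero}, and pass to the minimal embedding to get $A_1(X_\theta)\cong\Z$ for connected $\theta$ via \Lemma{chow1conn}. You then deduce (a), the uniqueness in (b), (d), and (e) from \Lemma{indecomp}, \Corollary{unique-factor}, and the splitting of $X$ along the components $\wh\theta_i$ of $\cP_X$. One small point in Step 3: generation of $A_0$ by product fixed points does not by itself give $A_0\cong\Z$, and the cleanest fix is to note that $X_{\theta_2}\times\dots\times X_{\theta_r}\cong X_{\theta_2\cup\dots\cup\theta_r}$ is itself a Richardson variety, so \Lemma{chowzero} applies directly.
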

\begin{proof}
  The isomorphism $X_\theta \cong X_{\theta_1} \times \dots \times X_{\theta_r}$
  follows from \Corollary{comin-rich-factor}, after which part (c) follows from
  \Lemma{chow1prod} and \Lemma{chowzero}. For the remaining claims we may assume
  that $X$ is the minimal embedding of $X_\theta$, so that $\dex(\theta) =
  \dex(\cP_X)$. If $\theta$ is connected, then so is $\cP_X$ by
  \Lemma{comparable-boxes}(c), so we have $A_1(X_\theta) \cong A_1(X) \cong \Z$
  by \Lemma{chow1conn} and \Lemma{indecomp-comin}. Part (a) now follows from
  \Lemma{indecomp}. The uniqueness of the factorization in part (b) follows from
  \Corollary{unique-factor}, and part (d) follows from (c) since
  $A_1(X_{\theta_i}) \cong \Z$ for $1 \leq i \leq r$. Finally, part (e) follows
  from part (c) and \Lemma{chow1conn}, noting that $\wh\theta_1, \dots,
  \wh\theta_r$ are the connected components of $\cP_X$. This completes the
  proof.
\end{proof}

\subsection{The Picard group of a cominuscule Richardson variety}

Given any complete variety $Z$, we let $A_1(Z)^*$ denote the dual of the first
Chow group of $Z$. The \emph{degree map} of $Z$ is the natural group
homomorphism $\Pic(Z) \to A_1(Z)^*$ defined by the intersection pairing
\[
  \Pic(Z) \otimes A_1(Z) \,\to\, \Z
  \ \ \ ; \ \ \
  \cL \otimes [C] \ \mapsto \int_C c_1(\cL) = \deg(\cL|_C) \,.
\]
Our computation of the Picard group $\Pic(X_\theta)$ requires the following
fact.

\begin{thm}\label{thm:comin-desing}%
  Let $X_\theta$ be any cominuscule Richardson variety. There exists a
  desingularization $\pi : \wt{X_\theta} \to X_\theta$ such that the degree
  map $\Pic(\wt{X_\theta}) \to A_1(\wt{X_\theta})^*$ is injective.
\end{thm}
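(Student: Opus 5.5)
We will construct the desingularization $\wt{X_\theta}$ as an iterated tower of bundles whose fibers are smooth projective varieties with injective degree maps; injectivity is then propagated up the tower. First reduce to the indecomposable case: by \Theorem{chow1rich}(b), $X_\theta \cong \prod_i X_{\theta_i}$, and a product of desingularizations is a desingularization. Injectivity of the degree map for a product of smooth projective rational varieties follows from $\Pic(Z_1\times Z_2)=\Pic(Z_1)\oplus\Pic(Z_2)$ (valid since each factor is rational with $H^1(\cO)=0$). It also uses the curves $C\times\{pt\}$ and $\{pt\}\times C$, which detect each summand separately. We may also replace $X$ by the minimal embedding (\Remark{min-embed}), so $X$ is an indecomposable cominuscule flag variety and $\dex(\theta)=\Delta$.

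For the classical types, realize $X=F_\om(m;N)$ and describe $X_\la^\mu$ by incidence conditions $\dim(V\cap E_{p_k})\geq k$ with respect to the flag $E_\bull$ from $B$, and $\dim(V\cap E^{\rm opp}_{q_l})\geq l$ with respect to the opposite flag. In the style of Zelevinsky, define $\wt{X_\theta}$ as the variety of tuples of subspaces $(A_1,\dots,A_s,V)$. Each $A_k$ is an (isotropic) subspace of fixed dimension squeezed between a coordinate subspace of $E_\bull$ or $E^{\rm opp}_\bull$ and previous choices, chosen so that the forgetful map to $X$ is birational onto $X_\la^\mu$ (an isomorphism over the open Richardson cell). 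Adding one subspace at a time, each step is a Grassmannian or isotropic Grassmannian bundle over the previous smooth variety, hence $\wt{X_\theta}$ is smooth and projective.

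The key inductive lemma is the following. Let $p:E\to M$ be a (isotropic) Grassmannian bundle with fiber $F$ over a smooth projective $M$ with injective degree map. Then $\Pic(E)=p^*\Pic(M)\oplus\Z\,\cO(1)$ (or a relative Plücker bundle). Take $\cL=p^*\cM\otimes\cO(a)$ with zero degree on all curves. Restricting to a line in a fiber forces $a=0$, using $\Pic(F)=\Z$. Then $\cM$ pulls back to zero on all curves; we lift curves of $M$ via a section of $p$ (or a multisection, using $T$-stable curves and $T$-fixed sections coming from coordinate subspaces). This shows $\cM$ has degree zero on all curves of $M$, hence $\cM$ is trivial by the induction hypothesis. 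In the isotropic cases, we must check that the relative Picard group of the bundle is still $\Z$, which holds since the fibers are cominuscule or isotropic Grassmannians with Picard rank one, except in the $\OG(n,2n)$ two-component subtleties.

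We expect the main obstacle to be the exceptional types $E_6/P_6$, $E_7/P_7$ (and $\OG(n,2n)$ in characteristic 2), where no flag-of-subspaces model is available. There we would instead use the brick (Bott--Samelson type) resolution of \cite{brion:positivity, escobar:brick}, which is an iterated $\bP^1$-bundle tower. The same inductive lemma applies if each stage is a $\bP^1$-bundle with a section. Failing a clean argument, we would invoke the result of Anderson and Payne \cite{anderson.payne:chow} that the brick resolution has injective (indeed bijective) degree map.
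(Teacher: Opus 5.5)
Your architecture is the paper's: a Zelevinsky-style tower of (isotropic) Grassmann bundles in the classical types, injectivity of the degree map propagated up the tower, and the Anderson--Payne theorem for $E_6/P_6$, $E_7/P_7$ and $\OG(n,2n)$ in characteristic $2$. In those types only your fallback works, because the brick resolution of a Richardson variety is built from a fiber product of Bott--Samelson varieties and is not in general an iterated $\bP^1$-bundle.

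The genuine gap is in your inductive lemma, which rests on $\Pic(E)=p^*\Pic(M)\oplus\Z\,\cO(1)$. For Grassmann and Lagrangian Grassmann bundles this follows from the known presentations of their Chow rings. It is not available for the bundle of maximal orthogonal Grassmannians at the top of the tower when $X$ is a maximal orthogonal Grassmannian. If $\cO(1)$ means the relative Pl\"ucker bundle, the formula fails already when $M$ is a point, since that bundle is twice the generator of $\Pic(\OG(r,2r+1))$. Knowing that the fiber has Picard rank one does not give $\ker(\Pic(E)\to\Pic(F))=p^*\Pic(M)$, which is what you actually use.

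The paper proves exactly this in \Proposition{int-inj}. Since the base $Y$ is rational and $\rho$ is Zariski-locally trivial, some open $U\subset Y$, isomorphic to an open subset of affine space, has $\rho^{-1}(U)\cong U\times F$. Then $\Pic(U)=0$ and $\Pic(\rho^{-1}(U))\cong\Pic(F)$, and the localization sequences identify $\ker(\Pic(Z)\to\Pic(F))$ with $\rho^*\Pic(Y)$. Zariski local triviality of an orthogonal bundle is not automatic; \Lemma{Xtilde-bundle} obtains it by splitting $\cH$ as a direct sum with middle summand $\bK e_{m+1}$. Alternatively, you could repair your lemma by seesaw, which needs neither rationality of the base nor Zariski local triviality. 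Zero degree on all curves forces $\cL$ to be trivial on every fiber, so $p_*\cL$ is invertible and $\cL\cong p^*p_*\cL$; your multisection argument then finishes.

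Second, the tower itself is only asserted, and that is where most of the work lies. If you adjoin intersection-type spaces $A\subset V\cap E_p$ and $A'\subset V\cap E^{\rm opp}_q$ for both flags, the two chains meet only in $V$, and $A+A'$ need not have constant rank. The paper instead encodes the $B$-stable conditions by spaces $V_{N-\rt(\al)}\supset V^\perp+E_{N-\ur(\al)}$ and the $B^-$-stable ones by $V_{\up(\al)}\subset V\cap F_{\ur(\al)}$. All auxiliary spaces then form a single partial flag around $V$ in a type A flag variety, and they are added in order of increasing $\ur(\al)$. This gives $E_{N-\ur(\al'')}+F_{\ur(\al)}=\bK^N$ and $F_{\ur(\al')}\cap E_{N-\ur(\al)}=0$ at each step (\Lemma{Xhat-bundle}). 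In types B and C it also gives $V_{\up(\al')}\cap V_{N-\rt(\al'')}^\perp=0$, which makes the induced form on $\cH$ non-degenerate.

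Finally, quadrics escape your scheme. There $V$ is a line, conditions of the form $V\subset E_p$ produce no auxiliary subspaces, and the Schubert varieties are singular cones. The paper handles this case separately: such an $X_\theta$ is a Schubert variety, and its Bott--Samelson resolution is a tower of $\bP^1$-bundles.
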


We prove this statement in \Section{desing} when the minimal embedding of
$X_\theta$ is a product of cominuscule flag varieties of classical Lie types,
see \Corollary{desing-classical}. In response to our questions, the general case
of \Theorem{comin-desing} has been proved by David Anderson and Sam Payne
\cite{anderson.payne:chow}, by showing that Brion's \emph{brick resolution} from
the proof of \cite[Thm.~4.2.1]{brion:lectures} (see also
\cite[Appendix~A]{knutson.lam.ea:projections}) satisfies the required property
(and much more!).

\begin{thm}\label{thm:picard}%
  Let $X$ be a cominuscule flag variety, and let $\theta \subset \cP_X$ be a
  skew shape. Let $\theta_1, \dots, \theta_r$ be the connected components of
  $\theta$.%\smallskip

  \titem{a} The degree map is an isomorphism of groups $\Pic(X_\theta)
  \xrightarrow{\cong\,} A_1(X_\theta)^*$.\smallskip

  \titem{b} We have $\Pic(X_\theta) \cong \Pic(X_{\theta_1}) \oplus \dots \oplus
  \Pic(X_{\theta_r})$.\smallskip

  \titem{c} The Picard group $\Pic(X_\theta)$ is a free Abelian group of rank
  $r$.\smallskip

  \titem{d} Assume that $X$ is the minimal embedding of $X_\theta$. Then
  pullback along the inclusion $i : X_\theta \to X$ is an isomorphism of groups
  $i^*: \Pic(X) \xrightarrow{\cong\,} \Pic(X_\theta)$.
\end{thm}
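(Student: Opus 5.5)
We will prove part (d) first, in a strengthened form, and then deduce (a), (b) and (c) from it. Throughout we assume, as allowed by \Remark{min-embed}, that $X$ is the minimal embedding of $X_\theta$. Then $\dex(\theta) = \dex(\cP_X)$, and the connected components $\wh\theta_1,\dots,\wh\theta_r$ of $\cP_X$ correspond to $\theta_1,\dots,\theta_r$. Write $i : X_\theta \to X$ for the inclusion.

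Consider the commutative square
\[
  \xymatrix{
    \Pic(X) \ar[r]^{i^*} \ar[d] & \Pic(X_\theta) \ar[d] \\
    A_1(X)^* \ar[r]^{(i_*)^*} & A_1(X_\theta)^* \,,
  }
\]
in which the vertical maps are degree maps. It commutes by the projection formula, $\int_C c_1(i^*\cL) = \int_{i_*C} c_1(\cL)$.
\begin{itemize}
\item The left vertical map is an isomorphism. $\Pic(X)$ is freely generated by the Schubert divisors $[X^{s_\be}]$ for $\be \in \min(\cP_X)$, and $A_1(X)$ is freely generated by the dual Schubert curves $[X_{s_\be}]$. The pairing between these bases is the identity matrix.
\item The bottom map $(i_*)^*$ is an isomorphism by \Theorem{chow1rich}(e).
\end{itemize}
Hence the composite $\Pic(X) \to A_1(X_\theta)^*$ is an isomorphism. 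It follows that the degree map $\Pic(X_\theta) \to A_1(X_\theta)^*$ is surjective, and that $i^*$ is injective.

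The remaining point, which is the main obstacle, is injectivity of the degree map of $X_\theta$. Once it holds, the degree map is an isomorphism, and the square forces $i^*$ to be an isomorphism as well. This gives (a) and (d) in the minimal embedding case.

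For injectivity we use \Theorem{comin-desing}. Let $\pi : \wt{X_\theta} \to X_\theta$ be the desingularization provided there, and consider the square
\[
  \xymatrix{
    \Pic(X_\theta) \ar[r]^{\pi^*} \ar[d] & \Pic(\wt{X_\theta}) \ar[d] \\
    A_1(X_\theta)^* \ar[r]^{(\pi_*)^*} & A_1(\wt{X_\theta})^* \,,
  }
\]
which again commutes by the projection formula. The right vertical map is injective by \Theorem{comin-desing}, so it suffices to show that $\pi^*$ is injective.

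For this we use that $X_\theta$ is normal and $\pi$ is birational and proper, so Zariski's main theorem gives $\pi_*\cO_{\wt{X_\theta}} = \cO_{X_\theta}$. Suppose $\pi^*\cL \cong \cO$. Then the projection formula gives
\[
  \cL \cong \cL \otimes \pi_*\cO_{\wt{X_\theta}} \cong \pi_*\pi^*\cL \cong \pi_*\cO_{\wt{X_\theta}} = \cO_{X_\theta},
\]
so $\cL$ is trivial and $\pi^*$ is injective. Thus the degree map of $X_\theta$ is injective.

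This argument does not use that $X$ is the minimal embedding. It shows injectivity of the degree map for every $X_\theta$, and surjectivity likewise holds in general because the isomorphism class of $X_\theta$ does not depend on the embedding. So (a) holds for arbitrary $X$.

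Parts (b) and (c) now follow from duality. By (a), $\Pic(X_\theta) \cong A_1(X_\theta)^*$ and $\Pic(X_{\theta_j}) \cong A_1(X_{\theta_j})^*$ for each $j$. By \Theorem{chow1rich}(c,d), $A_1(X_\theta) \cong \bigoplus_j A_1(X_{\theta_j})$ is free of rank $r$. Dualizing gives (b) and (c).

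The pullback from the factors is compatible with this identification. For a class $c_j$ on the $j$-th factor and a line bundle $\cL_k$ on the $k$-th factor,
\[
  \int_{c_j} \mathrm{pr}_k^*\cL_k = \int_{\mathrm{pr}_{k*}c_j} \cL_k ,
\]
which vanishes for $j \neq k$ because curves coming from the $j$-th factor push forward to points under $\mathrm{pr}_k$. Therefore the map $\bigoplus_j \Pic(X_{\theta_j}) \to \Pic(X_\theta)$ given by pullback is precisely the dual of the isomorphism of \Theorem{chow1rich}(c), and so it is itself an isomorphism.
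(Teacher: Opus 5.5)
Your proposal is correct and follows essentially the same route as the paper. You reduce to the minimal embedding and use the degree-map square for $i$, with \Theorem{chow1rich}(e) giving surjectivity and injectivity of $i^*$, together with the square for the desingularization $\pi$, where Zariski's main theorem gives injectivity of $\pi^*$; you then read off (b) and (c) from \Theorem{chow1rich}, and your extra check that the isomorphism in (b) is induced by pullback from the factors only makes explicit what the paper leaves implicit.
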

\begin{proof}
  We may assume that $X$ is the minimal embedding of $X_\theta$, so that the
  pushforward map $i_* : A_1(X_\theta) \to A_1(X)$ is an isomorphism by
  \Theorem{chow1rich}(e). Let $\pi : \wt{X_\theta} \to X_\theta$ be the
  desingularization of \Theorem{comin-desing}. Since $X_\theta$ is normal, we
  have $\pi_*(\cO_{\wt{X_\theta}}) = \cO_{X_\theta}$ by Zariski's main theorem.
  Given any line bundle $\cL$ on $X_\theta$, we obtain
  \[
    \pi_*(\pi^* \cL) = \pi_*(\pi^*\cL \otimes \cO_{\wt{X_\theta}})
    = \cL \otimes \pi_*(\cO_{\wt{X_\theta}}) = \cL \,.
  \]
  It follows that the pullback map $\pi^* : \Pic(X_\theta) \to
  \Pic(\wt{X_\theta})$ is injective. We obtain the following commutative
  diagram, where the vertical maps are degree maps.
  \[
    \xymatrix{
      \Pic(X) \ar[r]_{i^*} \ar[d]_\cong &
      \Pic(X_\theta)\, \ar@{^{(}->}[r]_{\pi^*} \ar[d] &
      \Pic(\wt{X_\theta}) \ar@{^{(}->}[d]
      \\
      A_1(X)^* \ar[r]^{(i_*)^*}_\cong &
      A_1(X_\theta)^* \ar[r]^{(\pi_*)^*} &
      A_1(\wt{X_\theta})^*
    }
  \]
  The right square of this diagram implies that the degree map
  $\Pic(X_\theta) \to A_1(X_\theta)^*$ is injective, after which the left square
  shows that this map as well as the pullback map $i^* : \Pic(X) \to
  \Pic(X_\theta)$ are isomorphisms. The result therefore follows from
  \Theorem{chow1rich}.
\end{proof}

% !TeX root=comrich.tex

\section{Desingularization of Richardson varieties in
Grassmannians}\label{sec:desing}%

In this section we construct an explicit desingularization of any Richardson
variety in a Grassmannian of type A or a maximal isotropic Grassmannian, in the
style of Zelevinsky's desingularization of Schubert varieties
\cite{zelevinski:small}. This desingularization is obtained as a tower of
(isotropic) Grassmann bundles and is used to prove that \Theorem{comin-desing}
holds for products of cominuscule Richardson varieties of classical Lie types
when $\chr(\bK) \neq 2$. Our construction is of independent interest, as the
known desingularization of Richardson varieties in \cite{brion:positivity}
involves a fiber product of Bott--Samelson varieties, which is less explicit. We
give a unified treatment of the Grassmannian $\Gr(m,N)$ of type A, the
Lagrangian Grassmannian $\LG(m,2m)$ of type C, and the maximal orthogonal
Grassmannian $\OG(m,2m+1)$ of type B, noting that $\OG(m,2m+1)$ is isomorphic to
the cominuscule orthogonal Grassmannian $\OG(m+1,2m+2)$ of type D. The chosen
Grassmannian will be denoted $X$, and we use the construction of $X$ given in
\Section{classical}. In particular, we assume $\chr(\bK)\neq 2$ when $X$ is an
orthogonal Grassmannian, and we set $N = 2m$ for $X = \LG(m,2m)$, and $N=2m+1$
for $X = \OG(m,2m+1)$.

The partially ordered set $\cP_X$ is identified with a diagram of boxes in the
plane; see \Table{labsets}. For $X=\Gr(m,N)$, $\cP_X$ is a rectangle with $m$
rows and $N-m$ columns, and for $X=\LG(m,2m)$ and $X=\OG(m,2m+1)$, $\cP_X$ is
the upper-triangular part of an $m \times m$ rectangle. Given any box $\al \in
\cP_X$, let $\up(\al)$ be the row number of $\al$ counted from the top of
$\cP_X$ and let $\dn(\al)$ be the row number counted from the bottom, so that
$\up(\al)+\dn(\al) = m+1$. Let $\lf(\al)$ be the column number of $\al$ counted
from the left and let $\rt(\al)$ be the column number counted from the right
side of $\cP_X$, so that $\lf(\al)+\rt(\al) = N-m+1$ for $X=\Gr(m,N)$, and
$\lf(\al)+\rt(\al) = m+1$ for $X=\LG(m,2m)$ and for $X=\OG(m,2m+1)$. We also set
$\ur(\al) = \up(\al)+\rt(\al)-1$. The partial order on $\cP_X$ is given by $\al'
\leq \al''$ if and only if $\up(\al') \leq \up(\al'')$ and $\lf(\al') \leq
\lf(\al'')$.

\begin{center}

\begin{tikzpicture}[
  x=5mm,y=5mm,
  measurement/.style={{Stealth[scale=1]}-{Stealth[scale=1]}, draw=blue!50}
]
  \begin{scope}[local bounding box=schub]
  % Light gray internal grid lines: 6 rows by 9 columns
  \draw[step=1, lightgray!50, thin] (0,0) grid (9,6);

  % Black frame and label for box (row,column) = (3,4), counting rows from the top
  \draw[black, thick] (3,3) rectangle (4,4);
  \node at (3.5,3.5) {$\alpha$};

  % Measurement line from the left border to the right border of alpha
  \draw[measurement] (0,2.5) -- (4,2.5)
    node[midway, below, fill=white, inner sep=1pt] {$\lf(\alpha)$};

  % Measurement line from the bottom of alpha to the top border
  \draw[measurement] (2.5,3) -- (2.5,6)
    node[midway, left, fill=white, inner sep=1pt] {$\up(\alpha)$};

  % Measurement line from the left side of alpha to the right border
  \draw[measurement] (3,4.5) -- (9,4.5)
    node[midway, above, fill=white, inner sep=1pt] {$\rt(\alpha)$};

  % Measurement line from the top of alpha to the bottom border
  \draw[measurement] (4.5,4) -- (4.5,0)
    node[midway, right, fill=white, inner sep=1pt] {$\dn(\alpha)$};

  % Black outer outline
  \draw[black, thick] (0,0) rectangle (9,6);

  % Measurement line 1/2 unit under the rectangle indicating its width
  \draw[measurement] (0,-0.5) -- (9,-0.5)
    node[midway, below, fill=white, inner sep=1pt] {$N-m$};
  \end{scope}

  % Vertical measurement line between the two pictures indicating their common height
  \draw[measurement] (10.5,0) -- (10.5,6)
    node[midway, right, fill=white, inner sep=1pt] {$m$};

  \begin{scope}[shift={(12,0)}, local bounding box=staircase]
  % Dashed outline of the full 6 by 6 square containing the staircase.
  \draw[black, dashed] (0,0) rectangle (6,6);

  % Staircase made from the upper-right part of a 6 by 6 rectangle
  % Draw only the internal grid lines, leaving the border for the black outline.
  \foreach \x in {1,...,5} {
    \pgfmathtruncatemacro{\ymin}{6-\x}
    \draw[lightgray!50, thin] (\x,\ymin) -- (\x,6);
  }
  \foreach \y in {1,...,5} {
    \pgfmathtruncatemacro{\xmin}{6-\y}
    \draw[lightgray!50, thin] (\xmin,\y) -- (6,\y);
  }

  % Black frame and label for box (row,column) = (3,5), counting rows from the top
  \draw[black, thick] (4,3) rectangle (5,4);
  \node at (4.5,3.5) {$\alpha$};

  % Measurement line from the left border of the 6 by 6 rectangle to the right border of alpha
  \draw[measurement] (0,2.5) -- (5,2.5)
    node[pos=0.37, below, fill=white, inner sep=1pt] {$\lf(\alpha)$};

  % Measurement line from the bottom of alpha to the top border of the 6 by 6 rectangle
  \draw[measurement] (3.5,3) -- (3.5,6)
    node[midway, left, fill=white, inner sep=1pt] {$\up(\alpha)$};

  % Measurement line from the left side of alpha to the right border of the 6 by 6 rectangle
  \draw[measurement] (4,4.5) -- (6,4.5)
    node[midway, above, yshift=1.2pt, fill=white, inner sep=1pt] {$\rt(\alpha)$};

  % Measurement line from the top of alpha to the bottom border of the 6 by 6 rectangle
  \draw[measurement] (5.5,4) -- (5.5,0)
    node[pos=0.625, left, fill=white, inner sep=0pt] {\tiny$\dn(\alpha)$};

  % Black outer outline of the staircase
  \draw[black, thick]
    (5,0) -- (6,0) -- (6,6) -- (0,6) -- (0,5)
    -- (1,5) -- (1,4) -- (2,4) -- (2,3)
    -- (3,3) -- (3,2) -- (4,2) -- (4,1)
    -- (5,1) -- cycle;

  % Measurement line 1/2 unit under the 6 by 6 rectangle indicating its width
  \draw[measurement] (0,-0.5) -- (6,-0.5)
    node[midway, below, fill=white, inner sep=1pt] {$m$};
  \end{scope}
\end{tikzpicture}

\end{center}

Let $E_\bull \subset \bK^N$ be the $B$-stable flag defined by $E_i = \langle
e_1, \dots, e_i \rangle$ for $0 \leq i \leq N$, and let $F_\bull \subset \bK^N$
be the $B^-$-stable flag defined by $F_i = \langle e_{N+1-i}, \dots, e_N
\rangle$. In types B and C we have $E_i^\perp = E_{N-i}$ and $F_i^\perp =
F_{N-i}$, where $U^\perp$ denotes the kernel of the linear map $\bK^N \to U^*$
defined by the nonzero bilinear form $\om$ on $\bK^N$, for any subspace $U
\subset \bK^N$. Each box $\al \in \cP_X$ defines a $B$-stable Schubert variety
\[
  X_\al = \{ V \in X \mid \dim(V^\perp + E_{N-\ur(\al)}) \leq N-\rt(\al) \} \,,
\]
where we write $V^\perp = V$ in type A, and a $B^-$-stable Schubert variety
\[
  X^\al = \{ V \in X \mid \dim(V \cap F_{\ur(\al)}) \geq \up(\al) \} \,.
\]

\begin{prop}\label{prop:schubvar}%
  The Schubert varieties in $X$ defined by a straight shape $\la \subset \cP_X$
  are given by
  \[
    X_\la = \bigcap_{\al \notin \la} X_\al
    \text{ \ \ \ \ and \ \ \ \ }
    X^\la = \bigcap_{\al \in \la} X^\al \,.
  \]
  The first intersection is over all boxes of $\cP_X \ssm \la$; it suffices to
  include all minimal boxes of $\cP_X \ssm \la$ in this intersection. Similarly,
  it suffices to include all maximal boxes of $\la$ in the second intersection.
\end{prop}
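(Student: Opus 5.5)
The plan is to prove that each single-box variety $X_\al$ (resp.\ $X^\al$) is a Schubert variety, identify it as $X_{\la_\al}$ for an explicit shape, and then use that Schubert varieties in a cominuscule flag variety intersect according to shapes. Recall from \Remark{comin-rich-int} that $X_{\la'} \cap X_{\la''} = X_{\la'\cap\la''}$ and $X^{\mu'}\cap X^{\mu''} = X^{\mu'\cup\mu''}$.

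The first step is to show that $X^\al$ is the opposite Schubert variety $X^{\la(\al)\cup\{\al\}}$, where $\la(\al)\cup\{\al\} = \{\al'\in\cP_X \mid \al'\leq\al\}$ is the principal order ideal generated by $\al$. Similarly, $X_\al$ should be the Schubert variety $X_{\la}$ with $\la = \{\al' \in \cP_X \mid \al' \not\geq \al\}$, the largest straight shape not containing $\al$. Both sets are $B^-$- resp.\ $B$-stable, closed, and defined by a single rank condition with respect to $F_\bull$ resp.\ $E_\bull$. Hence each is a union of Schubert varieties. Since the condition is a single rank condition in a cominuscule Grassmannian, it defines an irreducible Schubert variety. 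Irreducibility can be seen by checking that the set of $T$-fixed points satisfying the condition has a unique minimal (resp.\ maximal) element.

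To carry this out I will work with $T$-fixed points. The point $w_\nu.P_X$ corresponds to $V = \langle e_j : j \in \iota(w_\nu)\rangle$ in the notation of the proof of \Proposition{comin-triangle}, and the rank $\dim(V\cap F_{\ur(\al)})$ counts elements of $\iota(w_\nu)$ exceeding $N-\ur(\al)$. I will translate the standard correspondence between Young diagrams and index sets, in its type A and shifted forms, into the statement
\[
\dim(V\cap F_{\ur(\al)}) \geq \up(\al) \iff \al\in\nu .
\]
For a box in row $\up(\al)$, the $\up(\al)$-th largest index of $V$ is large precisely when row $\up(\al)$ of $\nu$ extends to column $\lf(\al)$. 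The $B$-stable version follows from the duality $V\mapsto V^\perp$ (type A) or from the isotropic identity $E_i^\perp = E_{N-i}$ (types B, C), with the analogous count. Given this equivalence, the $T$-fixed points of $X^\al$ are exactly those $w_\nu.P_X$ with $\al\in\nu$. That set has unique minimum $\{\al'\leq\al\}$, which gives $X^\al = X^{\{\al'\leq\al\}}$ by $T$-convexity, or simply because a closed $B^-$-stable set is a union of opposite Schubert varieties. The same argument gives $X_\al$.

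The intersection formulas then follow formally. We have
\[
\bigcap_{\al\in\la} X^\al = X^{\bigcup_{\al\in\la}\{\al'\leq\al\}} = X^\la ,
\]
and the union is already achieved by the maximal boxes of $\la$. Dually,
\[
\bigcap_{\al\notin\la} X_\al = X_{\bigcap_{\al\notin\la}\{\al'\not\geq\al\}} .
\]
This intersection equals $\la$, because $\la$ is a lower ideal, and it suffices to take the minimal boxes of $\cP_X\ssm\la$.

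The main obstacle is the combinatorial verification of the equivalence above uniformly across types A, B, C: matching $\ur(\al)$ and $\rt(\al)$ with the positions in $\iota(w_\nu)$, including the shifted staircase indexing and the type B middle index. I would handle this by induction on $|\nu|$ using \Lemma{act-fixed}. Adding a box with label $\be_i$ swaps the indices $i,i+1$ (and their bars), and one checks that only the rank condition attached to that box changes.
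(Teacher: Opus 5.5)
Your proposal is correct, but it takes a different route from the paper.

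\textbf{The paper's route.} For $X^\la$, the paper appeals to the standard rank-condition description of opposite Schubert varieties in the literature (Fulton; Fulton--Pragacz; Buch--Kresch--Tamvakis). That description already includes the redundancy of the conditions from non-maximal boxes. The paper then obtains $X_\la$ from the symmetry $X_{\la^\vee} = w_0.X^\la$. This reduces everything to checking $X_\al = w_0.X^{w_0^X.\al}$ for single boxes, which is done by explicit linear algebra: $\dim(V+E_j) = m+j-\dim(V\cap E_j)$ in type A, and $(V\cap E_{\ur(\al)})^\perp = V^\perp + E_{N-\ur(\al)}$ in types B and C.

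\textbf{Your route.} You identify each single-box locus intrinsically. It is closed and $B^-$-stable (resp.\ $B$-stable), hence a union of Schubert varieties determined by its $T$-fixed points. You compute those fixed points combinatorially and note that they have a unique extremal element. This gives $X^\al = X^{\{\al'\leq\al\}}$ and $X_\al = X_{\{\be\,:\,\be\not\geq\al\}}$. The intersection formulas, and the reduction to maximal or minimal boxes, then become elementary statements about order ideals via \Remark{comin-rich-int}.

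\textbf{Comparison.} The paper's argument is shorter because the matching of $\up$, $\rt$, $\ur$ with index sets is taken from the literature. Yours is self-contained and treats $X_\la$ and $X^\la$ by a single mechanism. The price is that you must actually carry out that matching in all three types. For $X_\al$, converting the $E_\bull$-condition into an intersection count uses exactly the identities the paper uses.

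\textbf{One refinement to your induction.} Your induction via \Lemma{act-fixed} works, but in types B and C a reflection $s_i$ with $i<m$ also swaps $\ov{i+1}$ and $\ov{i}$. So two rank functions change, namely $\dim(V\cap F_i)$ and $\dim(V\cap F_{N-i})$, not just one. Only the first enters the box conditions, because $\ur(\al)\leq m$ for every box of the staircase. It is precisely the rank at $\ur(\al)$ of the added box, so your claim that only that box's condition flips survives.
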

\begin{proof}
  The definition of $X^\la$ matches known definitions of Schubert varieties,
  noting that all Schubert conditions follow from those given by maximal boxes
  of $\la$; see \cite[\S9.4]{fulton:young},
  \cite[\S3.1]{fulton.pragacz:schubert}, and
  \cite[Prop.~A1]{buch.kresch.ea:giambelli}. Let $\la^\vee = w_0^X.(\cP_X \ssm
  \la)$ be the Poincaré dual shape of $\la$; see
  \cite[\S2]{buch.samuel:k-theory}. Using that $X_{\la^\vee} = w_0.X^\la$, it
  suffices to show that $X_\al = w_0.X^{w_0^X.\al}$ for all $\al \in \cP_X$. The
  effect of applying $w_0$ is to replace $F_\bull$ with $E_\bull = w_0.F_\bull$,
  and the action of $w_0^X$ on $\cP_X$ is the order-reversing involution that
  rotates by 180 degrees in type A and reflects across a southwest-to-northeast
  diagonal in types B and C. In type A we have $\ur(w_0^X.\al) = N-\ur(\al)$,
  $\up(w_0^X.\al) = \dn(\al)$, and $m+N-\ur(\al)-\dn(\al) = N-\rt(\al)$, which
  implies
  \[
    \begin{split}
      w_0.X^{w_0^X.\al}
      &= \{ V \in X \mid \dim(V \cap E_{N-\ur(\al)}) \geq \dn(\al) \} \\
      &= \{ V \in X \mid \dim(V + E_{N-\ur(\al)}) \leq N-\rt(\al) \} \,.
    \end{split}
  \]
  In types B and C we have $(V \cap E_{\ur(\al)})^\perp = V^\perp +
  E_{N-\ur(\al)}$, $\ur(w_0^X.\al) = \ur(\al)$, and $\up(w_0^X.\al) = \rt(\al)$,
  which implies
  \[
    \begin{split}
    w_0.X^{w_0^X.\al}
    &= \{ V \in X \mid \dim(V \cap E_{\ur(\al)}) \geq \rt(\al) \} \\
    &= \{ V \in X \mid \dim(V^\perp + E_{N-\ur(\al)}) \leq N - \rt(\al) \} \,.
    \end{split}
  \]
  The proposition follows from this.
\end{proof}

Given a Richardson variety $X_\la^\mu \subset X$, we define the sets of integers
\[
  a^-(\mu) = \{ \up(\al) : \al \in \max(\mu) \}
  \text{ \ \ \ and \ \ \ }
  a^+(\la) = \{ N-\rt(\al) : \al \in \min(\cP_X\ssm\la) \} \,.
\]
Let $\Fl(a^-(\mu) \cup a^+(\la); N)$ be the variety of partial flags in $\bK^N$
of the indicated dimensions; this flag variety has Lie type A regardless of the
type of $X$. Let $\wh{X}_\la^\mu \subset \Fl(a^-(\mu) \cup a^+(\la); N)$ be the
Richardson variety of partial flags $V_\bull$ satisfying
\[
  \begin{split}
  V_{N-\rt(\al)} \supset E_{N-\ur(\al)}
  &\text{ \ \ \ for each minimal box $\al$ in $\cP_X \ssm \la$, and} \\
  V_{\up(\al)} \subset F_{\ur(\al)}
  &\text{ \ \ \ for each maximal box $\al$ in $\mu$.}
  \end{split}
\]
Let $\wt{X}_\la^\mu \subset \Fl(a^-(\mu) \cup \{m\} \cup a^+(\la); N)$ be the
subvariety of partial flags $V_\bull$ satisfying the above conditions defining
$\wh{X}_\la^\mu$, in addition to $V_m \in X$; for $X = \OG(m,2m+1)$ we further
require $V_m^\perp \subset V_{N-\rt(\al)}$ for $\al \in \min(\cP_X\ssm \la)$. We
prove in \Theorem{desing-grass} below that the projection $\wt{X}_\la^\mu \to
X_\la^\mu$ defined by $V_\bull \mapsto V_m$ is a resolution of singularities.

\begin{example}\label{example:resol1}%
  Let $X = \LG(9,18)$ and consider the Richardson variety $X_\la^\mu$ defined by
  the skew shape:
  \begin{center}
  \begin{tikzpicture}
    [x=.33cm,y=.33cm]
    \newcommand{\crn}{ -- ++(-1,0) -- ++(0,1)}
    \newcommand{\bxx}{ -- ++(0,1) -- ++(1,0) -- ++(0,-1) -- cycle}
    \begin{scope}
      \draw [lightgray!50, thin] (1,9) -- (1,8) -- (9,8);
      \draw [lightgray!50, thin] (2,9) -- (2,7) -- (9,7);
      \draw [lightgray!50, thin] (3,9) -- (3,6) -- (9,6);
      \draw [lightgray!50, thin] (4,9) -- (4,5) -- (9,5);
      \draw [lightgray!50, thin] (5,9) -- (5,4) -- (9,4);
      \draw [lightgray!50, thin] (6,9) -- (6,3) -- (9,3);
      \draw [lightgray!50, thin] (7,9) -- (7,2) -- (9,2);
      \draw [lightgray!50, thin] (8,9) -- (8,1) -- (9,1);
      \draw (9,9) -- (9,0) \crn\crn\crn\crn\crn\crn\crn\crn\crn -- cycle;
      \draw [very thick] (6,3) -- ++(2,0) -- ++(0,1) -- ++(1,0) -- ++(0,5)
      -- ++(-2,0) -- ++(0,-2) -- ++(-2,0) -- ++(0,-1) -- ++(-2,0);
      \node at (6.5,5) {$\la/\mu$};
    \end{scope}
  \end{tikzpicture}
  \end{center}
  Since $V^\perp = V$ holds for $V \in X$, we obtain
  \[
    \begin{split}
      X^\mu &= \{V \in X \mid \dim(V\cap F_4) \geq 2
      \text{ and } \dim(V\cap F_7) \geq 3\} \text{ \ \ and }
      \\
      X_\la &= \{V \in X \mid \dim(V+E_9)\leq 15
      \text{ and } \dim(V+E_{12})\leq 17\} \,.
    \end{split}
  \]
  The Richardson variety $\wh{X}_\la^\mu \subset \Fl(2,3,15,17;18)$ is defined
  by the conditions
  \[
    V_2 \subset F_4\,, \ \ V_3 \subset F_7\,, \ \ V_{15} \supset E_9\,, \ \
    V_{17} \supset E_{12} \,.
  \]
\end{example}

Using that $\wh{X}_\la^\mu$ is an intersection of non-singular opposite Schubert
varieties in $\Fl(a^-(\mu)\cup a^+(\la);N)$, it follows from Kleiman's
transversality theorem \cite{kleiman:transversality} that $\wh{X}_\la^\mu$ is
non-singular in characteristic zero. The same applies to $\wt{X}_\la^\mu$ in
type A. To prove that $\wh{X}_\la^\mu$ is non-singular in arbitrary
characteristic, we show that this variety can be constructed as a tower of
Grassmann bundles.

Choose $\al \in \max(\mu) \cup \min(\cP_X\ssm\la)$ with $\ur(\al)$ maximal. Let
$\mu' \subset \cP_X$ and $\la' \subset \cP_X$ be the unique straight shapes
defined by $\max(\mu') = \max(\mu) \ssm \{\al\}$ and $\min(\cP_X\ssm\la') =
\min(\cP_X\ssm\la) \ssm \{\al\}$. We have $\mu'=\mu$ or $\la'=\la$.

\begin{example}\label{example:resol2}%
  For the Richardson variety of \Example{resol1} we obtain the following box
  $\al$ and straight shapes $\mu' \subset \la' \subset \cP_X$. The boxes $\al'$
  and $\al''$ are used in the proof of \Lemma{Xhat-bundle}, and the shapes
  $\cP_Y$ and $\cP_Z$ are discussed in \Remark{resol-fibers}.\smallskip
  \begin{center}
  \begin{tikzpicture}[x=.33cm,y=.33cm]
    \newcommand{\crn}{ -- ++(-1,0) -- ++(0,1)}
    \newcommand{\bxx}{ -- ++(0,1) -- ++(1,0) -- ++(0,-1) -- cycle}
    \begin{scope}
      \draw (9,9) -- (9,0) \crn\crn\crn\crn\crn\crn\crn\crn\crn -- cycle;
      \draw [gray] (6,2) \bxx;
      \node at (6.5,2.5) {$\al$};
      \draw [very thick] (6,3) -- ++(2,0) -- ++(0,1) -- ++(1,0) -- ++(0,5)
      -- ++(-2,0) -- ++(0,-2) -- ++(-2,0) -- ++(0,-1) -- ++(-2,0);
      \node at (6.5,5) {$\la/\mu$};
    \end{scope}
    \begin{scope}[shift={(12,0)}]
      \draw (9,9) -- (9,0) \crn\crn\crn\crn\crn\crn\crn\crn\crn -- cycle;
      \draw [gray] (4,6) \bxx;
      \node at (4.5,6.55) {$\scriptstyle\al'$};
      \draw [gray] (8,3) \bxx;
      \node at (8.55,3.55) {$\scriptstyle{\al\!}''$};
      \draw [very thick] (8,1) -- ++(0,3) -- ++(1,0) -- ++(0,5)
      -- ++(-2,0) -- ++(0,-2) -- ++(-2,0) -- ++(0,-1) -- ++(-2,0);
      \node at (6.5,5) {$\la'/\mu'$};
    \end{scope}
    \begin{scope}[shift={(24,0)}]
      \draw (9,9) -- (9,0) \crn\crn\crn\crn\crn\crn\crn\crn\crn -- cycle;
      \draw [gray] (6,3) -- ++(2,0) -- ++(0,1) -- ++(1,0) -- ++(0,5)
      -- ++(-2,0) -- ++(0,-2) -- ++(-2,0) -- ++(0,-1) -- ++(-2,0);
      \draw [very thick] (5,6) -- ++(0,1) -- ++(2,0) -- ++(0,2) -- ++(2,0) --
      ++(0,-5) -- ++(-1,0) -- ++(0,-1) -- ++(-3,0) -- ++(0,1) \crn\crn -- cycle;
      \draw [very thick] (6,3) -- ++(0,3) -- ++(-3,0);
      \draw [very thick] (8,3) -- ++(0,3) -- ++(-2,0);
      \draw [very thick] (8,6) -- ++(0,1) -- ++(-2,0);
      \draw [very thick] (8,7) -- ++(1,0);
      \node at (7,4.8) {$\cP_Y$};
      \node at (5,4.8) {$\cP_Z$};
    \end{scope}
  \end{tikzpicture}
  \end{center}
\end{example}

\begin{lemma}\label{lemma:Xhat-bundle}%
  The natural projection $\wh{X}_\la^\mu \to \wh{X}_{\la'}^{\mu'}$ is a
  Grassmann bundle.
\end{lemma}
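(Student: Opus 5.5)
We need to show that forgetting the step $V_d$ corresponding to the box $\al$ gives a Grassmann bundle $\wh{X}_\la^\mu \to \wh{X}_{\la'}^{\mu'}$. There are two symmetric cases, $\al \in \max(\mu)$ (so $d = \up(\al)$ and the condition is $V_d \subset F_{\ur(\al)}$) and $\al \in \min(\cP_X\ssm\la)$ (so $d = N-\rt(\al)$ and the condition is $V_d \supset E_{N-\ur(\al)}$). The second case is dual to the first: replace subspaces by annihilators in $(\bK^N)^*$, which exchanges $E_\bull$ and $F_\bull$ and reverses dimensions. So I will treat the case $\al \in \max(\mu)$ in detail.

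First I will locate the neighbouring steps of the flag. Let $V_{d'}$ and $V_{d''}$ be the steps of $\Fl(a^-(\mu')\cup a^+(\la');N)$ immediately below and above dimension $d$, with $d' < d < d''$ (set $d'=0$ or $d''=N$ if none exists). These steps come from boxes $\al'$ and $\al''$ as in \Example{resol2}. On $\wh{X}_{\la'}^{\mu'}$, the fiber of the projection consists of the $d$-dimensional subspaces $V_d$ with
\[
  V_{d'} \subset V_d \subset V_{d''} \cap F_{\ur(\al)} .
\]
This is the Grassmannian of $(d-d')$-planes in $(V_{d''}\cap F_{\ur(\al)})/V_{d'}$. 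Here I need two facts: $V_{d'} \subset F_{\ur(\al)}$ holds automatically on the base, and $V_{d''}\cap F_{\ur(\al)}$ has constant dimension over the whole base. Once both hold, $V_{d''}\cap F_{\ur(\al)}$ is a subbundle of the trivial bundle. The space $\wh{X}_\la^\mu$ is then identified with the Grassmann bundle of that quotient bundle, since both are reduced and the map is the evident one.

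To verify these facts I will use the maximality of $\ur(\al)$ together with the shape combinatorics.
\begin{itemize}
  \item If $\al'$ is a maximal box of $\mu$, it lies strictly northeast of $\al$ along the staircase of corners. Then $\up(\al') < \up(\al)$ and $\ur(\al') \le \ur(\al)$, so $V_{d'} \subset F_{\ur(\al')} \subset F_{\ur(\al)}$.
  \item If $d'$ comes from a box of $\min(\cP_X\ssm\la)$, I need to check that this cannot happen below $d$ unless it is forced by the geometry (that is, that $N - \rt(\cdot) \ge \up(\al)$), using that $\mu \subset \la$.
  \item For the upper step there are two possibilities. If $V_{d''} \supset E_{N-\ur(\al'')}$ with $\ur(\al'') \le \ur(\al)$, then $E_{N-\ur(\al'')} + F_{\ur(\al)} = \bK^N$. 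Hence
  \[
    \dim(V_{d''}\cap F_{\ur(\al)}) = d'' + \ur(\al) - N ,
  \]
  which is constant. If instead $V_{d''} \subset F_{\ur(\al'')}$ with $\ur(\al'') \le \ur(\al)$, then $V_{d''}\subset F_{\ur(\al)}$ and the intersection is all of $V_{d''}$.
\end{itemize}
I must also check that the resulting dimension is at least $d$, so that the fiber is nonempty. This should follow from $\mu \subset \la$, i.e.\ from comparing the positions of $\al$ and $\al''$.

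I expect the main obstacle to be this combinatorial bookkeeping. I must show, in every configuration of the neighbouring corners $\al', \al''$ relative to $\al$, that the inequalities $\ur(\al')\le\ur(\al)$ and $\ur(\al'')\le\ur(\al)$ together with $E_a+F_b=\bK^N$ (for $a+b\ge N$) make the relevant intersection transverse. The inequalities come from maximality of $\ur(\al)$, and the transversality is a dimension count. The count must be carried out uniformly in types A, B, C, using the formulas $\up+\dn=m+1$ and $\lf+\rt$ given above. With that check done, the claim is that locally trivial Grassmann bundle structure, obtained by the standard construction $\Gr(d-d', \mathcal{Q})$ for a vector bundle $\mathcal{Q}$.
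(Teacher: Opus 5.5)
Your proposal is correct and follows the paper's proof. In the case $\al\in\max(\mu)$, the paper likewise takes the neighbouring steps $V_{\up(\al')}$ and $V_{N-\rt(\al'')}$, where $\al'$ and $\al''$ are the corners of maximal $\ur$ in $\max(\mu')$ and $\min(\cP_X\ssm\la)$. It uses $\ur(\al'')\le\ur(\al)$ to get $E_{N-\ur(\al'')}+F_{\ur(\al)}=\bK^N$, and writes the fiber as $\Gr\bigl(\up(\al)-\up(\al'),(\cV_{N-\rt(\al'')}\cap F_{\ur(\al)})/\cV_{\up(\al')}\bigr)$.

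There are two differences. First, the paper treats $\al\in\min(\cP_X\ssm\la)$ directly, using $\cV_{\up(\al')}\cap E_{N-\ur(\al)}=0$, whereas you use annihilator duality; your reduction is valid because $\ur=\up+\rt-1$ is symmetric. Second, the bookkeeping you flag is immediate: every dimension in $a^-(\mu)$ is at most $m$ and every dimension in $a^+(\la)$ is at least $m$, so the lower neighbour never comes from $\min(\cP_X\ssm\la)$. No type-specific count is needed.
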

\begin{proof}
  Choose $\al' \in \max(\mu')$ with $\ur(\al')$ maximal, and choose $\al'' \in
  \min(\cP_X \ssm \la')$ with $\ur(\al'')$ maximal. If $\mu'$ is empty, so that
  $\al'$ does not exist, we use the notation $\up(\al') = \ur(\al') = 0$.
  Similarly, we write $\rt(\al'') = \ur(\al'') = 0$ when $\la' = \cP_X$. Let
  $\cV_\bull$ denote the tautological flag on $\wh{X}_{\la'}^{\mu'}$.

  Assume first that $\al \in \mu$. Given a point $V_\bull \in
  \wh{X}_{\la'}^{\mu'} = \wh{X}_\la^{\mu'}$, points in the fiber over $V_\bull$
  are obtained by adjoining a subspace $V_{\up(\al)}$ of dimension $\up(\al)$ to
  $V_\bull$ satisfying $V_{\up(\al')} \subset V_{\up(\al)} \subset
  V_{N-\rt(\al'')} \cap F_{\ur(\al)}$. By our choice of $\al$, we have
  $V_{N-\rt(\al'')} + F_{\ur(\al)} \supset E_{N-\ur(\al'')} + F_{\ur(\al)} =
  \bK^N$, which implies that $\cV_{N-\rt(\al'')} \cap F_{\ur(\al)}$ is a
  well-defined vector subbundle of $\wh{X}_\la^{\mu'} \times \bK^N$ of rank
  $\ur(\al)-\rt(\al'')$. We obtain
  \[
    \wh{X}_\la^\mu \cong
    \Gr\left( \up(\al)-\up(\al'),
    (\cV_{N-\rt(\al'')} \cap F_{\ur(\al)} ) / \cV_{\up(\al')} \right)
    \to \wh{X}_\la^{\mu'} \,.
  \]

  Otherwise we have $\al \in \cP_X \ssm \la$, and points in the fiber over
  $V_\bull \in \wh{X}_{\la'}^{\mu'} = \wh{X}_{\la'}^\mu$ are obtained by
  adjoining a subspace $V_{N-\rt(\al)}$ of dimension $N-\rt(\al)$ to $V_\bull$
  satisfying $V_{\up(\al')} + E_{N-\ur(\al)} \subset V_{N-\rt(\al)} \subset
  V_{N-\rt(\al'')}$. By our choice of $\al$, we have $V_{\up(\al')} \cap
  E_{N-\ur(\al)} \subset F_{\ur(\al')} \cap E_{N-\ur(\al)} = 0$, which implies
  that $\cV_{\up(\al')} \oplus E_{N-\ur(\al)}$ is a vector subbundle of
  $\wh{X}_{\la'}^\mu \times \bK^N$. Using that $\ur(\al)-\rt(\al) = \up(\al)-1$,
  we obtain
  \[
    \wh{X}_\la^\mu \cong
    \Gr\left( \up(\al)-\up(\al')-1,
    \cV_{N-\rt(\al'')} / (\cV_{\up(\al')} \oplus E_{N-\ur(\al)}) \right)
    \to \wh{X}_{\la'}^\mu \,.
  \]
  This completes the proof.
\end{proof}

\begin{remark}\label{remark:resol-fibers}%
  \def\resol#1{[#1]}%
  Let $\resol{\la/\mu}$ denote the subset of boxes in $\la/\mu$ that are in the
  same row as a box from $\mu$ or in the same column as a box from
  $\cP_X\ssm\la$. It follows from the proof of \Lemma{Xhat-bundle} that
  $\dim(\wh{X}_\la^\mu) = \# \resol{\la/\mu}$. In fact, $\resol{\la/\mu} \ssm
  \resol{\la'/\mu'}$ is a rectangle that can be identified with $\cP_Y$, where
  $Y$ is the fiber of the projection $\wh{X}_\la^\mu \to \wh{X}_{\la'}^{\mu'}$.
  Moreover, the shape $\la/\mu \ssm \resol{\la/\mu}$ is either a rectangle or
  an upper-triangular shape. The proof of \Lemma{Xtilde-bundle} shows that this
  shape can be identified with $\cP_Z$, where $Z$ is the fiber of the projection
  $\wt{X}_\la^\mu \to \wh{X}_\la^\mu$. The desingularization $\wt{X}_\la^\mu$ of
  the Richardson variety from \Example{resol1} is a tower of four Grassmann
  bundles followed by one Lagrangian Grassmannian bundle, where the fibers
  correspond to the rectangular and upper-triangular shapes in the last diagram
  of \Example{resol2}.
\end{remark}

\begin{lemma}\label{lemma:Xtilde-bundle}%
  The natural projection $\wt{X}_\la^\mu \to \wh{X}_\la^\mu$ is a Grassmann
  bundle when $X = \Gr(m,N)$ has type A, is a Lagrangian Grassmannian bundle
  when $X = \LG(m,2m)$ has type C, and is a locally trivial bundle of maximal
  orthogonal Grassmannians when $X = \OG(m,2m+1)$ has type B.
\end{lemma}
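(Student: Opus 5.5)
The plan is to describe the fiber of $\wt{X}_\la^\mu \to \wh{X}_\la^\mu$ explicitly as the variety of subspaces $V_m$ squeezed between two tautological subbundles. Then we show that these subbundles have constant rank and, in types B and C, span an isotropic subbundle.

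Let $\cV_\bull$ be the tautological flag on $\wh{X}_\la^\mu$. Set $A = \cV_{a}$ with $a = \max a^-(\mu)$, or $A = 0$ if $\mu = \emptyset$. Set $B = \cV_b$ with $b = \min a^+(\la)$, or $B = \bK^N$ if $\la = \cP_X$. Since every element of $a^-(\mu)$ is at most $m$ and every element of $a^+(\la)$ is at least $m$, a point of the fiber over $V_\bull$ is a subspace $V_m$ with $A \subset V_m \subset B$ and $V_m \in X$. In type B we also require $V_m^\perp \subset B$.

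\emph{Type A.} The fiber is $\Gr(m - \dim A, B/A)$, so the projection is the Grassmann bundle $\Gr(m-a, B/A)$.

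\emph{Types B and C.} For $V_m \in X$ maximal isotropic, the condition $V_m \subset B$ in type C, and $V_m^\perp \subset B$ in type B, is equivalent to $B^\perp \subset V_m$. So the fiber consists of the maximal isotropic $V_m$ containing $A + B^\perp$. We need three facts.
\begin{itemize}
\item $A$ is isotropic. If $\al$ is the maximal box of $\mu$ with $\up(\al) = a$, then $A \subset F_{\ur(\al)}$. For a box in the upper-triangular shape $\cP_X$ we have $\lf(\al) \geq \up(\al)$, hence $\ur(\al) \leq m$, and $F_{\ur(\al)}$ is isotropic.
\item $B^\perp$ is isotropic. Dually, $B \supset E_{N-\ur(\al'')}$ gives $B^\perp \subset E_{\ur(\al'')}$ with $\ur(\al'') \leq m$.
\item $A$ and $B^\perp$ are orthogonal, because $A \subset B$.
\end{itemize}
Hence $A + B^\perp$ is isotropic. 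Moreover $A \cap B^\perp \subset F_{\ur(\al)} \cap E_{\ur(\al'')} = 0$, since $\ur(\al) + \ur(\al'') \leq 2m \leq N$. Therefore $K = A \oplus B^\perp$ is an isotropic subbundle of constant rank $k$, and the fiber is the maximal isotropic Grassmannian of the bundle $K^\perp/K$ with its induced nondegenerate form. This form is symplectic of rank $2(m-k)$ in type C, and symmetric of rank $2(m-k)+1$ in type B. I must still check that the maximal box of $\mu$ with the largest $\up$-value is the one controlling $A$ (similarly for $B$); this is where the choice of $a$ and $b$ enters.

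\emph{Local triviality.} In type C, a symplectic vector bundle is Zariski-locally isomorphic to the standard one, because one can locally choose a Darboux frame. Hence we get a Lagrangian Grassmannian bundle. In type B, with $\chr(\bK) \neq 2$, the main obstacle is to trivialize the odd-rank nondegenerate quadratic bundle $K^\perp/K$ locally. I would first try to build local hyperbolic frames inductively: near a point, choose a local section $s$ with $q(s)$ invertible, split off $\langle s\rangle$, and then split off hyperbolic planes using isotropic sections. Isotropic sections exist locally after an \'etale extension, or Zariski-locally using the flag $E_\bull$, $F_\bull$ structure. If this fails in the Zariski topology, I would settle for \'etale local triviality, which suffices for smoothness and for the later Picard computations.

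Finally, I would confirm the fiber dimension $\binom{m-k+1}{2}$ in type C and $\binom{m-k}{2}$ in type B, as a consistency check with \Remark{resol-fibers}.
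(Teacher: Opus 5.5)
You correctly identify the fiber as the maximal isotropic subspaces containing $K = A \oplus B^\perp$, with $K^\perp/K$ nondegenerate. Your $A$ and $B$ are the paper's $\cV_{\up(\al')}$ and $\cV_{N-\rt(\al'')}$, because among the maximal boxes of $\mu$ the one with the largest $\up$ also has the largest $\ur$. The check you postpone is therefore immediate: $A \subset F_{\ur(\al)}$ is one of the defining conditions of $\wh{X}_\la^\mu$. Your isotropy arguments and your treatment of types A and C agree with the paper.

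The gap is local triviality in type B, which is the only part of the lemma that needs a real idea. Your inductive plan needs isotropic sections of $K^\perp/K$ on a Zariski neighborhood. For a general nondegenerate quadratic bundle these need not exist, since the form can fail to be split over the function field of the base. So that step cannot be carried out without using the specific geometry here.

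The \'etale fallback does not prove the lemma as stated, and your claim that it suffices for the later Picard computations is unjustified. The proof of \Proposition{int-inj} uses a Zariski open $U$ with $\rho^{-1}(U) \cong U \times F$. For fibrations that are only \'etale-locally trivial, the restriction $\Pic(Z) \to \Pic(F)$ need not be surjective in general (e.g.\ nontrivial Severi--Brauer schemes).

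The missing step is the one you mention only in passing. Since $A \subset F_m$, we have $F_{m+1} = F_m^\perp \subset A^\perp$. Since $B \supset E_{N-\ur(\al'')} \supset E_{m+1}$, this gives
\[
  K^\perp = A^\perp \cap B = (E_m \cap A^\perp) \oplus \bK e_{m+1} \oplus (F_m \cap B) \,,
\]
and, using $B^\perp \subset E_m \cap A^\perp$ and $A \subset F_m \cap B$,
\[
  K^\perp/K = (E_m \cap A^\perp)/B^\perp \,\oplus\, \bK e_{m+1} \,\oplus\, (F_m \cap B)/A \,.
\]
The two outer summands are isotropic subbundles of constant rank. They are orthogonal to $e_{m+1}$ and put in perfect duality by $\om$, and $\om(e_{m+1},e_{m+1}) = 1$. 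Take a local frame of $(F_m \cap B)/A$, the dual frame of $(E_m \cap A^\perp)/B^\perp$, and $e_{m+1}$. This gives a standard basis on a Zariski open set, hence local triviality with fiber $\OG(r,2r+1)$, $r = m-k$.

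Separately, your proposed dimension check would fail in type B. We have $\dim \OG(r,2r+1) = \binom{r+1}{2}$, so the fiber has dimension $\binom{m-k+1}{2}$ in both types B and C, matching the upper-triangular shape $\cP_Z$ of \Remark{resol-fibers}, not $\binom{m-k}{2}$.
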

\begin{proof}
  Choose $\al' \in \max(\mu)$ with $\ur(\al')$ maximal, and choose $\al'' \in
  \min(\cP_X \ssm \la)$ with $\ur(\al'')$ maximal. Let $\cV_\bull$ be the
  tautological flag on $\wh{X}_\la^\mu$. Points in the fiber over $V_\bull \in
  \wh{X}_\la^\mu$ are obtained by adjoining a subspace $V_m$ of dimension $m$ to
  $V_\bull$ satisfying $V_{\up(\al')} \subset V_m \subset V_m^\perp \subset
  V_{N-\rt(\al'')}$. In type A we obtain
  \[
    \wt{X}_\la^\mu \cong \Gr\left(m-\up(\al'),
    \cV_{N-\rt(\al'')}/\cV_{\up(\al')}\right) \to \wh{X}_\la^\mu \,.
  \]

  Assume that $X$ has type B or C. For $V_\bull \in \wh{X}_\la^\mu$ we have
  $V_{\up(\al')} \subset F_{\ur(\al')}$ and $V_{N-\rt(\al'')}^\perp \subset
  E_{N-\ur(\al'')}^\perp = E_{\ur(\al'')}$. It follows that $V_{\up(\al')}$ and
  $V_{N-\rt(\al'')}^\perp$ are isotropic subspaces of $\bK^N$, and also that
  $V_{\up(\al')} \cap V_{N-\rt(\al'')}^\perp = 0$. Therefore, $V_{\up(\al')} +
  V_{N-\rt(\al'')}^\perp = V_{\up(\al')} \oplus V_{N-\rt(\al'')}^\perp$ is a
  direct sum in $\bK^N$. Since $\up(\al') \leq m \leq N-\rt(\al'')$, we obtain
  $V_{\up(\al')} \subset V_{N-\rt(\al'')}$ and $V_{N-\rt(\al'')}^\perp \subset
  V_{\up(\al')}^\perp$. It follows that $V_{\up(\al')} + V_{N-\rt(\al'')}^\perp
  \subset V_{\up(\al')}^\perp \cap V_{N-\rt(\al'')} = (V_{\up(\al')} +
  V_{N-\rt(\al'')}^\perp)^\perp$. The form on $\bK^N$ therefore defines a
  non-degenerate bilinear form on the vector bundle
  \[
    \cH = (\cV_{\up(\al')}^\perp \cap \cV_{N-\rt(\al'')}) /
    (\cV_{\up(\al')} + \cV_{N-\rt(\al'')}^\perp) \,.
  \]
  When $X = \LG(m,2m)$ has type C, $\wt{X}_\la^\mu$ is the Lagrangian
  Grassmannian bundle
  \[
    \wt{X}_\la^\mu = \LG(r, \cH) \to \wh{X}_\la^\mu \,,
  \]
  where $r = m-\up(\al')-\rt(\al'')$. This bundle is automatically locally
  trivial, as any symplectic vector bundle has a local basis $\{b_1, \dots,
  b_{2r}\}$ satisfying $\om(b_i,b_j) = \delta_{i+j,2r+1}$ for $1 \leq i \leq j
  \leq 2r$.

  Assume $X = \OG(m,2m+1)$ has type B. Using that $F_{m+1} \subset
  F_{\ur(\al')}^\perp \subset V_{\up(\al')}^\perp$ and $E_{m+1} \subset
  E_{N-\ur(\al'')} \subset V_{N-\rt(\al'')}$, it follows that
  \[
    V_{\up(\al')}^\perp
    \cap V_{N-\rt(\al'')} = (E_m \cap V_{\up(\al')}^\perp) \oplus \bK e_{m+1}
    \oplus (F_m \cap V_{N-\rt(\al'')}) \,,
  \]
  hence $\cH$ factors as the direct sum
  \[
    \cH =
    (E_m \cap \cV_{\up(\al')}^\perp)/\cV_{N-\rt(\al'')}^\perp
    \oplus \bK e_{m+1} \oplus
    (F_m \cap \cV_{N-\rt(\al'')})/\cV_{\up(\al')} \,.
  \]
  Each point $V_\bull \in \wh{X}_\la^\mu$ therefore has an open neighborhood
  on which the orthogonal bundle $\cH$ can be trivialized; that is, $\cH$ has a
  local basis of sections $\{b_1, \dots, b_{2r+1}\}$ satisfying $\om(b_i,b_j) =
  \delta_{i+j,2r+2}$ for $1 \leq i, j \leq 2r+1$, where $r =
  m-\up(\al')-\rt(\al'')$. We deduce that $\wt{X}_\la^\mu \to \wh{X}_\la^\mu$ is
  a locally trivial bundle with fiber $\OG(r,2r+1)$.
\end{proof}

\begin{thm}\label{thm:desing-grass}%
  The map $\wt{X}_\la^\mu \to X_\la^\mu$ defined by $V_\bull \mapsto V_m$ is a
  desingularization.
\end{thm}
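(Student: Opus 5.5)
We need three things: $\wt{X}_\la^\mu$ is non-singular and irreducible, the map $\pi : V_\bull \mapsto V_m$ lands in $X_\la^\mu$, and $\pi$ is birational onto $X_\la^\mu$ (properness is automatic, since $\wt{X}_\la^\mu$ is closed in a projective flag variety).

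\textbf{Smoothness and irreducibility.} Apply \Lemma{Xhat-bundle} repeatedly, removing one box $\al$ from $\max(\mu)\cup\min(\cP_X\ssm\la)$ at a time. This exhibits $\wh{X}_\la^\mu$ as a tower of Grassmann bundles over a point. The base case is $\mu=\emptyset$, $\la=\cP_X$, where the flag has no steps. Then \Lemma{Xtilde-bundle} shows that $\wt{X}_\la^\mu \to \wh{X}_\la^\mu$ is a locally trivial bundle whose fiber is a Grassmannian, a Lagrangian Grassmannian, or $\OG(r,2r+1)$. Each of these fibers is smooth and irreducible. Hence $\wt{X}_\la^\mu$ is a smooth irreducible projective variety in every characteristic allowed here.

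\textbf{Image of $\pi$.} Let $V_\bull \in \wt{X}_\la^\mu$ and $V = V_m$.
\begin{itemize}
\item For a maximal box $\al$ of $\mu$ we have $V_{\up(\al)} \subset V \cap F_{\ur(\al)}$, so $\dim(V\cap F_{\ur(\al)}) \ge \up(\al)$.
\item For a minimal box $\al$ of $\cP_X\ssm\la$ we have $V^\perp + E_{N-\ur(\al)} \subset V_{N-\rt(\al)}$, so $\dim(V^\perp+E_{N-\ur(\al)}) \le N-\rt(\al)$. In type A this uses $V^\perp = V \subset V_{N-\rt(\al)}$. In type C it uses $V=V^\perp$. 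In type B it uses the extra imposed condition $V^\perp\subset V_{N-\rt(\al)}$.
\end{itemize}
By \Proposition{schubvar}, checking only these extremal boxes suffices, so $\pi(\wt{X}_\la^\mu)\subset X_\la^\mu$.

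\textbf{Birationality.} This is the main step, and the approach is a comparison on an open set. Take the open dense subset $U \subset X_\la^\mu$ of points $V$ where every defining condition holds with equality:
\[
\dim(V\cap F_{\ur(\al)}) = \up(\al) \quad\text{and}\quad \dim(V^\perp+E_{N-\ur(\al)}) = N-\rt(\al)
\]
for all relevant $\al$. Equivalently, $U$ avoids the finitely many Richardson divisors obtained by enlarging $\mu$ or shrinking $\la$ by one box. This set is open, and it is nonempty because it contains the open Richardson cell $\oX_\la^\mu$, on which no stronger Schubert condition holds.

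Over $U$ the fiber of $\pi$ is a single point. The only choices are forced: $V_{\up(\al)} = V\cap F_{\ur(\al)}$ and $V_{N-\rt(\al)} = V^\perp+E_{N-\ur(\al)}$. These subspaces are nested as required because $\ur$ is monotone along $\max(\mu)$ and along $\min(\cP_X\ssm\la)$. The map $U\to\wt{X}_\la^\mu$ sending $V$ to these subspaces is a morphism, since the intersections and sums have constant rank over $U$, and it is inverse to $\pi$ over $U$.

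Since $\wt{X}_\la^\mu$ is irreducible and $\pi$ is proper with $\pi^{-1}(U)\cong U$, the map $\pi$ is surjective and birational. Here $\pi^{-1}(U)$ is nonempty open, hence dense. The remaining point to double-check carefully is the nestedness and compatibility of the forced subspaces with the flag dimensions in the mixed case where $\max(\mu)$ and $\min(\cP_X\ssm\la)$ interleave. Alternatively, one can compare dimensions: by \Remark{resol-fibers}, the tower fibers have total dimension $|\la/\mu| = \dim X_\la^\mu$, and then injectivity on $U$ gives generic degree one.
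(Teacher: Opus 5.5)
Your proof is correct and follows the paper's argument. Smoothness and irreducibility come from iterating \Lemma{Xhat-bundle} and then applying \Lemma{Xtilde-bundle}, and birationality comes from the forced inverse $V \mapsto \bigl(V\cap F_{\ur(\al)},\, V,\, V^\perp + E_{N-\ur(\al)}\bigr)$; the paper writes this down directly on $\oX_\la^\mu$ rather than on your larger open set $U$.

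Some small corrections:
\begin{itemize}
\item The ``interleaving'' case you flagged does not arise. The steps indexed by $\max(\mu)$ have dimension $\up(\al) \le m$ and lie in $V$, while the steps indexed by $\min(\cP_X\ssm\la)$ have dimension $N-\rt(\al)\ge m$ and contain $V^\perp \supset V$.
\item Your ``equivalently'' description of $U$ is not an equivalence. What holds is the inclusion $\oX_\la^\mu \subset U$, and that is all your argument uses.
\item Your alternative dimension-count route would additionally need separability in positive characteristic, since a bijective morphism can be purely inseparable.
\end{itemize}
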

\begin{proof}
  It follows from \Lemma{Xhat-bundle} and \Lemma{Xtilde-bundle} that
  $\wt{X}_\la^\mu$ is non-singular and irreducible. We must show that the
  projection $\pi : \wt{X}_\la^\mu \to X_\la^\mu$ is birational. The definition
  of $\wt{X}_\la^\mu$ implies that $V_m \in X_\la^\mu$ for each $V_\bull \in
  \wt{X}_\la^\mu$, so $\pi$ is a well-defined morphism of varieties. For each
  point $V$ in the open Richardson variety $\oX_\la^\mu = B w_\la.P_X \cap B^-
  w_\mu.P_X$, we have $\dim(V \cap F_{\ur(\al)}) = \up(\al)$ for $\al \in
  \max(\mu)$, and $\dim(V^\perp + E_{N-\ur(\al)}) = N-\rt(\al)$ for $\al \in
  \min(\cP_X \ssm \la)$. We therefore obtain a point $V_\bull \in
  \wt{X}_\la^\mu$ by setting $V_{\up(\al)} = V \cap F_{\ur(\al)}$ for $\al \in
  \max(\mu)$, $V_m = V$, and $V_{N-\rt(\al)} = V^\perp + E_{N-\ur(\al)}$ for
  $\al \in \min(\cP_X \ssm \la)$. The assignment $V \mapsto V_\bull$ defines a
  morphism $\oX_\la^\mu \to \wt{X}_\la^\mu$ that is inverse to $\pi$.
\end{proof}

When $X$ has type A or C, it follows immediately from our construction that the
degree map $\Pic(\wt{X}_\la^\mu) \to A_1(\wt{X}_\la^\mu)^*$ is injective,
thanks to the explicit descriptions of the Chow rings of Grassmann bundles
\cite[Prop.~14.6.3]{fulton:intersection} and Lagrangian Grassmannian bundles
\cite[Prop.~13.4.1]{anderson.fulton:equivariant}. Analogous results for bundles
of orthogonal Grassmannians seem to be missing from the literature (see
\cite{edidin.graham:characteristic} for some related results). To establish
\Theorem{comin-desing} for cominuscule Richardson varieties of classical Lie
types, we use the following result.

\begin{prop}\label{prop:int-inj}%
  Let $\rho : Z \to Y$ be a locally trivial fibration of non-singular projective
  varieties with fiber $F$. Assume that $Y$ is rational and the degree maps
  $\Pic(Y) \to A_1(Y)^*$ and $\Pic(F) \to A_1(F)^*$ are injective. Then the
  degree map $\Pic(Z) \to A_1(Z)^*$ is also injective. Moreover, we have an
  exact sequence
  \[
    0 \to \Pic(Y) \xrightarrow{\ \rho^*} \Pic(Z) \xrightarrow{\ i^*}
    \Pic(F) \to 0 \,,
  \]
  where $i : F = \rho^{-1}(y_0) \subset Z$ is the inclusion of the fiber over a
  general point $y_0 \in Y$.
\end{prop}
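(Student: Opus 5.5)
Throughout, let $y_0 \in Y$ be a general point, $F = \rho^{-1}(y_0)$, and $i : F \subset Z$ the inclusion. The plan has three parts: construct the exact sequence, deduce injectivity of the degree map on $Z$ from it, and note where the hypotheses are used.

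\emph{The sequence.} Injectivity of $\rho^*$ and exactness at $\Pic(Z)$ should come from a seesaw-type argument. Let $\cL$ be a line bundle on $Z$ with $i^*\cL$ trivial. Since $\Pic(F) \to A_1(F)^*$ is injective, triviality of $\cL$ on $\rho^{-1}(y)$ is detected by degrees on curves. These degrees are locally constant in $y$, because $\rho$ is locally trivial and the Chow classes of curves in nearby fibers are identified through local trivializations. Hence $\cL$ is trivial on every fiber. For the same reason $H^0(F,\cO_F)=\bK$ (as $F$ is projective and connected, being a fiber over a connected base, or we assume $F$ irreducible), so $\rho_*\cL$ is a line bundle $\cM$ on $Y$ and $\cL \cong \rho^*\cM$ by cohomology and base change. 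Applying this to $\cL = \rho^*\cM$, and using $\rho_*\cO_Z = \cO_Y$, gives $\rho_*\rho^*\cM = \cM$, so $\rho^*$ is injective.

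\emph{Surjectivity of $i^*$.} This is the main obstacle. A line bundle on $F$ extends over $\rho^{-1}(U) \cong U \times F$ for a trivializing open set $U$, namely as $\mathrm{pr}_F^*$ of it. Since $Z$ is non-singular, we have $\Pic(Z) = \Cl(Z)$, so a divisor $D$ on $U \times F$ extends to $Z$ by taking its closure. Its restriction to $F$ is the original divisor, which gives surjectivity. I expect the rationality of $Y$ to enter here: $Y$ is rational, so one can work over an open set $U$ with $\Pic(U)$ controlled. Closure of divisors suffices even without this, but rationality is the natural hypothesis to invoke if needed.

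\emph{Injectivity of the degree map on $Z$.} Suppose $\cL \in \Pic(Z)$ has degree zero on every curve in $Z$. Curves in $F$ then show that $i^*\cL$ has degree zero on all curves of $F$, so $i^*\cL$ is trivial by the hypothesis on $F$. By exactness, $\cL = \rho^*\cM$. For any curve $C \subset Y$, choose a curve $C' \subset Z$ mapping onto $C$ with degree $d>0$. Then by the projection formula $\deg(\cL|_{C'}) = d\deg(\cM|_C) = 0$, so $\cM$ has degree zero on every curve of $Y$. Hence $\cM$ is trivial by the hypothesis on $Y$, and therefore so is $\cL$. To obtain the curve $C'$, take a multisection: cut $\rho^{-1}(C)$ by general hyperplane sections of $Z$ to get a curve dominating $C$.

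In summary, the delicate points are the local constancy of the fiber degrees and the surjectivity of $i^*$; both are handled through local triviality of $\rho$ and non-singularity of $Z$.
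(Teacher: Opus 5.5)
Your proof is correct, but it uses the hypotheses in different places than the paper does.

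\textbf{The paper's route.} It uses rationality of $Y$ to take the trivializing neighborhood $U \ni y_0$ to be an open subset of affine space. Then $\Pic(U)=0$ and $i^*:\Pic(\rho^{-1}(U))\to\Pic(F)$ is an isomorphism. Comparing the localization sequences of $Y\supset U$ and $Z\supset\rho^{-1}(U)$ shows at once that the image of $\rho^*$ is the kernel of $i^*$ and that $i^*$ is surjective; here flat pullback identifies the divisor groups supported on the complements. Injectivity of $\rho^*$ and of the degree map of $Z$ then comes from a diagram chase, using only that $\rho_*:A_1(Z)\to A_1(Y)$ is surjective.

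\textbf{Your route.} You get exactness at $\Pic(Z)$ from the degree hypothesis on $F$ together with the seesaw principle. You get surjectivity of $i^*$ by extending $\mathrm{pr}_F^*$ of a divisor by closure, which is essentially the right end of the paper's localization sequence. You get injectivity of $\rho^*$ from $\rho_*\cO_Z=\cO_Y$. Your final chase is the paper's, with multisections in place of surjectivity of $\rho_*$.

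\textbf{Trade-off.} The paper's argument is purely Chow-theoretic and needs the degree hypotheses only at the end. Yours relies on cohomology and base change and uses the hypothesis on $F$ already for exactness. However, contrary to your expectation, your argument never uses that $Y$ is rational, so it proves a slightly more general statement.

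\textbf{Two justifications to tighten.}
\begin{itemize}
\item Constancy of $\deg(\cL|_{\{y\}\times C})$ for $y\in U$ follows from flatness of $U\times C\to U$. It does not follow from equality of the classes $[\{y\}\times C]$ in $A_1(Z)$, which you do not need and which can fail if one does not use rationality of $Y$.
\item $H^0(F,\cO_F)=\bK$ holds because $F$ is irreducible, since $U\times F$ is a dense open subset of the irreducible variety $Z$. Being a fiber over a connected base is not the reason.
\end{itemize}
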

\begin{proof}
  Let $U \subset Y$ be an open neighborhood of $y_0$ such that $\rho^{-1}(U)
  \cong U \times F$, with $\rho$ identified with the first projection. Since $Y$
  is rational, we may assume that $U$ is isomorphic to an open subset of affine
  space. Set $d = \dim(Y)$ and $e = \dim(Z)$. The rows of the following
  commutative diagram are exact by \cite[Prop.~1.8]{fulton:intersection}.
  \[
    \xymatrix{
      A_{e-1}(\rho^{-1}(Y\ssm U)) \ar[r] &
      \Pic(Z) \ar[r] & \Pic(\rho^{-1}(U)) \ar[r] & 0
      \\
      A_{d-1}(Y\ssm U) \ar[u]^{\rho^*}_{\cong} \ar[r] &
      \Pic(Y) \ar[u]^{\rho^*} \ar[r] & \Pic(U) = 0
    }
  \]
  This diagram implies that the image of $\rho^* : \Pic(Y) \to \Pic(Z)$ is the
  kernel of the restriction map $\Pic(Z) \to \Pic(\rho^{-1}(U))$. Using that
  $i^* : \Pic(\rho^{-1}(U)) \to \Pic(F)$ is an isomorphism, we deduce that the
  top row of the following commutative diagram is right exact.
  \[
    \xymatrix{
      0 \ar[r] &
      \Pic(Y) \ar[r]^{\rho^*} \ar@{^{(}->}[d] &
      \Pic(Z)\, \ar[r]^{i^*} \ar[d] &
      \Pic(F) \ar@{^{(}->}[d] \ar[r] & 0
      \\
      &
      A_1(Y)^* \ar[r]^{(\rho_*)^*}_\subset &
      A_1(Z)^* \ar[r]^{(i_*)^*} &
      A_1(F)^*
    }
  \]
  Since $\rho_* : A_1(Z) \to A_1(Y)$ is surjective, it follows that $(\rho_*)^*
  : A_1(Y)^* \to A_1(Z)^*$ and $\rho^* : \Pic(Y) \to \Pic(Z)$ are injective.
  This establishes the short exact sequence of the proposition. Finally, the
  injectivity of the map $\Pic(Z) \to A_1(Z)^*$ follows from the second diagram.
\end{proof}

\ignore{
\begin{remark}
  Let $\emptyset \neq U \subset \bA^n$ be open, and let $F$ be an irreducible
  variety of dimension $d$. The regular embedding $i : F \to U \times F$ and the
  open inclusion $j : U \times F \to \bA^n \times F$ define group homomorphisms
  $i^* : A_{n+d-1}(F \times U) \to A_{d-1}(F)$ and $j^* : A_{n+d-1}(F \times
  \bA^n) \to A_{n+d-1}(F \times U)$. Since $i^* j^*$ is an isomorphism and $j^*$
  is surjective, it follows that $i^*$ is an isomorphism.
\end{remark}
}

\begin{cor}\label{cor:desing-classical}%
  Let $X_\theta$ be a cominuscule Richardson variety whose indecomposable
  factors have classical Lie types A, B, C, and D. There exists a
  desingularization $\wt{X_\theta} \to X_\theta$ such that $\wt{X_\theta}$ is a
  tower of locally trivial bundles, where the fibers are Grassmannians of type
  A, Lagrangian Grassmannians, or maximal orthogonal Grassmannians. Moreover,
  the degree map $\Pic(\wt{X_\theta}) \to A_1(\wt{X_\theta})^*$ is
  injective.
\end{cor}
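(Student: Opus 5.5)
We first reduce to a single indecomposable factor. By \Theorem{chow1rich}(b) we have $X_\theta \cong X_{\theta_1} \times \dots \times X_{\theta_r}$. If $\pi_j : \wt{X_{\theta_j}} \to X_{\theta_j}$ are desingularizations with the required properties, then $\prod_j \pi_j$ is a desingularization of $X_\theta$. The product of towers of locally trivial Grassmannian bundles is again such a tower: we pull back each stage of one tower over the total space of the previous ones. So it suffices to treat a connected skew shape $\theta$. By \Remark{min-embed} and \Corollary{indecomp-comin-classify} we may replace $X$ by the minimal embedding of $X_\theta$. This is then an indecomposable cominuscule flag variety of classical type: $\Gr(m,N)$, $\LG(m,2m)$, a maximal orthogonal Grassmannian $\OG(m+1,2m+2)\cong\OG(m,2m+1)$, or a quadric $Q^d$.

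The Grassmannian cases are covered by \Theorem{desing-grass}. Write $\theta = \la/\mu$ and set $\wt{X_\theta} = \wt{X}_\la^\mu$. By \Lemma{Xhat-bundle} and \Lemma{Xtilde-bundle}, $\wt{X}_\la^\mu$ is built from a point by a sequence of Grassmann bundles, followed by one bundle whose fiber is a Grassmannian, a Lagrangian Grassmannian, or $\OG(r,2r+1)$. (The first stage sits over a point, since $\wh{X}_{\cP_X}^{\emptyset}$ is a point.) For the orthogonal case we need $\chr(\bK)\neq 2$; in characteristic 2 we would instead appeal to \cite{anderson.payne:chow}. The quadric case needs separate handling. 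The Richardson varieties in $Q^d$ are easy to list. A connected skew shape in the chain $\cP_{Q^d}$ is an interval, or, for even $d$, a union of a branch piece. Such a $X_\theta$ is either a smooth quadric of smaller dimension or a Schubert variety (a linear space or a cone over a smaller quadric) of a smooth quadric. A cone over a quadric is resolved by a $\bP^1$-bundle (blowup of the vertex), which is also a tower of Grassmann bundles. Alternatively, $Q^3\cong\LG(2,4)$, $Q^4\cong\Gr(2,4)$, and in general we can write the resolution down directly.

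For injectivity of the degree map we argue by induction along the tower using \Proposition{int-inj}. Every stage is a locally trivial fibration of non-singular projective varieties. Each base is rational, being an iterated bundle over a point with rational fibers and locally trivial, hence birational to a product. The fibers are Grassmannians, Lagrangian Grassmannians, or maximal orthogonal Grassmannians $F$. For these, $\Pic(F)\cong\Z$ is generated by the Schubert divisor and pairs to $1$ with the Schubert curve, so the degree map of $F$ is injective. The base case is a point, where $\Pic=0$. Inducting with \Proposition{int-inj} then gives injectivity at every stage, and in particular for $\wt{X_\theta}$.

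The main obstacle is checking the hypotheses of \Proposition{int-inj} at each stage. The orthogonal stage is locally trivial only via the explicit trivialization of $\cH$ in \Lemma{Xtilde-bundle}. Rationality of each intermediate base also has to be justified, which we obtain from local triviality over opens of affine space. The quadric and characteristic-2 cases need the separate handling described above.
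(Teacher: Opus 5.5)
Your reduction to a single indecomposable factor, your use of \Theorem{desing-grass} for $\Gr(m,N)$, $\LG(m,2m)$ and $\OG(m,2m+1)\cong\OG(m+1,2m+2)$, and your inductive application of \Proposition{int-inj} all agree with the paper.

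The gap is the quadric case: there your construction does not produce a tower with the required fibers. When $X=Q^d$ is the minimal embedding, $\theta$ must contain a box labeled by the cominuscule root, which is the first or last box of $\cP_X$. So $\theta$ is a lower or upper order ideal, and $X_\theta$ is (a translate of) a Schubert variety. This includes the case $X_\theta=X$. A Schubert variety of codimension $c$ with $0<c<d/2$ is the cone $Q^d\cap\bP(E_c^\perp)$, whose vertex is $\bP(E_c)\cong\bP^{c-1}$ rather than a point. Its natural resolution is therefore a $\bP^c$-bundle, not a $\bP^1$-bundle, over a smooth quadric $Q^{d-2c}$. More seriously, the base $Q^{d-2c}$ of that bundle, and $X_\theta=Q^d$ itself in the smooth case, are quadrics such as $Q^5$ or $Q^7$. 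These are neither Grassmannians of type A, nor Lagrangian Grassmannians, nor maximal orthogonal Grassmannians; for instance, $5$ is not a value of $r(r+1)/2$, and $Q^5\not\cong\bP^5$. Hence neither the identity map nor the vertex blow-up is a tower of the stated kind, and the sentence ``in general we can write the resolution down directly'' does not supply one.

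The fix is the paper's argument: take a Bott--Samelson resolution of the Schubert variety, which is a tower of $\bP^1=\Gr(1,2)$-bundles over a point. Your construction would still give injectivity of the degree map via \Proposition{int-inj}, since smooth quadrics have injective degree maps. So only the structural tower claim fails. Similarly, appealing to \cite{anderson.payne:chow} in characteristic 2 yields injectivity but not a tower of Grassmannian bundles. The statement should instead be read under the section's standing assumption $\chr(\bK)\neq 2$ for orthogonal Grassmannians.
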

\begin{proof}
  If $\wt{X_\theta}$ is any variety constructed as a tower of locally trivial
  bundles of flag varieties, then \Proposition{int-inj} shows that the
  degree map $\Pic(\wt{X_\theta}) \to A_1(\wt{X_\theta})^*$ is injective.
  To see that the required desingularization exists, we may assume that
  $X_\theta$ is indecomposable with minimal embedding $X$. If $X$ is a quadric
  hypersurface, then $X_\theta$ is a Schubert variety in $X$, and the
  Bott--Samelson desingularization is a tower of $\bP^1$-bundles. Otherwise we
  use the desingularization of \Theorem{desing-grass}.
\end{proof}

% !TeX root=comrich.tex

\section{Hilbert Polynomials}\label{sec:hilbert}

The determination of the Picard group of a cominuscule Richardson variety allows
us to define additional geometric invariants. A first example is the Hilbert
polynomial of the minimal ample line bundle, which we study in this section.

Let $X = G/P_X$ be a cominuscule flag variety, let $\theta \subset \cP_X$ be a
skew shape, and let $X_\theta \subset X$ be the associated cominuscule
Richardson variety. It follows from \Theorem{picard} that the Picard group
$\Pic(X_\theta)$ contains a unique minimal ample line bundle, which we denote by
$\cO_{X_\theta}(1)$. Let $H_\theta(r) \in \Q[r]$ be the associated Hilbert
polynomial defined by
\[
  H_\theta(r) = \chi(X_\theta, \cO_{X_\theta}(r)) =
  \sum_{i \geq 0} (-1)^i \dim H^i(X_\theta,\cO_{X_\theta}(r)) \,,
\]
where $\cO_{X_\theta}(r) = \cO_{X_\theta}(1)^{\otimes r}$. The Hilbert
polynomial $H_\theta(r)$ depends only on the isomorphism class of $X_\theta$ as
an algebraic variety.

More generally, let $H^X_\theta(r) \in \Q[r]$ denote the Hilbert polynomial
defined by
\[
  H^X_\theta(r) = \chi(X_\theta, \cO_X(r)|_{X_\theta}) \,,
\]
where $\cO_X(1)$ is the minimal ample line bundle in $\Pic(X)$. This polynomial
depends on the embedding $X_\theta \subset X$, but has the advantage that it
satisfies a recursive formula, which we prove in \Corollary{hilbert} below. When
$X$ is the minimal embedding of $X_\theta$, we have $\cO_{X_\theta}(1) =
\cO_X(1)|_{X_\theta}$ and $H_\theta(r) = H^X_\theta(r)$ by \Theorem{picard}(d),
so our formula for $H^X_\theta(r)$ also covers $H_\theta(r)$.

Let $K(X)$ be the $K$-theory ring of $X$. Given straight shapes $\la,\mu \subset
\cP_X$, we denote the corresponding Schubert classes in $K(X)$ by $\cO_\la =
[\cO_{X_\la}]$ and $\cO^\mu = [\cO_{X^\mu}]$. Let $\chi : K(X) \to \Z$ be the
sheaf Euler characteristic map. We have
\[
  \chi(\cO_\la \cdot \cO^\mu) = \chi([\cO_{X_\la \cap X^\mu}]) = \begin{cases}
    1 & \text{if $\mu \subset \la$,} \\
    0 & \text{otherwise.}
  \end{cases}
\]

A skew shape $\theta \subset \cP_X$ will be called \emph{short} if it consists
of short boxes. The skew shape $\theta$ is called a \emph{rook strip} if no two
boxes of $\theta$ are comparable. A \emph{short rook strip} is a rook strip
consisting of short boxes. Given straight shapes $\la, \nu \subset \cP_X$,
define the constant
\[
  C_\nu^\la(X) = \# \{ \mu \subset \cP_X \mid \la \subset \mu \subset \nu
  \text{ and } \mu/\la \text{ is a short skew shape} \} \,.
\]
We have $C_\nu^\nu(X) = 1$, and $C_\nu^\la(X)$ is nonzero only if $\la \subset
\nu$. When $\Phi$ is simply-laced, we have $C_\nu^\la(X)=1$ whenever $\la \subset
\nu$.

The following statement is a special case of the Pieri--Chevalley formula from
\cite{fulton.lascoux:pieri,pittie.ram:pieri-chevalley}. We give an expository
proof based on identities from \cite{buch.chaput.ea:chevalley}.

\begin{lemma}\label{lemma:multO1}%
  Multiplication by $[\cO_X(1)]$ in $K(X)$ is given by
  \[
    [\cO_X(1)] \cdot \cO_\nu = \sum_{\la \subset \nu} C_\nu^\la(X)\, \cO_\la \,,
  \]
  where the sum is over all straight shapes $\la$ contained in $\nu$.
\end{lemma}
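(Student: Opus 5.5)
We determine the coefficients of $[\cO_X(1)] \cdot \cO_\nu$ by pairing against opposite Schubert classes. Since $\chi(\cO_\la \cdot \cO^\mu) = 1$ if $\mu \subset \la$ and $0$ otherwise, the pairing matrix is unitriangular, so $\{\cO_\la\}$ and $\{\cO^\mu\}$ are dual bases up to a Möbius inversion. Write $[\cO_X(1)]\cdot\cO_\nu = \sum_\la d_\la \cO_\la$. Then for every $\mu$,
\[
  \chi([\cO_X(1)]\cdot\cO_\nu\cdot\cO^\mu) = \sum_{\la \supset \mu} d_\la \,.
\]
The left side equals $\chi(X_\nu^\mu, \cO_X(1)|_{X_\nu^\mu})$ when $\mu \subset \nu$, and $0$ otherwise. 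It therefore suffices to show that
\[
  \chi(X_\nu^\mu,\cO_X(1)) = \#\{\kappa : \mu \subset \kappa \subset \nu,\ \kappa \supset \la \text{ with } \kappa/\la \text{ short} \ldots\}.
\]
More precisely, substituting $d_\la = C_\nu^\la(X)$, we must prove that for $\mu\subset\nu$,
\[
  \chi(X_\nu^\mu,\cO_X(1)) \,=\, \sum_{\mu\subset\la\subset\nu} C^\la_\nu(X) \,=\, \#\{(\la,\kappa) : \mu \subset \la \subset \kappa \subset \nu,\ \kappa/\la \text{ short}\}.
\]
This is a statement about Richardson varieties only. Since $\cO_X(1)$ is ample and globally generated, and Richardson varieties have rational singularities and are Frobenius split (compatibly with the Schubert divisors), higher cohomology vanishes. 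The left side is therefore $\dim H^0(X_\nu^\mu,\cO_X(1))$, the number of standard monomials of degree one. That number is the number of $T$-weights of $H^0(X,\cO_X(1))^*$ surviving on $X_\nu^\mu$, counted with multiplicity.

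Rather than invoking standard monomial theory, I would follow the expository route via the Chevalley identities of \cite{buch.chaput.ea:chevalley}. The $K$-theoretic Chevalley formula writes $[\cO_X(1)]\cdot\cO_\nu$ as a signed sum over saturated chains in the Bruhat order, that is, Lakshmibai--Seshadri paths of the fundamental weight $\om_\ga$. Because $\ga$ is cominuscule, $(\al^\vee,\om_\ga)\in\{1,2\}$ for $\al\in\cP_X$, with value $2$ exactly for short $\al$ by the computation in \Lemma{comin-degree}. Paths therefore have at most one break at $1/2$, occurring only along short steps. I would then show that the chains contributing to $\cO_\la$ are in bijection with the straight shapes $\kappa$ satisfying $\la\subset\kappa\subset\nu$ and $\kappa/\la$ short. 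The steps involved are covers $\sigma \lessdot s_\al\bullet\sigma$ where $\al$ is short, and these correspond via \Lemma{act-fixed} to adding a single short box. The expected cancellation is that the alternating sum over such chains collapses to the positive count $C^\la_\nu(X)$.

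I expect the main obstacle to be this combinatorial collapse. One must check that in cominuscule posets the short boxes of $\nu/\la$ behave independently, so that the relevant chains are counted by order ideals of the short subposet. The case check is small: the short boxes are the off-diagonal boxes of $\LG(n,2n)$ and the middle box of $Q^{2n+1}$, and elsewhere everything is long and $C_\nu^\la = 1$. In the simply-laced case the formula reduces to $[\cO_X(1)]\cdot\cO_\nu=\sum_{\la\subset\nu}\cO_\la$, which I would verify directly first. That identity is equivalent to $\chi(X_\nu^\mu,\cO_X(1))=\#[\mu,\nu]$, the number of $T$-fixed points of $X_\nu^\mu$. This matches the fact that for minuscule $\om_\ga$ all weights of $H^0(X,\cO_X(1))$ are extremal, with one weight per fixed point.
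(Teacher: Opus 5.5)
\textbf{There is a genuine gap: the key step is never proved.}

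Your first step is sound. Since the pairing $\chi(\cO_\la\cdot\cO^\mu)$ is unitriangular, the lemma is equivalent to
$\chi(X_\nu^\mu,\cO_X(1))=\#\{(\la,\kappa):\mu\subset\la\subset\kappa\subset\nu,\ \kappa/\la\text{ short}\}$
for all $\mu\subset\nu$. From there, however, nothing is actually established. You set standard monomial theory aside and announce the simply-laced case without verifying it. The decisive step is deferred to an ``expected cancellation'' that you identify as the main obstacle and never carry out.

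The mechanism you describe also does not match the source you invoke. The identities from \cite{buch.chaput.ea:chevalley} that the paper uses are not a chain expansion of $[\cO_X(1)]\cdot\cO_\nu$. They say two things:
\begin{itemize}
\item the classes $\psi(\cO^\la)=\sum_{\rho/\la\text{ rook strip}}(-1)^{|\rho/\la|}\cO^\rho$ form the basis dual to $\{\cO_\nu\}$ under $\chi$;
\item $[\cO_X(-1)]\cdot\cO^\la=\psi(\phi(\cO^\la))$, where $\phi(\cO^\la)=\sum_{\rho/\la\text{ short rook strip}}(-1)^{|\rho/\la|}\cO^\rho$.
\end{itemize}
The missing idea is to invert $\phi$. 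One has $\phi^{-1}(\cO^\la)=\sum_{\rho/\la\text{ short}}\cO^\rho$; checking $\phi(\sum_{\rho/\la\text{ short}}\cO^\rho)=\cO^\la$ is an inclusion--exclusion over antichains of maximal boxes. Hence $\psi(\cO^\la)=[\cO_X(-1)]\cdot\phi^{-1}(\cO^\la)$, and the coefficient of $\cO_\la$ is
\[
  \chi(\psi(\cO^\la)\cdot[\cO_X(1)]\cdot\cO_\nu)
  = \chi(\phi^{-1}(\cO^\la)\cdot\cO_\nu) = C_\nu^\la(X) \,.
\]
That inversion is exactly the ``collapse'' you were worried about, and it is the paper's proof.

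Alternatively, your LS-path observations would finish the proof, but only with the correct form of the formula. The Pittie--Ram/Littelmann--Seshadri Chevalley formula for a dominant weight has no signs. The coefficient of $\cO_\la$ in $[\cO_X(1)]\cdot\cO_\nu$ is the number of LS paths of shape $\om_\ga$ whose smallest direction is $w_\la$ and whose largest direction is at most $w_\nu$. As you correctly noted, such paths are either $(w_\la)$ or $(w_\kappa>w_\la;\,1/2)$ with every cover from $\la$ to $\kappa$ adding a short box, i.e.\ $\kappa/\la$ short. So the count is $C_\nu^\la(X)$ immediately, with no cancellation. This is why the paper calls the lemma a special case of \cite{fulton.lascoux:pieri, pittie.ram:pieri-chevalley}. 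Similarly, the standard-monomial route you dropped would identify $h^0(X_\nu^\mu,\cO_X(1))$ with your pair count.

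As written, the proposal contains neither argument. Its picture of a signed sum that must collapse misplaces where the work lies.
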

\begin{proof}
  Define linear endomorphisms $\phi$ and $\psi$ of $K(X)$ by
  \[
    \phi(\cO^\la) = \sum_{\mu/\la \text{ short rook strip}}
    (-1)^{|\mu/\la|}\, \cO^\mu \,,
  \]
  where the sum is over all $\mu \subset \cP_X$ such that $\la \subset \mu$ and
  $\mu/\la$ is a short rook strip, and
  \[
    \psi(\cO^\mu) = \sum_{\nu/\mu \text{ rook strip}}
      (-1)^{|\nu/\mu|}\, \cO^\nu \,,
  \]
  the sum over all $\nu \subset \cP_X$ such that $\mu \subset \nu$ and $\nu/\mu$
  is any rook strip. By \cite[Lemma~3.5 and
  Theorem~3.6]{buch.chaput.ea:chevalley} we have
  \[
    \chi(\psi(\cO^\la) \cdot \cO_\nu) = \delta_{\la,\nu}
    \text{ \ \ \ and \ \ \ }
    [\cO_X(-1)] \cdot \cO^\la = \psi(\phi(\cO^\la)) \,.
  \]
  Notice that $\phi$ is invertible with inverse
  \[
    \phi^{-1}(\cO^\la) =
    \sum_{\mu/\la \text{ short}} \cO^\mu \,,
  \]
  where the sum is over straight shapes $\mu \subset \cP_X$ such that $\la
  \subset \mu$ and $\mu/\la$ is a short skew shape. The coefficient of $\cO_\la$
  in $[\cO_X(1)] \cdot \cO_\nu$ is now given by
  \[
    \begin{split}
      \chi\left(\psi(\cO^\la) \cdot [\cO_X(1)] \cdot \cO_\nu\right)
      &= \chi\left([\cO_X(-1)] \cdot \phi^{-1}(\cO^\la) \cdot
         [\cO_X(1)] \cdot \cO_\nu\right) \\
      &= \chi\left(\phi^{-1}(\cO^\la) \cdot \cO_\nu\right) = C_\nu^\la(X) \,,
    \end{split}
  \]
  as required.
\end{proof}

\begin{remark}
  Although both the Pieri--Chevalley formula from \cite{fulton.lascoux:pieri,
  pittie.ram:pieri-chevalley} and the identities from
  \cite{buch.chaput.ea:chevalley} were proved for flag varieties defined over
  $\C$, they hold in arbitrary characteristic. One way to see this is to observe
  that the Graham-Willems formula \cite{graham:equivariant, willems:k-theorie}
  for the restrictions of equivariant $K$-theoretic Schubert classes to
  $T$-fixed points is independent of characteristic; this implies that the
  Schubert structure constants of the equivariant $K$-theory ring $K_T(G/P)$ are
  independent of characteristic. Indeed, Graham's proof of the formula uses only
  that the Bott--Samelson resolution of a Schubert variety is a rational
  resolution, which holds in any characteristic \cite{ramanathan:schubert}. We
  thank William Graham for a very helpful discussion about this.
\end{remark}

Given straight shapes $\la \subset \nu \subset \cP_X$, we set $\Delta
H^X_{\nu/\la}(r) = H^X_{\nu/\la}(r+1) - H^X_{\nu/\la}(r)$.

\begin{cor}\label{cor:hilbert}%
  The Hilbert polynomial $H^X_{\nu/\la}$ is determined by $H^X_{\nu/\la}(0) = 1$
  and
  \[
    \Delta H^X_{\nu/\la} = \sum_{\la \subset \mu \subsetneq \nu}
    C_\nu^\mu(X)\, H^X_{\mu/\la} \,,
  \]
  where the sum is over all straight shapes $\mu$ satisfying $\la \subset \mu
  \subsetneq \nu$.
\end{cor}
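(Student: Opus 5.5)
We compute $H^X_{\nu/\la}(r)$ in $K(X)$. Since $X_\nu$ and $X^\la$ meet properly with Cohen--Macaulay intersection $X_\nu^\la$ (reduced, by \cite{brion.kumar:frobenius}), we have $[\cO_{X_\nu^\la}] = \cO_\nu\cdot\cO^\la$ in $K(X)$. The projection formula then gives
\[
  H^X_{\nu/\la}(r) = \chi\big([\cO_X(r)]\cdot\cO_\nu\cdot\cO^\la\big).
\]
This also uses that $X_\theta$ is defined only up to translation; the Euler characteristic of $\cO_X(r)|_{X_\la^\mu}$ is translation invariant because $\cO_X(r)$ is $G$-linearized, by \Proposition{comin-rich-trans}. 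The initial value $H^X_{\nu/\la}(0)=\chi(\cO_\nu\cdot\cO^\la)=1$ is the displayed duality, since $\la\subset\nu$.

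For the difference, write $[\cO_X(r+1)] = [\cO_X(r)]\cdot[\cO_X(1)]$ and apply \Lemma{multO1} to $[\cO_X(1)]\cdot\cO_\nu$:
\[
  H^X_{\nu/\la}(r+1) = \sum_{\mu\subset\nu} C^\mu_\nu(X)\,\chi\big([\cO_X(r)]\cdot\cO_\mu\cdot\cO^\la\big).
\]
For each $\mu$, the term $\chi([\cO_X(r)]\cdot\cO_\mu\cdot\cO^\la)$ equals $H^X_{\mu/\la}(r)$ when $\la\subset\mu$. When $\la\not\subset\mu$ it vanishes, because $X_\mu\cap X^\la=\emptyset$ and hence $\cO_\mu\cdot\cO^\la=0$ (by Kleiman transversality, the Tor-sheaves are supported on the empty intersection). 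The term with $\mu=\nu$ has coefficient $C^\nu_\nu(X)=1$ and contributes $H^X_{\nu/\la}(r)$. Subtracting it gives
\[
  \Delta H^X_{\nu/\la}(r) = \sum_{\la\subset\mu\subsetneq\nu} C^\mu_\nu(X)\,H^X_{\mu/\la}(r).
\]

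The recursion determines the polynomial by induction on $|\nu/\la|$. The right-hand side involves only strictly smaller skew shapes, so $\Delta H^X_{\nu/\la}$ is known from the inductive hypothesis, and a polynomial is determined by its difference together with its value at $0$. In the base case $\nu=\la$ the sum is empty, so $H^X_{\nu/\nu}$ is the constant $1$, matching the fact that $X_\nu^\nu$ is a point.

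The step needing the most care is the identity $[\cO_{X_\nu^\la}]=\cO_\nu\cdot\cO^\la$, including its vanishing when $\la\not\subset\mu$. I would take this as standard from the transversality and Frobenius splitting references already cited, in the same spirit in which the paper uses $\chi(\cO_\la\cdot\cO^\mu)$.
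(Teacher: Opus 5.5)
Your proposal is correct and follows essentially the same route as the paper. Like the paper, you write $[\cO_X(r)|_{X_\nu^\la}]=[\cO_X(1)]^r\cdot\cO_\nu\cdot\cO^\la$, expand $[\cO_X(1)]\cdot\cO_\nu$ by \Lemma{multO1}, drop the terms with $\la\not\subset\mu$, subtract the $\mu=\nu$ term, and apply $\chi$. Your remarks on translation invariance and on the induction showing that the recursion determines the polynomial fill in details the paper leaves implicit.
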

\begin{proof}
  Using that $[\cO_X(r)|_{X_\nu^\la}] = [\cO_X(1)]^r \cdot \cO_\nu \cdot
  \cO^\la$ holds in $K(X)$, we obtain from \Lemma{multO1} that
  $[\cO_{X_\nu^\la}(r+1)] = \sum_{\la \subset \mu \subset \nu} C_\nu^\mu(X)\,
  [\cO_{X_\mu^\la}(r)]$. The result follows by subtracting
  $[\cO_{X_\nu^\la}(r)]$ from both sides of this identity and applying $\chi$.
\end{proof}

\begin{cor}\label{cor:hilbert-pos}%
  The coefficients $a_d$ in the binomial expansion
  \[
    H^X_{\nu/\la}(r) = \sum_{d=0}^{|\nu/\la|} a_d \binom{r}{d}
  \]
  are positive integers.
\end{cor}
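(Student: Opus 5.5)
We argue by induction on $|\nu/\la|$, using the recursion of \Corollary{hilbert}. The operator $\Delta$ acts on the binomial basis by $\Delta \binom{r}{d} = \binom{r}{d-1}$, with $\binom{r}{-1}=0$. So if $H^X_{\nu/\la}(r) = \sum_d a_d \binom{r}{d}$, then
\[
  \Delta H^X_{\nu/\la}(r) = \sum_{d\geq 1} a_d \binom{r}{d-1} .
\]
We also have $a_0 = H^X_{\nu/\la}(0) = 1$. Comparing coefficients with \Corollary{hilbert} gives, for $d \geq 1$,
\[
  a_d \,=\, \sum_{\la \subset \mu \subsetneq \nu} C_\nu^\mu(X)\, a_{d-1}(\mu/\la) ,
\]
where $a_e(\mu/\la)$ denotes the coefficient of $\binom{r}{e}$ in $H^X_{\mu/\la}$, and $a_e(\mu/\la)=0$ when $e > |\mu/\la|$. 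Each $a_d$ is therefore a nonnegative integer combination of coefficients of smaller shapes. The base case $\nu=\la$ gives $H=1$, and the claim is trivially true there, with only $d=0$.

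For the inductive step, fix $1 \leq d \leq |\nu/\la|$. All terms in the recursion are nonnegative, because the constants $C_\nu^\mu(X)$ are counts and the smaller coefficients are nonnegative by induction. To get strict positivity it therefore suffices to find one $\mu$ with $\la \subset \mu \subsetneq \nu$, $|\mu/\la| \geq d-1$, and $C_\nu^\mu(X) \geq 1$. Any $\mu$ with $\la \subset \mu \subset \nu$ has $C_\nu^\mu(X) \geq 1$, since the term with $\mu$ itself in the defining set has empty, hence short, skew shape $\mu/\mu$. So we take $\mu = \nu \ssm \{\al\}$ for a maximal box $\al$ of $\nu/\la$. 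This $\mu$ is a straight shape containing $\la$, and $|\mu/\la| = |\nu/\la|-1 \geq d-1$. By induction $a_{d-1}(\mu/\la) > 0$, so $a_d > 0$.

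This also shows that $H^X_{\nu/\la}$ has degree exactly $|\nu/\la|$, as expected since $\dim X_\nu^\la = |\nu/\la|$. There is no serious obstacle here. The only point needing care is the observation $C_\nu^\mu(X)\geq 1$ for $\la\subset\mu\subset\nu$, together with the fact that removing a maximal box of $\nu/\la$ from $\nu$ gives a straight shape still containing $\la$. Integrality of the $a_d$ is automatic from the recursion, or from $H$ being integer-valued.
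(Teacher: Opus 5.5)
Your proof is correct and follows the same route as the paper: induction on $|\nu/\la|$ using the recursion of \Corollary{hilbert}, with $a_d$ read off as the coefficient of $\binom{r}{d-1}$ in $\Delta H^X_{\nu/\la}$. You also spell out the strict-positivity step that the paper leaves implicit: $\mu = \nu \ssm \{\al\}$, for a maximal box $\al$ of $\nu/\la$, is a straight shape with $C_\nu^\mu(X) \geq 1$ and $|\mu/\la| = |\nu/\la| - 1$.
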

\begin{proof}
  We have $a_0 = H^X_{\nu/\la}(0) = 1$, and the remaining coefficients in
  \[
    \Delta H^X_{\nu/\la}(r) = \sum_{d=1}^{|\nu/\la|} a_d \binom{r}{d-1}
  \]
  are positive integers by \Corollary{hilbert} and induction on $|\nu/\la|$.
\end{proof}

\begin{example}
  The minuscule Richardson varieties defined by the shapes
  \[
    \theta_1 = \tableau{8}{{}&{}&{}&{}\\{}&{}}
    \text{ \ \ \ \ and \ \ \ \ }
    \theta_2 = \tableau{8}{&{}\\{}&{}&{}&{}\\{}}
  \]
  of types $A_6$ and $E_6$ have the same Hilbert polynomial
  \[
    H_{\theta_1}(r) = H_{\theta_2}(r) =
    1 + 11\,r + 43 \binom{r}{2} + 82 \binom{r}{3} +
    83 \binom{r}{4} + 43 \binom{r}{5} + 9 \binom{r}{6} \,.
  \]
  The Richardson varieties $X_{\theta_1}$ and $X_{\theta_2}$ are not isomorphic
  as algebraic varieties, for example because the singular locus of
  $X_{\theta_1}$ is isomorphic to $\bP^2$, whereas the singular locus of
  $X_{\theta_2}$ has two components isomorphic to $\bP^1$ and $\bP^1 \times
  \bP^2$.
\end{example}

\bibliographystyle{halpha}

\end{document}